\tikzset{
		OpenCirc/.style={
			label={[inner sep=0,minimum size=10mm, circle, draw, densely dotted]center:{}}
		},
		closed/.style = {decoration = {markings, mark = at position 0.5 with { \node[transform shape, xscale = .8, yscale=.4] {/}; } }, postaction = {decorate} },
		UpperBar/.style={decoration = {markings, mark = at position 0.25 with{ \arrow[line width=1pt]{|}
				} }, postaction={decorate} },
		LowerBar/.style={decoration = {markings, mark = at position 0.75 with { \arrow[line width=1pt]{|} } }, postaction={decorate} },
		kpm/.style={decoration = {markings, mark = at position 0.75 with {\node[draw=none, below =-1cm ]{$(k^+(u),k^-(u))$}} }, postaction={decorate} },
		10/.style={decoration = {markings, mark = at position 0.75 with { $(1,0)$ } }, postaction={decorate} },
		27/.style={decoration = {markings, mark = at position 0.75 with { $(2,7)$ } }, postaction={decorate} },
		Bullet/.style={circle, fill, draw, inner sep=0, minimum size=4pt}
}
\DeclareRobustCommand{\rvdots}{%
	\vbox{
		\baselineskip4\p@\lineskiplimit\z@
		\kern-\p@
		\hbox{.}\hbox{.}\hbox{.}
}}
\newtheorem{theoremAlph}{Theorem}
\newtheorem*{theorem*}{Theorem}
\newtheorem{theorem}{Theorem}[section]
\newtheorem{lemma}[theorem]{Lemma}	
\newtheorem{proposition}[theorem]{Proposition}
\newtheorem{corollary}[theorem]{Corollary}
\theoremstyle{definition}
\newtheorem{definition}[theorem]{Definition} 
\newtheorem{remark}[theorem]{Remark}	
\newtheorem{question}[theorem]{Question}
\theoremstyle{definition} 
\newtheorem*{ack}{Acknowledgements}
\numberwithin{equation}{section}
\newcommand{\C}{\mathbb{C}} 
\newcommand{\R}{\mathbb{R}} 
\newcommand{\Q}{\mathbb{Q}} 
\newcommand{\Z}{\mathbb{Z}} 
\newcommand{\N}{\mathbb{N}} 
\newcommand{\Quat}{\mathbb{H}}
\newcommand{\SO}{\mathrm{SO}}
\newcommand{\II}{\mathrm{I\!I}}
\DeclareMathOperator{\im}{im}
\DeclareMathOperator{\intr}{Int}
\newcommand{\bq}{/ \hspace{-.12cm} /}
\newcommand{\bigslant}[2]{{\raisebox{.2em}{$#1$}\left/\raisebox{-.2em}{$#2$}\right.}}
\newcommand{\vbigslant}[2]{{\raisebox{0em}{$#1$}\left/\raisebox{-.2em}{$#2$}\right.}}
\DeclareRobustCommand*\uell{\mathpalette\@uell\relax}
\newcommand*\@uell[2]{
	\setbox0=\hbox{$#1\ell$}
	\setbox1=\hbox{\rotatebox{10}{$#1\ell$}}
	\dimen0=\wd0 \advance\dimen0 by -\wd1 \divide\dimen0 by 2
	\mathord{\lower 0.1ex \hbox{\kern\dimen0\unhbox1\kern\dimen0}}
}
\begin{document}
	\title[\rmfamily Metrics of Positive Ricci Curvature on Simply-Connected Manifolds of Dimension $6k$]{\rmfamily Metrics of Positive Ricci Curvature on Simply-Connected Manifolds of Dimension $6k$}
	\date{\today}
	\subjclass[2010]{53C20, 57R65}
	\keywords{Positive Ricci Curvature, Surgery, Plumbing, 6-Manifolds}
	\author{Philipp Reiser$^*$}
	\address{Department of Mathematics\\University of Fribourg\\Switzerland.}
	\email{\href{mailto:philipp.reiser@unifr.ch}{philipp.reiser@unifr.ch}}
	\thanks{$^{*}$The author acknowledges funding by the Deutsche Forschungsgemeinschaft (DFG, German Research Foundation) -- 281869850 (RTG 2229).}
	
	\begin{abstract}
		A consequence of the surgery theorem of Gromov and Lawson is that every closed, simply-connected 6-manifold admits a Riemannian metric of positive scalar curvature. For metrics of positive Ricci curvature it is widely open whether a similar result holds; there are no obstructions known for those manifolds to admit a metric of positive Ricci curvature, while the number of examples known is limited. In this article we introduce a new description of certain $6k$-dimensional manifolds via labeled bipartite graphs and use an earlier result of the author to construct metrics of positive Ricci curvature on these manifolds. In this way we obtain many new examples, both spin and non-spin, of $6k$-dimensional manifolds with a metric of positive Ricci curvature.
	\end{abstract}

	\maketitle
	
	\section{Introduction and Main Results}

		Determining which manifolds admit a complete Riemannian metric of positive Ricci curvature remains a long standing open problem. In this article we consider this problem for closed and simply-connected manifolds. Under this assumption, the only known obstructions for the existence of a Riemannian metric of positive Ricci curvature already vanish for the (potentially) weaker condition of positive scalar curvature. In particular, it is an open question whether any closed, simply-connected manifold with a Riemannian metric of positive scalar curvature also admits a Riemannian metric of positive Ricci curvature.
		
		When considering low dimensions, the first non-trivial case is dimension 4 as any closed, simply-connected manifold in dimension 2 and 3 is a standard sphere. Here it was shown by Sha and Yang \cite{SY93} that any closed, simply-connected 4-manifold that admits a metric with positive scalar curvature is homeomorphic to a manifold with positive Ricci curvature, and in particular completely classified the intersection forms that can be realized by such a manifold. Further, in dimension 5, Sha and Yang \cite{SY91} proved that any closed, simply-connected 5-manifold with torsion-free homology admits a metric of positive Ricci curvature by constructing such metrics on connected sums of sphere bundles over spheres. In this article we consider dimension 6 and construct new examples of closed, simply-connected 6-manifolds with torsion-free homology that admit a metric of positive Ricci curvature. Moreover, we extend our construction to all dimensions $6k$, $k\in\N$.
	
	As a result of the surgery theorem of Gromov and Lawson \cite{GL80a}, \emph{any} closed, simply-connected 6-manifold admits a metric of positive scalar curvature. For positive Ricci curvature, while there is no obstruction known for the existence of metrics with this curvature condition, there are only relatively few known examples, see \cite[Section 5.1]{Re23}. In particular, the only examples where we have arbitrarily large Betti numbers are connected sums of sphere bundles.
	
	The main tool to construct metrics of positive Ricci curvature will be the theorem below. It constructs so-called \emph{core metrics}, a notion introduced by Burdick \cite{Bu19}. The main property of a core metric is that it has positive Ricci curvature and that the connected sum of manifolds with core metrics admits a metric of positive Ricci curvature, see Subsection \ref{SS:CORE_METRICS} below.
	\begin{theorem}[{\cite[Theorem B]{Re23}}]
		\label{T:PLUMBING}
		Let $W$ be the manifold obtained by plumbing linear disc bundles $\overline{E}_i\to B_i$, $1\leq i\leq k$, with compact base manifolds according to a simply-connected graph. If all $B_i$ admit a core metric with $\dim(B_1)\geq3$ and the fiber dimension of $\overline{E}_1$ is at least 4, then $\partial W$ admits a core metric.
	\end{theorem}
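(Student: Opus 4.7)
The plan is to induct on the number $k$ of vertices of the plumbing graph, which is a tree by the simply-connected hypothesis. For the base case $k=1$, we have a single disc bundle $\overline{E}_1\to B_1$ with fiber $D^{f_1}$, $f_1\geq 4$, and boundary equal to the sphere bundle $S(E_1)\to B_1$ with fiber $S^{f_1-1}$ of dimension at least $3$. I would construct a core metric on $S(E_1)$ via a Riemannian submersion, combining the core metric on $B_1$ with a sufficiently small round sphere fiber; since the fiber has positive Ricci curvature (its dimension is $\geq 3$) and so does $B_1$ away from its core disc, a small enough fiber scale guarantees positive Ricci curvature on the total space via the O'Neill formulas. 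The core disc of $B_1$ then yields a chart $D^{b_1}\times S^{f_1-1}$ of $S(E_1)$ which, after an appropriate deformation, provides the standard round ball inside the sphere bundle required by the definition of a core metric.

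For the inductive step, choose a leaf $v$ of the tree. Let $W'$ be the plumbing of the remaining $k-1$ disc bundles; by the inductive hypothesis $\partial W'$ admits a core metric. Reintroducing the leaf corresponds to plumbing $\overline{E}_v$ to $W'$ at a single chart, and the effect on the boundary is to remove $D^{b_v}\times S^{f_v-1}$ from $\partial W'$ and $S^{b_v-1}\times D^{f_v}$ from $S(E_v)$, identifying them along $S^{b_v-1}\times S^{f_v-1}$ after smoothing corners. This is a generalized connected sum that can be realized by a sequence of surgeries, all of codimension at least $3$ thanks to the dimension hypotheses. Having just constructed a core metric on $S(E_v)$ as in the base case, I would splice it with the inductive core metric on $\partial W'$ across the identification, relying on the connected-sum property built into the definition of a core metric to extract the final core metric on $\partial W$.

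The main obstacle is the geometric splicing. Ricci-positive surgery is notoriously delicate and generally forces the surgery neighborhoods to be isometric to standard Riemannian products. To arrange this, I would exploit the flexibility in the core metric constructions to place the plumbing chart on $B_1$ (and analogously on the leaf base by induction) inside a region that is metrically a round disc, so that the sphere bundle near that chart looks like a standard product $D^{b_v}\times S^{f_v-1}$ with a round metric on each factor. The gluing then becomes an identification of two standard geometric pieces along matching boundaries, for which an explicit Ricci-positive interpolation can be built using warped-product techniques. The assumptions $\dim(B_1)\geq 3$ and $f_1\geq 4$ are what provide the dimensional room to smooth out the resulting corners while simultaneously preserving positivity of Ricci curvature and the convex round boundary demanded by the core metric property.
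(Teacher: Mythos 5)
The theorem is quoted from \cite{Re21} and the present paper gives no proof of it, so there is no internal argument to compare against; I will assess your attempt on its own merits. The central gap is in the step you yourself flag as the ``main obstacle,'' and it does not close. Plumbing at a single chart removes $D^q\times S^{p-1}$ from the boundary sphere bundle over $B_u$ and $D^p\times S^{q-1}$ from the one over $B_v$, and identifies them along $S^{p-1}\times S^{q-1}$. You then appeal to ``the connected-sum property built into the definition of a core metric,'' but that property (Burdick's gluing theorem, recalled in Subsection \ref{SS:CORE_METRICS}) only yields Ricci-positive gluing of two pieces along a round $S^{n-1}$ with convex boundary; it says nothing about gluing along a product $S^{p-1}\times S^{q-1}$, which is what plumbing produces. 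So the inductive splice is not covered by any property of core metrics, and ``explicit Ricci-positive interpolation by warped-product techniques'' is precisely the hard content of the theorem, not a routine fill-in. Moreover, by placing the plumbing chart inside the core-disc region of $B_1$ you spend the one part of the metric you control there, and it is then unclear where the required core disc of the final $\partial W$ should live.

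The base case also hides substantive work: the chart over the core disc of $B_1$ is $D^{q}\times S^{p-1}$, a disc times a sphere rather than a disc, and producing a round, convex $D^{p+q-1}$ inside the sphere bundle while keeping Ricci curvature positive is exactly the nontrivial content of \cite{Bu20} and \cite{Re21}, not an ``appropriate deformation'' of the canonical-variation metric. The argument in \cite{Re21} does not proceed by iterated boundary surgery; it instead builds Ricci-positive metrics with prescribed boundary behavior directly on the disc bundles $\overline{E}_i$ (so that the plumbing identifications become isometric gluings of manifolds with corners), and obtains the core disc of $\partial W$ from the bundle structure at a single vertex --- which is also why the dimensional hypotheses are attached to one vertex rather than all of them. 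Your leaf-by-leaf surgery scheme is a genuinely different decomposition, and as written it has gaps at the base case and, more seriously, at the gluing step that every inductive step depends on.
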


	To state our result, given a closed, simply-connected and oriented $6k$-dimensional manifold $M$ with torsion-free homology, we have a symmetric trilinear form $\mu_M\colon H^{2k}(M)\times H^{2k}(M)\times H^{2k}(M)\to\Z$ defined by
	\[\mu_M(x,y,z)=\langle x\smile y\smile z,[M]\rangle.\]
	Further invariants we will consider are the $k$-th power of the second Stiefel-Whitney class
	\[w_2(M)^k\in H^{2k}(M;\Z/2)\cong H^{2k}(M)\otimes\Z/2\]
	and the $k$-th Pontryagin class
	\[p_k(M)\in H^{4k}(M)\cong \mathrm{Hom}(H^{2k}(M),\Z).\]
	In particular, these invariants are all defined on the cohomology group $H^{2k}(M)$. For $k=1$, by the classification of Jupp \cite{Ju73}, see also Theorem \ref{T:JUPP}, these invariants already determine the diffeomorphism type of $M$ up to connected sums with copies of $S^3\times S^3$. Given a finitely generated free abelian group $H$, a symmetric trilinear form $\mu$ on $H$, an element $w\in H\otimes \Z/2$ and a linear form $p$ on $H$, we call the system $(H,\mu,w,p)$ \emph{admissible in dimension $6k$}, if it can be realized as the invariants of a closed, simply-connected $6k$-dimensional manifold with torsion-free homology.
	
	Let $G=(U,V,E,(\alpha,k^+,k^-))$ be a bipartite graph, where $U$ and $V$ are the sets of vertices and $E\subseteq U\times V$ is the set of edges, with a labeling $(\alpha,k^+,k^-)\colon U\to \Z\times \N_0^2$ for vertices in $U$. We call such a graph an \emph{algebraic plumbing graph}. We draw vertices $u\in U$ as follows:
		\[
		\begin{tikzpicture}
			\node[circle,draw,label=below:${\scriptstyle k^{-}(u)}$,label=above:${\scriptstyle k^{+}(u)}$] (A) at (0,0) {$\alpha(u)$};
		\end{tikzpicture}
		\]
		If one of $k^+(u)$ and $k^-(u)$ vanishes, then we will omit it. Vertices in $V$ will simply be drawn as dots. An example for such a graph is given as follows:
		\[
		\begin{tikzpicture}
			\begin{scope}[every node/.style={circle,draw,minimum height=2em}]
				\node[Bullet] (V1) at (0,0) {};
				\node[Bullet] (V2) at (2,1) {};
				\node[Bullet] (V3) at (2,-1) {};
				\node[label={[label distance=-0.25cm]below:${\scriptscriptstyle 7}$},label={[label distance=-0.25cm]above:${\scriptscriptstyle 2}$}] (U1) at (-1,1) {5};
				\node (U2) at (-1,-1) {-1};
				\node[label={[label distance=-0.25cm]above:${\scriptscriptstyle 1}$}] (U3) at (1,0) {3} ;
				\node[label={[label distance=-0.25cm]below:${\scriptscriptstyle 9}$},label={[label distance=-0.25cm]above:${\scriptscriptstyle 6}$}] (U4) at (3,-1) {42} ;
			\end{scope}
			\path[-](V1) edge (U1);
			\path[-](V1) edge (U2);
			\path[-](V1) edge (U3);
			\path[-](V2) edge (U3);
			\path[-](V3) edge (U3);
			\path[-](V3) edge (U4);
		\end{tikzpicture}
		\]
		By assigning a suitable disc bundle to each vertex of an algebraic plumbing graph $G$ we will define for each $k\in \N$ a geometric plumbing graph, denoted $\overline{G}^k$, see Definition \ref{D:GEOM_PL_GRAPH} (and note that for $k>2$ there can be multiple such graphs, see Remark \ref{R:preimage_choice}), which in turn defines a $6k$-dimensional manifold $M_{\overline{G}^k}$, see Definition \ref{D:M_G}. Important invariants, such as the cohomology group $H^{2k}(M_{\overline{G}^k})$, the trilinear form $\mu_{M_{\overline{G}^k}}$, and characteristic classes can be computed directly from the data provided by the algebraic plumbing graph if it is simply-connected. For example, if no vertex in $V$ is a leaf, then $H^{2k}(M_{\overline{G}^k})$ has rank $|U|-|V|$ and $M_{\overline{G}^k}$ is spin if and only if $k^-=k^+\equiv0$. The fact that the invariants can be obtained from the graph data if $G$ is simply-connected motivates defining invariants $(H_G,\mu_G^k,w_G,p_G^k)$ in a similar way for any algebraic plumbing graph $G$, see Definition~\ref{D:INV}. We set $\mu_G=\mu_G^1$ and $p_G=p_G^1$.
		
		\begin{theoremAlph}
			\label{T:ALG_PL_GR_CORE1}
			Let $G$ be an algebraic plumbing graph.
			\begin{enumerate}
				\item If $k=1$, then the system of invariants $(H_G,\mu_G,w_G,p_G)$ is admissible in dimension $6$.
				\item If every connected component of $G$ is simply-connected, then the system of invariants $(H_G,\mu_G^k,w_G,p_G^k)$ is admissible in dimension $6k$ and realized by the manifold $M_{\overline{G}^k}$. Further, $M_{\overline{G}^k}$ admits a core metric.
				\item If $k=1$ and every connected component of $G$ is simply-connected, then any closed, simply-connected 6-manifold with torsion-free homology, whose invariants are equivalent to\linebreak $(H_G,\mu_G,w_G,p_G)$, admits a core metric.
			\end{enumerate}
		\end{theoremAlph}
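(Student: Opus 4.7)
My plan is to address the three parts in order, with part (2) carrying the main construction. For part~(2), I would first define the geometric plumbing graph $\overline{G}^k$ by assigning to each vertex $u \in U$ with label $(\alpha(u), k^+(u), k^-(u))$ a linear disc bundle over a simply-connected base (for instance $S^{2k}\times S^{2k}$), with $\alpha(u)$ encoding the Euler class and $(k^+(u), k^-(u))$ controlling the data that determine $w_2^k$ and $p_k$; to each vertex $v \in V$ I would assign a disc bundle over a simpler base such as $S^{4k}$. Plumbing these bundles according to the edges of $G$ yields a $(6k+1)$-manifold $W$ with boundary $M_{\overline{G}^k}$. Using the long exact sequence of $(W, \partial W)$ together with the simply-connectedness hypothesis on each component of $G$, I would compute $H^{2k}(M_{\overline{G}^k}) \cong H_G$ and verify that triple cup products of the cohomology generators associated to the plumbing sites realize $\mu_G^k$, with analogous identifications for $w_G$ and $p_G^k$. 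Finally, Theorem~\ref{T:PLUMBING} applied to each connected component of $W$ (taking connect sums across components) yields a core metric on $M_{\overline{G}^k}$, since the chosen bases admit core metrics and the fiber dimension can be arranged to be at least $4$.

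For part~(1), when $G$ may contain cycles, I cannot realize the invariants directly via $M_{\overline{G}^1}$. Instead I would verify Jupp's intrinsic admissibility conditions for $(H_G,\mu_G,w_G,p_G)$ --- the Wu-type congruences relating $\mu_G$, $w_G$ and $p_G$, together with any integrality conditions on $p_G$ --- directly from the combinatorial definitions of these invariants on $G$. For part~(3), I would invoke Jupp's classification (Theorem~\ref{T:JUPP}) to relate any closed, simply-connected 6-manifold $M$ with torsion-free homology and invariants equivalent to $(H_G,\mu_G,w_G,p_G)$ to $M_{\overline{G}^1}$ via connect sums with copies of $S^3\times S^3$. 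Since $S^3\times S^3$ admits a core metric and core metrics are preserved under connect sums (Burdick), part~(2) combined with the classification yields a core metric on $M$; if necessary, this may be arranged by enlarging $G$ with extra vertices producing the required $S^3\times S^3$ summands so that the model $M_{\overline{G}^1}$ directly matches $M$.

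The hard part will be the explicit identification of the invariants of $M_{\overline{G}^k}$ with the combinatorially defined $(H_G, \mu_G^k, w_G, p_G^k)$. Tracking triple cup products through a plumbing construction, accounting for the contributions at each plumbing site, and verifying that the labels $(\alpha, k^+, k^-)$ encode exactly the right Euler and characteristic data requires a careful Mayer--Vietoris and intersection-theoretic analysis of the plumbing; I expect this bookkeeping to dominate the technical work. A secondary challenge is verifying Jupp's admissibility conditions in part~(1), which should follow by direct computation but may require a case analysis depending on the parity data $(k^+, k^-)$.
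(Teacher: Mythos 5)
Your overall structure (part~(2) carries the load, part~(1) via admissibility conditions, part~(3) via Jupp's classification plus connected sums with $S^3\times S^3$) matches the paper, but the construction details you propose contain errors that would sink the invariant computation.

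First, the roles of the labels are wrong. In the paper's definition of $\overline{G}^k$, the bundle $\pi_u$ has \emph{vanishing} Euler class --- this is essential, since the entire cohomology computation (Theorem~\ref{T:MG_COHOM} via the Gysin sequence and \eqref{EQ:BDL_COHOM_SPLIT}) depends on the Gysin short exact sequence splitting. The label $\alpha(u)$ instead controls the $k$-th Pontryagin class of the bundle (via Lemma~\ref{L:S2k_BUNDLES_S4k}), which enters the cup product structure as $a^2=\frac{1}{4}\alpha\lambda_k\gamma_1 b_1^{2k}$ in Corollary~\ref{C:2k_SPHERE_BDL_COHOM}. If you made $\alpha$ the Euler class, you would lose the splitting and could not compute the invariants the way the paper does, nor would you realize the trilinear form $\mu_G^k$ as defined.

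Second, your choice of base manifolds does not encode the remaining data. The paper takes $B_u=\#_{k^+(u)}\C P^{2k}\#_{k^-(u)}(-\C P^{2k})$ precisely because $w_2(B_u)$ then records the labels $(k^+,k^-)$ and gives $w_G$ via $w_2(M_G)^k=\sum_u \pi_u^*(w_2(B_u)+w_2(\pi_u))^k$; your suggestion of $S^{2k}\times S^{2k}$ has $w_2=0$, so the construction would only ever produce spin manifolds and could not realize $w_G\neq 0$. You also place $B_v$ in the wrong dimension: in a geometric plumbing graph with fiber dimensions $(p,q)=(2k+1,4k)$, vertices in $V$ carry disc bundles over $p$-dimensional bases, so $B_v=S^{2k+1}$ (not $S^{4k}$); this dimension is also what makes Theorem~\ref{T:PLUMBING}'s hypotheses ($\dim(B_1)\geq 3$, fiber dimension $\geq 4$) hold when $k=1$.

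Finally, for part~(1) the paper avoids direct verification of Jupp's mod~48 congruence \eqref{EQ:JUPP_I}: it observes that $H_G\subseteq A$ with $\mu_G,p_G$ the restrictions of $\mu^1,p^1$, and that the edgeless graph $G^0$ realizes $(A,\mu^1,w_G,p^1)$ by a $6$-manifold, so \eqref{EQ:JUPP_I} holds on all of $A$ and in particular on $H_G$. Your direct-computation route is not obviously wrong, but it is considerably more work than this reduction-to-realization argument and would need its own careful case analysis.
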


	Since different algebraic plumbing graphs can have equivalent systems of invariants, it is not clear a priori, how large the class of manifolds is that we obtain in this way. To analyze this further, we introduce a reduced form in Subsection \ref{SS:RED_GRAPHS} and conjecture that systems of invariants obtained from different reduced forms are indeed not equivalent, see Question \ref{Q:RED_EQUIV=ISOM?}. The difficulty lies in the problem that in general it is hard to determine whether two given trilinear forms are equivalent or not. We prove the conjecture for graphs $G$ with $\mathrm{rank}(H_G)\leq2$, see Propositions \ref{P:GRAPH_RED1} and \ref{P:GRAPH_RED2}, except for the case where $\mathrm{rank}(H_G)=2$ and $w_G=0$ where we obtain a partial result by using invariant theory of $\mathrm{SL}(2,\C)$. This result is sufficient to show that infinitely many of the graphs in the latter case define new examples of 6-manifolds with a metric of positive Ricci curvature and of 6-manifolds with core metrics, see Remark \ref{R:NEW_EX}. An interesting subfamily of these graphs is given by certain graphs for which the corresponding $6$-manifolds split as a connected sum where one of the summands is a homotopy $\C P^3$, see Proposition \ref{P:HOM_CP3}.
	
	For larger Betti numbers, using Theorem \ref{T:ALG_PL_GR_CORE1}, we have the following result.
	\begin{theoremAlph}
		\label{T:NEW_EX_B2_LARGE}
		For every $k\in\N$ and for every odd $l\in\N$ sufficiently large there exists an infinite family $M_j^{6k}$ of pairwise non-diffeomorphic closed $6k$-dimensional manifolds with torsion-free homology with the following properties:
		\begin{itemize}
			\item $M_j$ is $(2k-1)$-connected with $b_{2k}(M_j)=l$,
			\item $M_j$ does not split non-trivially as a connected sum,
			\item $M_j$ is not diffeomorphic to the total space of a linear sphere bundle, a homogeneous space, a biquotient, a cohomogeneity one manifold or a Fano variety,
			\item $M_j$ admits a core metric.
		\end{itemize}
		Further, if $k=1$ or $k$ is even, then we can replace the condition that $M_j$ is $(2k-1)$-connected by $M_j$ being simply-connected and non-spin.
	\end{theoremAlph}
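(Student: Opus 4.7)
The plan is to exhibit, for each $k\in\N$ and each sufficiently large odd $l$, an explicit one-parameter family of simply-connected algebraic plumbing graphs $G_j$ whose associated manifolds $M_j=M_{\overline{G_j}^k}$ realize all four bullet points. Theorem A\,(2) automatically provides the core metric and computes the invariants $(H_{G_j},\mu_{G_j}^k,w_{G_j},p_{G_j}^k)$; the work then lies in arranging the graph combinatorics so that the $M_j$ are $(2k-1)$-connected with $b_{2k}=l$, have a sufficiently ``generic'' trilinear form, and differ pairwise.

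For the combinatorial setup I would fix a simply-connected bipartite tree in which no vertex of $V$ is a leaf and $|U|-|V|=l$, so that $H_{G_j}$ has rank $l$, and plumb disc bundles over $(2k-1)$-connected base manifolds (for instance $S^{2k}$'s) admitting a core metric, so that the plumbed manifold is $(2k-1)$-connected. In the alternative case $k=1$ or $k$ even I would attach a non-trivial $(k^+,k^-)$-label to a single $U$-vertex, so that $w_{G_j}\neq 0$; this yields a simply-connected non-spin manifold in place of higher connectedness. The one-parameter infinite family indexed by $j$ is then produced by varying the integer $\alpha$-labels on the $U$-vertices along a tractable sequence, with all other graph data held fixed.

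To exclude the forbidden diffeomorphism types I would combine Betti-number bounds with ring-theoretic rigidity of the trilinear form. Homogeneous spaces, biquotients, cohomogeneity-one manifolds and (for $k=1$) Fano varieties each have $b_{2k}$ in dimension $6k$ bounded by a constant depending only on $k$, by the corresponding classification results (Mori--Mukai for Fano threefolds, Hoelscher-type results for cohomogeneity one, rank-of-Lie-group estimates for homogeneous spaces and biquotients), so choosing $l$ larger than the maximum of these constants excludes them at once. To rule out linear sphere-bundle total spaces and to preclude non-trivial connected-sum decompositions, I would design the $\alpha$-labels so that $\mu_{G_j}^k$ admits no non-trivial orthogonal splitting of $H_{G_j}$, exploiting that $\mu_{G_j}^k$ is explicitly computable from the graph and that the connectedness of the graph combined with generic labels forces sufficiently many non-zero cubic monomials involving any fixed basis element.

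The expected main obstacle is pairwise non-diffeomorphism of the $M_j$: in dimension $6$ this reduces via Jupp's classification to showing that the trilinear forms $\mu_{G_j}$ are pairwise $\mathrm{GL}(l,\Z)$-inequivalent, which, as the paper emphasizes just before the statement of Theorem B, is the fundamental difficulty of such trilinear-form classifications. I would attempt to extract a single numerical $\mathrm{GL}(l,\Z)$-invariant of the cubic form — for instance a gcd or discriminant-type quantity built directly from the graph data in the spirit of the $\mathrm{SL}(2,\C)$-invariants used in the rank-$2$ case — and choose the $\alpha$-parameters so that this invariant takes infinitely many distinct values across $j$, thereby forcing pairwise non-diffeomorphism. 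In higher dimensions $6k$ with $k\geq 2$ an analogous strategy should work once one invokes a suitable classification or bordism-uniqueness statement for $(2k-1)$-connected manifolds with the prescribed invariants, so that equivalence of the algebraic data again suffices to decide diffeomorphism.
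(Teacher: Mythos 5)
Your high-level strategy—realize the manifolds through algebraic plumbing graphs, read off the invariants via Theorem~A, exclude the competing families via Betti-number bounds, and distinguish members by varying $\alpha$-labels—is the right framework and aligns with the paper's proof. However, there are several concrete gaps and one outright error.

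First, the proposed base manifolds are wrong. In the algebraic plumbing framework used by Theorem~A the $U$-vertices carry disc bundles with fiber $D^{2k+1}$ over $4k$-dimensional bases $\#\,(\pm\C P^{2k})$; there is no room for $S^{2k}$ bases. To get a $(2k-1)$-connected total manifold you must take the spin case $k^+=k^-\equiv 0$, i.e.\ $B_u=S^{4k}$, which is exactly what the paper does.

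Second, "no non-trivial orthogonal splitting of $\mu_{G_j}^k$" is far from enough to rule out a connected-sum decomposition, and a generic simply-connected bipartite tree does not give the claim. The paper needs a very specific star-shaped graph (a single $V$-vertex with $l$ rank-two arms) and a dedicated argument (Lemma~\ref{L:DIR_SUM_RANK} together with the matrix computation in Proposition~\ref{P:MG_IRRED}) to show that any direct summand of $(H_{G},\mu_G^k)$ must have rank $0$; the rank count $m\le l-1$ against a dimension count is the engine of that argument. Similarly, excluding linear sphere-bundle total spaces is not purely about the cubic form: one invokes Lemma~\ref{L:SPHERE_BDL_OBSTR}, which combines a Pontryagin-class constraint ($\mu_M$ trivial on $\ker p_k$) with Euler-characteristic parity; in the non-spin alternative, an additional parity argument for $\chi(M)$ is required for fibers of dimension $\le 2k-1$.

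Third, you overstate the difficulty of pairwise non-diffeomorphism and propose building $\mathrm{GL}(l,\Z)$-invariants of the cubic form, but this is unnecessary. Since systems of invariants are diffeomorphism invariants in all dimensions $6k$ (only the converse needs a classification theorem like Jupp's), it suffices to see that the invariants change with $j$. The paper simply varies $\alpha_1=j$ and observes that $p_G^k$ (and hence the Pontryagin class) changes, after first checking that the remaining $\alpha_i$ can be taken so that $p_G^k\neq 0$. Your worry about needing "a suitable classification or bordism-uniqueness statement" in higher dimensions is unfounded for this direction. Finally, your case-by-case classification bounds (Mori--Mukai, Hoelscher, Lie-rank estimates) would likely work, but the paper's unified route through Gromov's Betti-number theorem for spaces of almost non-negative sectional curvature plus finiteness of Fano types is simpler.
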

	It follows that the manifolds $M_j^6$ are new examples of manifolds with a metric of positive Ricci curvature and, to the best of our knowledge, this also holds for the manifolds $M_j^{6k}$.
	
	This article is organized as follows. In Section \ref{S:PREL} we recall core metrics and establish basic results on the topology of certain linear sphere bundle. We proceed in Section \ref{S:PLUMBINGS} by considering plumbing graphs and the topology of the resulting manifolds. Finally, in Section \ref{S:6-MFDS} we consider applications in dimension $6k$. Here we introduce the concept of algebraic plumbing graphs and give the proof of Theorem \ref{T:ALG_PL_GR_CORE1} in Subsection \ref{SS:ALG_PLUMBING}. In Subsection \ref{SS:RED_GRAPHS} we consider reduced forms for algebraic plumbing graphs and give applications in Subsection \ref{SS:ALG_PLUMBING_APPLICATIONS}. In Appendix \ref{A:GRAPHS} we recall basic facts about the adjacency and incidence matrix of a directed graph needed in Section \ref{S:PLUMBINGS}.
	
	\begin{ack}
		This article is based on the author's Ph.D.\ thesis written at the Karlsruhe Institute of Technology (KIT). The author would like to thank his supervisors Wilderich Tuschmann and Fernando Galaz-García for their support. The author would also like to thank them and Anand Dessai for helpful comments on an earlier version of this article and Manuel Krannich for helpful conversations on the homotopy groups of the orthogonal group. Finally, the author would like to thank the anonymous referee for their insightful comments.
	\end{ack}
	
	\section{Preliminaries}
	\label{S:PREL}

	All manifolds and maps between manifolds will be assumed to be smooth. For a manifold $M$ we will write $M^n$ to indicate that $M$ has dimension $n$. By $D^n$ we denote the closed $n$-dimensional disc and by $M\setminus D^n$ the manifold $M$ with \emph{some} embedded disc removed, where we require the embedding to be orientation preserving if $M$ is oriented. If $M$ is connected, then the embedding is unique up to isotopy, see \cite[Theorem 5.5]{Pa59}, so the expression $M\setminus D^n$ is well-defined up to diffeomorphism. If $M$ is oriented, then $-M$ denotes $M$ with the reversed orientation. If $M$ is oriented and has non-empty boundary we use the convention that the induced orientation on $\partial M$ is obtained by inserting the inward normal $\nu$ first, i.e.\ a basis of tangent vectors $(e_1,\dots,e_{n-1})$ tangent to $\partial M$ is oriented in $\partial M$ if $(\nu,e_1,\dots,e_{n-1})$ is oriented in $M$. If not stated otherwise, we use homology and cohomology with coefficients in $\Z$. We will use the convention that $0\not\in\N$ and set $\N_0=\N\cup\{0\}$.
	
	For a closed, oriented and connected manifold $M^n$ we have $H_n(M;\Z)\cong\Z$ and $H_{n-1}(M;\Z)\cong H^1(M;\Z)$ is torsion-free, hence, by the universal coefficient theorem, for a commutative ring $R$,
	\[H_n(M;R)\cong H_n(M;\Z)\otimes R\cong R. \]
	It follows that we obtain a fundamental class $[M;R]\in H_n(M;R)$ from the orientation class $[M;\Z]\in H_n(M;\Z)$, which induces an orientation of $M$ in the homological sense with coefficients in $R$. In particular, Poincaré duality with coefficients in $R$ can be applied for $M$.
	
		\subsection{Core Metrics}
	\label{SS:CORE_METRICS}
	
	In this section we introduce core metrics. We refer to the work of Burdick \cite{Bu19a,Bu19,Bu20,Bu20a} and the author \cite{Re23} for further details.
	
	\begin{definition}[{Burdick \cite{Bu19}, based on work by Perelman \cite{Pe97} }]
		Let $M^n$ be a manifold. A Riemannian metric $g$ on $M$ is called a \emph{core metric} if it has positive Ricci curvature and if there is an embedding $\varphi\colon D^n\hookrightarrow\textup{Int}(M)$ such that the induced metric $g|_{\varphi(S^{n-1})}$ is the round metric of radius 1 and such that the second fundamental form $\II_{\varphi(S^{n-1})}$ is positive semi-definite with respect to the inward pointing normal vector of $S^{n-1}\subseteq D^n$.
	\end{definition}
	
	Our definition differs from Burdick’s original definition, as he requires the second fundamental	form to be positive definite. However, the two definitions are equivalent: a metric with positive semi-definite second fundamental form can always be deformed into a metric with positive definite fundamental form while keeping the Ricci curvature positive, e.g.\ by a deformation as in \cite[Proposition 1.2.11]{Bu19a}.
	
	There exist different sign conventions for the second fundamental form, here we use the sign convention so that, after possibly rescaling the metric, the round metric on $S^n$ is a core metric, where the embedded disc can be any geodesic ball that contains a hemisphere.
	
	The main interest for core metrics comes from the following fact:
	\begin{theorem}[{\cite[Theorem B]{Bu19}}]
		Let $M^n_i$, $1\leq i\leq k$ be manifolds that admit core metrics. If $n\geq 4$, then $\#_i M_i$ admits a metric of positive Ricci curvature.
	\end{theorem}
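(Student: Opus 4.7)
The plan is to prove this by induction on $k$, reducing to the key case $k=2$: given core metrics on $M_1$ and $M_2$ with distinguished discs $\varphi_i\colon D^n\hookrightarrow \mathrm{Int}(M_i)$, construct a metric of positive Ricci curvature on $M_1\#M_2$. Set $N_i:=M_i\setminus \varphi_i(\mathrm{int}(D^n))$. Each $N_i$ carries the restriction of the core metric, has boundary isometric to the round unit sphere $(S^{n-1},g_{\mathrm{round}})$, and satisfies $\II\ge 0$ along the boundary with respect to the outward normal of $N_i$. The overall strategy is to attach a warped-product neck $[0,2L]\times S^{n-1}$ between the two copies, matching the metric smoothly at the seams and producing $N_1\cup (\text{neck})\cup N_2\cong M_1\#M_2$.

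The first technical step is to deform $g_i$ in a collar of $\partial N_i$, keeping positive Ricci curvature throughout the deformation, so as to bring it into the warped-product form $dt^2+f_i(t)^2 g_{\mathrm{round}}$ on $[0,\delta]\times S^{n-1}$ with $f_i(0)=1$ and $f_i'(0)\le 0$ dictated by the original mean curvature. This is a Perelman-style collar-bending argument in which the semi-definiteness of $\II$ in the definition of a core metric plays an essential role, since it provides exactly the mean-curvature control needed to carry out the deformation.

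The second step is to extend each $f_i$ on $[\delta,L]$ to a strictly concave smooth function ($f_i''<0$) with $(f_i')^2<1$ uniformly, and to arrange at $t=L$ that $f_1(L)=f_2(L)=f_\ast$, $f_1'(L)=f_2'(L)=0$ and $f_1''(L)=f_2''(L)<0$. The warped-product Ricci formulas
\[
\Ric(\partial_t,\partial_t)=-(n-1)\frac{f_i''}{f_i}, \qquad \Ric(X,X)/|X|^2=-\frac{f_i''}{f_i}+(n-2)\frac{1-(f_i')^2}{f_i^2}
\]
for $X$ tangent to $S^{n-1}$ then yield positive Ricci curvature on each neck. Identifying the two $S^{n-1}$-slices at $t=L$ by reflection of the $t$-coordinate produces a smooth metric on $M_1\#M_2$ thanks to the matching conditions, and this metric is positive Ricci everywhere: on each $N_i$ by hypothesis, on each neck by the computation above, and across the seam by smoothness combined with $f_i''(L)<0$. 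For $k>2$ one iterates, using that the construction can in fact be arranged so that $M_1\#M_2$ inherits a new core metric (choosing any embedded disc in the bulk of $N_1$).

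The main obstacle is the collar bending of the first step: turning a Ricci-positive metric with round but merely semi-definitely convex boundary into a Ricci-positive warped-product metric near the boundary is the technical heart of the theorem, essentially the smoothing lemma of Perelman adapted by Burdick. A secondary obstacle is the simultaneous parameter choice for $f_1$ and $f_2$ in the second step, where the boundary conditions at $t=0$ and $t=L$ must be met while maintaining $f_i''<0$ and $(f_i')^2<1$; the latter is compatible with strict concavity for $L$ sufficiently small, making this step elementary though fiddly.
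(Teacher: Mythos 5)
Your $k=2$ construction is essentially the cited docking-station argument specialised to two summands (the docking station for $k=2$ being a tube $S^{n-1}\times[0,2L]$ with a concave warped-product metric), but there is a sign error that matters: with the paper's conventions, the condition $\II_{\partial N_i}\ge 0$ translates after collar bending into $f_i'(0)\ge 0$ for the neck parameter $t$ going \emph{into} the neck, not $f_i'(0)\le 0$. With your stated boundary data $f_i'(0)\le 0$ together with $f_i''<0$, one gets $f_i'(L)<f_i'(0)\le 0$ for every $L>0$, so the matching condition $f_i'(L)=0$ could never be met and the neck could not be closed.

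The genuine gap is the inductive step. You pass from $k=2$ to general $k$ by asserting that the Ricci-positive metric constructed on $M_1\#M_2$ is again a core metric, with distinguished disc ``any embedded disc in the bulk of $N_1$.'' That does not follow: a core-metric disc must have boundary isometric to the round $S^{n-1}$ of radius $1$ with $\II\ge 0$, and a generic disc in $N_1$ satisfies neither condition; the one disc in $M_1$ guaranteed by the definition is precisely the one you cut out to form $N_1$. In fact the assertion that a connected sum of manifolds with core metrics again admits a core metric is a separate, later theorem of Burdick \cite{Bu20a}, which this paper lists independently of Theorem B of \cite{Bu19}; invoking it here is circular relative to the development, and its proof requires considerably more than the $k=2$ neck. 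Burdick's proof of Theorem B does not use induction at all: one removes all $k$ core discs at once, builds a single Ricci-positive ``docking station'' metric on $S^n$ with $k$ disjoint balls removed whose $k$ boundary spheres are round of radius $1$ with the complementary sign of $\II$, and glues each $N_i$ onto one boundary component via Perelman's gluing lemma. Your neck recovers exactly the docking station for $k=2$; the real content of the general case is the construction of the docking station with $k\ge 3$ boundary spheres, which is no longer a single warped product and requires Perelman's ambient construction.
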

	So far, the following closed manifolds are known to admit a core metric:
	\begin{itemize}
		\item $S^n$, if $n\geq2$,
		\item the complex projective space $\C P^n$, the quaternionic projective space $\Quat P^n$ and the Cayley plane $\mathbb{O} P^2$ (see \cite{Pe97} and \cite{Bu19}),
		\item $M_1^n\# M_2^n$, if $n\geq 4$ and $M_1$, $M_2$ admit core metrics (see \cite{Bu20a}),
		\item Total spaces of linear $S^p$-bundles $E^n\to B^q$ if $B$ admits a core metric and $n\geq6$, $p,q\geq2$ (see \cite{Bu20} and \cite{Re23}),
		\item The manifolds obtained in Theorem \ref{T:PLUMBING}.
	\end{itemize}
	
	\subsection{Linear Sphere Bundles}
	\label{SS:LSB}
	
	Let $E\xrightarrow{\pi} B^q$ be a fiber bundle with fiber $F^p$ and structure group $G$. We suppose that $\pi$ is oriented, that is, the manifold $F$ is oriented and the action of $G$ on $F$ is orientation-preserving. If the base $B$ is oriented, then the product orientation on every local trivialization $U\times F$, $U\subseteq B$ open, induces an orientation on the total space $E$.
	
	If $F$ has non-empty boundary and $B$ has no boundary, then $\partial E\xrightarrow{\pi|_{\partial E}}B$ is a fiber bundle with fiber $\partial F$ and structure group $G$. We have two orientations on $\partial E$: The induced orientation from $E$ as a boundary and the induced orientation from $B$ and $\partial F$ when viewed as a fiber bundle. It can be seen in local trivializations that these two orientations coincide if and only if $q$ is even.
	
	We will be interested in the case where $F=D^p$ and $G=\mathrm{SO}(p)$ acts linearly. Then $\partial F= S^{p-1}$. These bundles are called \emph{linear disc bundles} and \emph{linear sphere bundles}, respectively. In the following, if $E\xrightarrow{\pi}B$ is a linear sphere bundle, we denote by $\overline{E}\xrightarrow{\pi}B$ its corresponding disc bundle. In particular, $\partial\overline{E}=E$.
	
	Let $E\xrightarrow{\pi}B^q$ be an oriented linear sphere bundle with fiber $S^{p-1}$ and connected base $B$. To describe the cohomology ring of $E$ we fix a commutative ring $R$ (which we assume to be unital). To simplify notation, when no coefficients for (co-)homology are indicated, we assume in this section that the coefficient ring is given by $R$. We denote by $\rho_R\colon H^*(-;\Z)\to H^*(-)$ the map induced by the ring homomorphism $\Z\to R$, $z\mapsto z\cdot 1_R$.
	
	Let $e(\pi)\in H^p(B;\Z)$ be the Euler class of the bundle $\pi$ (see e.g.\ \cite[Chapter 9]{MS74} for its definition) and set $e_R(\pi)=\rho_R(e(\pi))\in H^p(B)$. Then we have a long exact sequence, called the \emph{Gysin sequence},
	\begin{equation}
		\label{EQ:GYSIN}
		\dots\xrightarrow{\cdot\smile e_R(\pi)} H^i(B)\xrightarrow{\pi^*} H^i(E)\xrightarrow{\psi} H^{i-p+1}(B)\xrightarrow{\cdot\smile e_R(\pi)}H^{i+1}(B)\xrightarrow{\pi^*}\dots,
	\end{equation}
	see e.g.\ \cite[Corollary 12.2]{MS74} for $R=\Z$ and note that the proof carries over in the same way for arbitrary $R$ (see also \cite[Lemma B.3.1]{Re22}). The map $\psi$ is defined by
	\begin{equation}
		\label{EQ:PSI}
		\psi=\Phi^{-1}\circ \delta,
	\end{equation}
	 where $\Phi=\cdot\smile u_R(\pi)\colon H^{*-p}(B)\to H^*(\overline{E},E)$ is the Thom isomorphism and $u_R(\pi)\in H^p(\overline{E},E)$ is the Thom class, and $\delta\colon H^{*-1}(E)\to H^{*}(\overline{E},E)$ is the connecting homomorphism in the long exact sequence in cohomology for the pair $(\overline{E},E)$.
	
	Now assume that the Euler class $e_R(\pi)$ vanishes, so we obtain from the Gysin sequence short exact sequences
	\begin{equation}
		\label{EQ:GYSIN_SES}
		0\longrightarrow H^i(B)\xrightarrow{\pi^*} H^i(E)\xrightarrow{\psi} H^{i-p+1}(B)\longrightarrow0.
	\end{equation}
	As in \cite[Section 5.3]{Re23}, for any $a\in H^{p-1}(E)$ that maps to $1\in R\cong H^0(B)$ under $\psi$, we obtain by \cite[Lemma 1]{Ma58} a splitting $\theta_{a}\colon H^*(B)\to H^{*+p-1}(E)$ of \eqref{EQ:GYSIN_SES} defined by
	\[\theta_{a}(x)=(-1)^{ip}a\smile \pi^*(x) \]
	for $x\in H^i(B)$. Hence, we have
	\begin{equation}
		\label{EQ:BDL_COHOM_SPLIT}
		H^i(E)=\pi^*(H^i(B))\oplus \theta_{a}(H^{i-p+1}(B)).
	\end{equation}

	It follows from \eqref{EQ:PSI} that $u_R=\Phi(1)=\delta(a)$. Hence, since the Thom class satisfies
	\begin{equation}
		\label{EQ:u_FIBR_INCL}
		\overline{\iota}^* u_R=[D^p,S^{p-1}]^*
	\end{equation}
	for any fiber inclusion $\overline{\iota}\colon D^p\hookrightarrow\overline{E}$, it follows that
	\begin{equation}
		\label{EQ:a_FIBR_INCL}
		\iota^*a=[S^{p-1}]^*
	\end{equation}
	for any fiber inclusion $\iota\colon S^{p-1}\hookrightarrow E$.
	\begin{remark}
		The Thom class $u_R$ is uniquely determined by the property \eqref{EQ:u_FIBR_INCL}, see e.g.\ \cite[Theorem 10.4]{MS74}. For linear sphere bundles we see that, while $a$ satisfies the corresponding property \eqref{EQ:a_FIBR_INCL}, it is not uniquely determined by it as the choice of $a$ is not unique provided the map $\pi^*\colon H^{p-1}(B)\to H^{p-1}(E)$ is non-trivial.
	\end{remark}
	
	To determine the cohomology ring structure of $E$, if $p$ is odd, two characteristic classes are important: The Stiefel-Whitney class $w_{p-1}(\pi)\in H^{p-1}(B;\Z/2)$ and the Pontryagin class $p_i(\pi)\in H^{4i}(B;\Z)$ for $i=\frac{2p-2}{4}$. As a consequence of the Wu formula for the bundle $\pi$, see e.g.\ \cite[Theorem C]{Th60}, we have
	\begin{equation}
		\label{EQ:WU}
		\mathcal{P}_2(w_{p-1}(\pi))\equiv p_i(\pi)\mod 4,
	\end{equation}
	where $\mathcal{P}_2\colon H^{j}(B;\Z/2)\to H^{2j}(B;\Z/4)$ is the Pontryagin square operation.
	\begin{proposition}
		\label{P:SPHERE_BDL_COHOM}
		Let $E\xrightarrow{\pi}B$ be an oriented linear sphere bundle with fiber $S^{p-1}$, $p$ odd, whose Euler class $e_R(\pi)\in H^p(B;R)$ vanishes and assume that $B$ is connected. Let $W\in H^{p-1}(B;\Z)$ so that $\rho_{\Z/2}W=w_{p-1}(\pi)$ and suppose that $H^{2p-2}(B;\Z)$ has no element of order 2. Define (using \eqref{EQ:WU}) $P=\frac{1}{4}(p_i(\pi)-W^2)$ for $i=\frac{2p-2}{4}$. Then there is an element $a\in H^{p-1}(E;R)$, so that
		\[H^*(E;R)\cong  \vbigslant{H^*(B;R)\otimes_R R[a]}{\langle 1\otimes a^2-\rho_R W\otimes a-\rho_R P\otimes 1 \rangle}.  \]
		The isomorphism is given by $x\otimes a^i\mapsto \pi^*(x)\smile a^i$.
	\end{proposition}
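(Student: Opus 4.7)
The additive structure is immediate from the Gysin setup: $e_R(\pi)=0$ turns \eqref{EQ:GYSIN} into the short exact sequences \eqref{EQ:GYSIN_SES}, and choosing any $a\in H^{p-1}(E;R)$ with $\psi(a)=1$ yields via \eqref{EQ:BDL_COHOM_SPLIT} the $H^*(B;R)$-module splitting
\[H^*(E;R)=\pi^*H^*(B;R)\oplus a\smile\pi^*H^*(B;R).\]
Thus $H^*(E;R)$ is free of rank $2$ over $H^*(B;R)$ on the basis $\{1,a\}$, and the proof reduces to computing $a^2\in H^{2p-2}(E;R)$. By the splitting I can write $a^2=\pi^*(X)+\pi^*(Y)\smile a$ for unique $X\in H^{2p-2}(B;R)$ and $Y\in H^{p-1}(B;R)$, and the remaining content of the proposition is to choose $a$ so that $Y=\rho_RW$ and $X=\rho_RP$.

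First I would analyze the freedom in $a$: replacing $a$ by $a+\pi^*(c)$ with $c\in H^{p-1}(B;R)$ again satisfies $\psi=1$ and, since $p-1$ is even, transforms the unknowns by $(X,Y)\mapsto(X-c\smile Y-c^2,\,Y+2c)$; hence $Y$ may be altered by any element of $2H^{p-1}(B;R)$ and once $Y$ is fixed $X$ is determined. To pin down $Y$ I would apply the mod-$2$ Wu formula for the Thom class, $\mathrm{Sq}^{p-1}(u)=\pi^*(w_{p-1}(\pi))\smile u$. Translating this via $\delta(a)=u$, the compatibility of $\mathrm{Sq}^i$ with $\delta$, and the identity $\mathrm{Sq}^{p-1}(\bar a)=\bar a^2$ in even degree $p-1$, one obtains $\bar a^2\equiv\pi^*(w_{p-1}(\pi))\smile\bar a$ in $H^{2p-2}(E;\Z/2)$ modulo a class pulled back from $B$. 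Comparing with the splitting forces $Y\equiv W\pmod 2$, so the universal coefficient identification $\ker(\rho_2)=2H^{p-1}(B;\Z)$ lets me adjust $a$ by $\pi^*(c)$ to achieve $Y=W$ integrally, hence $Y=\rho_RW$ after applying $\rho_R$.

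With $Y$ fixed I would determine $X$ by combining two perspectives. After inverting $2$, the class $a-\tfrac12\pi^*(W)$ squares to $\pi^*\bigl(X+\tfrac14W^2\bigr)$; arguing in the universal model for sphere bundles of odd-rank oriented bundles (where the "half Euler class" squares to a quarter of the top Pontryagin class) gives $X=P$ in $H^{2p-2}(B;\Z[1/2])$. At the prime $2$ I would use the mod-$4$ refinement of the Wu formula \cite[Theorem C]{Th60} underlying \eqref{EQ:WU}, transferred from the Thom class on $(\overline E,E)$ through $\delta$ to $a$ via the defining identity $\mathcal{P}_2(\bar a)\equiv\rho_4(a^2)$ of the Pontryagin square. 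The hypothesis that $H^{2p-2}(B;\Z)$ has no $2$-torsion both makes $P$ well-defined as an integral class and lets the $2$-inverted and mod-$4$ information be combined into the integral equality $X=P$; applying $\rho_R$ yields the relation over $R$, and the stated isomorphism $x\otimes a^i\mapsto\pi^*(x)\smile a^i$ follows.

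The main obstacle is the last step: the mod-$4$ Pontryagin-square computation requires careful transfer of Thomas's Wu formula from the relative Thom class $u\in H^p(\overline E,E)$ to the sphere-bundle class $a\in H^{p-1}(E)$, while tracking the correction terms that depend on the choice of $a$. An additional subtlety is that only $e_R(\pi)=0$ is assumed rather than $e(\pi)=0$ integrally, so one must either work in $R$-coefficients throughout with the appropriate Bockstein sequences, or reduce to a universal setup where a compatible integer lift of $a$ exists; the no-$2$-torsion hypothesis on $H^{2p-2}(B;\Z)$ is precisely what is needed to reconcile the $2$-inverted and mod-$4$ contributions into the integral equality $X=P$.
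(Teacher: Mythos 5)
The paper's proof is very short: establish the additive $H^*(B;R)$-module splitting via the Gysin sequence, observe that $a^2=\pi^*(\alpha)+\pi^*(\beta)\smile a$ for unique $\alpha,\beta$, and then \emph{cite} Massey's results \cite[Theorems III and IV]{Ma58} to conclude that $a$ can be chosen with $\beta=W$ and that $p_i(\pi)=4\alpha+\beta^2$; the no-2-torsion hypothesis then pins down $\alpha=P$, and the arbitrary-$R$ case is reduced to $R=\Z$ via the compatibility $\rho_R\circ\psi=\psi\circ\rho_R$ coming from naturality of the Thom class.

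Your proposal sets up the same additive splitting and correctly identifies both the quantities to be computed and the available gauge freedom $a\mapsto a+\pi^*(c)$, but then attempts to \emph{re-derive} Massey's theorems from scratch via mod-2 and mod-4 Wu formulas for the Thom class. This is a genuinely different route from the paper's (which just invokes the reference), and it is also incomplete: you explicitly flag that the mod-4 Pontryagin-square transfer from $u\in H^p(\overline E,E)$ to $a\in H^{p-1}(E)$ is not carried out, and the patching of the $\Z[1/2]$ computation with the mod-4 one is asserted rather than done. Moreover, the step ``compatibility of $\mathrm{Sq}^i$ with $\delta$'' applied to $\bar a^2=\mathrm{Sq}^{p-1}\bar a$ needs more care than stated, since $\delta$ is a connecting homomorphism and not a ring map; the relevant identity is $\delta(\iota^*z\smile x)=z\smile\delta(x)$, and unwinding this together with the Thom isomorphism is precisely what Massey's argument does. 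So the main gap is that you are reconstructing the cited theorem rather than proving the proposition, and the reconstruction stops short at exactly the places where the reference does the work.

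On the positive side, you correctly notice a subtlety the paper's proof passes over: the hypothesis is only $e_R(\pi)=0$, yet the reduction to $R=\Z$ requires an integral class $a\in H^{p-1}(E;\Z)$ with $\psi(a)=1$, which exists iff $e(\pi)=0$ in $H^p(B;\Z)$. Since $p$ is odd, $e(\pi)$ is $2$-torsion but need not vanish integrally. In the paper's applications (e.g.\ Lemma \ref{L:S2k_BUNDLES_S4k}) the Euler class does vanish integrally, so nothing is lost there, but this point is worth keeping in mind as a hidden hypothesis of the reduction step.
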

	\begin{proof}
		By \eqref{EQ:BDL_COHOM_SPLIT} every $y\in H^i(E)$ can be written as $y=\pi^*(x_1)+\pi^*(x_2)\smile a$ for unique elements ${x_1\in H^i(B)}$, $x_2\in H^{i-p+1}(B)$. In particular, there are $\alpha\in H^{2p-2}(B)$ and $\beta\in H^{p-1}(B)$ so that
		\[a\smile a=\pi^*(\alpha)+\pi^*(\beta)\smile a. \]
		Thus, $H^*(E)$ is isomorphic to
		\[\bigslant{H^*(B)\otimes_R R[a]}{\langle 1\otimes a^2-\beta\otimes a-\alpha\otimes 1 \rangle}. \]
		For the values of $\alpha$ and $\beta$ first consider the case $R=\Z$. Then, by \cite[Theorem III]{Ma58}, we can choose $a$ so that $\beta =W$. Further, by \cite[Theorem IV]{Ma58}, we have
		\[p_i(\pi)=4\alpha+\beta^2. \]
		Since $H^{2p-2}(B)$ has no element of order $2$, this equation determines $\alpha$ uniquely, and we can write
		\[\alpha=\frac{1}{4}(p_i(\pi)-\beta^2)=\frac{1}{4}(p_i(\pi)-W^2)=P. \]
		
		For an arbitrary commutative ring $R$ first note that $\psi\circ\rho_R=\rho_R\circ\psi$. This follows by \eqref{EQ:PSI} and from the fact that the Thom classes $u_\Z$ and $u_R$ with coefficients in $\Z$ and $R$, respectively, satisfy $\rho_R u_\Z=u_R$, cf.\ \cite[Remark on p.\ 111]{MS74}.
		
		Thus, for any $a\in H^{p-1}(E)$ with $\psi(a)=1$, we have $\psi(\rho_R a)=\rho_R\psi(a)=1$ and we denote $\rho_R a\in H^{p-1}(E;R)$ again by $a$. Hence, $\alpha=\rho_R P$ and $\beta=\rho_R W$.
	\end{proof}

	Recall that a \emph{stable characteristic class} is a sequence of elements $c\in H^i(\mathrm{BO}(p);R)$ for all $p\in\N_0$ (and we denote all these elements by $c$) satisfying $(Bj_p)^*c=c$ for all $p$, where $Bj_p\colon \mathrm{BO}(p)\to\mathrm{BO}(p+1)$ denotes the map obtained from the inclusion $\mathrm{O}(p)\hookrightarrow\mathrm{O}(p+1)$. For a vector bundle $\xi\colon E\to B$ of rank $p$ with classifying map $f\colon B\to\mathrm{BO}(p)$ the class $c(\xi)\in H^i(B;R)$ is then defined by $c(\xi)=f^*c$. The property $(Bj_p)^*c=c$ implies $c(\xi\oplus\underline{\R}_B)=c(\xi)$ for any vector bundle $\xi\colon E\to B$, where $\underline{\R}_B$ denotes the trivial line bundle over $B$. If one considers vector bundles with an additional structure, such as an orientation or a spin structure, the orthogonal group $\mathrm{O}(p)$ is replaced by the corresponding structure group, such as $\mathrm{SO}(p)$ or $\mathrm{Spin}(p)$. Examples of stable characteristic classes are the Pontryagin classes $p_i$ (with $R=\Z$) and the Stiefel-Whitney classes $w_i$ (with $R=\Z/2$).
	
	The following is well-known. We include the proof for convenience.
	\begin{proposition}
		\label{P:SPHERE_BDL_CHAR_CLASSES}
		Let $E\xrightarrow{\pi}B$ be a linear sphere bundle and let $c\in H^i(\mathrm{BO}(p);R)$ be a stable characteristic class. Denote the vector bundle corresponding to $\pi$ by $\xi$. Then
		\[c(TE)=\pi^*c(TB\oplus\xi).\]
	\end{proposition}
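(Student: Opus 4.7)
The plan is to identify the tangent bundle $TE$ up to a trivial summand and then apply stability of $c$. First I would use the general fact that for any smooth fiber bundle $E\xrightarrow{\pi}B$ there is a short exact sequence of vector bundles over $E$
\[0\longrightarrow T^vE\longrightarrow TE\longrightarrow \pi^*TB\longrightarrow 0,\]
where $T^vE=\ker(d\pi)$ is the vertical tangent bundle. Choosing a connection (equivalently, a Riemannian metric) yields a splitting $TE\cong T^vE\oplus\pi^*TB$.

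Next I would identify $T^vE$ explicitly using that $E$ is the unit sphere bundle of $\xi$. At a point $v\in E$ lying in the fiber over $b\in B$, the fiber of $T^vE$ is canonically the tangent space to the sphere $S(\xi_b)$ at $v$, which is the orthogonal complement of the position vector $v$ inside $\xi_b$. The radial position vector field along each fiber defines a nowhere vanishing section of $\pi^*\xi$ and thus a trivial line subbundle $\underline{\R}_E\subseteq\pi^*\xi$; its orthogonal complement is exactly $T^vE$. Consequently
\[T^vE\oplus\underline{\R}_E\cong \pi^*\xi.\]

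Combining these two identifications gives
\[TE\oplus\underline{\R}_E\cong T^vE\oplus\underline{\R}_E\oplus\pi^*TB\cong \pi^*\xi\oplus\pi^*TB\cong\pi^*(TB\oplus\xi).\]
Now I invoke the stability of $c$ together with naturality of characteristic classes:
\[c(TE)=c(TE\oplus\underline{\R}_E)=c\bigl(\pi^*(TB\oplus\xi)\bigr)=\pi^*c(TB\oplus\xi).\]

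No step here looks like a serious obstacle; the only point that needs a little care is the canonical identification $T^vE\oplus\underline{\R}_E\cong\pi^*\xi$, which is really the content of the argument and uses that $E$ is the linear, not merely topological, sphere bundle of $\xi$ (so that the radial vector field makes sense). Everything else is the standard splitting of the tangent bundle of a fibration plus the definition of a stable class.
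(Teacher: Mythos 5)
Your proof is correct, but it is organized differently from the paper's. You decompose $TE$ directly as $T^vE\oplus\pi^*TB$ and then identify $T^vE\oplus\underline{\R}_E\cong\pi^*\xi$ by exhibiting the radial vector field as a nowhere-vanishing section of $\pi^*\xi$. The paper instead passes to the associated disc bundle $\overline{E}$: there the vertical bundle is literally $\pi^*\xi$ with no trivial summand to add, so $T\overline{E}\cong\pi^*TB\oplus\pi^*\xi$ directly, and the trivial summand is picked up at the very end from the normal bundle of the boundary inclusion $\iota\colon E=\partial\overline{E}\hookrightarrow\overline{E}$, giving $\iota^*T\overline{E}\cong TE\oplus\underline{\R}_E$. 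The two trivial line bundles are of course the same (the outward radial direction), so the resulting stable isomorphism $TE\oplus\underline{\R}_E\cong\pi^*(TB\oplus\xi)$ is identical; the difference is purely in which step carries the ``$\oplus\underline{\R}$''. The paper's route has the small advantage that the vertical bundle of a vector/disc bundle is tautologically the pullback of $\xi$, avoiding the need to argue about the tangent bundle of the sphere $S(\xi_b)$; your route avoids introducing $\overline{E}$ and works entirely on the space whose characteristic classes you want. Both are fine, and your observation that linearity of the bundle is what makes the radial field well-defined is exactly the right point of care.
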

	\begin{proof}
		Let $\iota\colon E\hookrightarrow\overline{E}$ be the inclusion. Then $\iota^*T\overline{E}\cong TE\oplus \underline{\R}_E$, the trivial factor corresponds to the normal bundle of $E=\partial\overline{E}$. Further, $T\overline{E}\cong\pi^*TB\oplus\pi^*\xi$, which can be verified by turning the bundle into a Riemannian submersion, since then the horizontal distribution is isomorphic to $\pi^*TB$ and the vertical distribution is isomorphic to $\pi^*\xi$.
		
		It follows that
		\[c(E)=c(TE\oplus\underline{\R}_E)=c(\iota^*T\overline{E})=\iota^*c(\pi^*(TB\oplus\xi))=\iota^*\pi^*c(TB\oplus \xi)=\pi^*c(TB\oplus\xi). \]
	\end{proof}
	For $q=4k, p=2k+1$, Propositions \ref{P:SPHERE_BDL_COHOM} and \ref{P:SPHERE_BDL_CHAR_CLASSES} yield the following corollary.
	\begin{corollary}
		\label{C:S2k_BDLS_S4k}
		Let $E\xrightarrow{\pi}B^{4k}$ be an oriented linear $S^{2k}$-bundle with $e(\pi)=0$, where $B$ is closed, connected and oriented. Then for any $W\in H^{2k}(B;\Z)$ with $\rho_{\Z/2}W=w_{2k}(\pi)$ there exists $a\in H^{2k}(E;\Z)$ so that
		\[H^{2k}(E;\Z)=\pi^*(H^{2k}(B;\Z))\oplus \Z a \]
		and for $x_1,x_2,x_3\in H^{2k}(B;\Z)$ we have
		\begin{alignat*}{2}
			\mu_E(\pi^*x_1,\pi^*x_2,\pi^*x_3) &= 0,\\
			\mu_E(\pi^*x_1,\pi^*x_2, a) &= \langle x_1\smile x_2,[B]\rangle,\\
			\mu_E(\pi^*x_1, a, a) &= \langle x_1\smile W,[B]\rangle,\\
			\mu_E(a, a, a) &= \frac{1}{4}\langle 3W^2+p_k(\pi),[B]\rangle.
		\end{alignat*}
		Further, for $x\in H^{2k}(B;\Z)$ we have
		\begin{alignat*}{2}
			p_k(E) (\pi^*x)&=0,\\
			p_k(E) (a)&=\langle p_k(TB\oplus \xi),[B]\rangle.
		\end{alignat*}
	\end{corollary}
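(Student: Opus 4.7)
The plan is to derive the corollary as a direct specialization of Propositions \ref{P:SPHERE_BDL_COHOM} and \ref{P:SPHERE_BDL_CHAR_CLASSES} to the case $q=4k$, $p=2k+1$, $R=\Z$. First, I would verify the hypotheses of Proposition \ref{P:SPHERE_BDL_COHOM}: the fiber dimension $p=2k+1$ is odd, $i=(2p-2)/4=k$, and $H^{2p-2}(B;\Z)=H^{4k}(B;\Z)\cong\Z$ has no $2$-torsion since $B$ is closed, connected, and oriented. The proposition then yields the cohomology ring structure of $E$; in degree $2k$ this is $H^{2k}(E;\Z)=\pi^*(H^{2k}(B;\Z))\oplus\Z a$, together with the quadratic relation $a\smile a=\pi^*W\smile a+\pi^*P$, where $P=\tfrac14(p_k(\pi)-W^2)\in H^{4k}(B;\Z)$.

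The key auxiliary computation is the normalization $\langle \pi^*b\smile a,[E]\rangle=\langle b,[B]\rangle$ for $b\in H^{4k}(B;\Z)$. Since $H^{4k}(B;\Z)\cong\Z$ is generated by the Poincaré dual $[B]^*$ of a point, by $\Z$-linearity it suffices to check this when $b=[B]^*$. In that case, $\pi^*[B]^*$ is Poincaré dual to the class of a fiber $[F_b]\in H_{2k}(E)$, and by \eqref{EQ:a_FIBR_INCL} the restriction of $a$ to a fiber equals the generator $[S^{2k}]^*$. Hence the pairing becomes $\langle a,[F_b]\rangle=\langle [S^{2k}]^*,[S^{2k}]\rangle=1$. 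This is the essential, orientation-sensitive step, and is the place where the sign conventions of the paper enter.

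Given this normalization, the four trilinear form identities follow by direct manipulation with the ring structure, invoking repeatedly the dimension vanishing $H^{>4k}(B;\Z)=0$. The all-base identity reduces to pairing $\pi^*(x_1\smile x_2\smile x_3)$ with $[E]$, and $x_1\smile x_2\smile x_3\in H^{6k}(B)=0$. The two-base identity is immediate from the normalization applied to $b=x_1\smile x_2$. For $\mu_E(\pi^*x_1,a,a)$, substitute the quadratic relation to obtain $\pi^*(x_1\smile W)\smile a+\pi^*(x_1\smile P)$, where the second term lies in $\pi^*(H^{6k}(B))=0$. For $\mu_E(a,a,a)=\langle a\smile a\smile a,[E]\rangle$, apply the relation twice (using that classes of even degree commute) and discard terms involving $\pi^*(W\smile P)\in\pi^*(H^{6k}(B))=0$, reducing to $\langle \pi^*(W^2+P)\smile a,[E]\rangle=\langle W^2+P,[B]\rangle$; substituting $P=\tfrac14(p_k(\pi)-W^2)$ yields the stated formula.

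For the Pontryagin class, apply Proposition \ref{P:SPHERE_BDL_CHAR_CLASSES} with the stable class $c=p_k$ to obtain $p_k(E)=\pi^*p_k(TB\oplus\xi)$. Evaluating on $\pi^*x$ pairs $\pi^*(x\smile p_k(TB\oplus\xi))$ with $[E]$, which vanishes because $x\smile p_k(TB\oplus\xi)\in H^{6k}(B)=0$. Evaluating on $a$ pairs $\pi^*(p_k(TB\oplus\xi))\smile a$ with $[E]$, which by the normalization equals $\langle p_k(TB\oplus\xi),[B]\rangle$. The main obstacle in this proof is essentially only the normalization argument in the second paragraph; every other step is formal bookkeeping with the quadratic relation and dimensional vanishing.
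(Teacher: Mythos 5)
Your proposal is correct and takes exactly the route the paper intends: the paper presents this corollary as an immediate consequence of Propositions \ref{P:SPHERE_BDL_COHOM} and \ref{P:SPHERE_BDL_CHAR_CLASSES} without writing out a proof, and your argument supplies the bookkeeping that is left implicit (the verification of the hypotheses of Proposition \ref{P:SPHERE_BDL_COHOM}, the normalization $\langle \pi^*b\smile a,[E]\rangle=\langle b,[B]\rangle$ via \eqref{EQ:a_FIBR_INCL} and Poincaré duality, and the algebra with the quadratic relation $a^2=\pi^*W\smile a+\pi^*P$). The only place where one could be slightly more careful than your sketch is the sign in the normalization step, which is governed by the paper's convention of giving $E$ the product orientation from $B$ and the fiber; with $p-1=2k$ even this sign is $+1$, matching your claim.
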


	For the proof of Theorem \ref{T:NEW_EX_B2_LARGE} we will need the following lemma to show that certain spaces cannot be the total space of a linear sphere bundle.
	
	\begin{lemma}
		\label{L:SPHERE_BDL_OBSTR}
		Let $E\xrightarrow{\pi}B^q$ be a linear $S^{p-1}$-bundle with $B$ closed, oriented and connected.
		\begin{enumerate}
			\item For the Euler characteristic we have $\chi(E)=\chi(B)\chi(S^{p-1})$. In particular, $\chi(E)$ vanishes if $p$ is even and $\chi(E)$ is even if $p$ is odd.
			\item If all cohomology groups of $E$ in odd degrees vanish, then $b_{j}(E)$ is even for $j=\frac{p+q-1}{2}$.
			\item If $p+q-1=6k$ and $p_k(E)\neq0$ and
			\begin{itemize}
				\item $2k+1\leq p\leq 6k$, or
				\item $1<p\leq 2k$ and $E$ is $(2k-1)$-connected,
			\end{itemize}
			then $\mu_E$ is trivial on $\ker(p_k(E))\times \ker(p_k(E))\times \ker(p_k(E))$.
		\end{enumerate}
	\end{lemma}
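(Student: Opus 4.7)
The plan is as follows. For Part (1), the multiplicativity of the Euler characteristic for fiber bundles over finite CW complexes gives $\chi(E) = \chi(B)\chi(S^{p-1}) = \chi(B)(1 + (-1)^{p-1})$, which vanishes if $p$ is even and equals $2\chi(B)$ (hence is even) if $p$ is odd. For Part (2), since $\dim E = 2j$ is even, Poincaré duality over $\Q$ provides a nondegenerate pairing on $H^j(E;\Q)$. If $j$ is odd, the pairing is skew-symmetric and $b_j(E)$ is therefore even; if $j$ is even, then $\dim E \equiv 0 \pmod 4$, and since $E = \partial\overline{E}$ bounds a compact oriented manifold, Thom's theorem gives $\sigma(E) = 0$, which combined with nondegeneracy forces $b_j(E)$ to be even.

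For Part (3), I start from Proposition \ref{P:SPHERE_BDL_CHAR_CLASSES}, which gives $p_k(E) = \pi^* P_k$ with $P_k := p_k(TB \oplus \xi) \in H^{4k}(B)$. The projection formula yields $p_k(E)(x) = \langle P_k \smile \pi_!(x), [B]\rangle$, so $\pi^* H^{2k}(B;\Q) \subseteq \ker(p_k(E))$ (since $\pi_!\circ\pi^* = 0$ for $p > 1$). Moreover, $\mu_E$ vanishes on $\pi^* H^{2k}(B;\Q)$, because triple cup products of pullback classes lift to $\pi^* H^{6k}(B;\Q) = 0$ (as $\dim B = q = 6k + 1 - p < 6k$). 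In \textbf{Case A} ($p \geq 2k+1$), the Gysin sequence forces $H^{2k}(E;\Q) = \pi^* H^{2k}(B;\Q)$ for $p > 2k+1$, so $\mu_E \equiv 0$ on all of $H^{2k}(E;\Q)$; for $p = 2k+1$, Corollary \ref{C:S2k_BDLS_S4k} applies rationally (valid because $e_\Q = 0$ for odd-rank oriented bundles), giving $H^{2k}(E;\Q) = \pi^* H^{2k}(B;\Q) \oplus \Q a$ with $p_k(E)(a) = \langle P_k,[B]\rangle$. The hypothesis $p_k(E) \neq 0$ forces this value to be nonzero, whence $\ker(p_k(E)) = \pi^* H^{2k}(B;\Q)$, on which $\mu_E$ vanishes.

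The key observation for \textbf{Case B} ($1 < p \leq 2k$, $E$ is $(2k-1)$-connected) is that the hypothesis $p_k(E) \neq 0$ is in fact incompatible with the other assumptions, making the case vacuous. First, the Gysin sequence at $i = p-1 < 2k$ with $H^{p-1}(E;\Q) = 0$ forces $\cdot e_\Q \colon \Q \hookrightarrow H^p(B;\Q)$ to be injective, so $e_\Q \neq 0$; since $2e = 0$ for odd-rank oriented real bundles, this forces $p$ to be even. Next, the $(2k-1)$-connectedness of $E$ translates via Gysin into surjectivity $\cdot e_\Q \colon H^{j-p}(B;\Q) \twoheadrightarrow H^j(B;\Q)$ for $0 < j < 2k$. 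Iterating this surjectivity from $j = 2k - p + 1$, one obtains $H^{2k-p+1}(B;\Q) = e^m \cdot H^{2k-(m+1)p+1}(B;\Q)$ for each permissible $m$; since $p$ is even and $2k+1$ is odd, the exponent $2k - mp + 1$ is always odd and hence skips $0$, becoming strictly negative after at most $m = \lfloor 2k/p \rfloor$ iterations. Therefore $H^{2k-p+1}(B;\Q) = 0$, which via the Gysin sequence yields $\ker(\cdot e_\Q \colon H^{2k-p+1}(B;\Q) \to H^{2k+1}(B;\Q)) = 0$ and $H^{2k}(E;\Q) = \pi^* H^{2k}(B;\Q)$. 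But then $p_k(E) \equiv 0$, contradicting the hypothesis. The main obstacle is this delicate iteration argument, which crucially uses the parity mismatch between even $p$ and odd $2k+1$ to ensure the exponent never lands on $0$ (which would correspond to the always-nonzero $H^0(B;\Q) = \Q$ and could obstruct the vanishing).
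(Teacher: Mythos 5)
Your proof is correct and genuinely diverges from the paper in Parts (2) and (3).

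For Part (2), the paper argues more elementarily: vanishing odd cohomology forces $\chi(E)>0$, so by Part (1) $p$ is odd and $\chi(E)=2\chi(B)$ is even; Poincar\'e duality then gives $\chi(E)=2\sum_{i<j}b_i(E)+b_j(E)$, whence $b_j(E)$ is even. Your argument for $j$ even (via $\sigma(E)=0$ from Thom's theorem, using $E=\partial\overline E$) is correct but invokes heavier machinery, while your skew-symmetry argument for $j$ odd is superfluous -- if all odd-degree cohomology vanishes then $b_j(E)=0$ trivially when $j$ is odd.

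For Part (3), Case A essentially matches the paper (the paper in fact invokes Corollary \ref{C:2k_SPHERE_BDL_COHOM}, which is a special case, where the more general Corollary \ref{C:S2k_BDLS_S4k} is the intended reference; your rational reading handles the fact that $e(\pi)$ need only be $2$-torsion, not zero, for $p$ odd). In Case B, your observation that the hypotheses are incompatible -- making the case vacuous -- is a genuine sharpening. The paper establishes the integral structure $H^i(B;\Z)\cong\Z$ iff $p\mid i$ for $0<i<2k$ and then, under the assumption $p_k(E)\neq 0$, derives $H^{2k-p+1}(B;\Z)\cong\Z$ and $\ker(p_k(E))=\pi^*H^{2k}(B)$ without noticing that the former requires $p\mid(2k+1)$, hence $p$ odd, while $(2k-1)$-connectedness independently forces $\cdot e\colon H^0(B)\hookrightarrow H^p(B)$ to be injective, so $e(\pi)$ has infinite order, which contradicts $2e=0$ for odd rank. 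Your argument makes this parity obstruction explicit: $p$ must be even, so $p\nmid(2k+1)$, so $H^{2k-p+1}(B;\Q)=0$, so $p_k(E)$ vanishes rationally, contradicting the hypothesis. The paper's Case B conclusion is therefore vacuously true, and both proofs are valid; yours reveals that the second bullet in the lemma's hypothesis never co-occurs with $p_k(E)\neq0$, which is precisely how the lemma is used in the proof of Theorem \ref{T:NEW_EX_B2_LARGE}.
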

	\begin{proof}
		The first claim is well-known and holds more generally for arbitrary fiber bundles, see \cite{Se53}. For the second claim, first note that, since $E$ only has cohomology in even degrees, its Euler characteristic is positive. Hence, by item (1), we have that $p$ is odd, so $\chi(E)$ is even. Then it follows by Poincaré duality, that
		\[\chi(E)=2\sum_{i=0}^{j-1}b_i(E)+b_{j}(E), \]
		and hence $b_{j}(E)$ is even.
		
		Now suppose that $p+q-1=6k$ and $p_k(E)\neq0$. We will consider coefficients in $\Z$. If $p\geq 2k+1$, i.e.\ $q\leq 4k$, it follows from Proposition \ref{P:SPHERE_BDL_CHAR_CLASSES} that $p_k(E)\in H^{4k}(E)$ can only be non-trivial if $p=2k+1$, since otherwise $H^{4k}(B)$ is trivial.
		Then, by Corollary \ref{C:S2k_BDLS_S4k}, the trilinear form is trivial on $\ker(p_k(E))$ provided $e(\pi)$ vanishes. If $e(\pi)$ is non-trivial, the map $H^{2k}(B)\xrightarrow{\pi^*}H^{2k}(E)$ is an isomorphism by the Gysin sequence, so $\mu_E$ vanishes on all of $H^{2k}(E)$.
		
		Finally, we assume that $4k<q<6k$ and that $E$ is $(2k-1)$-connected. Then for $0<i+p<2k$, by the Gysin sequence, we have an isomorphism
		\[H^i(B)\xrightarrow{\cdot\smile e(\pi)}H^{i+p}(B). \]
		Hence, we have for $0<i<2k$
		\[H^i(B)\cong\begin{cases}
			\Z,\quad &p\mid i,\\
			0,\quad &\text{else.}
		\end{cases} \] 
		Again from the Gysin sequence we obtain the following exact sequence.
		\[H^{2k}(B)\xrightarrow{\pi^*}H^{2k}(E)\xrightarrow{\psi}H^{2k-p+1}(B). \]
		We have $H^{2k-p+1}(B)\cong\Z$ or $0$. Let $y\in H^{4k}(B)$ so that $\pi^*y=p_k(E)$ (which exists by Proposition \ref{P:SPHERE_BDL_CHAR_CLASSES}). Then for every $x\in H^{2k}(B)$ we have
		\[\pi^*(x)\smile p_k(E)=\pi^*(x\smile y)=0. \]
		Hence, if $p_k(E)\neq0$, we have $H^{2k-p+1}(B)\cong\Z$ and $\psi$ is non-trivial, otherwise $p_k(E)$ would be trivial on $H^{2k}(E)$, which would imply $p_k(E)=0$ by Poincaré duality. Hence, if $a\in H^{2k}(E)$ is a preimage of a generator of $\im\psi$, every $y\in H^{2k}(E)$ can be written as $y=\pi^*x+\lambda a$, where $x\in H^{2k}(B)$. Since $p_k(E)$ is trivial on $\pi^*H^{2k}(B)$, it follows that $\ker(p_k(E))=\pi^*H^{2k}(B)$, on which $\mu_E$ is trivial.
	\end{proof}
	
	We now construct linear $S^{2k}$-bundles over a closed, oriented $4k$-dimensional base $B^{4k}$. For that, recall that a \emph{collapse map} $B\to S^{4k}\cong D^{4k}/\partial D^{4k}$ for an embedding $D^{4k}\hookrightarrow B$ is defined as the identity on ${D^{4k}}^\circ$ and maps all other points to $\partial D^{4k}$. Since any two orientation-preserving embeddings $D^{4k}\hookrightarrow B$ are isotopic \cite[Theorem 5.5]{Pa59}, any two orientation-preserving collapse maps are homotopic.

	Further, we set
	\[\nu_k=\begin{cases}
		2,\quad & k=1,4,\\
		8,\quad & k=2,\\
		1,\quad & k=3\text{ or }k>4.
	\end{cases}
	\]
	\begin{definition}\label{D:pi_alpha,B}
		Let $B^{4k}$ be a closed, oriented $4k$-dimensional manifold and let $\alpha\in\Z$. We define the linear $S^{2k}$-bundle $\pi_{\alpha,B}\colon E\to B$ as the bundle classified by the composition of an orientation-preserving collapse map $B\to S^{4k}$ and a preimage of $\nu_k\alpha\in \Z\cong \pi_{4k}(\mathrm{BSO})$ in $\pi_{4k}(\mathrm{BSO}(2k+1))$.
	\end{definition}
	The existence of such a preimage is guaranteed by \cite[Theorem 1.1 and Proposition 2.1]{DM89}. For $k=1,2$ this preimage is unique since $\pi_4(\mathrm{BSO}(3))\cong\pi_8(\mathrm{BSO}(5))\cong\Z$, and for $k>2$ it is non-unique in general. For example, we have $\pi_{12}(\mathrm{BSO}(7))\cong \Z\oplus\Z/2$ and $\pi_{16}(\mathrm{BSO}(9))\cong \Z\oplus(\Z/2)^3$ (see e.g.\ \cite[App.\ A, Table 6.VII]{En87}), showing that for $k=3,4$ there exist $2$, resp.\ $8$, preimages. We note that the choice of preimage is not relevant for the results of this article.
	
	\begin{lemma}
		\label{L:S2k_BUNDLES_S4k}
		Set $\lambda_k=\nu_k\frac{3-(-1)^k}{2}(2k-1)!$. Then the bundle $\pi_{\alpha,B}$ satisfies the following:
		\begin{align*}
			w_{i}(\pi_{\alpha,B})&=\begin{cases}
				1,\quad &i=0,\\0,\quad &\text{else},
			\end{cases}\\
			p_i(\pi_{\alpha,B})&=\begin{cases}
				1,\quad &i=0,\\\alpha \lambda_k[B]^*,\quad &i=k,\\0,\quad &\text{else,}
			\end{cases}\\
			e(\pi_{\alpha,B})&=0.
		\end{align*}
	\end{lemma}
	\begin{proof}
		We first consider the bundle $\pi_{\alpha,S^{4k}}$ over $S^{4k}$. By \cite[Theorem 26.5]{BH59}, the $k$-th Pontryagin class $p_k$ of a stable vector bundle corresponding to a generator of the group $\pi_{4k}(\mathrm{BSO})\cong \Z$ is given by $\frac{3-(-1)^k}{2}(2k-1)![S^{4k}]^*$. Further, since $S^{4k}$ has non-vanishing cohomology groups only in degrees $0$ and $4k$ and since $\pi_{\alpha,S^{4k}}$ has rank $(2k+1)<4k$, all of the characteristic classes $w_i(\pi_{\alpha,S^{4k}})$ ($i\neq 0$), $p_i(\pi_{\alpha,S^{4k}})$ ($i\neq 0,k$) and $e(\pi_{\alpha,S^{4k}})$ vanish.
		
		For arbitrary base $B^{4k}$ the bundle $\pi_{\alpha,B}$ is the pull-back of $\pi_{\alpha,S^{4k}}$ along an orientation-preserving collapse map $h\colon B\to S^{4k}$, since an orientation-preserving collapse map of $S^{4k}$ is homotopic to the identity. The claim now follows from the naturality properties of $w_i$, $p_i$ and $e$ and the fact that $h$ has degree $1$, so $h^*[S^{4k}]^*=[B]^*$.
	\end{proof}
	
	\begin{remark}
		\label{R:LAMBDA_K}		
		Note that for $k\geq3$ we have
		\[{2k+1\choose k}=\frac{(2k+1)!}{k!(k+1)!}=(2k-1)!\frac{2k(2k+1)}{k!(k+1)!}\]
		and 
		\[\frac{2k(2k+1)}{k!(k+1)!}\leq\frac{2k(2k+2)}{k!(k+1)!}=\frac{4}{(k-1)!k!}< 1, \]
		showing that ${2k+1\choose k}< (2k-1)!\leq\lambda_k$. Since $\lambda_1=4$ and $\lambda_2=48$, the inequality ${2k+1\choose k}< \lambda_k$ holds for all $k\in\N$. This fact will be useful in the proof of Theorem \ref{T:NEW_EX_B2_LARGE}.
	\end{remark}
	\begin{remark}\label{R:S2_B}
		When $k=1$ and $B=S^4$, the linear $S^2$-bundles over $B$ constructed in Lemma \ref{L:S2k_BUNDLES_S4k} cover in fact all possible linear linear $S^2$-bundles over $S^4$, see e.g.\ \cite[Corollary 5.2]{Re23}. If $B=\pm\C P^2$, then, by \cite[Corollary 5.2]{Re23}, isomorphism classes of linear $S^2$-bundles over $B$ are in bijection with $\Z\times \{0,1\}$, where the bijection assigns to $(\alpha,\beta)$ the linear $S^2$-bundle $\pi$ with $p_1(\pi)=(4\alpha\pm\beta)[B]^*$ and $w_2(\pi)=\beta b$, where $b\in H^2(\C P^2;\Z/2)$ denotes a generator. Hence, the bundles constructed in Lemma \ref{L:S2k_BUNDLES_S4k} are precisely those corresponding to $(\alpha,0)\in \Z\times\{0,1\}$.
	\end{remark}
	
	We will now restrict to the following base manifolds:
	\begin{definition}
		Let $k,m\in \N$ and for $\overline{\gamma}=(\gamma_1,\dots,\gamma_m)\in\{\pm1\}^m$ we define the manifold $B_{\overline{\gamma}^{4k}}$ by
		\[B^{4k}_{\overline{\gamma}}=\gamma_1\C P^{2k}\#\dots\# \gamma_m\C P^{2k}.\]
	\end{definition}
	
	Let $B=B_{\overline{\gamma}}^{4k}$ and let $b_i\in H^2(\gamma_i\C P^{2k})$ be a generator of $H^*(\gamma_i\C P^{2k})$ so that $\gamma_i b_i^k=[\gamma_i\C P^{2k}]^*$. Then we have a ring isomorphism
	\[H^*(B)\cong\vbigslant{\Z[b_1,\dots,b_m]}{\langle b_i b_j,\gamma_ib_i^{2k}-\gamma_j b_j^{2k},b_i^{2k+1}\mid i\neq j\rangle }  \]
	given by mapping each $b_i\in H^2(\gamma_i \C P^{2k})$ (considered as an element of $H^2(B)$ via the collapse $B\to \gamma_i \C P^{2k}$) to the generator $b_i$ on the right-hand side. This also determines $H^*(B;R)$ via $H^*(B;R)\cong H^*(B)\otimes R$ (as $\C P^{2k}$, and thus $B$, has a cell decomposition with only cells in even dimensions).
	
	If $E\xrightarrow{\pi}B$ is a linear sphere bundle we will denote the images $\pi^* b_i$ in $H^*(E)$ again by $b_i$.
	
	Further,
	\[w_j(B)=\begin{cases}{2k+1\choose j/2}\sum_i \rho_{\Z/2}b_i^{j/2},\quad &j\text{ even,}\\0,\quad&\text{else,}\end{cases} \]
	and
	\[p_j(B)={2k+1\choose j}\sum_i b_i^{2j},\]
	see \cite[Corollary 11.15 and Example 15.6]{MS74}.
		
	The following corollary is now an immediate consequence of Propositions \ref{P:SPHERE_BDL_COHOM} and \ref{P:SPHERE_BDL_CHAR_CLASSES} and Lemma~\ref{L:S2k_BUNDLES_S4k}. To simplify notation, we will again write $b_i$ for $\rho_R b_i$.
	\begin{corollary}
		\label{C:2k_SPHERE_BDL_COHOM}
		Let $E\xrightarrow{\pi}B_{\overline{\gamma}}^{4k}$ be an oriented linear $S^{2k}$-bundle corresponding to $\alpha\in\Z$ in Lemma~\ref{L:S2k_BUNDLES_S4k}. Then
		\[H^*(E;R)\cong \vbigslant{R[a,b_1,\dots,b_m]}{\langle b_i b_j,\gamma_i b_i^{2k}-\gamma_j b_j^{2k},b_i^{2k+1},a^2-\frac{1}{4}\alpha\lambda_k \gamma_1b_1^{2k}\mid i\neq j \rangle }, \]
		where $a$ has degree $2k$ and $b_1,\dots,b_m$ have degree $2$. Further, 
		\begin{align*}
			w_j(E)&=\begin{cases}{2k+1\choose j/2}\sum_i b_i^{j/2},\quad &j\text{ even,}\\0,\quad&\text{else,}\end{cases}\\
			p_j(E)&=\begin{cases}
				\left({2k+1\choose k}\sum_i\gamma_i +\alpha \lambda_k\right)\gamma_1 b_1^{2k},\quad &j=k,\\
				{2k+1\choose j}\sum_{i}b_i^{2j},\quad &\text{else.}
			\end{cases}
		\end{align*}
		In particular, we have
		\[H^{2k}(E;\Z)=\bigoplus_i\Z b_i^k\oplus\Z a \]
		and
		\begin{alignat*}{2}
			\mu_E (b_i^k\smile b_j^k\smile b_m^k) &= 0,\\
			\mu_E (b_i^k\smile b_j^k\smile a) &= \delta_{ij}\gamma_i,\\
			\mu_E (b_i^k\smile a\smile a) &= 0,\\
			\mu_E (a\smile a\smile a) &=\frac{\lambda_k}{4}\alpha,\\
			w_2(E)^k&=\sum_i b_i^k, \\
			p_k(E) (b_i)&=0,\\
			p_k(E) (a)&={2k+1\choose k}\sum_i\gamma_i+\lambda_k\alpha.
		\end{alignat*}
	\end{corollary}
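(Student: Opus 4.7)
The plan is to combine the three cited results while carefully tracking the generators $b_i$, the class $a$, and the signs $\gamma_i$. First I invoke Lemma \ref{L:S2k_BUNDLES_S4k} to obtain, for the given $\alpha\in\Z$, a linear $S^{2k}$-bundle $\pi$ over $B=B_{\overline{\gamma}}^{4k}$ with $w_i(\pi)=0$ for $i>0$, with $p_i(\pi)=0$ for $0<i<k$, with $p_k(\pi)=\alpha\lambda_k[B]^*$, and with $e(\pi)=0$. The identity $\langle b_i^{2k},[B]\rangle=\gamma_i$ together with the relations $\gamma_ib_i^{2k}=\gamma_jb_j^{2k}$ lets me identify $[B]^*=\gamma_1 b_1^{2k}$, and in particular $p_k(\pi)=\alpha\lambda_k\gamma_1 b_1^{2k}$.

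Next, since $p=2k+1$ is odd and $w_{2k}(\pi)=0$, I may take $W=0$ in Proposition \ref{P:SPHERE_BDL_COHOM}, which yields $P=\tfrac{1}{4}p_k(\pi)=\tfrac{\alpha\lambda_k}{4}\gamma_1 b_1^{2k}$. Combining the resulting presentation of $H^*(E;R)$ with the standard presentation of $H^*(B;R)$ recalled just before the corollary produces the stated ring. The trilinear form $\mu_E$ is then read off directly from Corollary \ref{C:S2k_BDLS_S4k}: we get $\mu_E(\pi^*x_1,\pi^*x_2,\pi^*x_3)=0$, $\mu_E(\pi^*x_1,\pi^*x_2,a)=\langle x_1\smile x_2,[B]\rangle$, $\mu_E(\pi^*x_1,a,a)=\langle x_1\smile W,[B]\rangle=0$, and $\mu_E(a,a,a)=\tfrac14\langle 3W^2+p_k(\pi),[B]\rangle=\tfrac{\alpha\lambda_k}{4}$. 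Substituting $x_i=b_{\ell_i}^k$ and using $b_ib_j=0$ for $i\neq j$ together with $\langle b_i^{2k},[B]\rangle=\gamma_i$ reduces the second equation to $\delta_{ij}\gamma_i$, giving exactly the four formulas in the corollary.

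For the characteristic classes I apply Proposition \ref{P:SPHERE_BDL_CHAR_CLASSES}, so $w(E)=\pi^*w(TB\oplus\xi)$ and $p(E)=\pi^*p(TB\oplus\xi)$, where $\xi$ is the vector bundle associated to $\pi$; note that $H^*(B)$ is torsion-free, so no $2$-torsion ambiguity enters the Whitney sum formula for Pontryagin classes. Since $w(\xi)=1$ we have $w(E)=\pi^*w(B)$, hence $w_2(E)\equiv(2k+1)\sum_ib_i\equiv\sum_ib_i\pmod 2$, and then $b_ib_j=0$ for $i\neq j$ gives $w_2(E)^k=\sum_i b_i^k$. For the Pontryagin classes, $p_i(\xi)=0$ for $0<i<k$ collapses the Whitney sum to $p_k(TB\oplus\xi)=p_k(B)+p_k(\xi)$, and substituting the expressions for $p_k(B)$ and $p_k(\xi)$ from Lemma \ref{L:S2k_BUNDLES_S4k} and the preceding discussion, together with $b_i^{2k}=\gamma_i\gamma_1 b_1^{2k}$, yields the claimed formula for $p_k(E)$; the values $p_j(E)$ for $j\neq k$ follow in the same way from the formula for $p_j(B)$. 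Finally, the evaluations $p_k(E)(b_i)=0$ and $p_k(E)(a)=\binom{2k+1}{k}\sum_i\gamma_i+\alpha\lambda_k$ are immediate from Corollary \ref{C:S2k_BDLS_S4k} applied to $p_k(TB\oplus\xi)$. The only real bookkeeping challenge is the consistent translation between $[B]^*$, $\gamma_1 b_1^{2k}$, and the individual $b_i^{2k}$; beyond this, everything is a direct substitution into the cited results.
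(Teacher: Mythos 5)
Your proof is correct and follows exactly the route the paper intends (the paper itself presents the corollary as "an immediate consequence" of Propositions \ref{P:SPHERE_BDL_COHOM} and \ref{P:SPHERE_BDL_CHAR_CLASSES} and Lemma \ref{L:S2k_BUNDLES_S4k} and gives no written proof). You take $W=0$ since $w_{2k}(\pi)=0$, substitute into Proposition \ref{P:SPHERE_BDL_COHOM} to obtain the ring presentation, read $\mu_E$ off Corollary \ref{C:S2k_BDLS_S4k}, and compute the characteristic classes from the Whitney sum via Proposition \ref{P:SPHERE_BDL_CHAR_CLASSES}, correctly handling the normalizations $\gamma_i b_i^{2k}=[B]^*$ and the collapse of $\sum_i b_i^{2k}$ to $(\sum_i\gamma_i)\gamma_1 b_1^{2k}$.
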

	
	\section{Plumbings According to a Bipartite Graph}
	
	\label{S:PLUMBINGS}
	
	Let $G$ be a labeled bipartite graph, i.e.\ $G=(U,V,E,\pi,\delta)$, where $U$ and $V$ are the sets of vertices, which are assumed to be finite, $E\subseteq U\times V$ is the set of edges, $\delta\colon E\to\{\pm 1\}$ is a function, and $\pi$ assigns to each vertex an oriented linear disc bundle in the following way: For fixed $p,q\in\N$ we assign to each $u\in U$ an oriented linear disc bundle $D^p\hookrightarrow \overline{E}_u\xrightarrow{\pi_u}B_u^q$ and to each $v\in V$ an oriented linear disc bundle $D^q\hookrightarrow \overline{E}_v\xrightarrow{\pi_v}B_v^p$. We assume that all base spaces are connected and oriented. We call such a graph $G$ a \emph{geometric plumbing graph}. We refer to \cite[Section 2.1]{Re23} and the references therein for an introduction to plumbing and we use the notation established there.
	
	\begin{definition}\label{D:M_G}
		For a geometric plumbing graph $G$ we define $\overline{M}_G$ as the boundary connected sum of the manifolds obtained by plumbing the bundles $\pi_u$ and $\pi_v$ according to each connected component of the graph $G$, where each edge $e$ corresponds to plumbing with sign $\delta(e)$. We set $M_G=\partial\overline{M}_G$.
	\end{definition}
	
	We equip each total space $\overline{E}_u$ and $\overline{E}_v$ with the orientation induced from the base manifolds and the fiber orientations (cf.\ Subsection \ref{SS:LSB}). Then recall from \cite[Section 2.1]{Re23} that $\overline{M}_G$ can be oriented so that the orientation agrees with that of $\overline{E}_u$ and $\overline{E}_v$ if $p$ or $q$ is even and with that of $-\overline{E}_u$ and $\overline{E}_v$ if $p$ and $q$ are odd. Further, recall from Subsection \ref{SS:LSB} that the induced orientation of $E_u$ ($E_v$) as a boundary coincides with the induced orientation from the bundle structure if and only if $q$ is even ($p$ is even). Thus, the induced orientation on $M_G$ as a boundary coincides with the orientations of $E_u$ and $E_v$ induced from the bundle structures if and only if $p$ and $q$ are even, and if one of $p$ and $q$ is odd, then $M_G$ can be oriented so that the orientation agrees with the orientations of $-E_u$ and $E_v$. We will always choose these orientations in the following.

	\subsection{Modifications of Graphs}
	\label{SS:GRAPH_MOD}
	
	Let $G$ be a geometric plumbing graph. Our goal in this section is to simplify the graph data as much as possibly without changing the diffeomorphism type of $M_G$.
	
	\begin{proposition}[Sign of a Separating Edge]
		\label{P:GRAPH_RED_EDGE}
		Suppose $G$ is of the form
		\[
		\begin{tikzcd}[cells={nodes={circle, draw=black, anchor=center,minimum height=2em}}, column sep=small]
			|[draw=none]|G_1\arrow[dash]{rr}{e}&&|[draw=none]|G_2
		\end{tikzcd}
		\]
		with subgraphs $G_1$ and $G_2$ of $G$. Let $G^\prime$ be the graph obtained from $G$ by reversing the sign of $e$ and by reversing all base and fiber orientations of the bundles associated to vertices in $G_2$. Then $M_G$ is diffeomorphic to $M_{G^\prime}$.
	\end{proposition}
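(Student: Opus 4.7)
My plan is to prove the proposition by constructing an explicit diffeomorphism $\Phi\colon \overline{M}_G\to \overline{M}_{G'}$, which restricts to the desired $M_G\cong M_{G'}$ on the boundary. The guiding idea is that reversing the base and fiber orientations of a bundle $\overline{E}_w$ does not change its underlying smooth structure; it only re-labels the oriented data. Hence one can realize the orientation-reversal by an honest self-diffeomorphism of $\overline{E}_w$, and assembling these on the $G_2$ side gives the required map.

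Concretely, for each vertex $w\in G_2$ I would choose a bundle automorphism $R_w\colon \overline{E}_w\to\overline{E}_w$ that covers an orientation-reversing involution of $B_w$ and acts on each fiber by a fixed orientation-reversing linear involution of $D^p$ or $D^q$. Such an $R_w$ exists because orientable manifolds and linear discs admit orientation-reversing involutions. Then define $\Phi$ piecewise: $\Phi=\mathrm{id}$ on each $\overline{E}_u$ with $u\in G_1$, and $\Phi=R_w$ on each $\overline{E}_w$ with $w\in G_2$, and observe that $\Phi$ is an orientation-preserving diffeomorphism on each bundle piece (since the base- and fiber-reflections together have degree $+1$).

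The heart of the argument is verifying that $\Phi$ is compatible with every plumbing identification, so that it descends to a diffeomorphism $\overline{M}_G\to \overline{M}_{G'}$. In a local trivialization the plumbing identification for an edge is, up to the sign $\delta$, the swap $\tau\colon D^p\times D^q\to D^q\times D^p$, possibly postcomposed with an orientation-reversing reflection in one factor. For an \emph{internal} edge $e'$ of $G_2$, both endpoints carry $R_w,R_{w'}$; the conjugated identification $R_{w'}\circ\tau\circ R_w^{-1}$ is again the swap, because the fiber-reflection on one side is interchanged by $\tau$ with the base-reflection on the other, so the four reflections pair off and cancel. Hence the sign $\delta(e')$ is preserved in $G'$. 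For the \emph{separating} edge $e=(u,v)$, only the $v$-side is reflected, so the identification becomes $R_v\circ\psi_e^G$; the two reflections from $R_v$ now act on the same side of $\tau$ and do not pair off, so the new identification differs from the one which the new oriented trivialization on $\overline{E}_v$ would naturally produce for sign $\delta(e)$ by a single orientation-reversing reflection, i.e.\ it corresponds to sign $-\delta(e)$. This is exactly the sign prescribed for $e$ in $G'$, so $\Phi$ is globally well-defined.

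The main obstacle will be carrying out the sign bookkeeping in strict agreement with the convention of \cite[Section 2.1]{Re21}: one must track how the orientation-preserving product $r_B\times r_F$ of base- and fiber-reflections interacts with $\tau$, and verify that this yields an even number of sign-flips on each internal edge of $G_2$ (and on each edge of $G_1$) but an odd number on the separating edge $e$. Once this combinatorial parity is checked in local coordinates near a single plumbing region, assembling the pieces is straightforward and $\Phi$ restricts to the sought diffeomorphism $M_G\cong M_{G'}$.
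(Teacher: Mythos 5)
Your proposal has a genuine gap at its foundation: you require, for each vertex $w$ of $G_2$, a bundle automorphism $R_w$ of $\overline{E}_w$ covering an orientation-reversing involution of the base $B_w$, and you justify this with the claim that ``orientable manifolds \ldots admit orientation-reversing involutions.'' That claim is false. A closed oriented manifold need not admit \emph{any} orientation-reversing diffeomorphism; the standard counterexample is $\C P^{2k}$, which is exactly the type of base manifold the paper uses in its applications (Lemma \ref{L:S2k_BUNDLES_S4k} builds the bundles $\pi_u$ over connected sums of $\pm\C P^{2k}$). So the maps $R_w$ on which your diffeomorphism $\Phi$ is built need not exist, and the construction collapses.

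The deeper issue is that no such self-diffeomorphism is needed. The geometric plumbing graphs $G$ and $G'$ involve the \emph{same} underlying unoriented disc bundles $\overline{E}_w$; reversing base and fiber orientations only changes the labels, not the smooth manifold. Orientations enter the plumbing construction only through the choice of orientation-preserving local trivializations $\varphi_{(u,v)}$ in the pushout, and one is free to take the trivializations for $G'$ on the $G_2$-side to be the old ones composed with a reflection $r\times r$ in each factor. A short check then shows that all the gluing maps are literally unchanged: on an internal edge of $G_2$ the two reflections (one from each endpoint) conjugate $I^{\delta(e)}_{q,p}$ back to itself, so the unchanged sign is correct, while on the separating edge $e$ the single reflection satisfies $(r\times r)\circ I^{-\delta(e)}_{q,p}=I^{\delta(e)}_{q,p}$, which precisely compensates the flipped sign. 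Hence $\overline{M}_G$ and $\overline{M}_{G'}$ are built from identical pushout data, which is what the paper's one-sentence proof asserts. Your ``sign bookkeeping'' in the final paragraph is essentially this computation; the fix is to apply it to show that the pieces glue the same way with the \emph{identity} on each $\overline{E}_w$, rather than to verify equivariance of a nonexistent $\Phi$.
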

	
	\begin{proposition}[Vertex of Degree 1]
		\label{P:GRAPH_RED1}
		Suppose $G$ is of the form
	\begin{center}
		\begin{tikzpicture}
			\begin{scope}[every node/.style={circle,draw,minimum height=2em}]
				\node (V) at (-0.5,0) {$v$};
				\node (U) at (1,0) {$u$};
				\node[draw=none] (G1) at (3,1) {$G_1$};
				\node[draw=none] (Gn) at (3,-1) {$G_n$};
				\node[draw=none] (dots) at (2,0) {$\rvdots$};
			\end{scope}
			\path[-](V) edge["{$\scriptstyle e_0$}"] (U);
			\path[-](U) edge["{$\scriptstyle e_1$}"] (G1);
			\path[-](Gn) edge["{$\scriptstyle e_n$}"] (U);
		\end{tikzpicture}
	\end{center}
		with subgraphs $G_1,\dots,G_n$ of $G$. Let $G_0$ be the graph
		\begin{center}
			\begin{tikzpicture}
				\begin{scope}[every node/.style={circle,draw,minimum height=2em}]
					\node (V) at (-0.5,0) {$v$};
					\node (U) at (1,0) {$u$};
				\end{scope}
				\path[-](V) edge["{$\scriptstyle e_0$}"] (U);
			\end{tikzpicture}
		\end{center}
		with corresponding restrictions of $\delta$ and $\pi$. If $\pi_v$ is trivial with $B_v=S^p$, then $M_G$ is diffeomorphic to $M_{G_0}\# M_{G_1}\#\dots\# M_{G_n}$, i.e.\ we can replace $G$ by the disjoint union of the subgraphs $G_0,\dots,G_n$.
	\end{proposition}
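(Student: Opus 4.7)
The plan is to use the triviality of $\overline{E}_v = S^p \times D^q$ to convert each plumbing at $u$ into a boundary connected sum at the level of $M_G$, yielding the desired decomposition.

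As a first step I would analyze the plumbing at $e_0$ via handles. Writing $S^p = D^p_+ \cup_{S^{p-1}} D^p_-$ gives $\overline{E}_v = (D^p_+ \times D^q) \cup (D^p_- \times D^q)$, and I arrange so that $D^p_+ \times D^q$ is the plumbing region. Via the factor-swap identification, the piece $D^p_+ \times D^q$ is absorbed into $\overline{E}_u$ by corner-smoothing, so the net effect is the attachment of $D^p_- \times D^q$ as a $p$-handle along $S^{p-1} \times D^q \subset E_u$, where $S^{p-1}$ is a fiber of the sphere bundle $E_u$. This attaching fiber bounds the fiber disc $D^p \subset \overline{E}_u$, and the framings are compatible, since both come from the local trivialization of $\overline{E}_u$ over the plumbing disc. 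Combining the fiber disc with the core of the handle produces an embedded $S^p \subset \overline{M}_{G_0}$ with trivial normal $D^q$; sliding this $S^p$ along the connected base $B_u$, I conclude that every fiber of $E_u$ becomes null-homotopic, and hence (by general position in our dimension range) null-isotopic in $M_{G_0}$.

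Next, since the plumbing regions for $e_0, e_1, \ldots, e_n$ lie over pairwise disjoint discs of $B_u$, the plumbings commute, so I carry out $e_0$ first and then $e_1, \ldots, e_n$ successively. For each $e_i$ with $i\geq1$, the attaching sphere for the plumbing with $\overline{M}_{G_i}$ is a fiber of $E_u$, which by the first step is null-isotopic with its standard framing in the current boundary. I would then invoke the following key lemma: if $\overline{M}$ has a chosen plumbing disc whose attaching sphere is null-isotopic with the standard framing in $\partial \overline{M}$, then plumbing $\overline{M}$ with any disc bundle $\overline{N}$ along this disc produces a manifold whose boundary is $\partial \overline{M} \# \partial \overline{N}$. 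The geometric idea is that the null-isotopy allows us to push the plumbing region into a collar of $\partial \overline{M}$, where the plumbing identification reduces to a standard boundary connected sum.

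Applying the key lemma inductively in $i$ gives $M_G \cong M_{G_0} \# M_{G_1} \# \cdots \# M_{G_n}$, as desired. The main obstacle is the careful proof of the key lemma; in particular, one must verify that the bundle framings from $\overline{E}_u$ agree with those induced by the null-isotopies and that the push into a boundary collar can be carried out compatibly with the modifications already performed by $e_0, e_1, \ldots, e_{i-1}$.
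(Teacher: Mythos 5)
Your overall strategy --- exploit the triviality of $\overline{E}_v$ over $S^p$ to show that, after the $e_0$-plumbing, the remaining attaching regions are ``trivially embedded'' and hence yield connected summands --- is the same skeleton as the paper's proof. The execution, however, has concrete gaps. The step ``null-homotopic, and hence (by general position in our dimension range) null-isotopic in $M_{G_0}$'' is not justified: the proposition carries no restriction on $p$ and $q$, and passing from null-homotopy to null-isotopy of an embedded $S^{p-1}$ in a $(p+q-1)$-manifold requires codimension/dimension hypotheses that are not part of the statement; even where they hold, one only gets an unframed isotopy, whereas the plumbing identification requires control of the full tubular neighborhood. Your ``key lemma'' is asserted rather than proved --- you yourself flag it as the main obstacle --- and it is precisely the content that demands care: it corresponds to the paper's Lemmas \ref{L:TRIV_EMB_ISO} and \ref{L:TRIV_EMB}, whose correct hypothesis is not merely that the attaching sphere bounds a framed disc, but that the entire embedding $D^q\times S^{p-1}\hookrightarrow M_{G_0}$ extends to an embedding $D^{q-1}\times D^p\hookrightarrow M_{G_0}$ (the paper's notion of \emph{trivial embedding}). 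Finally, the $S^p$ you build lives in the interior of $\overline{M}_{G_0}$, while the trivialization must take place in $M_{G_0}=\partial\overline{M}_{G_0}$; ``sliding along $B_u$'' does not by itself supply this transfer.

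The paper avoids all of this by working entirely inside $M_{G_0}$ via the explicit decomposition \eqref{EQ:MG0_DECOMP},
\[M_{G_0}\cong D^p\times S^{q-1}\,\cup_{S^{p-1}\times S^{q-1}}\,\bigl([0,1]\times S^{q-1}\times S^{p-1}\bigr)\,\cup_{S^{q-1}\times S^{p-1}}\,\pi_u^{-1}(B_u\setminus {D^q}^\circ),\]
isotoping the plumbing regions for $e_1,\dots,e_n$ into the collar piece and then capping them off through the collar into $D^p\times S^{q-1}$. This produces the trivializing maps $\bar\varphi_i$ by an explicit construction, with no appeal to general position or unknotting, and works uniformly in $p$ and $q$.
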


	\begin{proposition}
		\label{P:GRAPH_REDG0}
		Let $G$ be the graph
		\begin{center}
			\begin{tikzpicture}
				\begin{scope}[every node/.style={circle,draw,minimum height=2em}]
					\node (V) at (-0.5,0) {$v$};
					\node (U) at (1,0) {$u$};
				\end{scope}
				\path[-](V) edge["{$\scriptstyle e$}"] (U);
			\end{tikzpicture}
		\end{center}
		where $\pi_v$ is trivial with $B_v=S^p$ and $B_u=S^q$. Then $M_G\cong S^{p+q-1}$.
	\end{proposition}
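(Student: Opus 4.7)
The plan is to compute $M_G=\partial\overline{M}_G$ directly by decomposing each of $\partial\overline{E}_v$ and $\partial\overline{E}_u$ into the part absorbed by the plumbing identification and the complementary part that survives on the boundary, then observing that the two surviving pieces reassemble into the standard splitting of $S^{p+q-1}$. The key simplifications are that $\pi_v$ is the trivial bundle $S^p\times D^q$, and that over any hemisphere of $B_u=S^q$ the bundle $\pi_u$ is canonically trivial.

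Concretely, I would fix hemispherical decompositions $S^p=D^p_+\cup D^p_-$ and $S^q=D^q_+\cup D^q_-$ and take $D^p_+\times D^q\subset\overline{E}_v$ and $D^q_+\times D^p\subset\overline{E}_u$ as the plumbing patches; plumbing identifies them via the factor swap $(x,y)\mapsto(y,x)$, with the sign $\delta(e)$ only affecting orientation. The boundary parts $D^p_+\times S^{q-1}\subset\partial\overline{E}_v$ and $D^q_+\times S^{p-1}\subset\partial\overline{E}_u$ then become interior in $\overline{M}_G$, leaving on $\partial\overline{M}_G$ exactly $(S^p\times S^{q-1})\setminus(D^p_+\times S^{q-1})=D^p_-\times S^{q-1}$ from $\partial\overline{E}_v$, together with the restriction of the $S^{p-1}$-bundle $\partial\overline{E}_u\to S^q$ to $D^q_-$, which by contractibility of the hemisphere is canonically $D^q_-\times S^{p-1}$. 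The clutching map of $\pi_u$ only controls how the two halves of $\partial\overline{E}_u$ (over $D^q_+$ and over $D^q_-$) are glued across the equator, and since the half over $D^q_+$ is entirely swallowed by the plumbing, this clutching data never enters the surviving boundary.

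These two solid pieces are glued along the corner of the plumbing patch: the plumbing identifies $\partial D^p_+\times S^{q-1}=S^{p-1}\times S^{q-1}$ with $\partial D^q_+\times S^{p-1}=S^{q-1}\times S^{p-1}$ by the factor swap. After renaming $D^p_-\cong D^p$, $D^q_-\cong D^q$ and using the swap to rewrite $D^q\times S^{p-1}\cong S^{p-1}\times D^q$, the resulting manifold is precisely the standard splitting
\[
\partial(D^p\times D^q)\;=\;(S^{p-1}\times D^q)\cup_{S^{p-1}\times S^{q-1}}(D^p\times S^{q-1})\;=\;S^{p+q-1}.
\]
I expect the main obstacle to lie in the bookkeeping of the corner structure: verifying that after corner-smoothing, the identification on the common $S^{p-1}\times S^{q-1}$ is exactly the factor swap and carries no residual twist inherited from the clutching of $\pi_u$. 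A local model of the plumbing in a neighborhood of $\partial D^p_+\times\partial D^q$, combined with the explicit trivialization of $\pi_u$ over the contractible hemisphere $D^q_+$, should make this transparent and complete the argument.
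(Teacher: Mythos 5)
Your decomposition of $M_G$ into the surviving piece $D^p_-\times S^{q-1}$ of $\partial\overline{E}_v$ and the surviving piece $E_u|_{D^q_-}$ of $\partial\overline{E}_u$, glued along the corner $S^{p-1}\times S^{q-1}$, is exactly the paper's starting point. The gap is the assertion that \textquotedblleft this clutching data never enters the surviving boundary.\textquotedblright\ In fact it does, precisely through the gluing of the two surviving pieces. The plumbing patch $D^q_+\times D^p\subset\overline{E}_u$ is taken with respect to a trivialization of $\pi_u$ over $D^q_+$, so the plumbing carries the $v$-corner to $\partial D^q_+\times S^{p-1}$ expressed in the $D^q_+$-trivialization. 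The surviving $u$-piece, however, is parameterized by a trivialization over $D^q_-$, which is not \textquotedblleft canonical\textquotedblright: such trivializations exist and are all homotopic, but no one of them agrees on the equator with the $D^q_+$-trivialization unless $\pi_u$ is trivial. Over $S^{q-1}$ the two trivializations differ exactly by the clutching function $T\colon S^{q-1}\to\mathrm{SO}(p)$ of $\pi_u$, so the map gluing your two solid pieces, read in your coordinates, is $(x,y)\mapsto(y,T_y(x))$ (up to conventions), not the bare factor swap. No local model or choice of hemispherical trivialization removes this twist; it is the bundle $\pi_u$ itself.

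The correct finish, which is what the paper's proof does, is not to argue the twist away but to absorb it: the clutching-twisted gluing $\tilde T(x,y)=(x,T_x(y))$ of $S^{q-1}\times S^{p-1}$ extends verbatim to a diffeomorphism $\bar T(x,y)=(x,T_x(y))$ of the whole solid $S^{q-1}\times D^p$, since $T_x\in\mathrm{SO}(p)$ acts on all of $D^p$. Precomposing the identification of the $v$-piece with $\bar T$ turns the gluing into the identity, and then $(S^{q-1}\times D^p)\cup_{\mathrm{id}}(D^q\times S^{p-1})=\partial(D^q\times D^p)\cong S^{p+q-1}$. So your decomposition is the right one, but the step you flagged as bookkeeping is the actual mathematical content, and it is resolved by this absorption argument rather than by the claim that no residual twist appears.
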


	\begin{proposition}[Vertex of Degree 2]
		\label{P:GRAPH_RED2}
		Suppose $G$ is of the following form:
		\begin{center}
		\begin{tikzpicture}
			\begin{scope}[every node/.style={circle,draw,minimum height=2em}]
				\node (V) at (0,0) {$v$};
				\node (U1) at (-1.5,0) {$u$};
				\node (U2) at (1.5,0) {$u^\prime$};
				\node[draw=none] (dots1) at (-2.25,0) {$\rvdots$};
				\node[draw=none] (dots2) at (2.25,0) {$\rvdots$};
				\node[draw=none] (G1) at (-3.5,1) {};
				\node[draw=none] (Gn) at (-3.5,-1) {};
				\node[draw=none] (G11) at (3.5,1) {};
				\node[draw=none] (G1n) at (3.5,-1) {};
			\end{scope}
			\path[-](U1) edge["{$\scriptstyle e$}"] (V);
			\path[-](V) edge["{$\scriptstyle e^\prime$}"] (U2);
			\path[-](G1) edge["{$\scriptstyle e_1$}"] (U1);
			\path[-](U1) edge["{$\scriptstyle e_n$}"] (Gn);
			\path[-](U2) edge["{$\scriptstyle e_1^\prime$}"] (G11);
			\path[-](G1n) edge["{$\scriptstyle e_n^\prime$}"] (U2);
		\end{tikzpicture}
	\end{center}
		Suppose $\pi_v$ is trivial with $B_v=S^p$ and $\delta(e)=1$, $\delta(e^\prime)=-1$. Define $\pi_{\hat{u}}$ as the fiber connected sum of $\pi_{u}$ and $\pi_{u'}$. Then we can replace $G$ by the following graph without changing the diffeomorphism type of $M_G$.
		\begin{center}
			\begin{tikzpicture}
				\begin{scope}[every node/.style={circle,draw,minimum height=2em}]
					\node (U) at (0,0) {$\hat{u}$};
					\node[draw=none] (dots1) at (-0.75,0) {$\rvdots$};
					\node[draw=none] (dots2) at (0.75,0) {$\rvdots$};
					\node[draw=none] (G1) at (-2,1) {};
					\node[draw=none] (Gn) at (-2,-1) {};
					\node[draw=none] (G11) at (2,1) {};
					\node[draw=none] (G1n) at (2,-1) {};
				\end{scope}
				\path[-](G1) edge["{$\scriptstyle e_1$}"] (U);
				\path[-](U) edge["{$\scriptstyle e_n$}"] (Gn);
				\path[-](U) edge["{$\scriptstyle e_1^\prime$}"] (G11);
				\path[-](G1n) edge["{$\scriptstyle e_n^\prime$}"] (U);
			\end{tikzpicture}
		\end{center}
	\end{proposition}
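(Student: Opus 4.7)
The strategy is to analyze the boundary of $\overline{M}_G$ locally around the subgraph involving $v, u, u'$, and to identify it with $M_{G'} = \partial \overline{M}_{G'}$. Since the subgraphs $G_1, \dots, G_n, G_1', \dots, G_n'$ are plumbed to $u$ and $u'$ at points of $B_u, B_{u'}$ that can be chosen away from the fiber charts used for the two $v$-plumbings, and since the base connected sum $B_u \# B_{u'}$ defining the base of $\pi_{\hat u}$ is performed in a small neighborhood of those fiber charts, the plumbings with the subgraphs $G_i, G_i'$ are preserved under the passage $G \rightsquigarrow G'$ and may be ignored in the local analysis.

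Because $\pi_v$ is trivial with $\overline{E}_v = S^p \times D^q$, the two plumbings identify disjoint charts $D^p_1 \times D^q$ and $D^p_2 \times D^q$ in $S^p \times D^q$ (about distinct points $x^1_v, x^2_v \in S^p$) with fiber charts $D^q \times D^p$ in $\overline{E}_u$ and $\overline{E}_{u'}$ via the factor-swap map, the orientations being determined by $\delta(e) = +1$ and $\delta(e') = -1$. I would then compute the surviving contributions to $M_G = \partial\overline{M}_G$: the boundary faces $D^p_i \times S^{q-1}$ of the plumbing charts get identified with the interior faces $S^{q-1} \times D^p$ of the fiber charts in $\overline{E}_u, \overline{E}_{u'}$, so what remains of $\partial \overline{E}_v = S^p \times S^{q-1}$ is
\[
\bigl(S^p \setminus (\mathring D^p_1 \cup \mathring D^p_2)\bigr) \times S^{q-1} \;\cong\; [0,1] \times S^{p-1} \times S^{q-1},
\]
a trivial cobordism between two copies of $S^{p-1} \times S^{q-1}$. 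Dually, the boundary faces $D^q \times S^{p-1}$ of the fiber charts in $\overline{E}_u, \overline{E}_{u'}$ are identified with interior points of $\overline{E}_v$, so the contributions to $M_G$ from those sides are $E_u \setminus \mathring{D^q \times S^{p-1}}$ and $E_{u'} \setminus \mathring{D^q \times S^{p-1}}$.

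Assembling these three pieces, $M_G$ is obtained by gluing $E_u \setminus \mathring{D^q \times S^{p-1}}$, the trivial cylinder $[0,1]\times S^{p-1}\times S^{q-1}$, and $E_{u'} \setminus \mathring{D^q \times S^{p-1}}$ along two copies of $S^{p-1} \times S^{q-1}$. Absorbing the cylinder into either side then yields the standard description of the sphere bundle $E_{\hat u} = \partial \overline{E}_{\hat u}$ of the fiber connected sum $\pi_{\hat u}$: remove a trivialized fiber-chart neighborhood from each of $E_u, E_{u'}$ and glue along the common boundary $S^{q-1} \times S^{p-1}$. Reinstalling the plumbings with the unchanged subgraphs $G_i, G_i'$, this is exactly $M_{G'}$.

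The main obstacle will be to verify that the sign combination $\delta(e) = +1,\;\delta(e') = -1$ produces precisely the \emph{orientation-reversing} identification of the two $S^{q-1} \times S^{p-1}$ needed for the fiber connected sum, rather than some twisted gluing yielding a different bundle. This reduces to unwinding the orientation conventions for plumbing recorded in Section 2 of \cite{Re21}: the two boundary spheres of the cylinder $S^p \setminus (\mathring D^p_1 \cup \mathring D^p_2) \cong [0,1]\times S^{p-1}$ inherit opposite orientations from $S^p$, and the sign asymmetry between $e$ and $e'$ compensates for this exactly so that the induced gluing of the two halves matches the orientation-reversing identification used in the connected sum $B_u \# B_{u'}$ of oriented bases underlying $\pi_{\hat u}$.
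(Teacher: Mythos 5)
Your proposal matches the paper's own proof: both localize to the $u$--$v$--$u'$ picture, rewrite $(S^p\setminus(D^p\sqcup D^p)^\circ)\times S^{q-1}$ as a collar $(S^{p-1}\times I)\times S^{q-1}$, absorb it, and note that the opposite boundary orientations of the collar together with the sign assumption $\delta(e)=1$, $\delta(e')=-1$ produce exactly the orientation-reversing gluing of the fiber-chart boundaries that defines the fiber connected sum $\pi_{\hat u}$. Your preliminary remark on keeping the $G_i,G_i'$ plumbings away from the relevant charts makes explicit a point the paper leaves implicit, but the route is the same.
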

	
	\begin{remark}
		\begin{enumerate}
			\item Proposition \ref{P:GRAPH_RED1} generalizes \cite[Proposition 2.6]{CW17} and \cite[Satz 5.9 (A)]{Sc75}.
			\item By Proposition \ref{P:GRAPH_RED_EDGE}, we can, after possibly reversing base and fiber orientations in the corresponding subgraphs, apply Proposition \ref{P:GRAPH_RED2} without any condition on the signs of $e$ and $e'$.
		\end{enumerate}
	\end{remark}
	
	Proposition \ref{P:GRAPH_RED_EDGE} directly follows from the fact that, by reversing the orientations as assumed, the gluing maps for the plumbings do not change. Proposition \ref{P:GRAPH_RED2} follows from \cite[Lemma 2.10]{CW17}. For convenience we give the proof below.
	
	\begin{proof}[{{Proof of Proposition \ref{P:GRAPH_RED2}, cf.\ \cite[Lemma 2.10]{CW17}}}]
		Denote the graph
		\begin{center}
			\begin{tikzpicture}
				\begin{scope}[every node/.style={circle,draw,minimum height=2em}]
					\node (V) at (0,0) {$v$};
					\node (U1) at (-1.5,0) {$u$};
					\node (U2) at (1.5,0) {$u^\prime$};
				\end{scope}
				\path[-](U1) edge["{$\scriptstyle e$}"] (V);
				\path[-](V) edge["{$\scriptstyle e^\prime$}"] (U2);
			\end{tikzpicture}
		\end{center}
		by $G^\prime$. Then
		\begin{align*}
			M_{G^\prime} &\cong \pi_{u}^{-1}(B_{u}\setminus {D^q}^\circ)\cup_{S^{q-1}\times S^{p-1}} (S^p\setminus(D^p\sqcup D^p)^\circ)\times S^{q-1}\cup_{S^{q-1}\times S^{p-1}} \pi_{u^\prime}^{-1}(B_{u^\prime}\setminus {D^q}^\circ)\\
			&\cong \pi_{u}^{-1}(B_{u}\setminus {D^q}^\circ)\cup_{S^{q-1}\times S^{p-1}} (S^{p-1}\times I)\times S^{q-1}\cup_{S^{q-1}\times S^{p-1}} \pi_{u^\prime}^{-1}(B_{u^\prime}\setminus {D^q}^\circ)\\
			&\cong \pi_{u}^{-1}(B_{u}\setminus {D^q}^\circ)\cup_{S^{q-1}\times S^{p-1}} \pi_{u^\prime}^{-1}(B_{u^\prime}\setminus {D^q}^\circ).
		\end{align*}
		Here we identify the boundaries of $\pi_u^{-1}(B_u\setminus {D^q}^\circ ) $ and $\pi_{u'}^{-1}(B_{u'}\setminus {D^q}^\circ)$ with $S^{q-1}\times S^{p-1}$ via local trivializations $D^q\times S^{p-1}\hookrightarrow E_u,E_{u'}$ and consider gluings to different connected components of the boundaries of $(S^p\setminus (D^p\sqcup D^p)^\circ )\times S^{q-1}$ and $(S^{p-1}\times I)\times S^{q-1}$ which can both be identified canonically with the disjoint union of two copies of $S^{p-1}\times S^{q-1}\cong S^{q-1}\times S^{p-1}$.		
		
		Since the induced orientations on the boundary components of $S^{p-1}\times I$ are opposite to each other, and by the assumption on the signs, the gluing map reverses the orientation on the $S^{q-1}$-factor and preserves the orientation on the $S^{p-1}$-factor. Hence, the claim follows.
	\end{proof}
	\begin{proof}[Proof of Proposition \ref{P:GRAPH_REDG0}]
		By possibly reversing the orientations on the base and fibers of $\pi_v$, we can assume that $\delta(e)=1$. Let $T\colon S^{q-1}\to\SO(p)$ be the clutching function for $\pi_u$. Then we have
		\[M_G\cong (S^{q-1}\times D^p)\cup_{\tilde{T}}(D^q\times S^{p-1}),  \]
		where $\tilde{T}\colon S^{q-1}\times S^{p-1}\to S^{q-1}\times S^{p-1}$ is defined by
		\[\tilde{T}(x,y)=(x,T_x(y)). \]
		We extend this map to $S^{q-1}\times D^p$, i.e.\ we define the diffeomorphism $\bar{T}\colon S^{q-1}\times D^p\to S^{q-1}\times D^p$ by $\bar{T}(x,y)=(x,T_x(y))$, so by definition $\bar{T}|_{S^{q-1}\times S^{p-1}}=\tilde{T}$. Hence, 
		\[M_G\cong \bar{T}(S^{q-1}\times D^p)\cup_{\tilde{T}}(D^q\times S^{p-1})=(S^{q-1}\times D^p)\cup_{\textup{id}}(D^q\times S^{p-1})\cong S^{p+q-1}, \]
		where the last isomorphism follows by identifying $D^q\times D^p$ with $D^{p+q}$ by smoothing corners and considering the boundaries of each space.
	\end{proof}
	
	For the proof of Proposition \ref{P:GRAPH_RED1} we need the following notion.
	\begin{definition}
		An embedding $\varphi\colon D^q	\times S^{p-1}\hookrightarrow M^{p+q-1}$ into a manifold $M$ is called \emph{trivial}, if there is an embedding $\bar{\varphi}\colon  D^{q-1}\times D^p\hookrightarrow M$ so that $\varphi|_{D^{q-1}\times S^{p-1}}=\bar{\varphi}|_{D^{q-1}\times S^{p-1}}$.
	\end{definition}
	Here we consider $D^{q-1}\subseteq D^q$ according to the embedding $\R^{q-1}\cong \R^{q-1}\times \{0\}\subseteq \R^q$.
	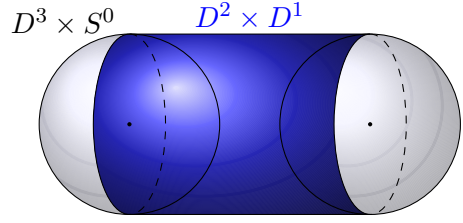
\begin{figure}
		\label{F:TRIV_EMB}
		\caption{A trivial embedding of $D^3\times S^0$}
		\centering
		\begin{tikzpicture}[scale=0.8]
			\shade[ball color=blue!10!white,opacity=0.2] (-2,0) circle (1.5cm);
			\shade[ball color=blue!10!white,opacity=0.2] (2,0) circle (1.5cm);
			\shade[ball color=blue!80,opacity=0.7] (-2,1.5) arc (90:270:0.6 and 1.5) arc (180:360:2 and 0) arc (270:90:0.6 and 1.5) arc (0:180:2 and 0);
			\fill[fill=black] (-2,0) circle (1pt);
			\fill[fill=black] (2,0) circle (1pt);
			\draw (-2,0) circle (1.5cm);
			\draw (2,0) circle (1.5cm);
			\draw (-2,1.5) arc (90:270:0.6 and 1.5);
			\draw[dashed] (-2,1.5) arc (90:-90:0.6 and 1.5);
			\draw (-2,-1.5) arc (180:360:2 and 0);
			\draw (2,-1.5) arc (270:90:0.6 and 1.5);
			\draw[dashed] (2,1.5) arc (90:-90:0.6 and 1.5);
			\draw (2,1.5) arc (0:180:2 and 0);
			\node[color=blue] at (0,1.8) {$D^2\times D^1$};
			\node at (-3.1,1.7) {$D^3\times S^0$};
		\end{tikzpicture}
	\end{figure}
	\begin{lemma}
		\label{L:TRIV_EMB_ISO}
		Let $M$ be connected. Then any two trivial embeddings $\varphi_1,\varphi_2\colon D^q\times S^{p-1}\hookrightarrow M$, that are orientation-preserving with respect to the same orientation of $M$ if $M$ is orientable, are isotopic.
	\end{lemma}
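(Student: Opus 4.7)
The approach is to reduce to Palais's isotopy uniqueness of disc embeddings \cite[Theorem 5.5]{Pa59} applied to the witnesses $\bar\varphi_i$, and then to match the $\varphi_i$ on the remaining collar direction by uniqueness of tubular neighborhoods.

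First, I would reduce to the case $\bar\varphi_1 = \bar\varphi_2$. After smoothing the corner $S^{q-2}\times S^{p-1}$, each $\bar\varphi_i\colon D^{q-1}\times D^p \hookrightarrow M$ is a smooth codimension-zero embedding of a $(p+q-1)$-disc into the connected manifold $M$. The orientation-preservation hypothesis on $\varphi_i$ (when $M$ is orientable) propagates to $\bar\varphi_i$ after possibly reversing the product orientation on one factor of $D^{q-1}\times D^p$: both embeddings induce compatible orientations on the common piece $\varphi_i(D^{q-1}\times S^{p-1}) = \bar\varphi_i(D^{q-1}\times S^{p-1})$, viewed as codimension-one submanifolds of $M$. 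Palais's theorem then supplies an ambient isotopy $H_t$ of $M$ with $H_1\circ\bar\varphi_1 = \bar\varphi_2$; replacing $\varphi_1$ by $H_1\circ\varphi_1$, I may assume $\bar\varphi_1 = \bar\varphi_2 =: \bar\varphi$, so that
\[
\varphi_1|_{D^{q-1}\times S^{p-1}} = \varphi_2|_{D^{q-1}\times S^{p-1}}.
\]

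Second, I would match $\varphi_1$ and $\varphi_2$ on the rest of $D^q\times S^{p-1}$. Each $\varphi_i$ realizes a regular neighborhood in $M$ of the embedded $(p-1)$-sphere $\bar\varphi(\{0\}\times S^{p-1})$, extending the already-coinciding tubular-neighborhood structure provided by $\bar\varphi$ along $\bar\varphi(D^{q-1}\times S^{p-1})$. The normal bundle of this sphere is trivial (as the sphere bounds the embedded disc $\bar\varphi(\{0\}\times D^p)$), and the framings induced by $\varphi_1$ and $\varphi_2$ agree on the half corresponding to $\bar\varphi(D^{q-1}\times S^{p-1})$; by uniqueness of tubular neighborhoods—with the orientation hypothesis ensuring that the two tubular neighborhoods point to compatible sides—$\varphi_1$ and $\varphi_2$ are isotopic rel $D^{q-1}\times S^{p-1}$. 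The isotopy extension theorem then globalizes this to an ambient isotopy of $M$.

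The main obstacle I expect is the orientation bookkeeping: verifying that after suitable factor-orientation reversals both witnesses $\bar\varphi_i$ are orientation-preserving, and that the tubular-neighborhood framings in the second step match consistently. The corner-smoothing required to apply Palais's theorem to $D^{q-1}\times D^p$ contributes only routine technicalities.
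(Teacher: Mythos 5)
Your overall strategy is the same as the paper's — Palais's disc theorem followed by uniqueness of tubular neighborhoods — but your first step is a genuine shortcut. You apply Palais directly to the full codimension-zero discs $\bar{\varphi}_i\colon D^{q-1}\times D^p\hookrightarrow M$ (after corner-smoothing), obtaining $\bar{\varphi}_1=\bar{\varphi}_2$ in a single move. The paper instead applies Palais only to the lower-dimensional cores $\bar{\varphi}_i|_{\{0\}\times D^p}$, treats the non-orientable case separately via a local orientation, and then needs a further tubular-neighborhood argument with a homotopy $\phi\colon D^p\to\mathrm{GL}_+(q)$ to upgrade agreement on the cores to $\bar{\varphi}_1=\bar{\varphi}_2$ on all of $D^{q-1}\times D^p$. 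Your version collapses these steps (at the cost of the corner-smoothing technicality you flag) and is a cleaner reduction.

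The final step, however, compresses a necessary argument and rests on a claim that is not quite correct. Knowing $\varphi_1=\varphi_2$ on $D^{q-1}\times S^{p-1}$ pins down only the $\mathbb{R}^{q-1}$-part of the framings of the normal bundle of $\{0\}\times S^{p-1}$ induced by $\varphi_1$ and $\varphi_2$; the derivatives $\partial\varphi_i/\partial x_q$ along the core sphere need not agree, so the framings do not agree, even ``on the half''. What uniqueness of tubular neighborhoods (in a form that preserves the condition on the $D^{q-1}$-slab, cf.\ the proof of \cite[Theorem 4.5.3]{Hi76}) actually produces is an isotopy carrying $\varphi_1$ to $\varphi_2\circ(\phi\times\mathrm{id})$ for a bundle automorphism $\phi\colon S^{p-1}\to\mathrm{GL}_+(q)$ with $\phi_y|_{\mathbb{R}^{q-1}}=\mathrm{id}$. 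To finish you still need the observation that such $\phi_y$ — block upper-triangular with identity upper-left $(q-1)\times(q-1)$ block and positive $(q,q)$-entry — form a convex, hence contractible, subgroup of $\mathrm{GL}_+(q)$, so the straight-line homotopy $t\mapsto(1-t)\phi+t\,\mathrm{id}_{\mathbb{R}^q}$ deforms $\phi$ to the identity through such automorphisms. This is exactly the paper's concluding line; without it, ``isotopic rel $D^{q-1}\times S^{p-1}$'' does not follow from uniqueness of tubular neighborhoods alone.
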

	\begin{proof}
		First note that we can assume that both $\bar{\varphi}_1$ and $\bar{\varphi}_2$ are orientation preserving if $M$ is orientable, since otherwise we can replace $\bar{\varphi}_i$ by the map
		\[(x_1,\dots,x_{q-1},y)\mapsto \bar{\varphi}_i(x_1,\dots,x_{q-2},-x_{q-1},y), \]
		where $(x_1,\dots,x_{q-1})\in D^{q-1}$ and $y\in D^p$, and $\varphi_i$ by the map
		\[(x_1,\dots,x_{q},y)\mapsto \varphi_i(x_1,\dots,x_{q-2},-x_{q-1},-x_q,y), \]
		where $(x_1,\dots,x_q)\in D^q$ and $y\in S^{p-1}$, which is clearly isotopic to $\varphi_i$. 
		
		By the disc theorem of Palais \cite[Theorem 5.5]{Pa59}, any two embeddings of $D^p$ into $M$ are ambient isotopic, where we require them to be orientation-preserving if $M$ is orientable and $p=\dim(M)$. It follows that, after applying an ambient isotopy, we can assume that $\bar{\varphi}_1|_{\{0\}\times D^p}=\bar{\varphi}_2|_{\{0\}\times D^p}$.
		
		If $M$ is non-orientable, we can introduce a local orientation by enlarging the image of one of $\bar{\varphi}_i$ to a ball $D^{p+q-1}$, so that $\varphi_i$, $\bar{\varphi}_i$ are all contained in this ball. If one of $\varphi_i$ is orientation-reversing with respect to the orientation of $D^{p+q-1}$, we can apply an isotopy of this ball that reverses the orientation, which exists by the disc theorem of Palais. If one of $\bar{\varphi}_i$ is orientation-reversing, we modify it as in the beginning of the proof. Hence, we can assume that $\varphi_i$, $\bar{\varphi}_i$ are all orientation-preserving.
		
		By the uniqueness of tubular neighborhoods, see e.g.\ \cite[Theorem 4.5.3]{Hi76}, after applying an isotopy to one of $\bar{\varphi}_i$, there is a smooth map $\phi\colon D^p\to\mathrm{GL}_+(q)$, so that $\bar{\varphi}_1(x,y)=\bar{\varphi}_2(\phi_y(x),y)$. Since $D^p$ is contractible, there exists a smooth homotopy of $\phi$ to the constant map $\phi\equiv\textrm{id}_{\R^q}$. This yields an isotopy of $\bar{\varphi}_2$ so that we can assume $\bar{\varphi}_1=\bar{\varphi}_2$. By the isotopy extension theorem, see e.g.\ \cite[Theorem 8.1.3]{Hi76}, this isotopy extends to a diffeotopy of $M$, in particular we can assume that after the isotopy the assumption $\varphi_i|_{D^{q-1}\times S^{p-1}}=\bar{\varphi}_i|_{D^{q-1}\times S^{p-1}}$ still holds.
		
		Again by the uniqueness of tubular neighborhoods there is $\phi\colon S^{p-1}\to\mathrm{GL}_+(q)$ so that, after applying an isotopy, $\varphi_1(x,y)=\varphi_2(\phi_y(x),y)$. This isotopy can be chosen so that the condition $\varphi_1|_{D^{q-1}\times S^{p-1}}=\varphi_2|_{D^{q-1}\times S^{p-1}}$ is preserved (cf.\ \cite[Proof of Theorem 4.5.3]{Hi76}), hence $\phi_y$ fixes $\R^{q-1}\subseteq\R^q$ pointwise for all $y\in S^{p-1}$. The subspace of $\mathrm{GL}_+(q)$ fixing $\R^{q-1}$ pointwise can be identified with $\R^{q-1}\times \R_+$ by considering the image of the last standard basis vector $(0,\dots 0,1)$. Since this space is convex, there is a smooth homotopy of $\phi$ to the constant map $\textrm{id}_{\R^q}$ within this space, so $\varphi_1$ is isotopic to $\varphi_2$.
	\end{proof}
	\begin{lemma}
		\label{L:TRIV_EMB}
		Let $\varphi_1\colon D^q\times S^{p-1}\hookrightarrow M_1^{p+q-1}$, $\varphi_2\colon D^p\times S^{q-1}\hookrightarrow M_2^{p+q-1}$ be embeddings into connected manifolds and let $M$ be the manifold
		\[M=M_1\setminus\im(\varphi_1)^\circ\cup_{S^{p-1}\times S^{q-1}} M_2\setminus\im(\varphi_2)^\circ, \]
		where for the gluing we identify the boundaries of $M_1\setminus\im(\varphi_1)^\circ$ and $M_2\setminus\im(\varphi_2)^\circ$ with $S^{p-1}\times S^{q-1}$ via $\varphi_1$ and $\varphi_2$, respectively.
		Suppose that one of $\varphi_1,\varphi_2$ is trivial and that, after choosing an orientation, both $\varphi_1$ and $\varphi_2$ are orientation-preserving if both $M_1$ and $M_2$ are orientable. Then $M$ is diffeomorphic to $M_1\# M_2$ if one of $M_1$ and $M_2$ is non-orientable and to $M_1\# (-1)^{pq-p-q}M_2$ otherwise.
	\end{lemma}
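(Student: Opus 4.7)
The plan is to exploit the uniqueness of trivial embeddings from Lemma \ref{L:TRIV_EMB_ISO} to reduce $\varphi_1$ to a standard model inside a disc in $M_1$, and then to recognize the resulting reassembly as a connected sum via the standard genus-one splitting of the sphere. Without loss of generality, assume $\varphi_1$ is trivial. By Lemma \ref{L:TRIV_EMB_ISO} together with the isotopy extension theorem, I may isotope $\varphi_1$ to any other trivial embedding of my choice without changing the diffeomorphism type of $M$ (orientation-preservingly if $M_1$ is orientable). A convenient standard model is the following: fix an embedded disc $D\cong D^{p+q-1}\hookrightarrow \intr(M_1)$ together with an unknotted $S^{p-1}\subseteq D$ bounding a standard $D^p\subseteq D$, and take $\varphi_1$ to be a tubular neighborhood of this sphere; the corresponding $\bar\varphi_1$ can be chosen as a tubular neighborhood of the bounding disc, so this is indeed a trivial embedding.

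The key model computation then uses the genus-one splitting
\begin{equation*}
S^{p+q-1} = (S^{p-1}\times D^q)\cup_{S^{p-1}\times S^{q-1}}(D^p\times S^{q-1}).
\end{equation*}
Removing a small open disc $(D_0^{p+q-1})^\circ$ from the interior of the $(D^p\times S^{q-1})$-factor yields $S^{p+q-1}\setminus(D_0^{p+q-1})^\circ\cong D^{p+q-1}$, and in this model disc the tubular neighborhood $S^{p-1}\times D^q$ corresponds exactly to $\im(\varphi_1)$, giving
\begin{equation*}
D\setminus \im(\varphi_1)^\circ \cong (D^p\times S^{q-1})\setminus(D_0^{p+q-1})^\circ
\end{equation*}
with matching identifications on the common boundary $S^{p-1}\times S^{q-1}$. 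Splitting $M_1=(M_1\setminus D)\cup_{S^{p+q-2}}D$ and substituting, I obtain
\begin{align*}
M &= (M_1\setminus D)\cup_{S^{p+q-2}}\bigl[(D\setminus \im(\varphi_1)^\circ)\cup_{S^{p-1}\times S^{q-1}}(M_2\setminus \im(\varphi_2)^\circ)\bigr] \\
&\cong (M_1\setminus D)\cup_{S^{p+q-2}}(M_2\setminus D_0^{p+q-1}),
\end{align*}
since reattaching $\im(\varphi_2)\cong D^p\times S^{q-1}$ along $S^{p-1}\times S^{q-1}$ reassembles $M_2$ up to the removed small disc. This is precisely a connected sum.

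The main obstacle is extracting the sign $(-1)^{pq-p-q}$ in the orientable case. This requires a careful orientation comparison on the various gluing spheres: the product orientations of $S^{p-1}\times D^q$ and $D^p\times S^{q-1}$, the induced boundary orientations on $S^{p-1}\times S^{q-1}$, the outward-normal convention used for the connected sum along $S^{p+q-2}$, and the orientations coming from the embeddings $\varphi_1,\varphi_2$ all must be reconciled. A direct computation using the outward-normal-first convention—together with the fact that $\partial(A\times B)$ picks up signs depending on the parities of $\dim A$ and $\dim B$—yields the factor $(-1)^{pq-p-q}$. In the non-orientable case this sign is irrelevant, because if at least one of $M_i$ is non-orientable then $M_1\# M_2\cong M_1\#(-M_2)$ by sliding the connected-sum ball along an orientation-reversing loop, so $\#$ is well-defined independent of signs.
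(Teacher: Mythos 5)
Your proposal is correct and follows essentially the same strategy as the paper: isotope $\varphi_1$ to a standard model trivial embedding supported in a disc $D\subseteq M_1$ (via Lemma~\ref{L:TRIV_EMB_ISO}), recognize that model via the genus-one splitting of $S^{p+q-1}$, and then cut-and-paste to reassemble $M_2\setminus D_0^\circ$ glued to $M_1\setminus D$ along a sphere, which is a connected sum. The paper phrases the first step as inserting a copy of $S^{p+q-1}$ via $M_1\cong M_1\#S^{p+q-1}$ and verifies the triviality of the model embedding explicitly, and extracts the sign $(-1)^{pq-p-q}$ from the orientation behavior of the swap map $S^{p-1}\times S^{q-1}\to S^{q-1}\times S^{p-1}$ — your sign argument is stated only in outline, but it points at the same orientation bookkeeping and reaches the same answer, so there is no substantive difference.
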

	\begin{proof}
		By possibly interchanging the roles of $\varphi_1$ and $\varphi_2$ we can assume that $\varphi_1$ is trivial. We decompose
		\begin{align}\label{EQ:CONN_SUM_DECOMP}
			M_1\cong M_1\# S^{p+q-1}\cong M_1\#((-1)^{q} (S^{q-1}\times D^p)\cup_\partial (D^q\times S^{p-1})),
		\end{align}
		the orientation on $S^{q-1}\times D^p$ is chosen so that the induced orientations on the boundary of $S^{q-1}\times D^p$ and $D^q\times S^{p-1}$ are opposite to each other, so that after gluing both pieces together we have a well-defined orientation.
		
		The embedding $\varphi\colon D^q\times S^{p-1}\hookrightarrow (S^{q-1}\times D^p)\cup_{S^{q-1}\times S^{p-1}} (D^q\times S^{p-1})$ of the second factor is trivial: The map $\bar{\varphi}$ is given by
		\[D^{q-1}\times D^p\cong (D^{q-1}\times D^p)\cup_{D^{q-1}\times S^{p-1}}(D^q_+\times S^{p-1})\hookrightarrow (S^{q-1}\times D^p)\cup_{S^{q-1}\times S^{p-1}} (D^q\times S^{p-1}),\]
		where $D^q_+\subseteq D^q$ denotes the upper half-ball and we embed $D^{q-1}\subseteq S^{q-1}$ as the upper half-sphere into the first factor (and note that in this way we obtain an embedding $D^{q-1}\subseteq S^{q-1}\subseteq D^q$ that differs from the standard embedding $D^{q-1}\subseteq D^q$).
		
		By Lemma \ref{L:TRIV_EMB_ISO}, the embeddings $\varphi$ and $\varphi_1$ are isotopic, where we consider both $\varphi$ and $\varphi_1$ as embeddings into $M_1$ via \eqref{EQ:CONN_SUM_DECOMP} and assume that the discs removed in the connected sum construction of $M_1$ and $S^{p+q-1}$ are disjoint from the images of $\varphi$, $\varphi_1$ and $\bar{\varphi}$. Hence,
		\begin{align*}
			M_1\setminus \im(\varphi_1)^\circ\cup_{S^{p-1}\times S^{q-1}} M_2\setminus\im(\varphi_2)^\circ&\cong M_1\#((-1)^{q} S^{q-1}\times D^p)\cup_{S^{q-1}\times S^{p-1}} M_2\setminus\im(\varphi_2)^\circ.
		\end{align*}
		The map $S^{p-1}\times S^{q-1}\to S^{q-1}\times S^{p-1}$, $(x,y)\mapsto(y,x)$, is orientation-preserving if and only if $(p-1)(q-1)$ is even. Hence, if $M_2$ is orientable, and if we equip it with the orientation $-(-1)^{(p-1)(q-1)}=(-1)^{pq-p-q}$, we have a well-defined orientation after gluing, so we obtain
		\begin{align*}
			M_1\setminus \im(\varphi_1)^\circ\cup_{S^{p-1}\times S^{q-1}} M_2\setminus\im(\varphi_2)^\circ&\cong M_1\#((-1)^q S^{q-1}\times D^p\cup_{S^{q-1}\times S^{p-1}}(-1)^{pq-p-q}M_2\setminus \im(\varphi_2)^\circ )\\
			&\cong M_1\# (-1)^{pq-p-q}M_2.
		\end{align*}
	\end{proof}
	\begin{proof}[Proof of Proposition \ref{P:GRAPH_RED1}.]
		First note that, by reversing the base and fiber orientations of the bundles associated to vertices in $G_i$ (resp.\ $v$) and setting $\delta(e_i)=1$ whenever $\delta(e_i)=-1$ for $i\in\{1,\dots,n\}$ (resp.\ $i=0$), we obtain a new graph (which we also denote by $G$) that satisfies $\delta(e_0)=\delta(e_1)=\dots=\delta(e_n)=1$ and leaves the diffeomorphism type of $M_G$ unchanged by Proposition \ref{P:GRAPH_RED_EDGE}. Then we have
		\begin{align}
			\notag M_{G_0}&\cong \pi_v^{-1}(B_v\setminus {D^p}^\circ)\cup_{S^{p-1}\times S^{q-1}} \pi_u^{-1}(B_u\setminus {D^q}^\circ)\\
			\notag &\cong D^p\times S^{q-1}\cup_{S^{p-1}\times S^{q-1}} \pi_u^{-1}(B_u\setminus {D^q}^\circ)\\
			\label{EQ:MG0_DECOMP}&\cong D^p\times S^{q-1}\cup_{S^{p-1}\times S^{q-1}}([0,1]\times S^{q-1}\times S^{p-1})\cup_{S^{q-1}\times S^{p-1}} \pi_u^{-1}(B_u\setminus {D^q}^\circ),
		\end{align}
		where we use similar conventions for the gluing as in the proof of Proposition \ref{P:GRAPH_RED2}.
		Since we can view the mid-part as part of the bundle $\pi_u$, the manifold $M_G$ is now obtained from $M_{G_0}$ by cutting out embeddings $\varphi_i\colon D^q\times S^{p-1}\hookrightarrow(0,1)\times S^{q-1}\times S^{p-1}$ of the form $\iota_i\times \textrm{id}_{S^{p-1}}$ for embeddings $\iota_i\colon D^q\hookrightarrow(0,1)\times S^{q-1}$, and gluing in the corresponding parts of $M_{G_i}$. We now show that all of these embeddings are trivial.
		
		For that we isotope $\iota_i$ so that $\iota_i|_{D^{q-1}}\colon D^{q-1}\hookrightarrow \{t_i\}\times D^{q-1}_i$ for an embedded disc $D^{q-1}_i\subseteq S^{q-1}$. We choose the discs $D^{q-1}_i$ so that they do not intersect. Then we define $\bar{\varphi}_i\colon D^{q-1}\times D^{p}\hookrightarrow M_{G_0}$ by identifying
		\[D^{q-1}\times D^{p}\cong D_i^{q-1}\times D^p\cup_{D_i^{q-1}\times S^{p-1}}([0,t_i]\times D_i^{q-1}\times S^{p-1})\]
		and mapping it to
		\[S^{q-1}\times D^p\cup_{S^{q-1}\times S^{p-1}}([0,1]\times S^{q-1}\times S^{p-1}) \]
		via the obvious inclusions on each part, cf.\ Figure \ref{F:TRIV_EMB2}.
		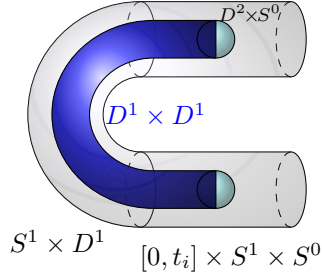
\begin{figure}
			\caption{The embeddings $\varphi_{i}$ are trivial}
			\label{F:TRIV_EMB2}
			\centering
			\begin{tikzpicture}
				\draw[dashed] (-2,1) arc (90:270:0.2 and 0.5);
				\draw (-2,0) arc (270:450:0.2 and 0.5);
				\draw[dashed] (-2,-1) arc (90:270:0.2 and 0.5);
				\draw (-2,-2) arc (270:450:0.2 and 0.5);
				\draw[dashed] (0,1) arc (90:270:0.2 and 0.5);
				\draw (0,0) arc (270:450:0.2 and 0.5);
				\draw[dashed] (0,-1) arc (90:270:0.2 and 0.5);
				\draw (0,-2) arc (270:450:0.2 and 0.5);
				\draw (-2,1) arc (180:360:1 and 0);
				\draw (-2,0) arc (180:360:1 and 0);
				\draw (-2,-1) arc (180:360:1 and 0);
				\draw (-2,-2) arc (180:360:1 and 0);
				\draw (-2,0) arc (90:270:0.5 and 0.5);
				\draw (-2,1) arc (90:270:1.5 and 1.5);
				
				\shade[ball color=blue!5!white,opacity=0.1] (0,1)--(-2,1)arc (90:270:1.5 and 1.5)--(0,-2) arc (270:450:0.2 and 0.5)--(-2,-1) arc (270:90:0.5 and 0.5)--(0,0) arc (270:450:0.2 and 0.5);
				\shade[ball color=cyan!40!white,opacity=0.2] (-1,0.5) circle (0.25);
				\shade[ball color=cyan!40!white,opacity=0.2] (-1,-1.5) circle (0.25);
				\shade[ball color=blue!80,opacity=0.65] (-1,0.75)--(-1.83,0.75)arc (90:270:1.35 and 1.25)--(-1,-1.75)--(-1,-1.25)--(-1.83,-1.25) arc (270:90:0.85 and 0.75)--(-1,0.25)--(-1,0.75);

				\draw (-1,0.5) circle (0.25);
				\draw (-1,0.75)--(-1.83,0.75);
				\draw (-1,0.25)--(-1.83,0.25);
				\draw (-1,0.75)--(-1,0.25);
				
				\draw (-1,-1.5) circle (0.25);
				\draw (-1,-1.75)--(-1.83,-1.75);
				\draw (-1,-1.25)--(-1.83,-1.25);
				\draw (-1,-1.75)--(-1,-1.25);
				
				\draw (-1.83,0.75) arc (90:270:1.35 and 1.25);
				\draw (-1.83,0.25) arc (90:270:0.85 and 0.75);
				
				\node[blue] at (-1.8,-0.5) {\small$D^1\times D^1$};
				\node at (-0.55,0.84) {\tiny$D^2\hspace{-1ex}\times \hspace{-1ex}S^0$};
				\node at (-0.8,-2.4) {\small$[0,t_i]\times S^1\times S^0$};
				\node at (-3.1,-2.2) {\small$S^1\times D^1$};
			\end{tikzpicture}
		\end{figure}
		Since the discs $D^{q-1}_i$ do not intersect each other, the maps $\bar{\varphi}_i$ have pairwise disjoint image. Hence, if we equip each $M_{G_i}$ with the orientation induced from each $E_v$, which equals the orientation induced from $(-1)^{pq-p-q}E_u$, it follows from Lemma \ref{L:TRIV_EMB} that
		\[M_G\cong M_{G_0}\# M_{G_1}\#\dots\# M_{G_n}. \]
	\end{proof}

	\subsection{Fundamental Group, Cohomology and Characteristic Classes}
	\label{SS:PLUMBING_TOP}
	
	The goal of this section is to determine the fundamental group, the cohomology ring and the characteristic classes of $M_G$. As in the previous section we fix a geometric plumbing graph $G$.
	
	\begin{lemma}
		\label{L:FUND_GROUP}
		Suppose $p,q>2$ and that every connected component of $G$ and every $B_u$ and $B_v$ are simply-connected. Then $M_G$ is simply-connected.
	\end{lemma}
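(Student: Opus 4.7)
The proof naturally splits into two steps: first verify that the filling $\overline{M}_G$ is simply-connected, then transfer this to the boundary $M_G$ via a codimension argument.

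\emph{Step 1: $\pi_1(\overline{M}_G)=1$.} For each vertex $u\in U$ (resp.\ $v\in V$) the total space $\overline{E}_u$ (resp.\ $\overline{E}_v$) is a linear disc bundle over a simply-connected base with contractible fiber, so the long exact sequence of the fibration gives $\pi_1(\overline{E}_u)=\pi_1(B_u)=1$ (and similarly for $v$). A plumbing along an edge $e=(u,v)\in E$ identifies a $D^p\times D^q\cong D^{p+q}$ inside $\overline{E}_u$ with one inside $\overline{E}_v$; the overlap is contractible. Since each connected component of $G$ is simply-connected, i.e.\ a tree, the plumbings within a component can be performed one edge at a time, each step attaching a new simply-connected bundle to the already-constructed (inductively simply-connected) piece along a contractible intersection. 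Van Kampen preserves simply-connectedness at each step. Finally the pieces coming from different connected components are assembled via boundary connected sums, which glue along $D^{p+q-1}$; since $p+q-1\geq 5$, these intersections are simply-connected and Van Kampen gives $\pi_1(\overline{M}_G)=1$.

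\emph{Step 2: $\pi_1(M_G)=1$.} Let $X\subseteq \overline{M}_G$ be the union of all zero sections $B_u\subseteq\overline{E}_u$ and $B_v\subseteq\overline{E}_v$, identified along the plumbing loci. Viewed as a stratified subspace, $X$ has top-dimensional stratum of dimension $\max(p,q)$, so codimension $\min(p,q)\geq 3$ by hypothesis. Radial retraction in each disc fiber, carried out compatibly across plumbings, exhibits $\overline{M}_G\setminus X$ as deformation retracting onto $\partial\overline{M}_G=M_G$, giving $\pi_1(M_G)\cong\pi_1(\overline{M}_G\setminus X)$. Now I invoke the standard transversality principle: the inclusion $\overline{M}_G\setminus X\hookrightarrow\overline{M}_G$ is $\pi_1$-surjective because any loop $S^1\to\overline{M}_G$ may be perturbed off $X$ (the dimension count $1+\max(p,q)<p+q$ is automatic), and $\pi_1$-injective because any null-homotopy $D^2\to\overline{M}_G$ of such a loop may be perturbed rel boundary off $X$, which requires exactly $2+\max(p,q)<p+q$, i.e.\ $\min(p,q)>2$. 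Combining with Step 1 yields $\pi_1(M_G)=1$.

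\emph{Main obstacle.} Nothing is really deep here; the only point needing care is that $X$ is not a manifold but a stratified subspace (the bases meet transversally at the plumbing points). However, each stratum individually has codimension at least $\min(p,q)\geq 3$ in $\overline{M}_G$, so the stratified transversality theorem applies uniformly and the codimension-three argument proceeds exactly as for a smooth submanifold.
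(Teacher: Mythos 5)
Your proof is correct but takes a genuinely different route from the paper's. The paper decomposes $M_G$ directly into the sphere bundle pieces $E_u|_{B_u\setminus\bigsqcup D^q}$ and $E_v|_{B_v\setminus\bigsqcup D^p}$, observes each is a simply-connected fiber bundle (this is exactly where $p,q>2$ enter: the fibers $S^{p-1}$, $S^{q-1}$ must be simply-connected and removing top-dimensional discs from $B_u$, $B_v$ must not create fundamental group), and then runs Van Kampen along the tree structure of $G$. You instead work with the filling $\overline{M}_G$ first, where the pieces are disc bundles so Van Kampen is automatic with no dimension constraint, and then transfer the conclusion to the boundary by general position relative to the union $X$ of zero-sections; here the hypothesis $p,q>2$ re-enters as the codimension-$\geq 3$ bound on the strata of $X$. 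Both routes are valid. One step in your argument deserves more than the phrase \textquotedblleft carried out compatibly across plumbings\textquotedblright: near a plumbing locus $D^q\times D^p$ the set $X$ has a normal crossing, and the needed deformation retraction of $(D^q\times D^p)\setminus\bigl((D^q\times\{0\})\cup(\{0\}\times D^p)\bigr)$ onto the corner $S^{q-1}\times S^{p-1}$, say via $(x,y)\mapsto(x/|x|,\,y/|y|)$, must be seen to agree with the fiberwise radial retraction on the overlaps $S^{q-1}\times D^p$ and $D^q\times S^{p-1}$; it does, but this is the one place where the plumbing geometry actually matters and should be spelled out. Overall the paper's argument is a bit more self-contained, while yours trades the fiber-bundle long exact sequence for a stratified general-position argument.
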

	\begin{proof}
		If $G_1,\dots,G_n$ denote the connected components of $G$, then $M_G\cong M_{G_1}\#\dots\# M_{G_n}$ and hence
		\[\pi_1(M_G)\cong \pi_1(M_1)*\dots*\pi_1(M_n). \]
		Thus, we can restrict to the case where $G$ is connected.
		
		The manifold $M_G$ is obtained by gluing the spaces $\pi^{-1}_u(B_u\setminus(\bigsqcup_{\deg(u)}D^q)^\circ)$ and $\pi^{-1}_v(B_v\setminus(\bigsqcup_{\deg(v)}D^p)^\circ)$ according to $G$.  These spaces are fiber bundles with base spaces $B_u$ or $B_v$ with a finite number of discs removed. Since, $p,q>2$, it follows from the long exact sequence for fiber bundles that all these spaces are simply-connected.
		
		The graph is simply-connected, hence, after choosing a root, it is a tree. If $M_k$ is the manifold obtained by gluing according the subgraph of $G$ consisting of all vertices of distance at most $k$ from the root, then it follows inductively from van Kampen's theorem, that $M_k$ is simply-connected. For $k$ large enough we have $M_k=M_G$, and hence $M_G$ is simply-connected.
	\end{proof}
	\begin{remark}
		If one does not require that $G$ is simply-connected in Lemma \ref{L:FUND_GROUP}, then, if $G$ is connected, one can show, by using the groupoid version of van Kampen's theorem, that $\pi_1(M_G)\cong \pi_1(G)$.
	\end{remark}
	
	For the cohomology we fix a commutative unital ring $R$. For $u\in U$ recall from \eqref{EQ:BDL_COHOM_SPLIT} that if $e_R(\pi_u)=0$, then we have
	\begin{equation*}
		H^i(E_u)=\pi^*_u(H^i(B_u))\oplus \theta_{a_u}(H^{i-p+1}(B_u))
	\end{equation*}
	for an element $a_u\in H^{p-1}(E_u)$ with $\psi(a_u)=1$. We make the following assumption:
	\begin{equation}
		\label{EQ:GRAPH_CONDITION}
		G \text{ is simply-connected and either no vertex }v\in V\text{ is a leaf, or }G\text{ is of the form }\,\,\begin{tikzpicture}[scale=0.6,, transform shape]
			\begin{scope}[every node/.style={circle,draw,minimum height=2em}]
				\node (V) at (-0.5,0) {$v$};
				\node (U) at (1,0) {$u$};
			\end{scope}
			\path[-](V) edge["{$\scriptstyle e$}"] (U);
		\end{tikzpicture}.
	\end{equation}
	\begin{theorem}
		\label{T:MG_COHOM}
		Let $G$ be a geometric plumbing graph with $p>1$ satisfying \eqref{EQ:GRAPH_CONDITION}, so that $e_R(\pi_u)=0$ and $B_u$ is closed for all $u\in U$, and each bundle $\pi_v$ is trivial with $B_v\cong S^p$. Then the cohomology ring $H^*(M_G)$ is a quotient of a subring of $\bigoplus_{u\in U}H^*(E_u)$ as follows:
		\[
		H^i(M_G)\cong
		\begin{cases}
			\left(\sum\limits_{u\in U}1_{H^0(E_u)}\right)R\cong R,\quad &i=0;\\
			\bigoplus\limits_{u\in U} \pi_u^*H^{p-1}(B_u)\\
			\quad\oplus \left\{\sum\limits_{u\in U}\lambda_u\cdot a_u \;\middle|\;\lambda_u\in R\; \forall u\in U,\, \sum\limits_{(u,v)\in E}\delta(e)\lambda_u=0\;\forall v\in V \right\},\quad&i=p-1;\\
			\vbigslant{\bigoplus\limits_{u\in U}H^q(E_u)}{\bigoplus\limits_{v\in V}\left(\sum\limits_{e=(u,v)\in E}\delta(e)\pi_u^*[B_u]^*\right)R},\quad &i=q;\\
			\vbigslant{\bigoplus\limits_{u\in U}H^{p+q-1}(E_u)}{\left\{\sum\limits_{u\in U}\lambda_u[E_u]^*\;\middle|\; \sum\limits_{u\in U}\lambda_u=0 \right\}}\cong R,\quad &i=p+q-1;\\
			\bigoplus\limits_{u\in U} H^i(E_u),\quad &\text{else,}
		\end{cases}
		\]
		if $q\neq p-1$, and
		\[
		H^i(M_G)\cong
		\begin{cases}
			\left(\sum\limits_{u\in U}1_{H^0(E_u)}\right)R\cong R,\quad &i=0;\\
			\vbigslant{\bigoplus\limits_{u\in U} \pi_u^*H^{p-1}(B_u)}{\bigoplus\limits_{v\in V}\left(\sum\limits_{e=(u,v)\in E}\delta(e)\pi_u^*[B_u]^*\right)R}\\
			\quad\oplus\left\{\sum\limits_{u\in U}\lambda_u\cdot a_u\;\middle|\;\lambda_u\in R\;\forall u\in U,\, \sum\limits_{(u,v)\in E}\delta(e)\lambda_u=0\;\forall v\in V \right\} ,\quad &i=q;\\
			\vbigslant{\bigoplus\limits_{u\in U}H^{p+q-1}(E_u)}{\left\{\sum\limits_{u\in U}\lambda_u[E_u]^*\;\middle|\; \sum\limits_{u\in U}\lambda_u=0 \right\}}\cong R,\quad &i=2q;\\
			\bigoplus\limits_{u\in U} H^i(E_u),\quad &\text{else,}
		\end{cases}
		\]
		if $q=p-1$.	The cup product structure is induced from $\bigoplus_{u\in U}H^*(E_u)$.
	\end{theorem}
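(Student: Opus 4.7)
The plan is to compute $H^*(M_G)$ via a Mayer--Vietoris sequence adapted to the bipartite structure of $G$. Decompose $M_G$ (up to open thickening) as $A \cup B$, where
\[ A = \bigsqcup_{u \in U} A_u, \qquad A_u := \pi_u^{-1}\Bigl(B_u \setminus \bigsqcup_{\deg(u)} (D^q)^\circ\Bigr), \]
\[ B = \bigsqcup_{v \in V} A_v, \qquad A_v := \Bigl(S^p \setminus \bigsqcup_{\deg(v)} (D^p)^\circ\Bigr) \times S^{q-1}, \]
so that $A \cap B = \bigsqcup_{e \in E} S^{q-1} \times S^{p-1}$, one boundary torus per edge.

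Next, I would compute the cohomology of each piece. For $A_u$, since $e_R(\pi_u) = 0$, the Gysin sequence of the sphere bundle $A_u \to B_u' := B_u \setminus \bigsqcup D^q$ gives the splitting $H^*(A_u) = \pi_u^* H^*(B_u') \oplus \theta_{a_u} H^{*-p+1}(B_u')$, where $a_u$ is the restriction of the chosen class from $H^{p-1}(E_u)$. The long exact sequence of the pair $(B_u, B_u')$, together with $H^*(B_u, B_u') \cong \bigoplus_{\deg(u)} H^*(D^q, S^{q-1})$, gives $H^i(B_u') \cong H^i(B_u)$ for $i < q-1$, an extension with quotient $R^{\deg(u)-1}$ in degree $q-1$, and $H^q(B_u') = 0$. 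For $A_v$, apply Künneth using $S^p \setminus \bigsqcup_{\deg(v)} D^p \simeq \bigvee^{\deg(v)-1} S^{p-1}$ (valid for $\deg(v) \ge 2$, as forced by \eqref{EQ:GRAPH_CONDITION} outside the trivial case, which is handled analogously since $S^p \setminus D^p$ is contractible). The restriction maps to a torus $S^{q-1} \times S^{p-1}$ corresponding to an edge $e = (u, v)$ are then identified via the natural inclusions: from $A_u$, a class $\pi_u^* x$ restricts through $H^*(B_u') \to H^*(\partial D^q) = H^*(S^{q-1})$ and $a_u \mapsto [S^{p-1}]^*$ by \eqref{EQ:a_FIBR_INCL}; from $A_v$, the $H^*(S^{q-1})$-factor pulls back canonically, while $[S^{p-1}]^*$ restricts as the class dual to a removed-disc boundary, with sign $\delta(e)$ governing the orientation-reversal.

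Plugging everything into the Mayer--Vietoris sequence
\[ \cdots \to H^{i-1}(A \cap B) \xrightarrow{\partial} H^i(M_G) \to H^i(A) \oplus H^i(B) \xrightarrow{\phi} H^i(A \cap B) \to \cdots \]
and analyzing it degree by degree produces the claim. Outside the distinguished degrees $0, p-1, q, p+q-1$ (with $q = p-1$ fusing two of them), the extra contributions from $A_v$ and from $A \cap B$ cancel, leaving $H^i(M_G) \cong \bigoplus_u H^i(E_u)$. In degree $p-1$, the summand $\bigoplus_u \pi_u^* H^{p-1}(B_u)$ survives untouched because its restrictions to the tori lie in $H^{p-1}(S^{q-1}) = 0$, while a class $\sum_u \lambda_u a_u$ lifts to $H^{p-1}(M_G)$ iff its restriction to each torus adjacent to a fixed $v$ is compensated by a class from $A_v$; this is exactly the condition $\sum_{(u,v) \in E} \delta(e) \lambda_u = 0$ for each $v$. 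In degree $q$, the fundamental classes $\pi_u^* [B_u]^*$ are \emph{not} directly present in $H^q(A_u)$ (as $H^q(B_u') = 0$), but arise via the connecting homomorphism $\partial$ from $[S^{q-1}]^* \in H^{q-1}(A \cap B)$ on each edge; exactness then yields the quotient relation $\sum_{(u, v) \in E} \delta(e) \pi_u^*[B_u]^* = 0$ per $v \in V$. The top-degree class is the fundamental class of $M_G$. Finally, the assertion on the cup product follows since each non-top cohomology class of $M_G$ admits a representative supported in a single $A_u$, and products can be computed inside $H^*(A_u) \subseteq H^*(E_u)$.

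The main obstacle will be the careful bookkeeping of the signs $\delta(e)$ and orientation conventions (in particular the reversal of $E_u$ or $E_v$ as a piece of $\partial \overline{M}_G$ depending on the parities of $p$ and $q$, as fixed in Section \ref{S:PLUMBINGS}) throughout the Mayer--Vietoris coboundary maps, since this is where the precise relations $\sum \delta(e) \lambda_u = 0$ and $\sum \delta(e) \pi_u^*[B_u]^* = 0$ emerge. The combined case $q = p-1$ requires fusing the analyses in degrees $p-1$ and $q$, and one must also handle the potential non-splitting of the extension $0 \to H^{q-1}(B_u) \to H^{q-1}(B_u') \to R^{\deg(u)-1} \to 0$ when tracing the restriction maps.
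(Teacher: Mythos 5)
Your decomposition of $M_G$ into $A=\bigsqcup_u A_u$ and $B=\bigsqcup_v A_v$ glued along tori $\bigsqcup_e S^{q-1}\times S^{p-1}$ is correct and natural, and it is genuinely different from the paper's route: the paper first introduces the auxiliary space $X=\overline{M}_G\setminus\bigcup_{u}\textup{Int}(\overline{E}_u)$, computes $H^*(X)$ by a Mayer--Vietoris sequence whose intersection pieces are $D^q\times S^{p-1}\simeq S^{p-1}$ (not tori), and then relates $H^*(X)$ to $H^*(M_G)$ via excision on the pair $(X,M_G)$ and Lefschetz duality in $\overline{M}_G$. The key structural advantage the paper buys with $X$ is that $X$ contains \emph{both} $M_G$ and each full $E_u$, so the composite $\bigoplus_u H^*(E_u)\xleftarrow{\,\cong\,}H^*(X)\twoheadrightarrow H^*(M_G)$ gives a natural ring map realizing the theorem's description, including the cup product statement, essentially for free.

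Your approach has a real gap precisely where this comparison is needed, in degree $q$. You correctly observe that $H^q(A_u)=\pi_u^*H^q(B_u')\oplus\theta_{a_u}H^{q-p+1}(B_u')$ has $\pi_u^*H^q(B_u')=0$ since $B_u'=B_u\setminus\bigsqcup D^q$ has trivial top cohomology. Thus the classes $\pi_u^*[B_u]^*\in H^q(E_u)$ die under restriction $H^q(E_u)\to H^q(A_u)$, and your decomposition provides no natural map from $\bigoplus_u H^q(E_u)$ to $H^q(M_G)$ to realize the quotient in the statement. Saying they ``arise via the connecting homomorphism $\partial$'' and ``exactness then yields the quotient relation'' is an assertion, not a computation: you would still need to identify the image of $\partial\colon H^{q-1}(A\cap B)\to H^q(M_G)$ with the classes $\pi_u^*[B_u]^*$ and determine the relations, which requires precisely the kind of ambient comparison space (or the Lefschetz-duality argument in $\overline{M}_G$) that your setup does not supply. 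The same issue breaks your last sentence: the degree-$q$ classes coming from $\partial$ do \emph{not} admit representatives supported in a single $A_u$, so the claim ``each non-top cohomology class of $M_G$ admits a representative supported in a single $A_u$, and products can be computed inside $H^*(A_u)\subseteq H^*(E_u)$'' is false for those classes, and the cup-product assertion is unjustified.

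There is also a smaller bookkeeping concern: with tori as intersection pieces, $H^*(A\cap B)$ and $H^*(A_v)$ contribute in degrees $0$, $p-1$, $q-1$, $p+q-2$, so the claimed cancellation ``outside the distinguished degrees'' needs a case analysis depending on the coincidences among $p-1$, $q-1$, $q$, $p+q-2$ (e.g.\ $q=p$, $q=p-1$, $q=2(p-1)$), considerably more than the two cases appearing in the theorem. The paper's choice of $X$ makes the intersection piece a single sphere $S^{p-1}$, which is what collapses the case analysis to the degrees $0$, $p-1$, $p$ for the first lemma and to $q$, $p+q-1$ for the second.
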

	\begin{remark}
		If, in the situation of Theorem \ref{T:MG_COHOM}, we merely require that $G$ has simply-connected connected components instead of \eqref{EQ:GRAPH_CONDITION}, we can apply Proposition \ref{P:GRAPH_RED1} to split it into a disjoint union of subgraphs that all either satisfy \eqref{EQ:GRAPH_CONDITION}, or consist of a single vertex in $V$. Thus, we can compute the cohomology ring of $M_G$ by applying Theorem \ref{T:MG_COHOM} to the corresponding components of $G$ (provided that the assumptions on the bundles as in Theorem \ref{T:MG_COHOM} are satisfied).
	\end{remark}

	\begin{theorem}
		\label{T:MG_CHAR_CLASSES}
		Let $c$ be a stable characteristic class. Let $G$ be a geometric plumbing graph satisfying the assumptions of Theorem \ref{T:MG_COHOM}. If $\xi_u$ denotes the vector bundle corresponding to the bundle $\pi_u$, then
		\[c(TM_G)=\sum\limits_{u\in U}\pi_u^*c(\xi_u\oplus TB_u), \]
		where we identified $H^*(M_G)$ with a quotient of a subring of $\bigoplus_{u\in U}H^*(E_u)$ according to Theorem \ref{T:MG_COHOM}.
	\end{theorem}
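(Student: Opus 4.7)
The plan is to compute $c(TM_G)$ by restricting to the pieces composing $M_G$, using the stability of $c$ together with Proposition \ref{P:SPHERE_BDL_CHAR_CLASSES}, and then assembling the results using the identification from Theorem \ref{T:MG_COHOM}. Write $M_G = \bigcup_{u \in U} P_u \cup \bigcup_{v \in V} P_v$, where $P_u = \pi_u^{-1}(B_u \setminus \bigsqcup {D^q}^\circ) \subseteq E_u$ and $P_v = \pi_v^{-1}(B_v \setminus \bigsqcup {D^p}^\circ) \subseteq E_v$ are the codimension-zero submanifolds-with-boundary inherited from $E_u$ and $E_v$. Since each $P_u$ is codimension zero in $E_u$, we have $TM_G|_{P_u} \cong TE_u|_{P_u}$, and similarly $TM_G|_{P_v} \cong TE_v|_{P_v}$.

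Applying Proposition \ref{P:SPHERE_BDL_CHAR_CLASSES} together with naturality of characteristic classes then gives
\[ c(TM_G)|_{P_u} = \pi_u^* c(\xi_u \oplus TB_u)|_{P_u}, \qquad c(TM_G)|_{P_v} = \pi_v^* c(\xi_v \oplus TB_v)|_{P_v}. \]
For the $v$-contributions, the hypotheses force $\xi_v$ to be trivial and $TB_v = TS^p$ to be stably trivial, so $\xi_v \oplus TB_v$ is stably trivial. Stability of $c$ then yields $c(\xi_v \oplus TB_v) = 0$ in positive degree, so the $v$-pieces contribute nothing to $c(TM_G)$. This is consistent with the fact that the $v$-pieces do not appear in the cohomological description of $H^*(M_G)$ in Theorem \ref{T:MG_COHOM}.

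Next I invoke Theorem \ref{T:MG_COHOM}: under the isomorphism expressing $H^*(M_G)$ as a quotient of a subring of $\bigoplus_{u \in U} H^*(E_u)$, an element of $H^*(M_G)$ corresponds to a tuple $(x_u)_{u \in U}$, where $x_u \in H^*(E_u)$ is (up to the relations in the theorem) the restriction of the class to $P_u$, regarded as an element of $H^*(E_u)$ via the natural map $H^*(E_u) \to H^*(P_u)$. By the computations above, the tuple representing $c(TM_G)$ is $(\pi_u^* c(\xi_u \oplus TB_u))_{u \in U}$, which in the notation of Theorem \ref{T:MG_COHOM} is exactly $\sum_{u \in U} \pi_u^* c(\xi_u \oplus TB_u)$.

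The main technical point is to verify that the isomorphism in Theorem \ref{T:MG_COHOM} is indeed compatible with restriction to the $P_u$, so that the global class on $M_G$ can be read off from its restrictions to the $u$-pieces. This should follow directly from the construction of the isomorphism via Mayer--Vietoris applied to the cover of $M_G$ by the $P_u$ and $P_v$, combined with the vanishing of $c(TE_v)$ in positive degree. Some care is needed in the exceptional degrees $p-1$, $q$, and $p+q-1$ singled out in Theorem \ref{T:MG_COHOM} to confirm that the prescribed tuple really does satisfy the compatibility relations and represents the intended global class; however, no new geometric input beyond the restriction computation above is needed.
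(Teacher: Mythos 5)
Your computation of the restrictions $c(TM_G)|_{P_u}$ (via $TM_G|_{P_u}\cong TE_u|_{P_u}$ and Proposition~\ref{P:SPHERE_BDL_CHAR_CLASSES}) and the vanishing of the $P_v$-contributions is correct and does mirror ingredients used in the paper. The ``main technical point'' you flag and then set aside, however, is where the real content of the argument lies, and as written there is a genuine gap.

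The identification of $H^*(M_G)$ in Theorem~\ref{T:MG_COHOM} is not built from a Mayer--Vietoris cover of $M_G$ by the $P_u$ and $P_v$; it goes through the auxiliary space $X=\overline{M}_G\setminus\bigcup_u\intr(\overline{E}_u)$, via the injection $H^*(X)\hookrightarrow\bigoplus_u H^*(E_u)$ (Lemma~\ref{L:X_COHOM_INJ}) followed by the surjection $H^*(X)\twoheadrightarrow H^*(M_G)$ (Lemma~\ref{L:X_COHOM_SURJ}). To express $c(TM_G)$ under this identification you must exhibit a class in $H^*(X)$ that restricts to $c(TM_G)$ on $M_G$ and to $\pi_u^*c(\xi_u\oplus TB_u)$ on each $E_u$. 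Knowing only that $c(TM_G)$ and $\pi_u^*c(\xi_u\oplus TB_u)$ agree after the further restriction to $P_u\subsetneq E_u$ does not yield this: neither $H^*(E_u)\to H^*(P_u)$ nor $H^*(M_G)\to\bigoplus_u H^*(P_u)\oplus\bigoplus_v H^*(P_v)$ is injective in general (the Mayer--Vietoris boundary map for the cover you describe need not vanish), so a compatible family of restrictions to the $P_u$, $P_v$ does not by itself determine the global class, let alone single out the claimed representative in $\bigoplus_u H^*(E_u)$. The paper avoids this by working on the filled-in plumbing $\overline{M}_G$: since $\overline{M}_G\simeq\bigvee_u B_u\vee\bigvee_v B_v$, the class $c(T\overline{M}_G)$ is computed outright as $\sum_u\pi_u^*c(\xi_u\oplus TB_u)$, stability gives $c(TM_G)=\iota^*c(T\overline{M}_G)$, and a commutative square relating $M_G$, $X$, $\bigsqcup_u E_u$, $\overline{M}_G$, $\bigsqcup_u B_u$ shows that $c(T\overline{M}_G)|_X$ is exactly the global lift your argument is missing. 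To repair your proof you would have to reproduce this lift or prove the injectivity you are implicitly assuming; the restriction computations on the $P_u$ alone do not get you there.
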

	
	Applying Theorems \ref{T:MG_COHOM} and \ref{T:MG_CHAR_CLASSES} in dimension $6k$ yields the following corollary.
	
	\begin{corollary}
		\label{C:MG_COHOM_6KDIM}
		In the setting of Theorem \ref{T:MG_COHOM} let $p=2k+1$, $q=4k$, i.e.\ $M_G$ has dimension $6k$. Let every $B_u$ be simply-connected with torsion-free homology. Then the following assertions hold:
		\begin{enumerate}
			\item If $G$ is the graph\,\, \begin{tikzpicture}[scale=0.5,, transform shape]
				\begin{scope}[every node/.style={circle,draw,minimum height=2em}]
					\node (V) at (-0.5,0) {$v$};
					\node (U) at (1,0) {$u$};
				\end{scope}
				\path[-](V) edge["{$\scriptstyle e$}"] (U);
			\end{tikzpicture}, then $M_G$ is a simply-connected $6k$-dimensional  manifold with torsion-free homology and invariants 
			\[(H^{2k}(M_G;\Z),\mu_{M_G},w_2(M_G)^k,p_k(M_G))=(H^{2k}(B_u;\Z),0,(w_2(B_u)+w_2(\pi_u))^k,0).\]
			If $k=1$, then $b_3(M_G)=0$.
			\item If $G$ has no vertex in $V$ that is a leaf, then $M_G$ is a simply-connected $6k$-dimensional manifold with torsion-free homology. Further, define $A=\bigoplus_{u\in U}H^{2k}(E_u;\Z)$, $\mu=\sum_{u\in U} \mu_{E_u}$ and $p=\sum_{u\in U}p_k(E_u)$ (viewed as a linear form on $A$). Then $H^{2k}(M_G;\Z)$ is given by
			\[\bigoplus\limits_{u\in U}\pi_u^*H^{2k}(B_u;\Z)\oplus\left\{\sum\limits_{u\in U}\lambda_u\cdot a_u \;\middle|\; \lambda_u\in \Z\;\forall u\in U,\, \sum\limits_{e=(u,v)\in E}\delta(e)\lambda_u=0\;\forall v\in V \right\}\subseteq A, \]
			and we have $w_2(M_G)^k=\sum_{u\in U}\pi_u^*(w_2(B_u)+w_2(\pi_u))^k$ and $\mu_{M_G}$ and $p_k(M_G)$ are the restrictions of $\mu$ and $p$ to $H^{2k}(M_G;\Z)$, respectively. If $k=1$, then $b_3(M_G)=0$.
		\end{enumerate}
	\end{corollary}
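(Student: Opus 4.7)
The plan is to specialize Theorems \ref{T:MG_COHOM} and \ref{T:MG_CHAR_CLASSES} to the dimensions $p = 2k+1$, $q = 4k$, and substitute the explicit fiberwise formulas of Corollary \ref{C:S2k_BDLS_S4k} for each sphere bundle $E_u \to B_u$. Simple-connectedness of $M_G$ in both parts will follow from Lemma \ref{L:FUND_GROUP}: the hypotheses $p,q>2$ hold since $p \geq 3$ and $q \geq 4$, every component of $G$ is simply-connected by \eqref{EQ:GRAPH_CONDITION}, each $B_u$ is simply-connected by assumption, and each $B_v = S^p$ is a sphere of dimension $\geq 3$.

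For part (1), with $G$ the single edge $v{-}u$, I would first observe that the constraint $\sum_{(u,v) \in E} \delta(e) \lambda_u = 0$ in the degree-$(p-1)$ formula collapses to $\lambda_u = 0$, killing the $a_u$-summand and giving $H^{2k}(M_G;\Z) \cong \pi_u^* H^{2k}(B_u;\Z)$; in degree $4k$ the quotient kills the single generator $\pi_u^*[B_u]^*$, leaving the free group $a_u \cdot \pi_u^* H^{2k}(B_u;\Z)$; and in the remaining degrees the formula is a direct sum of pieces of the (torsion-free) cohomology of $B_u$. For the invariants, Corollary \ref{C:S2k_BDLS_S4k} gives $\mu_{E_u} \equiv 0$ on triples of pulled-back classes, hence $\mu_{M_G} \equiv 0$; Theorem \ref{T:MG_CHAR_CLASSES} places $p_k(TM_G)$ in $\pi_u^* H^{4k}(B_u;\Z) = \Z\pi_u^*[B_u]^*$, which is precisely what was quotiented out, so $p_k(M_G) = 0$; and the Whitney formula for orientable bundles yields $w_2(M_G) = \pi_u^*(w_2(B_u) + w_2(\pi_u))$, from which the $k$-th power formula follows.

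For part (2), Theorem \ref{T:MG_COHOM} directly produces the asserted description of $H^{2k}(M_G;\Z)$: the first summand is free, and the second is cut out of the free group $\bigoplus_u \Z a_u$ by linear constraints. Because the cup product on $H^*(M_G;\Z)$ is inherited componentwise from $\bigoplus_u H^*(E_u;\Z)$ and pairing with $[M_G]$ corresponds to summing pairings against each $[E_u]$, the trilinear form and Pontryagin class of $M_G$ become the restrictions of $\sum_u \mu_{E_u}$ and $\sum_u p_k(E_u)$ to $H^{2k}(M_G;\Z) \subset A$; for $w_2(M_G)^k$, all cross terms between distinct $u$ vanish componentwise, producing the stated sum. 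To check torsion-freeness of $H^{4k}(M_G;\Z)$, I would split off the free part $\bigoplus_u a_u \cdot \pi_u^* H^{2k}(B_u;\Z)$ and reduce to showing that the cokernel of the signed bipartite incidence map $\Z^V \to \Z^U$ is torsion-free; applying Proposition \ref{P:GRAPH_RED_EDGE} iteratively to the tree $G$, one may assume all signs are $+1$, at which point the matrix is totally unimodular. This last step is where I expect the main difficulty.

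Finally, for $b_3(M_G) = 0$ when $k = 1$, observe that $3 \notin \{0,p-1,q,p+q-1\} = \{0,2,4,6\}$, so the formula of Theorem \ref{T:MG_COHOM} gives $H^3(M_G;\Z) = \bigoplus_u H^3(E_u;\Z)$; by Proposition \ref{P:SPHERE_BDL_COHOM}, $H^3(E_u;\Z) \cong H^3(B_u;\Z) \oplus a_u \cdot H^1(B_u;\Z)$, both of which vanish since $B_u$ is a simply-connected 4-manifold with torsion-free cohomology.
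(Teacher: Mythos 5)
Your proof is correct and follows essentially the same route as the paper: apply Lemma \ref{L:FUND_GROUP} for simple-connectedness, then specialize Theorems \ref{T:MG_COHOM} and \ref{T:MG_CHAR_CLASSES} and Corollary \ref{C:S2k_BDLS_S4k} to $p=2k+1$, $q=4k$. The one place you diverge is the torsion-freeness of $H^{4k}(M_G;\Z)$, which you reduce to showing the cokernel of the (signed) biadjacency map $\Z^V\to\Z^U$ is free, and attack via total unimodularity. That plan does work: for a bipartite forest with parts $U,V$ and no isolated column in a chosen square submatrix, some row in $U$ must be a leaf (a forest on $2j$ vertices has at most $2j-1$ edges, so not all $j$ rows can have degree $\geq 2$), and expanding along that row gives an inductive proof that every square minor lies in $\{0,\pm1\}$; your use of Proposition \ref{P:GRAPH_RED_EDGE} to normalize the edge signs is also fine since every edge of a tree separates. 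So the step you flag as the main difficulty is in fact unproblematic. That said, a shorter route — and more likely what the paper has in mind given its one-line dismissal — is Poincar\'e duality plus the universal coefficient theorem: $H^{4k}(M_G)\cong H_{2k}(M_G)$, whose free part agrees with that of $H^{2k}(M_G)$ and whose torsion agrees with that of $H^{2k+1}(M_G)$; both of these are direct sums or subgroups of the torsion-free $\bigoplus_u H^*(E_u)$ by Theorem \ref{T:MG_COHOM} and \eqref{EQ:BDL_COHOM_SPLIT}, so no matrix computation is needed. Everything else in your argument (killing the $a_u$-summand in part~(1), componentwise cup product and summed pairing against the $[E_u]$ in part~(2), the Whitney sum for $w_2$, and the vanishing of $H^3$ when $k=1$) matches the paper's intent.
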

	
	The rest of this section consists of the proofs of Theorems \ref{T:MG_COHOM} and \ref{T:MG_CHAR_CLASSES}.
	
	\begin{lemma}
		\label{L:Snk}
		Let $S_k^n=S^n\setminus(\bigsqcup_k D^n)^\circ$ with $n\geq2$. Then the inclusion $\iota\colon\bigsqcup_k S^{n-1}\hookrightarrow S^n_k$ of the boundary induces an injective map
		\[
		\begin{tikzcd}
			\iota^*\colon H^{n-1}(S^n_k)\to H^{n-1}(\bigsqcup_k S^{n-1})\cong \bigoplus\limits_k H^{n-1}(S^{n-1})
		\end{tikzcd}
		\]
		with image generated by the elements $a_j-a_i$, where $a_i$ is a positively oriented generator of the $i$-th copy of $H^{n-1}(S^{n-1})$ with respect to the orientation induced by $S^n_k$.
	\end{lemma}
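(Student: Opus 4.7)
The plan is to apply Mayer--Vietoris to the decomposition of $S^n$ as $A\cup B$, where $A$ is an open neighborhood of $S^n_k$ deformation retracting onto $S^n_k$ and $B$ is an open neighborhood of the $k$ removed discs deformation retracting onto $\bigsqcup_k D^n$; then $A\cap B$ deformation retracts onto $\bigsqcup_k S^{n-1}$. Since $n\geq 2$, we have $H^{n-1}(S^n)=0$ and $H^{n-1}(B)=0$, while $H^n(S^n_k)=0$ because $S^n_k$ is an $n$-manifold with non-empty boundary and $H^n(B)=0$. Therefore the Mayer--Vietoris sequence collapses to a short exact sequence
\[
0\longrightarrow H^{n-1}(S^n_k)\xrightarrow{(\iota^*,\,j^*)} H^{n-1}\bigl(\textstyle\bigsqcup_k S^{n-1}\bigr)\xrightarrow{\phi} H^n(S^n)\longrightarrow 0,
\]
in which the component $j^*\colon H^{n-1}(S^n_k)\to H^{n-1}(\bigsqcup_k D^n)=0$ is zero, so the first map is exactly $\iota^*$. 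This immediately yields the claimed injectivity.

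Next I would identify the connecting homomorphism $\phi$. Let $a_i^B$ be the generator of the $i$-th summand $H^{n-1}(S^{n-1})$ oriented as the boundary of the $i$-th disc $D^n_i\subset B$. The Mayer--Vietoris coboundary can be computed by choosing extensions of a cocycle across $A$ and $B$ and subtracting: for $a_i^B$, the standard extension across $B$ is the generator of $H^n(D^n_i,\partial D^n_i)$ via excision, which represents $[S^n]^*\in H^n(S^n)$. Thus $\phi(a_i^B)=[S^n]^*$ for every $i$, so $\phi$ is the sum-of-coefficients map in the basis $\{a_i^B\}$.

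Finally I would reconcile the orientation conventions. By definition, $a_i$ in the lemma is oriented using the induced boundary orientation of the $i$-th component of $\partial S^n_k$; since $S^n_k$ and $D^n_i$ induce opposite outward normals on their common boundary $S^{n-1}$, one has $a_i=-a_i^B$. Hence $\phi(a_i)=-[S^n]^*$ for every $i$, and
\[
\ker\phi=\Bigl\{\sum_{i=1}^k\lambda_i a_i\,\Bigm|\,\sum_{i=1}^k\lambda_i=0\Bigr\},
\]
which is generated by the differences $a_j-a_i$. Combined with the short exact sequence above, this identifies $\mathrm{im}(\iota^*)$ as asserted.

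The main obstacle is the explicit identification of the Mayer--Vietoris coboundary $\phi$ on the generators $a_i^B$; it is a standard computation but requires keeping track of the orientation conventions carefully, especially the sign flip coming from the fact that $S^n_k$ and the filled-in discs induce opposite orientations on the common boundary circles.
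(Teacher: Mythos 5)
Your proof is correct, but it takes a genuinely different route from the paper's. You use Mayer--Vietoris applied to the closed cover $S^n = S^n_k \cup \bigsqcup_k D^n$ (after thickening to open sets), obtaining the short exact sequence
\[
0\longrightarrow H^{n-1}(S^n_k)\xrightarrow{\ \iota^*\ } H^{n-1}\bigl(\textstyle\bigsqcup_k S^{n-1}\bigr)\xrightarrow{\ \partial\ } H^n(S^n)\longrightarrow 0,
\]
and then identify the connecting homomorphism $\partial$ on each summand. The paper instead uses Lefschetz duality on the pair $(S^n_k, \partial S^n_k)$, converting the long exact cohomology sequence of the pair into a long exact homology sequence; injectivity of $\iota^*$ then corresponds to vanishing of the map $H_1(S^n_k)\to H_1(S^n_k,\partial S^n_k)$ (trivially true for $n\geq 3$, checked by hand for $n=2$), and the image is read off as the kernel of $H_0(\bigsqcup_k S^{n-1})\to H_0(S^n_k)$, which is the visibly geometric statement that any two boundary components can be joined by a path in $S^n_k$. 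Both arguments are clean and correct; yours is arguably more elementary in that it avoids Lefschetz duality, at the cost of an explicit computation of the Mayer--Vietoris coboundary, while the paper's duality argument replaces that computation with an immediate geometric statement about connectedness. One small remark: the sign discrepancy $a_i=-a_i^B$ that you carefully track is harmless for the conclusion, since the kernel of the ``sum of coefficients'' map and the set of differences $a_j-a_i$ are both stable under negating all basis vectors simultaneously, so you could have sidestepped this bookkeeping.
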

	\begin{proof}
		We use Lefschetz duality to obtain the following commutative diagram with exact rows and where the vertical arrows are isomorphisms:
		\[
		\begin{tikzcd}
			H^{n-1}(S^n_k,\bigsqcup_k S^{n-1})\arrow{d}{\cong}\arrow{r}&H^{n-1}(S^n_k)\arrow{r}{\iota^*}\arrow{d}{\cong}&H^{n-1}(\bigsqcup_k S^{n-1})\arrow{r}\arrow{d}{\cong}& H^n(S^n_k,\bigsqcup_k S^{n})\arrow{r}\arrow{d}{\cong}&0\\
			H_1(S^n_k)\arrow{r}&H_1(S^n_k,\bigsqcup_k S^{n-1})\arrow{r}&H_0(\bigsqcup_k S^{n-1})\arrow{r}&H_0(S^n_k)\arrow{r}&0
		\end{tikzcd}
		\]
		If $n\geq3$, then $S^n_k$ is simply-connected. If $n=2$, then the map $H_1(S^2_k)\to H_1(S^2_k,\bigsqcup_k S^{1})$ is trivial as any closed loop in $S^2_k$ is homologous to the sum of loops in $\bigsqcup_k S^{1}$ it encloses. Hence, the map $\iota^*$ is injective. To compute the image, it suffices to determine the kernel of the map $H_0(\bigsqcup_k S^{n-1})\to H_0(S^n_k)$. The claim now follows from the fact that we can join any two boundary components by a path and that the vertical map $H^{n-1}(\bigsqcup_k S^{n-1})\to H_0(\bigsqcup_k S^{n-1})$ is given by Poincar\'e duality.
	\end{proof}

	For each $e=(u,v)\in E$ we denote the embeddings along which the spaces $\overline{E}_u$ and $\overline{E}_v$ get glued by $\varphi_{(u,v)}\colon D^q\times D^p\hookrightarrow \overline{E}_u$ and $\varphi_{(v,u)}\colon D^p\times D^q\hookrightarrow \overline{E}_v$, respectively and by $I^\pm_{p,q}\colon D^p\times D^q\to D^q\times D^p$ the diffeomorphism
	\[I^\pm_{p,q}(x_1,\dots,x_p,y_1,\dots,y_q)=(\pm y_1,y_2,\dots,y_q,\pm x_1,x_2,\dots,x_p), \]
	so that $\overline{M}_G$ is the result of the following pushout:
	\begin{equation*}
		\begin{tikzcd}[row sep=huge, column sep = huge]
			\bigsqcup\limits_{e\in E} D^q\times D^p\arrow[hookrightarrow]{rr}{\bigsqcup\limits_{e=(u,v)\in E} \varphi_{(v,u)}\circ I^{\delta(e)}_{q,p} }\arrow[hookrightarrow]{d}{\bigsqcup\limits_{(u,v)\in E} \varphi_{(u,v)}} && \bigsqcup\limits_{v\in V} \overline{E}_v \arrow[hookrightarrow]{d}\\
			\bigsqcup\limits_{u\in U}\overline{E}_u\arrow[hookrightarrow]{rr}&& \overline{M}_G
		\end{tikzcd}
	\end{equation*}
	
	Now let \[X=\overline{M}_G\setminus \bigcup_{u\in U}\intr(\overline{E}_u)=\bigcup\limits_{v\in V} \overline{E}_v\setminus\left(\bigcup\limits_{(u,v)\in E}\varphi_{(v,u)}({D^p}^\circ\times D^q)\right)\cup \bigcup_{u\in U}E_u,\]
	where we considered the spaces $\overline{E}_v$ and $\overline{E}_u$ as subspaces of $\overline{M}_G$. Alternatively, the space $X$ is the result of the following pushout:
	\begin{equation}
		\label{EQ:PUSHOUT_X}
		\begin{tikzcd}[row sep=huge, column sep = huge]
			\bigsqcup\limits_{e\in E} D^q\times S^{p-1}\arrow[hookrightarrow]{rr}{\bigsqcup\limits_{e=(u,v)\in E} \varphi_{(v,u)}\circ I^{\delta(e)}_{q,p} }\arrow[hookrightarrow]{d}{\bigsqcup\limits_{(u,v)\in E} \varphi_{(u,v)}} && \bigsqcup\limits_{v\in V} \overline{E}_v\setminus\left(\bigcup\limits_{(u,v)\in E}\varphi_{(v,u)}({D^p}^\circ\times D^q)\right) \arrow[hookrightarrow]{d}\\
			\bigsqcup\limits_{u\in U}E_u\arrow[hookrightarrow]{rr}&& X
		\end{tikzcd}
	\end{equation}

	For every $v\in V$, by assumption, the manifold $\overline{E}_v\setminus(\bigcup_{(u,v)\in E}\varphi_{(v,u)}({D^p}^\circ\times D^q))$ can be identified with $S^p_{\deg(v)}\times D^q$.
	
\begin{lemma}
	\label{L:X_COHOM_INJ}
	Suppose that $G$ satisfies \eqref{EQ:GRAPH_CONDITION}. Then the inclusion $\bigsqcup_{u\in U} E_u\hookrightarrow X$ induces an injective map $H^i(X)\to \bigoplus_{u\in U}H^i(E_u)$ for every $i$ with image given by
	\begin{itemize}
		\item $\left(\sum\limits_{u\in U}1_{H^0(E_u)}\right)R\cong R$, \quad if $i=0$;
		\item $\bigoplus\limits_{u\in U} \pi_u^*H^{p-1}(B_u)\oplus \left\{\sum\limits_{u\in U}\lambda_u\cdot a_u \;\middle|\;\lambda_u\in R\;\forall u\in U, \sum\limits_{(u,v)\in E}\delta(e)\lambda_u=0\;\forall v\in V \right\}$,\quad if $i=p-1$;
		\item $\bigoplus\limits_{u\in U} H^i(E_u)$,\quad \text{else.}
	\end{itemize}
\end{lemma}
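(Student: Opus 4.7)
The plan is to apply the Mayer--Vietoris sequence to the pushout \eqref{EQ:PUSHOUT_X}. Set $A = \bigsqcup_{u \in U} E_u$ and $B = \bigsqcup_{v \in V} \overline{E}_v \setminus \bigcup_{(u,v) \in E} \varphi_{(v,u)}({D^p}^\circ \times D^q)$, so that $A$ and $B$ are the two halves of the pushout glued along $A \cap B = \bigsqcup_{e \in E} D^q \times S^{p-1}$; after thickening by open collars these form a suitable open cover of $X$. Since by hypothesis each $\pi_v$ is trivial with $B_v = S^p$, the $v$-summand of $B$ deformation retracts to $S^p_{\deg(v)}$, whose cohomology is described by Lemma~\ref{L:Snk} and is concentrated in degrees $0$ and $p-1$; each $e$-summand of $A \cap B$ deformation retracts to $S^{p-1}$. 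I would then identify the restriction maps: on the $A$-side, $\iota_A^*$ sends $a_u \mapsto 1$ on each edge $e = (u,v)$ incident to $u$ by \eqref{EQ:a_FIBR_INCL} and kills $\pi_u^* H^{p-1}(B_u)$; on the $B$-side, the diffeomorphism $I^{\delta(e)}_{q,p}$ reflects one $S^{p-1}$-coordinate precisely when $\delta(e) = -1$ and hence acts as multiplication by $\delta(e)$ on $H^{p-1}(S^{p-1})$, so by Lemma~\ref{L:Snk} the map $\iota_B^*$ is injective with image on each $v$-summand equal to $\{(\nu_e)_{e \sim v} : \sum_{e \sim v} \delta(e) \nu_e = 0\}$.

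With these restriction maps in hand, the Mayer--Vietoris sequence determines $H^i(X)$ case by case. For $i \notin \{0, 1, p-1, p\}$, all of $H^{i-1}(A \cap B)$, $H^i(A \cap B)$, and $H^i(B)$ vanish, so $H^i(X) \cong H^i(A) = \bigoplus_u H^i(E_u)$. The case $i = 0$ follows from the connectedness of $X$, which holds because $G$ is a tree and $p, q \geq 2$. For the main case $i = p-1$, I would compute the kernel of the difference map $\iota_A^* - \iota_B^*$ directly: a pair $\bigl(\sum_u (\pi_u^* x_u + \lambda_u a_u), (\eta_v)_v\bigr)$ lies in this kernel if and only if $\eta_{v, e} = \delta(e) \lambda_u$ for each edge $e = (u,v)$, and the Lemma~\ref{L:Snk} constraint $\sum_{e \sim v} \eta_{v, e} = 0$ then forces $\sum_{(u,v) \in E} \delta(e) \lambda_u = 0$ for each $v$. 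Since $\iota_B^*$ is injective, the projection of this kernel to $H^{p-1}(A)$ remains injective, yielding the stated formula provided that $H^{p-1}(X)$ actually equals this kernel.

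To ensure that $H^{p-1}(X)$ equals the kernel above (and more generally that $H^i(X) \to H^i(A)$ is injective in the remaining degrees $i = 1$ and $i = p$), I need the Mayer--Vietoris difference map to be surjective in degrees $0$ and $p-1$, so that the connecting homomorphism vanishes. The degree $0$ surjectivity follows easily from the tree structure of $G$. The main obstacle is the degree $p-1$ surjectivity, which---after unwinding the constraint of Lemma~\ref{L:Snk}---reduces to showing that the signed incidence matrix $M \colon R^U \to R^V$, $\lambda \mapsto \bigl(\sum_{(u,v) \in E} \delta(e) \lambda_u\bigr)_v$, is surjective whenever $G$ satisfies \eqref{EQ:GRAPH_CONDITION}. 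I would prove this by induction on $|V|$: the base case $|V| = 1$ is immediate from $\deg(v_1) \geq 2$ (or is trivial in the single-edge case); for $|V| \geq 2$, I pick a leaf $u_0 \in U$ with neighbor $v_0$ (which exists since all leaves of the tree lie in $U$), and either remove $u_0$ alone when $\deg(v_0) \geq 3$ or remove both $u_0$ and $v_0$ when $\deg(v_0) = 2$, applying the inductive hypothesis to the reduced tree (whose no-leaves-in-$V$ condition survives the surgery) and then solving the remaining equation at $v_0$ by choosing $\lambda_{u_0}$ freely.
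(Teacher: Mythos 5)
Your proof follows the same overall strategy as the paper's: apply the Mayer--Vietoris sequence to the pushout \eqref{EQ:PUSHOUT_X}, identify the cohomology of the $B$-side via Lemma~\ref{L:Snk}, read off the restriction maps using \eqref{EQ:a_FIBR_INCL} and the twist $I^{\delta(e)}$, and reduce the degree-$(p-1)$ case to an algebraic statement about the graph. The chief technical burden in both arguments is showing the Mayer--Vietoris difference map is surjective in degree $p-1$, and here you deviate: the paper argues directly that the combined map $\phi$ hits every standard basis vector by working from the $U$-leaves and propagating by induction on the distance to a chosen root, whereas you first reformulate surjectivity as surjectivity of the signed biadjacency-type map $M\colon R^U\to R^V$, $\lambda\mapsto\bigl(\sum_{(u,v)\in E}\delta(e)\lambda_u\bigr)_v$, and then prove \emph{that} by removing a $U$-leaf (and possibly its degree-2 $V$-neighbour). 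Your reformulation is a genuine and arguably cleaner packaging of what the paper shows in-line; it makes visible that the hypothesis ``no $v\in V$ is a leaf'' is exactly what prevents $M$ from having a coordinate that it cannot reach. One small remark: your degree-$0$ case relies not only on $X$ being connected but also on each $E_u$ being connected, so that $1_{H^0(X)}$ maps to $\sum_u 1_{H^0(E_u)}$; the paper instead invokes Lemma~\ref{L:GRAPH_RANK_N-1} to identify the kernel of the incidence-matrix map.

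There is a minor flaw in the bookkeeping of your induction: you state it as induction on $|V|$, but in the branch $\deg(v_0)\ge 3$ you remove only $u_0$, so $|V|$ does not decrease and the induction would not terminate. This is easily repaired by inducting on $|U|$ (equivalently $|E|$, or lexicographically on $(|V|,|U|)$): in either branch $|U|$ strictly drops, the reduced graph is still a tree with no $V$-leaves (checking, as you note, that a $V$-neighbour whose degree drops to $1$ is removed together with $u_0$, and that the possibly-isolated $u_1$ causes no harm since the claim is vacuous when $V'=\emptyset$), and surjectivity of $M'$ plus a free choice of $\lambda_{u_0}$ gives surjectivity of $M$. With that adjustment the argument is complete.
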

	We will henceforth identify the cohomology of $X$ with its image in $\bigoplus_{u\in U}H^*(E_u)$.
	\begin{proof}
		The Mayer-Vietoris sequence for the pushout \eqref{EQ:PUSHOUT_X} is given as follows:
		\[
		\dots\longrightarrow H^i(X)\longrightarrow\bigoplus\limits_{v\in V}H^i(S^p_{\deg(v)})\bigoplus\limits_{u\in U}H^i(E_u)\longrightarrow \bigoplus\limits_{e\in E}H^i(S^{p-1})\longrightarrow H^{i+1}(X)\longrightarrow\cdots
		\]
		To simplify notation, we will write $u$, $v$ or $e$ for a canonical generator of a group $H^i(M^n)$ (e.g.\ $[M]^*$ if $i=n$ or $1$ if $i=0$), where $M$ is related to $u\in U$, $v\in V$ or $e\in E$ (e.g.\ as the fiber, base or total space of $\pi_u$ or $\pi_v$).
		
		For $i=0$ the middle map of the sequence is given by
		\begin{align*}
			\bigoplus\limits_{v\in V}R v\bigoplus\limits_{u\in U}R u\longrightarrow \bigoplus\limits_{e\in E}R e,\quad v\mapsto \sum\limits_{e=(u,v)\in E} e,\quad u\mapsto -\sum\limits_{e=(u,v)\in E} e.
		\end{align*}
		This is the linear map associated to the incidence matrix $Q(G)$, when we view $G$ as a directed graph with edges originating from vertices in $V$ and ending in vertices in $U$. Since $G$ is simply-connected, it follows from Lemmas \ref{L:GRAPH_RANK_N-1} and \ref{L:GRAPH_SURJ} that this map is surjective with kernel generated by $\sum_{u\in U} u+\sum_{v\in V} v$. In particular, the image of $H^0(X)$ in $\bigoplus_{u\in U}H^0(E_u)$ is generated by
		\[\sum\limits_{u\in U} u=\sum\limits_{u\in U} 1_{H^0(E_u)}.\]
		
		For $1\leq i<p-1$ or $i\geq p$ the groups $H^i(S^p_k)$ and $H^i(S^{p-1})$ vanish. By exactness, the map $H^i(X)\to \bigoplus_{u\in U} H^i(E_u)$ is an isomorphism for $1\leq i<p-1$ and for $i>p$ (for $i=1$ this follows from the surjectivity of the map in degree $0$).
		
		Finally, we have to investigate the following part of the sequence:
		\begin{align*}
			0\longrightarrow H^{p-1}(X)\longrightarrow\bigoplus\limits_{v\in V}H^{p-1}(S^p_{\deg(v)})\bigoplus\limits_{u\in U}H^{p-1}(E_u)\longrightarrow &\bigoplus\limits_{e\in E}H^{p-1}(S^{p-1})\\
			&\longrightarrow H^{p}(X)\longrightarrow\bigoplus\limits_{u\in U}H^p(E_u)\longrightarrow 0.
		\end{align*}
		By \eqref{EQ:BDL_COHOM_SPLIT} and by writing $e=[S^{p-1}]*\in H^{p-1}(S^{p-1})$, viewed as the summand corresponding to $e\in E$, we can write the third map in this sequence as
		\[\bigoplus\limits_{v\in V}H^{p-1}(S^p_{\deg(v)})\bigoplus\limits_{u\in U}\pi^*_u(H^{p-1}(B_u))\oplus R a_u\longrightarrow \bigoplus\limits_{e\in E}R e. \]
		We denote this map by $\phi$. By Lemma \ref{L:Snk}, for every $v\in V$ there are generators $a^v_{u_1u_2}$, $u_1,u_2\in U$  with $(u_1,v),(u_2,v)\in E$, of $H^{p-1}(S^p_{\deg(v)})$ (note that they are not linearly independent), so that we have
		\begin{align*}
			\phi(a^v_{u_1u_2})&= \delta(u_1,v)(u_1,v)-\delta(u_2,v)(u_2,v),\quad&&\text{for }v\in V,\\
			\phi(x)&=0,\quad&&\text{for }x\in \pi^*_u(H^{p-1}(B_u)),u\in U,\\
			\phi(a_u)&= \sum\limits_{e=(u,v)\in E}e,\quad &&\text{for }u\in U.
		\end{align*}
		The last line follows from \eqref{EQ:a_FIBR_INCL}.
		
		If $G$ is the graph \begin{tikzpicture}[scale=0.5,, transform shape]
			\begin{scope}[every node/.style={circle,draw,minimum height=2em}]
				\node (V) at (-0.5,0) {$v$};
				\node (U) at (1,0) {$u$};
			\end{scope}
			\path[-](V) edge["{$\scriptstyle e$}"] (U);
		\end{tikzpicture}, then $S^p_{\deg(v)}$ is contractible, so $H^{p-1}(S^p_{\deg(v)})$ is trivial and $\phi(a_u)=e$. Hence, the map $\phi$ is surjective with kernel $\pi^*_u(H^{p-1}(B_u))$.
		
		Now suppose that no vertex in $V$ is a leaf. Then for any edge $e=(u,v)\in E$, that is connected to a leaf $u\in U$, we have $\phi(a_u)=e$. Hence, for any edge $e^\prime=(u^\prime,v)\in E$ connected to $v$, we have
		\[\phi(\delta(u^\prime,v)a^v_{u^\prime u}+\delta(u,v)a_u)=e^\prime.\]
		Hence, by induction over the distance to the root, we see that the map $\phi$ is surjective.
		
		We have that $\bigoplus_{u\in U}\pi^*_u(H^{p-1}(B_u))$ is contained in the kernel of $\phi$. Further, the image of the restriction $\bigoplus_{v\in V}H^{p-1}(S^p_{\deg(v)})\longrightarrow\bigoplus_{e\in E}R e$ of $\phi$ is given by 
		\[\left\{\sum\limits_{e\in E}\lambda_e e \;\middle|\;\lambda_e\in R\;\forall e\in E, \sum\limits_{e=(u,v)\in E}\delta(e)\lambda_e=0\;\forall v\in V \right\}. \]
		Thus, to determine the kernel of $\phi$, we need to determine all elements $\sum_{u\in U}\lambda_u a_u\in\bigoplus_{u\in U}R a_u$, that get mapped into this set via $\phi$. This is precisely the set
		\[\left\{\sum\limits_{u\in U}\lambda_u a_u \;\middle|\; \lambda_u\in R\;\forall u\in U,  \sum\limits_{e=(u,v)\in E}\delta(e)\lambda_u=0\;\forall v\in V \right\}, \]
		which is the projection of the kernel of $\phi$ to $\bigoplus_{u\in U}R a_u$. This finishes the proof.
	\end{proof}

\begin{lemma}
	\label{L:X_COHOM_SURJ}
	Suppose that $G$ satisfies \eqref{EQ:GRAPH_CONDITION}. Then the inclusion $M_G\hookrightarrow X$ induces a surjective map $H^i(X)\to H^i(M_G)$ with kernel given by
	\[
	\ker(H^i(X)\to H^i(M_G))=\begin{cases}
		\bigoplus\limits_{v\in V}\left(\sum\limits_{e=(u,v)\in E}\delta(e)\pi_u^*[B_u]^*\right)R,\quad &i=q;\\
		\left\{\sum\limits_{u\in U}\lambda_u[E_u]^* \;\middle|\;\lambda_u\in R\;\forall u\in U,  \sum\limits_{u\in U}\lambda_u=0 \right\},\quad &i=p+q-1;\\
		0,\quad &\text{else.}
	\end{cases}
	\]
\end{lemma}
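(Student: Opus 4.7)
The plan is to present $M_G$ via a pushout analogous to \eqref{EQ:PUSHOUT_X} and to compare the resulting Mayer-Vietoris sequences. First, I would observe that $M_G = \partial \overline{M}_G$ is the pushout
\[
\bigsqcup_{e \in E} S^{p-1} \times S^{q-1} \rightrightarrows \bigsqcup_{u \in U} \pi_u^{-1}\Bigl(B_u \setminus \Bigl(\bigsqcup_{\deg(u)} D^q\Bigr)^\circ\Bigr) \sqcup \bigsqcup_{v \in V} S^p_{\deg(v)} \times S^{q-1},
\]
obtained from \eqref{EQ:PUSHOUT_X} by passing to boundaries. The inclusion $M_G \hookrightarrow X$ is induced piecewise by the evident inclusions $A_u := \pi_u^{-1}(B_u \setminus (\bigsqcup D^q)^\circ) \hookrightarrow E_u$, the boundary inclusion $S^p_{\deg(v)} \times S^{q-1} \hookrightarrow S^p_{\deg(v)} \times D^q$, and $S^{p-1} \times S^{q-1} \hookrightarrow D^q \times S^{p-1}$ (factor swap combined with boundary inclusion on the $D^q$-factor). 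This yields a morphism of Mayer-Vietoris long exact sequences from which the surjectivity and the asserted kernel can be extracted.

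Next, I would compute the cohomology of each piece and the induced maps. By K\"unneth, the restriction $S^p_{\deg(v)} \times S^{q-1} \hookrightarrow S^p_{\deg(v)} \times D^q$ realizes the natural embedding $H^*(S^p_{\deg(v)}) \hookrightarrow H^*(S^p_{\deg(v)}) \otimes H^*(S^{q-1})$, and analogously for the intersection pieces $D^q \times S^{p-1} \hookleftarrow S^{p-1} \times S^{q-1}$. For the $A_u$-piece I would use a Mayer-Vietoris argument for $B_u = (B_u \setminus (\bigsqcup D^q)^\circ) \cup \bigsqcup D^q$: this shows that $H^*(B_u \setminus (\bigsqcup D^q)^\circ)$ agrees with $H^*(B_u)$ outside degrees $q-1$ and $q$, vanishes in degree $q$, and gains an additional $R^{\deg(u)-1}$ summand in degree $q-1$. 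Feeding this into the Gysin sequence for the sphere bundle $A_u \to B_u \setminus (\bigsqcup D^q)^\circ$ (whose Euler class vanishes by hypothesis), I would conclude that the restriction $H^*(E_u) \to H^*(A_u)$ is an isomorphism outside degrees $q-1$ and $q$, and that its kernel in degree $q$ is exactly $\pi_u^*(H^q(B_u))$, generated by $\pi_u^*[B_u]^*$.

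Finally, I would chase the comparison diagram. In degrees $i \notin \{q, p+q-1\}$ all piecewise restriction maps are isomorphisms on the relevant cohomology groups, and the five lemma gives that $H^i(X) \to H^i(M_G)$ is an isomorphism. The top-degree case $i = p+q-1$ reflects only that $M_G$ is connected and oriented, so $H^{p+q-1}(M_G) \cong R$ and each $[E_u]^*$ restricts to the fundamental class of $M_G$, yielding the sum map on $\bigoplus_u R \cdot [E_u]^*$. The main obstacle is degree $q$: each class $\pi_u^*[B_u]^*$ dies under restriction to $A_u$, so its image in $H^q(M_G)$ lies in the image of the Mayer-Vietoris connecting map $H^{q-1}(A \cap B) \to H^q(M_G)$. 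The hard part is then to trace this connecting map together with the pullback of the $H^{q-1}(S^{q-1})$-generator from each $S^p_{\deg(v)} \times S^{q-1}$ to its $\deg(v)$ boundary components (each appearing with sign $\delta(e)$ from the plumbing orientation), thereby identifying the $|V|$ relations $\sum_{e=(u,v) \in E} \delta(e) \pi_u^*[B_u]^* = 0$ in $H^q(M_G)$, and to verify by a rank count that these span the entire kernel.
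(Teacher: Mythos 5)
Your strategy — presenting $M_G$ itself as a pushout obtained from \eqref{EQ:PUSHOUT_X} by passing to boundaries and then comparing Mayer–Vietoris sequences — is genuinely different from the paper's, which instead analyzes the long exact sequence of the pair $(X,M_G)$: by excision $H^*(X,M_G)$ is concentrated in degrees $q$ and $p+q$, so the isomorphism in all other degrees is immediate, and the two interesting degrees are handled via Lefschetz duality and commutative diagrams of pairs involving $\overline{M}_G$. Your route avoids $(X,M_G)$ entirely, which is a reasonable alternative, but the execution has real gaps.

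The central error is the claim that the piecewise restrictions are isomorphisms in all degrees $i\notin\{q,p+q-1\}$. On the $u$-pieces, the Gysin splitting $H^i(E_u)=H^i(B_u)\oplus H^{i-p+1}(B_u)$ and $H^i(A_u)=H^i(B_u'')\oplus H^{i-p+1}(B_u'')$ (where $B_u''=B_u\setminus(\bigsqcup D^q)^\circ$) shows that the restriction $H^*(E_u)\to H^*(A_u)$ fails to be an isomorphism in degrees $q-1$, $q$, $p+q-2$ and $p+q-1$ — in particular $H^{p+q-1}(E_u)\cong R$ while $H^{p+q-1}(A_u)=0$, since $A_u$ has nonempty boundary. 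Likewise, $H^{q-1}(S^p_{\deg(v)}\times D^q)\to H^{q-1}(S^p_{\deg(v)}\times S^{q-1})$ and $H^{q-1}(D^q\times S^{p-1})\to H^{q-1}(S^{q-1}\times S^{p-1})$ are not isomorphisms because the extra $H^{q-1}(S^{q-1})$-factor appears, and similarly in degree $p+q-2$. So the five lemma does not apply as you state it in degrees $q-1$ and $p+q-2$, and you would need a separate diagram chase there (the conclusion is still correct, but it does not follow from your argument). Beyond this, your treatment of degree $q$ — which is where the actual content of the lemma lies — is only sketched: you identify that the classes $\pi_u^*[B_u]^*$ must lie in the image of the connecting map $H^{q-1}(\bigsqcup_e S^{p-1}\times S^{q-1})\to H^q(M_G)$ and say "the hard part is then to trace this connecting map ... and to verify by a rank count that these span the entire kernel," but this diagram chase, including the appearance of the signs $\delta(e)$ and the identification of exactly $|V|$ independent relations, is precisely what needs to be carried out. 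As it stands, the proposal names the right target but does not prove it.
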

\begin{proof}
	By excision, the cohomology of the pair $(X,M_G)$ is isomorphic to the cohomology of the pair
	\[\left(\bigsqcup\limits_{v\in V}D^q\times S^p_{\deg(v)},\bigsqcup\limits_{v\in V}S^{q-1}\times S^p_{\deg(v)} \right). \]
	By the long exact sequence, this pair only has non-vanishing cohomology groups in degrees $q$ and $p+q-1$. We first consider degree $q$. The inclusion $X\hookrightarrow \overline{M}_G$ then gives the following commutative square:
	\[
	\begin{tikzcd}
		H^q(\overline{M}_G,M_G)\arrow{r}\arrow{d}&H^q(\overline{M}_G)\arrow{d}\\
		H^q(X,M_G)\arrow{r}&H^q(X)
	\end{tikzcd}
	\]
	For this commutative diagram we will now prove the following claims:
	\begin{enumerate}
		\item $H^q(\overline{M}_G,M_G)\cong\bigoplus_{u\in U}H_p(B_u)\bigoplus_{v\in V}R v$ and $H^q(\overline{M}_G)\cong\bigoplus_{u\in U}R u\bigoplus_{v\in V} H^q(S^p)$.
		\item The map $H^q(\overline{M}_G,M_G)\to H^q(X,M_G)$ restricted to $\bigoplus_{v\in V}R v$ is an isomorphism.
		\item The map $H^q(\overline{M}_G,M_G)\to H^q(\overline{M}_G)$ maps $v\in V$ to $\sum_{e=(u,v)\in E}\delta(e)u$.
		\item The map $H^q(\overline{M}_G)\to H^q(X)$ maps $u\in U$ to $\pi^*_{u}[B_u]^*$.
	\end{enumerate}
	
	(1) By Lefschetz duality we have $H^q(\overline{M}_G,M_G)\cong H_p(\overline{M}_G)$, so both isomorphisms follow from the fact that $\overline{M}_G$ is homotopy equivalent to $\bigvee_{u\in U}B_u\bigvee_{v\in V}S^p$.
	
	(2) We have the following commutative diagram of inclusions:
	\begin{equation}
		\label{EQ:XMG_DIAG}
		\begin{tikzcd}
			\left(\bigsqcup\limits_{v\in V}S^p_{\deg(v)}\times D^q,\bigsqcup\limits_{v\in V} S^p_{\deg(v)}\times S^{q-1} \right)\arrow[hookrightarrow]{r}\arrow[hookrightarrow]{rd}&(X,M_G)\arrow[hookrightarrow]{d}\\
			&(\overline{M}_G,M_G)
		\end{tikzcd}
	\end{equation}
	and Lefschetz duality gives the following commutative square.
	\[
	\begin{tikzcd}
		H^q(\overline{M}_G,M_G)\arrow{r}\arrow{d}{\cong}& \bigoplus\limits_{v\in V} H^q(S^p_{\deg(v)}\times D^q,S^p_{\deg(v)}\times S^{q-1})\arrow{d}{\cong}\\
		H_p(\overline{M}_G)\arrow{r}&\bigoplus\limits_{v\in V} H_p(S^p_{\deg(v)}\times D^q,\bigsqcup_{\deg(v)} S^{p-1}\times D^q)
	\end{tikzcd}
	\]
	The elements $v\in H^q(\overline{M}_G,M_G)$ in (1) are represented by $B_v\cong S^p\subseteq \overline{M}_G$ in $H_p(\overline{M}_G)\cong H^q(\overline{M}_G,M_G)$. Each class $[B_v]$ also represents a generator of
	\begin{align*}
		H_p\left(S^p_{\deg(v)}\times D^q,\bigsqcup\nolimits_{\deg(v)} S^{p-1}\times D^q\right)\cong H_p\left(S^p,\bigsqcup\nolimits_{\deg(v)}D^p\right)\cong H_p(S^p).
	\end{align*}
	Hence, when restricted to $\bigoplus_{v\in V}R v$, the lower horizontal map is an isomorphism. The claim now follows from the commutativity of \eqref{EQ:XMG_DIAG} and that the map
	\[\left(\bigsqcup_{v\in V}S^p_{\deg(v)}\times D^q,\bigsqcup_{v\in V} S^p_{\deg(v)}\times S^{q-1} \right)\hookrightarrow(X,M_G)\]
	induces an isomorphism on cohomology by excision.
	
	(3) By Lefschetz duality we have the following commutative diagram.
	\[
	\begin{tikzcd}
		H^q(\overline{M}_G,M_G)\arrow{r}\arrow{d}{\cong}&H^q(\overline{M}_G)\arrow{d}{\cong}\\
		H_p(\overline{M}_G)\arrow{r}&H_p(\overline{M}_G,M_G)
	\end{tikzcd}
	\]
	By (1), for each $v\in V$, the element $v\in H^q(\overline{M}_G,M_G)$ maps to the homology class which is represented by $B_v\cong S^p\subseteq \overline{M}_G$. By isotoping the embedding of the zero-section $B_v\subseteq \overline{E}_v\subseteq \overline{M}_G$ to the boundary of the disc bundle (which is possible as the bundle $\pi_v$ is trivial), we see that the class of this embedding in $H_p(\overline{M}_G,M_G)$ is represented by the sum of embeddings of fibers of all $\overline{E}_u$ for which $(u,v)\in E$, each class multiplied by the sign $\delta(u,v)$. Each embedding of a fiber of $\overline{E}_u$ represents the dual to the class represented by the embedding of the zero-section $B_u\subseteq \overline{E}_u$. By commutativity of the diagram, it follows that $v\in H^q(\overline{M}_G,M_G)$ gets mapped to $\sum_{e=(u,v)\in E}\delta(e)u\in H^q(\overline{M}_G)$.
	
	(4) The diagram of maps
	\[
	\begin{tikzcd}
		\bigsqcup\limits_{u\in U}B_u\arrow[hookrightarrow]{r}&\overline{M}_G\\
		\bigsqcup\limits_{u\in U}E_u\arrow[hookrightarrow]{r}\arrow{u}{\sqcup_u\pi_u}&X\arrow[hookrightarrow]{u}
	\end{tikzcd}
	\]
	induces the following commutative diagram, where the lower horizontal map is injective by Lemma \ref{L:X_COHOM_INJ}.
	\[
	\begin{tikzcd}
		\bigoplus\limits_{u\in U}H^q(B_u)\arrow{d}{\oplus_u\pi_u^*}&H^q(\overline{M}_G)\arrow{d}\arrow{l}\\
		\bigoplus\limits_{u\in U} H^q(E_u)\arrow[hookleftarrow]{r}&H^q(X)
	\end{tikzcd}
	\]
	For each $u\in U$, the element $u\in H^q(\overline{M}_G)$ gets mapped to $[B_u]^*\in H^q(B_u)$, which in turn gets mapped to $\pi_u^*[B_u]^*\in H^q(E_u)$. This proves the claim.
	\leavevmode\\
	
	Combining claims (1)--(4) it follows that the map $H^q(X,M_G)\longrightarrow H^q(X)$ is given by
	\begin{align*}
		\bigoplus\limits_{v\in V}R v&\longrightarrow H^q(X)\subseteq \bigoplus\limits_{u\in U}H^q(E_u)\\
		v &\longmapsto \sum\limits_{e=(u,v)\in E}\delta(e)\pi_u^*[B_u]^*.
	\end{align*}
	This map is injective; this is clear if $G$ is of the form \begin{tikzpicture}[scale=0.5,, transform shape]
		\begin{scope}[every node/.style={circle,draw,minimum height=2em}]
			\node (V) at (-0.5,0) {$v$};
			\node (U) at (1,0) {$u$};
		\end{scope}
		\path[-](V) edge["{$\scriptstyle e$}"] (U);
	\end{tikzpicture}. If no vertex is a leaf, then this follows from Lemma \ref{L:BG_RANK}.
	
	To summarize, we showed for $i<p+q-1$ that the map $H^i(X)\to H^i(M_G)$ is surjective except possibly for $i=p+q-2$ and only has a non-trivial kernel when $i=q$, which is then generated by the elements $\sum_{e=(u,v)\in E}(\delta(e)\pi_u^*[B_u]^*)$ for $v\in V$.
	
	It remains to consider the case $i=p+q-1$. For each $u\in U$ we have the following commutative diagram of maps of pairs:
	\[	\begin{tikzcd}
		(M_G,\emptyset)\arrow[hookrightarrow]{r}\arrow[hookrightarrow]{d}&(X,\emptyset)\arrow[hookleftarrow]{d}\\
		\left(M_G,M_G\setminus \left(E_u\setminus \bigsqcup\limits_{(u,v)\in E}\varphi_{(u,v)}(D^q\times S^{p-1})\right)\right)\arrow[hookleftarrow]{d}&(E_u,\emptyset)\arrow[hookrightarrow]{d}\\
		\left(E_u\setminus\hspace{-2ex}\bigsqcup\limits_{(u,v)\in E}\hspace{-2ex}\varphi_{(u,v)}(D^q\times S^{p-1})^\circ,\hspace{-2ex}\bigsqcup\limits_{(u,v)\in E}\hspace{-2ex}\varphi_{(u,v)}(S^{q-1}\times S^{p-1})\right)\arrow[hookrightarrow]{r}&\left(E_u,\hspace{-2ex}\bigsqcup\limits_{(u,v)\in E}\hspace{-2ex}\varphi_{(u,v)}(D^q\times S^{p-1})\right)
	\end{tikzcd}
	\]
	The maps not involving $X$ are all orientation-preserving maps that induce isomorphisms on $H^{p+q-1}$ by excision and the long exact sequence. Hence, each $[E_u]^*\in H^{p+q-1}(X)$ gets mapped to the dual of the fundamental class of $M_G$ under the induced map $H^{p+q-1}(X)\longrightarrow H^{p+q-1}(M_G)$. In particular, the kernel of this map is given by
	\[\left\{\sum\limits_{u\in U}\lambda_u[E_u]^* \;\middle|\;\lambda_u\in U\;\forall u\in U, \sum\limits_{u\in U}\lambda_u=0 \right\}. \]
	Further, by the long exact sequence of the pair
	\[\left(\bigsqcup\limits_{v\in V}S^p_{\deg(v)}\times D^q,\bigsqcup\limits_{v\in V}S^p_{\deg(v)}\times S^{q-1} \right),\]
	the $R$-module $H^{p+q-1}(X,M_G)$ is free of rank $|E|-|V|$. Since the kernel of the map
	\[H^{p+q-1}(X)\longrightarrow H^{p+q-1}(M_G),\] 
	which is the image of the map $H^{p+q-1}(X,M_G)\to H^{p+q-1}(X)$, has rank $|U|-1$, and since in a tree we have $|U|+|V|=|E|+1$, i.e.\ $|E|-|V|=|U|-1$, it follows that $H^{p+q-1}(X,M_G)$ injects into $H^{p+q-1}(X)$ as any surjective $R$-module homomorphism $R^{|U|-1}\to R^{|U|-1}$ is necessarily injective. This shows that the boundary map $H^{p+q-2}(M_G)\to H^{p+q-1}(X,M_G)$ is trivial and hence the map $H^{p+q-2}(X)\to H^{p+q-2}(M_G)$ is surjective. This finishes the proof.
\end{proof}

We are now ready to prove Theorems \ref{T:MG_COHOM} and \ref{T:MG_CHAR_CLASSES}, and Corollary \ref{C:MG_COHOM_6KDIM}.

\begin{proof}[Proof of Theorem \ref{T:MG_COHOM}.]
	This directly follows from Lemmas \ref{L:X_COHOM_INJ} and \ref{L:X_COHOM_SURJ}.
\end{proof}
\begin{proof}[Proof of Theorem \ref{T:MG_CHAR_CLASSES}.]
	For $u\in U$ we have the inclusion $\overline{E}_u\hookrightarrow\overline{M}_G$, and, by naturality, the induced map on cohomology maps $c(T\overline{M}_G)$ to $c(T\overline{E}_u)$. If $\xi_u$ denotes the vector bundle corresponding to $\pi_u$, then the tangent bundle of $\overline{E}_u$ decomposes as
	\[T\overline{E}_u\cong \pi_u^*(\xi_u\oplus TB_u), \]
	cf.\ Proposition \ref{P:SPHERE_BDL_CHAR_CLASSES}. Hence,
	\[c(T\overline{E}_u)=\pi_u^*c(\xi_u\oplus TB_u). \]
	Since $\overline{M}_G\simeq\bigvee_{u\in U}B_u\bigvee_{v\in V}B_v$ and all bundles $\pi_v$ are trivial, it follows that
	\[c(T\overline{M}_G)=\sum\limits_{u\in U}c(T\overline{E}_u)=\sum\limits_{u\in U}\pi_u^*c(\xi_u\oplus TB_u). \]
	Now $M_G=\partial\overline{M}_G$, and we denote the inclusion $M_G\hookrightarrow\overline{M}_G$ by $\iota$. Then
	\[ \iota^*T\overline{M}_G\cong TM_G\oplus\underline{\R}_{M_G}, \]
	the trivial factor corresponding to the normal bundle of $M_G$ in $\overline{M}_G$. Hence, by the stability of $c$, we have
	\[c(TM_G)=\iota^*\left(\sum\limits_{u\in U}\pi_u^*c(\xi_u\oplus TB_u) \right). \]
	To determine this element in $H^*(M_G)$ consider the following commutative diagram:
	\[
	\begin{tikzcd}
		M_G\arrow[hookrightarrow]{r}\arrow[hookrightarrow]{d}{\iota}&X\arrow[hookleftarrow]{r}&\bigsqcup\limits_{u\in U} E_u\arrow{d}{\bigsqcup\limits_{u\in U}\pi_u}\\
		\overline{M}_G\arrow[hookleftarrow]{rr}&&\bigsqcup\limits_{u\in U}B_u
	\end{tikzcd}
	\]
	By Theorem \ref{T:MG_COHOM}, we need to determine the image of $c(TM_G)$ in the cohomology of $\bigsqcup_{u\in U} E_u$. This is given by
	\[\sum\limits_{u\in U}\pi_u^*c(\xi_u\oplus TB_u)\in \bigoplus\limits_{u\in U}H^*(E_u;R).\]
\end{proof}

\begin{proof}[Proof of Corollary \ref{C:MG_COHOM_6KDIM}.]
	In both cases $M_G$ is simply-connected by Lemma \ref{L:FUND_GROUP}. By \eqref{EQ:BDL_COHOM_SPLIT} we have that $\bigoplus_{u\in U}H^*(E_u;\Z)$ has torsion-free homology. Since, by Lemma \ref{L:BG_BASIS}, the subspace
	\[\bigoplus\limits_{v\in V}\left( \sum_{e=(u,v)\in E} \delta(e)\pi^*_u[B_u]^* \right)\Z\]
	is a direct summand in $\bigoplus_{u\in U}H^q(E_u;\Z)$, the same is true for $H^*(M_G;\Z)$ by Theorem \ref{T:MG_COHOM}. The remaining claims directly follow from Theorems \ref{T:MG_COHOM} and \ref{T:MG_CHAR_CLASSES}.
\end{proof}

	\section{Simply-Connected $6k$-Dimensional Manifolds}
	\label{S:6-MFDS}
	
	In this section we consider manifolds obtained by plumbing in dimension $6k$.
	
	\subsection{The Classification of Jupp}
	
	Recall from the introduction that we consider systems of invariants $(H^{2k}(M),\mu_M,w_2(M)^k,p_k(M))$ of a closed, simply-connected $6k$-dimensional manifold $M$ with torsion-free homology.
	
	Two systems of invariants $(H,\mu,w,p)$ and $(H^\prime,\mu^\prime,w^\prime,p^\prime)$ are \emph{equivalent} if there is an isomorphism $\phi\colon H\to H^\prime$ such that $\phi^*\mu^\prime=\mu$, $\phi(w)=w^\prime$ and $\phi^*p^\prime=p$. It is then clear that two manifolds that are orientation-preserving diffeomorphic have equivalent systems of invariants.
		
	A system of invariants is called \emph{admissible in dimension $6k$} if it is the system of invariants of a closed, simply-connected and oriented $6k$-dimensional manifold with torsion-free homology. With this terminology, for $k=1$ the classification of Jupp is then given as follows.
	\begin{theorem}[{\cite[Theorem 1]{Ju73}}]
		\label{T:JUPP}
		Two oriented closed simply-connected 6-manifolds $M$ and $M^\prime$ are orientation-preserving diffeomorphic if and only if $b_3(M)=b_3(M')$ and their systems of invariants are equivalent.
		A system of invariants $(H,\mu,w,p)$ is admissible in dimension $6$ if and only if
		\begin{equation}
		\label{EQ:JUPP_I}
			\mu(W)\equiv p(W)\mod 48
		\end{equation}
		holds for all $W\in H$ whose reduction to $H\otimes \Z/2$ is $w$.
	\end{theorem}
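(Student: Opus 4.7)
The statement is Jupp's classical classification theorem, so the most honest proof plan is simply to cite \cite{Ju73}. I outline below how one could reconstruct the argument, following the Wall-style strategy for simply-connected manifolds with prescribed cohomological invariants.

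\textbf{Existence.} Given an admissible system $(H,\mu,w,p)$ with $\mathrm{rank}(H)=b$, my plan is to build a manifold via a handle decomposition. Start with a boundary-connected sum of $b$ copies of $D^4$-bundles over $S^2$, where the bundles are chosen (trivial versus the non-trivial one with $w_2\neq0$) to realize the class $w$ and so that a suitable combination of framings realizes the Pontryagin data $p$. Call the result $X_2$, a 6-dimensional 2-handlebody with $\partial X_2\cong\#_b\,S^2\times S^3$ (modified if some bundles are non-trivial). The boundary is still highly connected, and one now attaches 3-handles along an unlinked collection of embedded $3$-spheres chosen to realize the trilinear form $\mu$ via the triple self-intersection of the 2-skeleton dual classes; then $4$-, and $6$-handles close off. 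The mod $48$ congruence is exactly the obstruction to closing off: it is the integrality condition ensuring that for any characteristic $W$ the secondary characteristic number $\mu(W)-p(W)$ (essentially $24$ times the $\mathrm{Spin}^c$ index of a bounding 7-manifold with $c_1=W$) lies in $48\Z$. Conversely, if $M$ is closed, the same index theory shows the congruence must hold, because $\mu(W)-p(W)$ equals a multiple of the $\hat{A}$-integer of any spin$^c$ cobordism.

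\textbf{Classification.} Given $M,M'$ with equivalent invariants and $b_3(M)=b_3(M')$, I would first stabilize both by taking connected sums with enough copies of $S^3\times S^3$; this does not change $H^2$, $\mu$, $w$, or $p$. After stabilization, both manifolds admit handle decompositions with matching 2-handles (since the tangential and cohomological data agree), matching 4-handles (by Poincaré duality), and a free middle-dimensional block of 3-handles. Here the key input is a surgery/modification theorem in the spirit of Wall: the attaching maps of the 3-handles can be normalized using ambient isotopies and handle slides on the $(\#\,S^2\times S^3)$-type boundary, because any two embedded 3-spheres realizing the same homology class in that boundary are isotopic after possibly stabilizing. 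This produces a diffeomorphism of handlebodies which extends across the top-dimensional handles. Finally, one appeals to a cancellation lemma (again in Wall's style) to eliminate the excess $S^3\times S^3$ summands — legal because $b_3$ was assumed equal, so the same number of cancellations is required on each side.

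\textbf{Main obstacle and subtlety.} The hardest input is the stabilization-and-cancellation step, which requires showing that the isotopy class of the 3-handle attaching data is governed entirely by the algebraic invariants, with no extra embedded-surgery obstructions in $\pi_3(\mathrm{SO}(3))$ or $\pi_3$ of the 2-handlebody that are invisible to $(H,\mu,w,p)$. This is where Jupp must use the torsion-freeness of homology in an essential way: it guarantees that the relevant $H_3$ is free, so normal data for 3-handles can be identified with lattice-theoretic data and compared using the isomorphism $\phi\colon H\to H'$ of invariants. The admissibility direction of (ii) is comparatively routine once the constructions above are set up, as it reduces to the standard computation that $24\mid(\mu(W)-p(W))/2$ for characteristic $W$ on a closed almost-complex-like 6-manifold, doubled by the factor $2$ coming from the spin$^c$ lift, yielding $48$.
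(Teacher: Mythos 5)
The paper does not prove this theorem; it is cited directly from \cite{Ju73}, so I am evaluating your reconstruction against the literature rather than against the paper. Your outline captures the right overall shape (build one representative, then show uniqueness up to $S^3\times S^3$-stabilization), but the details contain genuine gaps. First, the mod~$48$ congruence is misattributed: you invoke ``the $\mathrm{Spin}^c$ index of a bounding 7-manifold'' and ``the $\hat A$-integer of any spin$^c$ cobordism,'' but the Dirac-type index vanishes identically in odd dimensions, so there is no such quantity for a 7-manifold. The congruence is the integrality of the spin$^c$ Dirac index on the closed 6-manifold itself: for any integral lift $W$ of $w_2(M)$, the Atiyah--Singer theorem gives $\mathrm{ind}(D^{c})=\int_M e^{W/2}\hat A(M)=\tfrac{1}{48}\bigl(W^3-W\smile p_1(M)\bigr)\in\Z$. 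Second, your proposal to ``realize the trilinear form $\mu$'' by attaching 3-handles along embedded 3-spheres cannot work: in a 6-dimensional 2-handlebody, 3-handles alter $H_2$ and $H_3$ but do not encode the product $H^2\otimes H^2\to H^4$; the trilinear form is carried by the 2-handles together with the dual 4-handles and the way the two handlebodies are glued along their common 5-dimensional boundary. (For the same reason, the Pontryagin datum cannot be realized by framings of 2-handles: the 2-handlebody built from $D^4$-bundles over $S^2$ is homotopy equivalent to a wedge of 2-spheres and has $H^4=0$, so all the $p_1$ information comes from the gluing.)

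Finally, the classification step is exactly where the real work is, and your sketch leaves it as an assertion. Normalizing the attaching data after stabilization so that it is governed only by $(H,\mu,w,p)$ is not a matter of ``handle slides and ambient isotopies''; in Jupp's argument it is established via Browder--Novikov--Wall surgery (classification of normal invariants and vanishing of the surgery obstruction in the relevant degree), together with the $h$-cobordism theorem to convert the resulting normal cobordism into a diffeomorphism and to cancel the excess $S^3\times S^3$ summands. You correctly flag this as the ``main obstacle,'' but naming an obstacle is not the same as overcoming it: as written, the uniqueness direction has no proof, and the realization direction is based on a handle-attaching step (the 3-handle realization of $\mu$) that does not do what you claim.
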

	For $k>1$, due to the much larger range of (co-)homology groups one needs to consider and since there can exist exotic spheres, there is no such classification possible.
	
	Theorem \ref{T:JUPP} is commonly referred to as the classification of closed, simply-connected 6-manifolds with torsion-free homology. It should be noted that the term \textquotedblleft classification\textquotedblright\ can be misleading in this context: in fact, Jupp's result shows that the classification of closed, simply-connected 6-manifolds with torsion-free homology is \emph{equivalent} to the classification of equivalence classes of systems of invariant that satisfy \eqref{EQ:JUPP_I} and there is no classification known of the latter, except if $\textup{rk}(H)=1$ (this is obvious) or if $\textup{rk}(H)=2$, in which case there exists a partial classification by Schmitt \cite{Sc97}.
	
	We also note that Jupp's classification extends the classification of Wall \cite{Wa66}, which requires the manifolds to be spin, and is a special case of the classification of Zhubr \cite{Zh00}, which does not require the manifolds to have torsion-free homology.
	
	\subsection{Algebraic Plumbing Graphs}
	\label{SS:ALG_PLUMBING}
	
	In this Section we introduce the notion of algebraic plumbing graphs and give the proof of Theorem \ref{T:ALG_PL_GR_CORE1}.
	
	Let $G=(U,V,E,(\alpha,k^+,k^-))$ be a bipartite graph, which has a labeling $(\alpha,k^+,k^-)\colon U\to \Z\times \N_0^2$ for vertices in $U$. We call such a graph an \emph{algebraic plumbing graph}. We will draw vertices $u\in U$ as
	\[
	\begin{tikzpicture}
		\node[circle,draw,label=below:${\scriptstyle k^{-}(u)}$,label=above:${\scriptstyle k^{+}(u)}$] (A) at (0,0) {$\alpha(u)$};
	\end{tikzpicture}
	\]
	If one of $k^+(u)$ and $k^-(u)$ vanishes, then we will omit it.
	Vertices in $V$ will be drawn as dots (as they do not have any labeling). An example for such a graph is the graph given in the introduction.

	For every $u\in U$ we introduce the symbols $u^{-k^-(u)},\dots,u^{k^+(u)}$ and define the free abelian group
	\[A=\bigoplus\limits_{u\in U}\bigoplus\limits_{i=-k^-(u)}^{k^+(u)}\Z u^i. \]
	For $k\in\N$ we define the symmetric trilinear form $\mu^k\colon A^3\to\Z$ by defining it for each $u\in U$ on $\bigoplus_{i=-k^-(u)}^{k^+(u)}\Z u^i$ by
	\begin{align*}
		\mu^k(u^0,u^0,u^0)&=\frac{\lambda_k}{4}\alpha(u),\\
		\mu^k(u^0,u^0,u^l)&=0,\\
		\mu^k(u^0,u^j,u^l)&=\begin{cases}
			\textup{sgn}(j),\quad &j=l,\\
			0,\quad &else,
		\end{cases}\\
		\mu^k(u^i,u^j,u^l)&=0
	\end{align*}
	for $i,j,l\in\{-k^-(u),\dots,k^+(u) \}\setminus\{0\}$, where $\lambda_k\in\N$ is the constant from Lemma \ref{L:S2k_BUNDLES_S4k} (note that it is a multiple of 4 by definition). Then extend $\mu^k$ to $A$ by setting
	\[\mu^k(u^i_m,u^j_n,u^k_r)=0 \]
	whenever any two of $u_m,u_n,u_r$ are not equal.
	
	Next we define a linear form $p^k\colon A\to\Z$ by
	\[p^k(u^j)=\begin{cases}
		\lambda_k\alpha(u)+{2k+1\choose k}(k^+(u)-k^-(u)),\quad&j=0,\\
		0,\quad &\text{else,}
	\end{cases} \]
	and we define $w_G\in A\otimes\Z/2$ by
	\[w_G=\sum\limits_{\substack{u\in U,\\i\neq0}}u^i\mod 2. \]
	
	Finally, we set
	\[H_G=\bigoplus\limits_{\substack{u\in U\\i\neq0}}\Z u^i\oplus\left\{\sum\limits_{u\in U}\lambda_u\cdot u^0 \;\middle|\; \lambda_u\in\Z\;\forall u\in U,\, \sum\limits_{e=(u,v)\in E}\lambda_u=0\;\forall v\in V\right\}\subseteq A\]
	and denote the restrictions of $\mu^k$ and $p^k$ to $H_G$ by $\mu_G^k$ and $p_G^k$, respectively (and note, that, by definition, we have $w_G\in H_G\otimes\Z/2$). Further, we set $\mu_G=\mu_G^1$ and $p_G=p_G^1$.
	\begin{definition}\label{D:INV}
		We call $H_G$, $\mu^k_G$, $w_G$ and $p^k_G$ the \emph{invariants of $G$} and we define the \index{rank of a graph}\emph{rank of $G$} by $\mathrm{rank}(H_G)$. We say that $G$ is \index{spin (graph)}\emph{spin} if $k^+=k^-\equiv0$. Two algebraic plumbing graphs $G$ and $G^\prime$ are \index{equivalence of graphs}\emph{$k$-equivalent}, denoted $G\sim_k G^\prime$, if the systems of invariants $(H_G,\mu_G^k,w_G,p^k_G)$ and $(H_{G^\prime},\mu^k_{G^\prime},w_{G^\prime},p_{G^\prime}^k)$ are equivalent.
	\end{definition}
	\begin{remark}
		In the spin case, if two algebraic plumbing graphs are $k$-equivalent for one $k$, then they are $k$-equivalent for all $k$, since then $\mu_G^k=\frac{\lambda_k}{4}\mu_G$ and $p_G^k=\frac{\lambda_k}{4}p_G$ (recall that $\lambda_1=4)$. However, in the non-spin case it is not clear if $k$-equivalence for one $k$ implies $k$-equivalence for other, or all, $k$.
	\end{remark}
	
	\begin{definition}\label{D:GEOM_PL_GRAPH}
		We define a geometric plumbing graph $\overline{G}^k=(U,V,E,\pi,\delta)$ with the same set of vertices and edges as $G$ as follows. For $u\in U$ set 
		\[B_u=\#_{k^+(u)}\C P^{2k}\#_{k^-(u)}(-\C P^{2k})\]
		(note that the empty connected sum is defined as $S^{4k}$) and define $\pi_u$ as the disc bundle of the bundle $\pi_{\alpha(u),B_u}$ of Definition \ref{D:pi_alpha,B}. For $v\in V$ set $B_v=S^{2k+1}$ and define $\pi_v$ as the trivial $D^{4k}$-bundle over $S^{2k+1}$, i.e.\ $\overline{E}_v=S^{2k+1}\times D^{4k}$ and $\pi_v$ is given by the projection onto the first factor. Finally, we define $\delta(e)=1$ for all $e\in E$. We set $\overline{G}=\overline{G}^1$.
	\end{definition}
	\begin{remark}\label{R:preimage_choice}
		For $k>2$ the definition of each bundle $\pi_{\alpha(u),B_u}$, and therefore the definition of $\overline{G}^k$, depends on a choice of preimage in Definition \ref{D:pi_alpha,B}. As noted after Definition \ref{D:pi_alpha,B}, the results of this article hold for any choice of such preimage.
	\end{remark}	
	
	\begin{lemma}
		\label{L:ALG_PL_GR_INV}
		Let $G=(U,V,E,(\alpha,k^+,k^-))$ be an algebraic plumbing graph for which all connected components are simply-connected and set $M=M_{\overline{G}^k}$. Then
		\begin{enumerate}
			\item $M$ is a closed, simply-connected $6k$-dimensional manifold with torsion-free homology and the systems of invariants $(H^{2k}(M),\mu_M,w_2^k(M),p_k(M))$ and $(H_G,\mu_G^k,w_G,p_G^k)$ are equivalent. In particular, the system $(H_G,\mu_G^k,w_G,p_G^k)$ is admissible in dimension $6k$.
			\item There exists a $k$-equivalent subgraph $G^\prime$ of $G$ so that for $M^\prime=M_{\overline{G^\prime}^k}$ the same as in (1) holds and additionally the odd Betti numbers of $M^\prime$ vanish and $M=M^\prime\#_r(S^{2k+1}\times S^{4k-1})$ for some $r\in\N_0$.
			\item $b_{2k}(M^\prime)=b_{2k}(M)=|U^\prime|-|V^\prime|+\sum_{u\in U^\prime}k^+(u)+k^-(u)$.
			\item $M$ is spin if and only if $G$ is spin, and the same holds for $M'$ and $G'$ (and note that $M$ is spin if and only if $M'$ is spin by (2)).
		\end{enumerate}
	\end{lemma}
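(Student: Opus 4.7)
The plan is to reduce $G$ to a subgraph $G' \subseteq G$ for which Corollary \ref{C:MG_COHOM_6KDIM} applies directly, compute the invariants of $M_{G'}$ there, and then argue that the $S^{2k+1} \times S^{4k-1}$ factors arising in the reduction affect none of them. Concretely, each connected component of $G$ is a tree, and while a $V$-leaf $v$ adjacent to some $u \in U$ exists I apply Proposition \ref{P:GRAPH_RED1} to $\overline{G}^k$; its hypothesis is satisfied because $\pi_v$ is the trivial $D^{4k}$-bundle over $B_v = S^{2k+1}$. This splits off a single-edge component $v{-}u$ and detaches the other branches from $u$; whenever a detached piece is an isolated $V$-vertex it contributes a summand $\partial(S^{2k+1} \times D^{4k}) \cong S^{2k+1} \times S^{4k-1}$ to the connected sum. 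Let $G' \subseteq G$ be the disjoint union of the non-trivial components remaining at the end (each being either a single edge $v{-}u$ or a tree with no $V$-leaf) and let $r$ count the isolated $V$-vertices discarded; then $M \cong M_{G'} \#_r(S^{2k+1} \times S^{4k-1})$. The equivalence $G \sim_k G'$ is immediate: $\mu_G^k, p_G^k$ and $w_G$ are built vertex-wise from $U$-labels which are preserved; the constraint $\lambda_u = 0$ from a pruned leaf $v$ is retained in the new edge $v{-}u$; every other constraint involving $\lambda_u$ becomes equivalent modulo $\lambda_u = 0$ to its analog in the pruned subtree; and isolated $V$-vertices impose only the empty constraint.

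To identify the invariants geometrically, for each component of $G'$ I apply Corollary \ref{C:MG_COHOM_6KDIM}(1) (single-edge case) or (2) (no $V$-leaf case) together with Corollary \ref{C:2k_SPHERE_BDL_COHOM} and Lemma \ref{L:S2k_BUNDLES_S4k}. Under the identifications $a_u \leftrightarrow u^0$ and $b_i^k \leftrightarrow u^{\pm i}$, with the sign $\gamma_i \in \{\pm 1\}$ matching $\mathrm{sgn}(i)$ according to whether the corresponding summand of $B_u$ is $+\C P^{2k}$ or $-\C P^{2k}$, the formulas of Corollary \ref{C:2k_SPHERE_BDL_COHOM} directly reproduce $\mu_{G'}^k, w_{G'}$ and $p_{G'}^k$: the cubic self-pairing gives $\mu(a_u,a_u,a_u) = \lambda_k\alpha(u)/4$, the mixed cross-pairings give $\pm\delta_{ij}$, and $p_k$ pairs with $a_u$ to give $\lambda_k\alpha(u) + \binom{2k+1}{k}(k^+(u)-k^-(u))$. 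Summing across components exhibits $M_{G'}$ as a closed simply-connected $6k$-manifold with torsion-free homology whose invariants match $(H_{G'}, \mu_{G'}^k, w_{G'}, p_{G'}^k)$.

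Since $2k{+}1$ and $4k{-}1$ are both odd and neither equals $2k$ or $4k$, the cohomology of $S^{2k+1} \times S^{4k-1}$ vanishes in degrees $2k$ and $4k$; hence the connected sums preserve $H^{2k}, \mu, w_2^k, p_k$, together with simply-connectedness, torsion-freeness, and vanishing odd Betti numbers (the latter holds on each component of $G'$ by Theorem \ref{T:MG_COHOM}, since $H^*(E_u)$ is concentrated in even degrees by Corollary \ref{C:2k_SPHERE_BDL_COHOM}). This proves (1) and (2). For (3), on any component $C$ of $G'$ with no $V$-leaf the $V{\times}U$ adjacency map $\Z^{U_C} \to \Z^{V_C}$ has full row rank $|V_C|$: by induction on $|V_C|$, peel off a $U$-leaf $u_0$ with its unique neighbor $v_0 \in V_C$ (the constraint at $u_0$ forces $c_{v_0} = 0$, and the remaining forest $C \setminus \{u_0, v_0\}$ still has no $V$-leaves since removing $v_0$ does not alter the degree of any other $V$-vertex). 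Together with the trivial single-edge case, this yields $b_{2k}(M_C) = |U_C| - |V_C| + \sum_{u \in U_C}(k^+(u)+k^-(u))$; summing and using $b_{2k}(M) = b_{2k}(M_{G'})$ proves (3). Finally, Theorem \ref{T:MG_CHAR_CLASSES} together with $w_2(\pi_u) = 0$ (Lemma \ref{L:S2k_BUNDLES_S4k}) gives $w_2(M_G) = \sum_u \pi_u^* w_2(B_u)$ with $w_2(B_u) = \sum_i b_i \in H^2(B_u;\Z/2)$, and the $\pi_u^* b_i$ are linearly independent in $H^2(M;\Z/2)$ by Theorem \ref{T:MG_COHOM}; hence $M$ is spin iff all $k^\pm(u)$ vanish, iff $G$ is spin, proving (4).

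The main difficulty is the bookkeeping in the second paragraph: aligning the algebraic factors $\lambda_k/4$ and $\binom{2k+1}{k}(k^+(u)-k^-(u))$ and the sign conventions $\gamma_i = \mathrm{sgn}(i)$ with the geometric data from Corollary \ref{C:2k_SPHERE_BDL_COHOM}. The inductive rank computation in the third paragraph and the spin class computation are self-contained secondary technical points.
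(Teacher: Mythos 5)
Your proposal is correct and follows essentially the same route as the paper's proof: prune $V$-leaves via Proposition~\ref{P:GRAPH_RED1} until each component either satisfies \eqref{EQ:GRAPH_CONDITION} or is an isolated $V$-vertex (giving the $S^{2k+1}\times S^{4k-1}$ summands), observe that the algebraic invariants are unchanged because a leaf $v$ already forces $\lambda_u=0$, then match the invariants of $M_{\overline{G'}^k}$ with $(H_{G'},\mu^k_{G'},w_{G'},p^k_{G'})$ via Corollaries~\ref{C:MG_COHOM_6KDIM} and \ref{C:2k_SPHERE_BDL_COHOM}. Your inductive rank computation for (3) and the explicit $w_2$-computation for (4) just re-derive the content of Lemma~\ref{L:BG_RANK} and the paper's cohomology-ring argument, respectively, so the differences are expository rather than structural.
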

	\begin{proof}
		We use Proposition \ref{P:GRAPH_RED1} to split $\overline{G}^k$ into connected components that either satisfy the hypotheses of Theorem \ref{T:MG_COHOM} or consist of a single vertex in $V$ as follows: For any $u\in U$ that is connected to a leaf, we remove all edges connected to $u$ except one that connects $u$ to a leaf, this is precisely the modification in Proposition \ref{P:GRAPH_RED1}. The corresponding modification of $G$ does not change its invariants, since for such $u$ we always have that $u^0$ has coefficient zero in every element of $H_G$. We repeat this process until all connected components of $\overline{G}^k$ satisfy the hypotheses of Theorem \ref{T:MG_COHOM} and we denote the graph we obtain from $G$ in this way after additionally removing all isolated vertices in $V$ by $G^\prime$. Then $G$ and $G^\prime$ have the same invariants, since isolated vertices in $V$ do not make any contribution, and the manifolds $M$ and $M^\prime$ then only differ by connected sums of copies of $S^{2k+1}\times S^{4k-1}$.
		
		By Corollary \ref{C:MG_COHOM_6KDIM}, the cohomology group $H^{2k}(M)$ is given by
		\[H^{2k}(M)=\bigoplus\limits_{u\in U}\pi_u^*H^{2k}(B_u)\oplus\left\{\sum\limits_{u\in U}\lambda_u\cdot a_u \;\middle|\; \lambda_u\in \Z \; \forall u\in U,\, \sum\limits_{e=(u,v)\in E^\prime}\delta(e)\lambda_u=0 \;\forall v\in V^\prime \right\} \]
		and we define the isomorphism $\phi\colon H^{2k}(M)\to H_{G'}$ by mapping a generator of the $i$-th summand in the first component of the right hand side of
		\[H^{2k}(B_u)=\bigoplus\limits_{k^+(u)}H^{2k}(\C P^{2k})\bigoplus\limits_{k^-(u)}H^{2k}(-\C P^{2k})  \]
		to $u^i$ and a generator of the $i$-th summand in the second component of the right hand side to $u^{-i}$. Further, we define
		\[\phi\left(\sum\limits_{u\in U}\lambda_u\cdot a_u\right)=\sum\limits_{u\in U}\lambda_u u^0. \]
		It now follows from Theorem \ref{T:MG_COHOM} and Corollary \ref{C:2k_SPHERE_BDL_COHOM}, that this isomorphism preserves the remaining invariants. Moreover, it follows from \eqref{EQ:BDL_COHOM_SPLIT} that $\bigoplus_{u\in U}H^*(E_u)$ is trivial in odd degrees and by Theorem \ref{T:MG_COHOM} the same holds for $H^*(M')$.
		
		For (3) we need to determine the rank of
		\[\left\{\sum\limits_{u\in U}\lambda_u\cdot u^0 \;\middle|\; \lambda_u\in\Z\;\forall u\in U,  \sum\limits_{e=(u,v)\in E^\prime}\lambda_u=0\;\forall v\in V^\prime\right\}\subseteq H_{G^\prime} . \]
		The condition for the coefficients $\lambda_u$ is equivalent to $(\lambda_u)_{u\in U}$ being an element of the kernel of $B(G^\prime)^\top$ (see Appendix \ref{A:GRAPHS}). By Lemma \ref{L:BG_RANK}, the matrix $B(G^\prime)$ has rank $|V^\prime|$ (as $|U^\prime|\geq |V^\prime|$), hence its kernel has rank $|U^\prime|-|V^\prime|$.
		
		Finally, for (4), note that by (1) $w_G$ (and $w_{G^\prime}$) vanishes if and only if $w_2(M)^k$ (and $w_2(M^\prime)^k$) vanishes, which is the case if and only if $w_2(M)$ (and $w_2(M^\prime)$) vanishes by the cohomology ring structure of each $E_u$.
	\end{proof}
	
	We can now prove Theorem \ref{T:ALG_PL_GR_CORE1}.
	
	\begin{proof}[Proof of Theorem \ref{T:ALG_PL_GR_CORE1}]
		First consider an arbitrary algebraic plumbing graph $G$. By construction, the group $H_G$ is a subgroup of $A$ and the invariants $\mu_G$ and $p_G$ are the restrictions of the invariants $\mu^1$ and $p^1$. Let $G^0$ be the graph obtained from $G$ be removing all edges. Then the invariants of $G^0$ are precisely $A$, $\mu^1$, $w_G$ and $p^1$ and the system of invariants $(A,\mu^1,w_G,p^1)$ is realized by $M_{\overline{G^0}}$ by (1) of Lemma \ref{L:ALG_PL_GR_INV}. It follows that \eqref{EQ:JUPP_I} holds for all $W\in A$, and hence also for all $W\in H_G$, that restrict to $w_G$. Thus, the system $(H_G,\mu_G,w_G,p_G)$ is admissible in dimension 6.		
		
		Now assume that every connected component of $G$ is simply-connected and let $M=M_{\overline{G}^k}$. We apply Lemma \ref{L:ALG_PL_GR_INV} to obtain an equivalent subgraph $G^\prime$ of $G$ and a manifold $M^\prime=M_{\overline{G^\prime}^k}$ with vanishing odd cohomology. Since each connected component of $G^\prime$ is simply-connected, we can apply Theorem \ref{T:PLUMBING} to obtain a core metric on each summand of $M^\prime$, hence $M^\prime$ also admits a core metric, but note that if $k=1$, then the restrictions on the dimensions in this theorem require that every connected component contains a vertex in $V$. This can always be achieved by introducing a new vertex according to Proposition \ref{P:GRAPH_RED2}. Since $M=M^\prime\#_r(S^{2k+1}\times S^{4k-1})$ for some $r\in\N_0$, and $S^{2k+1}\times S^{4k-1}$ admits a core metric (see the list in Subsection \ref{SS:CORE_METRICS}), $M$ admits a core metric.
		
		Finally, if $k=1$ and $N$ is a simply-connected 6-manifold with torsion-free homology, whose invariants are equivalent to $(H_G,\mu_G,w_G,p_G)$, then, by Theorem \ref{T:JUPP}, $N$ is diffeomorphic to $M^\prime\#_r(S^3\times S^3)$ for some $r\in\N_0$, so $N$ admits a core metric.
	\end{proof}

	\subsection{Reduced Graphs}
	\label{SS:RED_GRAPHS}
	
	A fixed system of invariants $(H,\mu,w,p)$ can potentially be realized by many different algebraic plumbing graphs. To analyze this, we consider modifications of graphs that do not change the invariants.
	\begin{lemma}
		\label{L:GRAPH_RED}
		We can modify algebraic plumbing graphs in the following ways without changing their $k$-equivalence classes.
		\begin{enumerate}
			\item[$(1)$] \begin{minipage}[c]{\linewidth}\begin{tikzpicture}
					\begin{scope}[every node/.style={circle,draw,minimum height=2em}]
						\node[Bullet] (V) at (-0.5,0) {};
						\node[label={[label distance=-0.25cm]below:${\scriptscriptstyle b}$},label={[label distance=-0.25cm]above:${\scriptscriptstyle a}$}] (U) at (1,0) {$\alpha$};
						\node[draw=none] (G1) at (3,1) {$G_1$};
						\node[draw=none] (Gn) at (3,-1) {$G_n$};
						\node[draw=none] (dots) at (2,0) {$\rvdots$};
					\end{scope}
					\path[-](V) edge (U);
					\path[-](G1) edge (U);
					\path[-](Gn) edge (U);
					\begin{scope}[every node/.style={circle,draw,minimum height=2em}]
						\node[Bullet] (V1) at (4.5,1) {};
						\node[label={[label 	distance=-0.25cm]above:${\scriptscriptstyle 1}$}] (U1) at (6,1) {$0$};
						\node[Bullet] (V2) at (4.5,-1) {};
						\node[label={[label distance=-0.25cm]above:${\scriptscriptstyle 1}$}] (U2) at (6,-1) {$0$};
						\node[draw=none] (G11) at (7.5,1) {$G_1$};
						\node[draw=none] (Gn1) at (7.5,-1) {$G_n$};
						\node[draw=none] (dots1) at (7.5,0) {$\rvdots$};
						\node[draw=none,label={[label distance=-0.5cm]right:${\scriptscriptstyle a+b}$}] (dots2) at (5,0) {$\rvdots$};
						\node[draw=none] (sim) at (3.75,0) {$\sim_k$};
					\end{scope}
					\path[-](V1) edge (U1);
					\path[-](V2) edge (U2);
				\end{tikzpicture}
			\end{minipage}
			\item[$(1^\prime)$] \begin{minipage}[c]{\linewidth}\begin{tikzpicture}
					\begin{scope}[every node/.style={circle,draw,minimum height=2em}]
						\node (U) at (-0.5,0) {$0$};
						\node[Bullet] (V) at (1,0) {};
						\node[draw=none] (G1) at (3,1) {$G_1$};
						\node[draw=none] (Gn) at (3,-1) {$G_n$};
						\node[draw=none] (dots) at (2,0) {$\rvdots$};
					\end{scope}
					\path[-](U) edge (V);
					\path[-](G1) edge (V);
					\path[-](Gn) edge (V);
					\begin{scope}[every node/.style={circle,draw,minimum height=2em}]
						\node[draw=none] (G11) at (4.5,1) {$G_1$};
						\node[draw=none] (Gn1) at (4.5,-1) {$G_n$};
						\node[draw=none] (dots1) at (4.5,0) {$\rvdots$};
						\node[draw=none] (sim) at (3.75,0) {$\sim_k$};
					\end{scope}
				\end{tikzpicture}
			\end{minipage}
			\item[$(2)$] \begin{minipage}[c]{\linewidth}\begin{tikzpicture}
					\begin{scope}[every node/.style={circle,draw,minimum height=2em}]
						\node[Bullet] (V) at (0,0) {};
						\node[label={[label distance=-0.25cm]below:${\scriptscriptstyle b}$},label={[label distance=-0.25cm]above:${\scriptscriptstyle a}$}] (U1) at (-1,0) {$\alpha$};
						\node[label={[label distance=-0.25cm]below:${\scriptscriptstyle b^\prime}$},label={[label distance=-0.25cm]above:${\scriptscriptstyle a^\prime}$}] (U2) at (1,0) {$\alpha^\prime$};
						\node[draw=none] (G1) at (-2.5,1) {$G_1$};
						\node[draw=none] (Gn) at (-2.5,-1) {$G_n$};
						\node[draw=none] (G1p) at (2.5,1) {$G_1^\prime$};
						\node[draw=none] (Gmp) at (2.5,-1) {$G_m^\prime$};
						\node[draw=none] (dots1) at (-1.75,0) {$\rvdots$};
						\node[draw=none] (dots2) at (1.75,0) {$\rvdots$};
					\end{scope}
					\path[-](V) edge (U1);
					\path[-](V) edge (U2);
					\path[-](G1) edge (U1);
					\path[-](Gn) edge (U1);
					\path[-](G1p) edge (U2);
					\path[-](Gmp) edge (U2);
					\begin{scope}[every node/.style={circle,draw,minimum height=2em}]
						\node[ellipse, label={[label distance=-0.25cm]below:${\scriptscriptstyle b+a^\prime}$},label={[label distance=-0.25cm]above:${\scriptscriptstyle a+b^\prime}$}] (U11) at (6.5,0) {$\alpha-\alpha^\prime$};
						\node[draw=none] (G11) at (4,1) {$G_1$};
						\node[draw=none] (Gn1) at (4,-1) {$G_n$};
						\node[draw=none] (G1p1) at (9,1) {$-G_1^\prime$};
						\node[draw=none] (Gmp1) at (9,-1) {$-G_m^\prime$};
						\node[draw=none] (dots3) at (5.25,0) {$\rvdots$};
						\node[draw=none] (dots4) at (7.75,0) {$\rvdots$};
						\node[draw=none] (sim) at (3.25,0) {$\sim_k$};
					\end{scope}
					\path[-](G11) edge (U11);
					\path[-](Gn1) edge (U11);
					\path[-](G1p1) edge (U11);
					\path[-](Gmp1) edge (U11);
				\end{tikzpicture}
			\end{minipage}
			\item[$(2^\prime)$]\begin{minipage}[c]{\linewidth}\begin{tikzpicture}
					\begin{scope}[every node/.style={circle,draw,minimum height=2em}]
						\node (U) at (0,0) {$0$};
						\node[Bullet] (V1) at (-1,0) {};
						\node[Bullet] (V2) at (1,0) {};
						\node[draw=none] (G1) at (-2.5,1) {$G_1$};
						\node[draw=none] (Gn) at (-2.5,-1) {$G_n$};
						\node[draw=none] (G1p) at (2.5,1) {$G_1^\prime$};
						\node[draw=none] (Gmp) at (2.5,-1) {$G_m^\prime$};
						\node[draw=none] (dots1) at (-1.75,0) {$\rvdots$};
						\node[draw=none] (dots2) at (1.75,0) {$\rvdots$};
					\end{scope}
					\path[-](U) edge (V1);
					\path[-](U) edge (V2);
					\path[-](G1) edge (V1);
					\path[-](Gn) edge (V1);
					\path[-](G1p) edge (V2);
					\path[-](Gmp) edge (V2);
					\begin{scope}[every node/.style={circle,draw,minimum height=2em}]
						\node[Bullet] (V11) at (6.5,0) {};
						\node[draw=none] (G11) at (4,1) {$G_1$};
						\node[draw=none] (Gn1) at (4,-1) {$G_n$};
						\node[draw=none] (G1p1) at (9,1) {$-G_1^\prime$};
						\node[draw=none] (Gmp1) at (9,-1) {$-G_m^\prime$};
						\node[draw=none] (dots3) at (5.25,0) {$\rvdots$};
						\node[draw=none] (dots4) at (7.75,0) {$\rvdots$};
						\node[draw=none] (sim) at (3.25,0) {$\sim_k$};
					\end{scope}
					\path[-](G11) edge (V11);
					\path[-](Gn1) edge (V11);
					\path[-](G1p1) edge (V11);
					\path[-](Gmp1) edge (V11);
				\end{tikzpicture}
			\end{minipage}
			\item[$(3)$] \begin{minipage}[c]{\linewidth}\begin{tikzpicture}
					\begin{scope}[every node/.style={circle,draw,minimum height=2em}]
						\node[draw=none] (G1) at (0,0) {$G_1$};
						\node[Bullet] (V) at (1,0) {};
						\node[draw=none] (G11) at (2.5,0) {$G_1$};
						\node[draw=none] (sim) at (1.75,0) {$\sim_k$};
					\end{scope}
				\end{tikzpicture}
			\end{minipage}\vspace{-0.5cm}
			\item[$(3^\prime)$] \begin{minipage}[c]{\linewidth}\begin{tikzpicture}
					\begin{scope}[every node/.style={circle,draw,minimum height=2em}]
						\node[draw=none] (G1) at (1,0) {$G_1$};
						\node[draw=none] (G11) at (3.5,0) {$G_1$};
						\node[draw=none] (sim) at (1.75,0) {$\sim_k$};
						\node[Bullet] (V1) at (-1.5,0) {};
						\node[label={[label 	distance=-0.25cm]above:${\scriptscriptstyle 1}$}] (U1) at (0,0) {$0$};
						\node (U4) at (2.5,0) {$0$};
						\node[draw=none](txt) at (6,0) {if $G_1$ is not spin};
					\end{scope}
					\path[-](U1) edge (V1);
				\end{tikzpicture}
			\end{minipage}\vspace{-0.5cm}
			\item[$(4)$] \begin{minipage}[c]{\linewidth}\begin{tikzpicture}
					\begin{scope}[every node/.style={circle,draw,minimum height=2em}]
						\node[draw=none] (G1) at (0,0) {$G_1$};
						\node[draw=none] (G2) at (1,0) {$G_2$};
						\node[draw=none] (G11) at (2.5,0) {$-G_1$};
						\node[draw=none] (G21) at (3.5,0) {$G_2$};
						\node[draw=none] (sim) at (1.75,0) {$\sim_k$};
					\end{scope}
				\end{tikzpicture}
			\end{minipage}
		\end{enumerate}
		Here $G_i$, $G_i^\prime$ are (pairwise disjoint, and possibly empty) subgraphs, and $-G$ denotes $G$ with $\alpha$ replaced by $-\alpha$ and $(k^+,k^-)$ replaced by $(k^-,k^+)$.
	\end{lemma}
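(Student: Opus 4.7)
The plan is to prove each of the seven modifications by exhibiting an explicit isomorphism $\phi\colon H_G\to H_{G'}$ of the underlying free abelian groups and verifying that it carries $(\mu_G^k,w_G,p_G^k)$ to $(\mu_{G'}^k,w_{G'},p_{G'}^k)$. Throughout, I would exploit the basic observation that the vertices $u\in U$ interact across the graph only through the constraints $\sum_{(u,v)\in E}\lambda_u=0$ at each $v\in V$, while the ``off-diagonal'' symbols $u^i$ with $i\neq 0$ are trilinearly null and $p^k$-null, contributing only to $w_G$.

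I would first dispatch the routine cases. In $(1)$, attaching the leaf $v$ forces $\lambda_u=0$ and therefore removes $u^0$ from $H_G$; the surviving symbols $u^{\pm i}$ are orphaned generators with trivial $\mu^k$ and $p^k$, and the subgraphs $G_j$ decouple because their constraints no longer see $\lambda_u$, which gives the splitting into $a+b$ leaf-vertex components plus the $G_j$'s. Case $(1')$ is dual: the trivial vertex contributes only $u^0$ with vanishing invariants, and the constraint at $v$ solves for $\lambda_u$, allowing removal of both $u$ and $v$. Case $(3)$ is immediate. For case $(4)$ I would verify that $u^i\mapsto -u^{-i}$ carries the data of $G_1$ to that of $-G_1$: the two sign changes (the outer minus and $\mathrm{sgn}(i)\mapsto\mathrm{sgn}(-i)$) combine to preserve $\mu^k$, the outer minus cancels the flip $\alpha\mapsto-\alpha$ together with the swap of $k^\pm$ in $p^k$, and $-w_{G_1}\equiv w_{G_1}\pmod 2$.

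For $(2)$ the central $v$ imposes $\lambda_u+\lambda_{u'}=0$, so $u^0-u'^0$ is the unique rank-one generator from that pair in $H_G$, and a direct computation gives
\[\mu_G^k(u^0-u'^0,u^0-u'^0,u^0-u'^0)=\tfrac{\lambda_k}{4}(\alpha-\alpha'),\]
\[p_G^k(u^0-u'^0)=\lambda_k(\alpha-\alpha')+\tbinom{2k+1}{k}\bigl((a+b')-(b+a')\bigr),\]
matching the labels $(\alpha-\alpha',\,a+b',\,b+a')$ on $\hat u$. I would take the isomorphism $u^i\mapsto\hat u^i$, $u^{-i}\mapsto\hat u^{-i}$, $u'^{-j}\mapsto\hat u^{a+j}$, $u'^{j}\mapsto\hat u^{-(b+j)}$ (so that the $w$-summands line up and the $\mathrm{sgn}(\cdot)$-signs in $\mu^k$ match), and consistency with the remaining constraints on the $G_j'$-side would force $\lambda\mapsto-\lambda$ there, which is precisely the orientation reversal $G_j'\leadsto -G_j'$. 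Case $(2')$ runs in parallel with the roles of $U$ and $V$ exchanged, using the trivial central $u$ to transmit the constraint between the two dots.

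The main obstacle is $(3')$. The new generator $y=u^1$ on the left contributes $y$ to $w_G$, while $y'=u^0$ on the right contributes nothing to $w_{G'}$; both are null for $\mu^k$ and $p^k$. The naive identification $y\leftrightarrow y'$ therefore fails to preserve $w$ and no symbol-by-symbol match is possible. My trick would be a shear: since $G_1$ is not spin, $w_{G_1}\neq 0$ in $H_{G_1}\otimes\Z/2$, so there exists a linear form $\alpha_0\colon H_{G_1}\to\Z$ with $\alpha_0(w_{G_1})\equiv 1\pmod 2$. Set
\[\phi(h)=h+\alpha_0(h)\,y'\quad(h\in H_{G_1}),\qquad \phi(y)=y'.\]
This has determinant $1$, hence is an isomorphism; because $y'$ is trilinearly null and satisfies $p^k(y')=0$, the shear term $\alpha_0(h)\,y'$ is invisible to $\mu^k$ and $p^k$, which are therefore preserved; and
\[\phi(w_G)=\phi(w_{G_1}+y)=w_{G_1}+\bigl(\alpha_0(w_{G_1})+1\bigr)y'\equiv w_{G_1}=w_{G'}\pmod 2\]
by the choice of $\alpha_0$. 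Without the non-spin hypothesis on $G_1$ no such $\alpha_0$ exists, which explains the proviso, and this is the only step in the lemma that is not essentially bookkeeping.
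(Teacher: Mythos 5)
Your proof is correct, and it takes a genuinely different route from the paper for most of the modifications. The paper proves $(1')$, $(2)$, $(2')$ by observing that the corresponding geometric plumbing graphs $\overline{G}^k$ and $\overline{G'}^k$ yield diffeomorphic manifolds $M_{\overline{G}^k}\cong M_{\overline{G'}^k}$ (via Propositions~\ref{P:GRAPH_RED_EDGE}, \ref{P:GRAPH_RED1}, \ref{P:GRAPH_RED2}) and then invoking Lemma~\ref{L:ALG_PL_GR_INV} to transfer this to the algebraic invariants; $(3)$ and $(4)$ are declared clear; only $(1)$ and $(3')$ get direct algebraic arguments. You instead carry out a uniform algebraic verification across all seven cases, building the isomorphism $\phi\colon H_G\to H_{G'}$ explicitly each time. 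This buys you self-containedness (no appeal to the geometric diffeomorphism theorems) and, in principle, greater generality, since your argument does not secretly route through the hypothesis that connected components be simply-connected, which Lemma~\ref{L:ALG_PL_GR_INV} requires; the paper's route is more economical because those geometric propositions are needed elsewhere anyway. For the one case where the paper also argues algebraically, $(3')$, your ``shear'' $\phi(h)=h+\alpha_0(h)\,y'$ using an integral lift $\alpha_0$ of a functional hitting $w_{G_1}$ is exactly the coordinate-free version of the paper's construction (the paper picks a primitive lift $x_0$ of $w_{G_1}$, extends to a basis $(x_0,\dots,x_n)$, and maps $x_0\mapsto x_0-u_2^0$, $x_i\mapsto x_i$, $u_1^1\mapsto u_2^0$; this is your map with $\alpha_0$ the dual functional to $x_0$), so the two are equivalent there. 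Your sign bookkeeping in $(2)$ and $(4)$ — in particular that the constraint at the central dot forces $\lambda_{u'}=-\lambda_u$, that the matching of $\text{sgn}$'s requires $u'^{\pm j}\mapsto\hat u^{\mp(\cdots)}$, and that consistency with the downstream constraints reproduces $-G'_j$ via $u''^0\mapsto-\hat u''^0$, $u''^i\mapsto\hat u''^{-i}$ — checks out.
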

	\begin{proof}
		First note that $\overline{-G}^k$ is obtained from $\overline{G}^k$ by reversing the orientations of all bases and fibers (but not of the total spaces). Then the equivalences $(3)$ and $(4)$ are clear and the remaining equivalences follow from Propositions \ref{P:GRAPH_RED_EDGE}, \ref{P:GRAPH_RED1}, \ref{P:GRAPH_RED2} and Lemma \ref{L:ALG_PL_GR_INV}, except $(1)$ and $(3^\prime)$. For $(1)$ we additionally need to show that
		\begin{center}
			\begin{tikzpicture}
				\begin{scope}[every node/.style={circle,draw,minimum height=2em}]
					\node[Bullet] (V) at (-0.5,0) {};
					\node[label={[label distance=-0.25cm]below:${\scriptscriptstyle b}$},label={[label distance=-0.25cm]above:${\scriptscriptstyle a}$}] (U) at (1,0) {$\alpha$};
				\end{scope}
				\path[-](V) edge (U);
				\begin{scope}[every node/.style={circle,draw,minimum height=2em}]
					\node[Bullet] (V1) at (2.5,1) {};
					\node[label={[label 	distance=-0.25cm]above:${\scriptscriptstyle 1}$}] (U1) at (4,1) {$0$};
					\node[Bullet] (V2) at (2.5,-1) {};
					\node[label={[label distance=-0.25cm]above:${\scriptscriptstyle 1}$}] (U2) at (4,-1) {$0$};
					\node[draw=none,label={[label distance=-0.5cm]right:${\scriptscriptstyle a+b}$}] (dots2) at (3,0) {$\rvdots$};
					\node[draw=none] (sim) at (1.75,0) {$\sim_k$};
				\end{scope}
				\path[-](V1) edge (U1);
				\path[-](V2) edge (U2);
			\end{tikzpicture}
		\end{center}
		holds. For that, denote the graph on the left-hand side by $G$ and the graph on the right-hand side by $G^\prime$. Denote the single element of $U$ by $u$ and the elements of $U^\prime$ by $u_1,\dots,u_{a+b}$. Then, by definition, we have
		\[H_G=\bigoplus\limits_{\substack{i=-b\\i\neq 0}}^a \Z u^i\quad \text{ and }\quad H_{G^\prime}=\bigoplus\limits_{i=1}^{a+b}\Z u_i^1, \]
		and an isomorphism $H_G\to H_{G^\prime}$ is given by mapping $u^i$ to $u_i^1$ for $i>0$ and $u^i$ to $u_{a-i}^1$ for $i<0$. It is now easily verified that $\mu_G^k$, $p_G^k$, $\mu_{G^\prime}^k$ and $p_{G^\prime}^k$ all vanish, and that
		\[w_G=\sum\limits_{\substack{u\in U,\\i\neq0}}u^i\mod 2,\quad w_{G^\prime}=\sum\limits_{i=1}^{a+b}u_i^1\mod 2,\]
		hence this isomorphism preserves all invariants.
		
		For $(3^\prime)$ denote the graph on the left-hand side by $G$ and the one on the right-hand side by $G^\prime$. Let $x_0\in H_{G_1}$ be a primitive element that restricts to $w_{G_1}$ (which exists since $w_{G_1}$ is non-trivial) and extend it to a basis $(x_0,\dots,x_n)$ of $H_{G_1}$. Let $u_1$ be the additional vertex in $U$ and $u_2$ the additional vertex in $U^\prime$. Then
		\[w_G=u_1^1+x_0\mod 2,\quad w_{G^\prime}=x_0\mod 2. \]
		Hence, by mapping $u_1^1$ to $u_2^0$, $x_0$ to $x_0-u_2^0$ and $x_i$ to $x_i$ for $i>0$, we obtain an isomorphism $H_G\to H_{G^\prime}$ that maps $w_G$ to $w_{G^\prime}$. Since the linear and trilinear forms are only non-trivial on elements of $H_{G_1}$, they are also preserved under this isomorphism.
	\end{proof}
	
	These modifications will be used to bring a given graph into a reduced form.
	\begin{definition}
		Let $G=(U,V,E,(\alpha,k^+,k^-))$ be an algebraic plumbing graph. We call $G$ \index{reduced graph}\emph{reduced}, if it satisfies the following conditions:
		\begin{itemize}
			\item Every connected component of $G$ is simply-connected.
			\item The graph \begin{tikzpicture}[scale=0.6,, transform shape]
				\begin{scope}[every node/.style={circle,draw}]
					\node[Bullet] (V) at (0,0) {};
					\node[label={[label distance=-0.2cm]above:${\scriptscriptstyle 1}$}] (U) at (1.5,0) {$0$};
				\end{scope}
				\path[-](V) edge (U);
			\end{tikzpicture} only appears as a connected component in $G$ if it is the only non-spin connected component. Every $v\in V$ not contained in this connected component has degree at least $3$, and
			\item Every $u\in U$ with $\alpha(u)=k^+(u)=k^-(u)=0$ has degree $0$ or at least $3$.
		\end{itemize}
		Given a reduced graph $G$ we define its \emph{reduced class} as the set of isomorphism classes of reduced graphs that are obtained from $G$ by multiplying each connected component by $(\pm1)$. By (4) of Lemma \ref{L:GRAPH_RED}, two reduced graphs with the same reduced class are $k$-equivalent, so the notion of $k$-equivalence descends to an equivalence relation between reduced classes.
	\end{definition}
	
	The following result is now a direct consequence of Lemmas \ref{L:ALG_PL_GR_INV} and \ref{L:GRAPH_RED}.
	\begin{corollary}
		\label{C:RED_GRAPHS}
		Let $G=(U,V,E,(\alpha,k^+,k^-))$ be an algebraic plumbing graph. If every connected component of $G$ is simply-connected, then $G$ is $k$-equivalent to a reduced graph $G^\prime$. Further, we have $\textup{rank}(H_G)=|U^\prime|-|V^\prime|+\sum_{u\in U^\prime}k^+(u)+k^-(u)$.
	\end{corollary}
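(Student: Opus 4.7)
The plan is to establish the existence of a $k$-equivalent reduced graph $G'$ through an iterative reduction procedure using Lemma \ref{L:GRAPH_RED}, and then compute $\operatorname{rank}(H_{G'})$ directly from the defining formula for $H_{G'}$.

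For the existence of $G'$, I would work component-by-component, since each connected component of $G$ is by hypothesis simply-connected (a bipartite tree) and the modifications of Lemma \ref{L:GRAPH_RED} preserve this property within each component. Within a fixed component, I iteratively eliminate the three kinds of offending configurations in the definition of a reduced graph: a vertex $v \in V$ of degree $2$ is removed by merging its two $U$-neighbors via (2); a vertex $u \in U$ with trivial data $\alpha = k^+ = k^- = 0$ and degree $1$ or $2$ is eliminated via (1') or (2'); and a leaf $v \in V$ attached to a vertex $u$ with nontrivial label is handled via (1), which trades it for a collection of exceptional components sitting alongside the remaining subgraphs. Each such step strictly decreases the lexicographic pair $(|E|,\, |U|+|V|)$, so the process terminates. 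After the intra-component reductions, I perform a global clean-up: isolated bullets are discarded via (3), and any surplus exceptional components sitting alongside another non-spin component are absorbed using (3'), which replaces an exceptional piece next to a non-spin $G_1$ by an isolated trivial $u$-vertex next to $G_1$. This leaves a graph satisfying all three defining properties of being reduced.

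For the rank formula, I read $\operatorname{rank}(H_{G'})$ directly from
\[H_{G'} \;=\; \bigoplus_{\substack{u \in U' \\ i \neq 0}} \Z u^i \;\oplus\; K, \qquad K \;=\; \ker\!\left(\Z^{U'} \xrightarrow{B(G')^\top} \Z^{V'}\right),\]
where $B(G')$ is the biadjacency matrix from Appendix \ref{A:GRAPHS}. The first summand contributes rank $\sum_{u \in U'} (k^+(u)+k^-(u))$. For the second, I decompose $G'$ into connected components and verify that each satisfies $\operatorname{rank}(K) = |U|-|V|$. On the exceptional component $|U|=|V|=1$ and the single constraint forces $K=0$; on an isolated trivial $u$-vertex $|U|=1,\,|V|=0$ and $K \cong \Z$; and on any remaining component, simple-connectedness gives $|U|+|V|=|E|+1$, while the condition $\deg(v) \geq 3$ for every $v \in V$ forces $|E| \geq 3|V|$ and hence $|U| \geq 2|V|+1 > |V|$. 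In the last case Lemma \ref{L:BG_RANK} applies and gives $B(G')$ full row rank $|V|$, so $\operatorname{rank}(K) = |U|-|V|$. Summing over components yields the claimed $|U'|-|V'|$.

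The main obstacle I expect is the bookkeeping for the reduction algorithm: one must verify that each modification of Lemma \ref{L:GRAPH_RED} is genuinely applicable in the indicated configuration, that the chosen complexity measure actually decreases in each step, and that the global clean-up with (3') leaves the graph in a form compatible with the subtle constraint that the exceptional component appears as a connected component only when no other non-spin component remains. Once a reduced $G'$ is produced, the rank calculation itself is purely combinatorial.
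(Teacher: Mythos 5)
Your approach matches the paper's: the paper simply asserts the corollary as a direct consequence of Lemmas \ref{L:ALG_PL_GR_INV} and \ref{L:GRAPH_RED}, and your proposal makes explicit what "direct consequence" means --- iterate Lemma \ref{L:GRAPH_RED} to reach a reduced graph, then compute $\operatorname{rank}(H_{G'})$ component-by-component via Lemma \ref{L:BG_RANK} (which is essentially the computation already carried out in the proof of Lemma \ref{L:ALG_PL_GR_INV}(3)). Your rank computation is correct, including the careful separate treatment of the exceptional component and of isolated $u$-vertices, and the inequality $|U|\geq 2|V|+1$ needed to invoke Lemma \ref{L:BG_RANK}.

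The one genuine issue is your termination measure. The lexicographic pair $(|E|,\,|U|+|V|)$ does \emph{not} decrease under modification (1): if $u$ carries labels $(\alpha,a,b)$ with $a+b$ large relative to its degree, then replacing the $v$--$u$ part by $a+b$ exceptional components increases $|E|$ by $(a+b)-(1+n)$ and increases $|U|+|V|$ by $2(a+b)-2$, both of which can be positive. A measure that does work is to count vertices only in components that are not yet "done" (i.e.\ not exceptional and not isolated $u$-vertices): modifications (1), $(1')$, (2), $(2')$, and (3) each strictly decrease this quantity, and $(3')$ leaves it unchanged while strictly decreasing the number of exceptional components, so the process terminates. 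The paper leaves this entirely implicit, but since you chose to spell it out, the chosen measure should actually decrease.
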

	It is now a natural question, how \textquotedblleft good\textquotedblright\ this notion of reduced form is, i.e.\ if any two $k$-equivalent reduced graphs are contained in the same reduced class.
	\begin{question}
		\label{Q:RED_EQUIV=ISOM?}
		Let $G_1$, $G_2$ be reduced graphs that are $k$-equivalent for some $k$. Are their reduced classes the same?
	\end{question}
	This question is open, but we will answer it affirmatively in some special cases. For that we first list the reduced graphs of low rank. Clearly the only reduced graph of rank $0$ is the empty graph, which defines $S^6$.
	
	\begin{proposition}
		\label{P:RED_FORMS1}
		Let $G$ be a reduced graph of rank 1. Then the following assertions hold:
		\begin{itemize}
			\item If $G$ is spin, then it is of the form \begin{tikzpicture}[scale=0.6,, transform shape]
				\begin{scope}[every node/.style={circle,draw}]
					\node (U) at (0,0) {$\alpha$};
				\end{scope}
			\end{tikzpicture}. The manifold $M_{\overline{G}^k}$ is the total space of a linear $S^{2k}$-bundle over $S^{4k}$. Two graphs \begin{tikzpicture}[scale=0.5,, transform shape]
				\begin{scope}[every node/.style={circle,draw}]
					\node (U) at (0,0) {$\alpha_1$};
				\end{scope}
			\end{tikzpicture} and \begin{tikzpicture}[scale=0.5,, transform shape]
				\begin{scope}[every node/.style={circle,draw}]
					\node (U) at (0,0) {$\alpha_2$};
				\end{scope}
			\end{tikzpicture} are $k$-equivalent if and only if $\alpha_1=\pm\alpha_2$.
			\item If $G$ is not spin, then it is given by \begin{tikzpicture}[scale=0.6,, transform shape]
				\begin{scope}[every node/.style={circle,draw}]
					\node[Bullet] (V) at (0,0) {};
					\node[label={[label distance=-0.1cm]above:${\scriptscriptstyle 1}$}] (U) at (1.5,0) {$0$};
				\end{scope}
				\path[-](V) edge (U);
			\end{tikzpicture}. The graph $G$ has trivial trilinear form $\mu_G^k$ and trivial linear form $p_G^k$. If $k=1$, then the manifold $M_{\overline{G}}$ is the unique non-trivial linear $S^4$-bundle over $S^2$.
		\end{itemize}
	\end{proposition}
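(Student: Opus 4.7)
\emph{Classification of the reduced graphs of rank $1$.} By Corollary \ref{C:RED_GRAPHS}, we need $|U| - |V| + \sum_{u\in U}(k^+(u)+k^-(u)) = 1$, with $G$ a disjoint union of trees. I first show that every non-trivial connected component contributes rank at least $1$, and non-exceptional components containing a $V$-vertex contribute rank at least $2$. For the latter, summing degrees in a bipartite tree gives $\sum_{v\in V_c}\deg(v) = |U_c|+|V_c|-1$, so if every $v \in V_c$ has degree $\geq 3$, then $|U_c| \geq 2|V_c|+1$, whence the rank is $\geq |V_c|+1\geq 2$. The exceptional non-spin component has $|U_c| = |V_c| = 1$ and $k^+(u) = 1$, giving rank exactly $1$. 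An isolated $U$-vertex $u$ contributes $1 + k^+(u) + k^-(u)\geq 1$ to the rank (the $u^0$-summand is unconstrained). Since the total rank is $1$, there is exactly one component, and it is either (a) a single isolated $U$-vertex with $k^\pm(u) = 0$ and arbitrary $\alpha(u) \in \Z$, or (b) the exceptional non-spin component. This gives precisely the two graphs displayed in the proposition.

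\emph{Spin case.} By construction $M_{\overline{G}^k} = \partial\overline{E}_u = E_u$ is the linear $S^{2k}$-bundle over $S^{4k}$ associated to $\alpha$ via Lemma \ref{L:S2k_BUNDLES_S4k}. Reading off the definitions, $H_G = \Z u^0$ with $\mu_G^k(u^0,u^0,u^0) = \tfrac{\lambda_k}{4}\alpha$, $p_G^k(u^0) = \lambda_k\alpha$ and $w_G = 0$. Any isomorphism $\Z u^0 \to \Z u^0$ is multiplication by $\varepsilon \in \{\pm 1\}$, and the condition $\varepsilon^3 \tfrac{\lambda_k}{4}\alpha' = \tfrac{\lambda_k}{4}\alpha$ becomes $\alpha = \pm\alpha'$ (which then automatically preserves $p_G^k$). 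This establishes the equivalence criterion.

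\emph{Non-spin case.} The single $v \in V$ imposes $\lambda_u = 0$, so $H_G = \Z u^1$, on which both $\mu_G^k$ and $p_G^k$ vanish directly from the definitions, while $w_G = u^1 \bmod 2 \neq 0$. For $k=1$, Corollary \ref{C:MG_COHOM_6KDIM}(1) shows $M_{\overline{G}}$ is a closed simply-connected $6$-manifold with torsion-free homology, $b_3 = 0$, and invariants $(\Z, 0, 1, 0)$. The unique non-trivial oriented linear $S^4$-bundle $N \to S^2$, classified by the generator of $\pi_1(\mathrm{SO}(5)) = \Z/2$, shares these invariants: $H^2(N) = \Z$ with vanishing cup cube and $b_3(N) = 0$, while Proposition \ref{P:SPHERE_BDL_CHAR_CLASSES} yields $w_2(N) \neq 0$ (the associated stable bundle generates $\pi_1(\mathrm{BSO}) = \Z/2$, which is detected by $w_2$) and $p_1(N) = 0$ (this group carries no Pontryagin information). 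Jupp's classification (Theorem \ref{T:JUPP}) then gives $M_{\overline{G}} \cong N$.

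The main technical point is the combinatorial first step: one must argue carefully why any non-exceptional tree component containing a $V$-vertex already has rank at least $2$, and why an isolated $U$-vertex can appear at most once, so that rank $1$ forces one of the two listed configurations. The remaining invariant computations are routine from the definitions, and the identification in the non-spin case is a direct application of Jupp's uniqueness theorem together with the characteristic class computation for $N$.
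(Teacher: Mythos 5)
Your proof is correct, and the combinatorial classification, the spin-case equivalence computation, and the invariant calculations match the paper's argument (the paper establishes the classification of rank-$1$ reduced graphs in the discussion preceding the proof, via Lemma~\ref{L:RED_GRAPH_INEQ}; your argument is the same, spelled out in slightly more detail). Where you genuinely diverge is the non-spin identification: the paper simply cites \cite[Lemma 4.2]{Re21} to identify $M_{\overline{G}}$ with the non-trivial linear $S^4$-bundle over $S^2$, whereas you give a self-contained argument computing the Jupp invariants of both $M_{\overline{G}}$ (via Corollary~\ref{C:MG_COHOM_6KDIM}(1)) and the $S^4$-bundle $N$ (via the Gysin sequence and Proposition~\ref{P:SPHERE_BDL_CHAR_CLASSES}), then invoking Theorem~\ref{T:JUPP}. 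This is a valid and arguably more transparent route, at the cost of a little extra computation. One small slip: the stable class of the generating rank-$5$ bundle over $S^2$ is detected in $\pi_2(\mathrm{BSO})\cong\Z/2$, not $\pi_1(\mathrm{BSO})$ (which is trivial since $\mathrm{BSO}$ is simply-connected); the conclusion that $w_2(\xi)\neq 0$ is nonetheless correct.
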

	\begin{proposition}
		\label{P:RED_FORMS2}
		Let $G$ be a reduced graph of rank 2. Then the following assertions hold:
		\begin{itemize}
			\item If $G$ is spin, then it is of the form\vspace{-0.3cm}
			\begin{center}
				\begin{tikzpicture}
					\begin{scope}[every node/.style={circle,draw,minimum height=2.35em}]
						\node(U1) at (-1.25,0) {$\alpha_1$};
						\node(U2) at (0,0) {$\alpha_2$};
						\node[draw=none](or) at (1.5,0){or};
						\node(U11) at (4,1){$\alpha_1$};
						\node(U21) at (3.133975,-0.5){$\alpha_2$};
						\node(U31) at (4.866025,-0.5){$\alpha_3$};
						\node[Bullet](V) at (4,0){};
					\end{scope}
					\path[-](V) edge (U11);
					\path[-](V) edge (U21);
					\path[-](V) edge (U31);
				\end{tikzpicture}
			\end{center}
			with $\alpha_i\neq0$ in the second case. For every such graph $G$ there is at most one reduced class that is $k$-equivalent but not equal to that of $G$.
			\item If $G$ is not spin, its reduced class is equal to that of a graph of the form\vspace{-0.3cm}
			\begin{center}
				\begin{tikzpicture}
					\begin{scope}[every node/.style={circle,draw,minimum height=2.35em}]
						\node[Bullet] (V1) at (-0.5,0.75) {};
						\node[label={[label distance=-0.25cm]above:${\scriptscriptstyle 1}$}] (U1) at (1,0.75) {$0$};
						\node (U2) at (1,-0.75) {$\alpha$};
						\node[draw=none](or) at (2.5,0){or};
					\end{scope}
					\path[-](V1) edge (U1);
					\begin{scope}[every node/.style={circle,draw,minimum height=2.35em}]
						\node[label={[label 	distance=-0.25cm]above:${\scriptscriptstyle 1}$}] (U) at (4,0) {$\alpha$};
					\end{scope}
				\end{tikzpicture}
			\end{center}
			For every such graph $G$ every reduced class that is $k$-equivalent to that of $G$ is equal to that of $G$.
		\end{itemize}
	\end{proposition}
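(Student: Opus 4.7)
My plan is to prove the proposition in two phases: first to enumerate the reduced graphs of rank $2$ by combinatorics, then to analyze $k$-equivalences among them using the trilinear form.

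\textbf{Structural classification.} By Corollary~\ref{C:RED_GRAPHS}, a reduced graph $G$ has rank $|U| - |V| + \sum_{u \in U}(k^+(u) + k^-(u)) = 2$. I would argue component by component. Each component is a tree, hence $|E_i| = |U_i| + |V_i| - 1$, and in a reduced graph every $v \in V$ outside the special component of Proposition~\ref{P:RED_FORMS1} has degree $\geq 3$, forcing $|U_i| - |V_i| \geq |V_i| + 1$ whenever $|V_i| \geq 1$. In the spin case ($k^{\pm} \equiv 0$) any component with $|V_i| \geq 1$ thus contributes at least $2$ to the total rank while an isolated $u$-vertex contributes $1$; the only rank-$2$ configurations are therefore two isolated $u$-vertices, or a single component with $|V| = 1,\,|U| = 3$ (the star), on whose leaves the reduced condition forces $\alpha_i \neq 0$. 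In the non-spin case the special component contributes $1$ and (by the restriction in the definition of reduced) precludes other non-spin components, so we either combine it with an isolated $\alpha$-vertex (option~(a)) or use a single non-spin component of rank~$2$; the degree inequality rules out any non-trivial tree for the latter, leaving a single isolated $u$ with $k^+(u) + k^-(u) = 1$, which the $(\Z/2)^m$-action on reduced classes normalizes to $k^+(u) = 1$ (option~(b)).

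\textbf{Non-spin case.} I would then compute the invariants $(H_G, \mu_G^k, w_G, p_G^k)$ of both options directly from the definitions and exhibit a distinguishing invariant. The crucial observation is that for any lift $W \in H_G$ of $w_G$ the linear form $\phi_W(x) = \mu_G^k(W, W, x) \bmod 2$ is well-defined on $H_G/2 H_G$, since replacing $W$ by $W + 2Y$ alters $\mu_G^k(W, W, x)$ by an even integer. In option~(a) every non-trivial value of $\mu_G^k$ involves only the summand coming from the isolated $\alpha$-vertex, so $\phi_W = 0$; in option~(b) the definition gives $\mu_G^k(u^1, u^1, u^0) = 1$, so $\phi_W \neq 0$. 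Thus (a) and~(b) are never $k$-equivalent. Within each option the parameter $\alpha$ is then recovered up to sign: in option~(a), the generator $u^1$ from the special component is characterized as a primitive element of the radical of $\mu_G^k$, and $\mu_G^k$ on the complementary direction recovers $\pm\alpha$; in option~(b), the discriminant of the binary cubic associated to $\mu_G^k$ equals (up to a unit) $-27\lambda_k \alpha$, a $\mathrm{GL}_2(\Z)$-invariant determining $|\alpha|$. Combined with the $(\Z/2)^m$-action on reduced classes this yields the claimed uniqueness.

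\textbf{Spin case.} Here $w_G = 0$ and $\phi_W$ is trivial, so a finer analysis is needed. The invariants reduce to a pair consisting of a binary cubic form $C_G$ and a linear form $p_G^k$ on $H_G \cong \Z^2$, considered up to $\mathrm{GL}_2(\Z)$-equivalence. For the two-isolated form, $C_G(x, y) = \tfrac{\lambda_k}{4}(\beta_1 x^3 + \beta_2 y^3)$; for the star form, after parameterizing $\{x_1 + x_2 + x_3 = 0\} \subset \Z^3$ via $(x, y) \mapsto (x, y, -x - y)$, $C_G(x, y) = \tfrac{\lambda_k}{4}(\alpha_1 x^3 + \alpha_2 y^3 + \alpha_3(-x - y)^3)$. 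I would then invoke the classical invariant theory of binary cubic forms under $\mathrm{SL}(2, \C)$, together with Schmitt's partial classification~\cite{Sc97}, to bound the number of reduced shapes compatible with a given $C_G$: the $\mathrm{SL}(2, \C)$-orbit of a non-degenerate binary cubic is controlled by its discriminant, and the descent to $\mathrm{GL}_2(\Z)$-orbits involves only finite class-group data. Matching this against the two combinatorial shapes above shows that at most two reduced classes can share a common system of invariants. The main obstacle is precisely this descent step; controlling which $\C$-equivalences lift to $\Z$-equivalences is exactly what prevents a complete answer to Question~\ref{Q:RED_EQUIV=ISOM?} in the spin rank-$2$ case, and is why only an \emph{at most one} statement can be established here.
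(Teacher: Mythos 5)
Your structural classification of rank-$2$ reduced graphs runs essentially along the same lines as the paper (via the degree inequality of Lemma~\ref{L:RED_GRAPH_INEQ} and the rank formula of Corollary~\ref{C:RED_GRAPHS}) and is correct. In the non-spin case you take a genuinely different but valid route: the paper separates the two shapes and recovers $|\alpha|$ directly from the image of the linear form $p_G^k$ inside $\Z$, using $0 < \binom{2k+1}{k} < \lambda_k$ to see that one image lies in $\lambda_k\Z$ and the other never does. You instead use the mod-$2$ quadratic refinement $\phi_W(x) = \mu_G^k(W,W,x) \bmod 2$ to separate the shapes, then the radical of $\mu_G^k$ in option~(a) and the discriminant $-27\lambda_k\alpha$ of the cubic in option~(b) to pin down $\alpha$; this works, at the cost of bringing in the cubic at all, which the paper's argument avoids.

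The spin case, however, has a genuine gap. You correctly set up the parametrization (unifying the two shapes via $\alpha_3=0$ and reading off the cubic in the basis $e_1 = u_1^0 - u_3^0$, $e_2 = u_2^0 - u_3^0$), but the central claim --- that at most two reduced classes share a common system of invariants --- is asserted rather than proved. Invoking ``the $\mathrm{SL}(2,\C)$-orbit of a non-degenerate binary cubic is controlled by its discriminant'' and ``descent to $\mathrm{GL}_2(\Z)$-orbits involves only finite class-group data'' does not deliver the bound: the discriminant of $f$ alone does not separate triples (distinct triples routinely share a discriminant), the equivalence to be analyzed involves the \emph{pair} $(f,p)$ rather than $f$ alone, and there is no route from class-group finiteness to the concrete bound of two. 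The paper's proof instead computes all four joint $\widetilde{\mathrm{SL}}(2,\C)$-invariants $D(f)$, $R(f,p)^2$, $I(f\cdot p)$, $J(f\cdot p)$ from \cite{Sc97}, rewrites them as polynomials in the elementary symmetric functions $\sigma_1,\sigma_2,\sigma_3$ of $(\alpha_1,\alpha_2,\alpha_3)$, and runs an explicit case analysis (distinguishing whether $D(f)\neq 0$, or $D(f)=0$ with $\sigma_2\neq 0$, or $\sigma_2^2=\sigma_1\sigma_3=0$) to recover $(\sigma_1,\sigma_2,\sigma_3)$ up to at most a two-fold ambiguity, and hence the triple up to permutation and simultaneous sign change. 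That explicit computation is the actual content of the proposition in the spin case and is not replaceable by the general invariant-theory remarks in your plan. You would also need, as the paper notes at the end, the observation that in the spin case $(\mu_G^k, p_G^k) = \frac{\lambda_k}{4}(\mu_G, p_G)$ so that the $k=1$ computation covers all $k$.
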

	We refer to \cite[Proposition 6.4.3]{Re22} for a list of reduced graphs of rank 3.
	
	The possibilities for the reduced graphs are a simple cobinatorial consequence of the following lemma.
	\begin{lemma}
		\label{L:RED_GRAPH_INEQ}
		Let $G^\prime$ be a non-empty connected component of a reduced graph that is not of the form \begin{tikzpicture}[scale=0.6,, transform shape]
			\begin{scope}[every node/.style={circle,draw}]
				\node[Bullet] (V) at (0,0) {};
				\node[label={[label distance=-0.15cm]above:${\scriptscriptstyle 1}$}] (U)	at (1.5,0) {$0$};
			\end{scope}
			\path[-](V) edge (U);
		\end{tikzpicture}. Then
		\[2\cdot|V^\prime|+1\leq |U^\prime|.\]
	\end{lemma}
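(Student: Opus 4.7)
The plan is a short double-counting argument, using that $G'$ is a tree together with the degree condition on $V'$-vertices. I would first record the two standard relations that a finite, non-empty, simply-connected bipartite graph $G'=(U',V',E')$ satisfies: being a tree, we have
\[
|E'| = |U'| + |V'| - 1,
\]
and bipartiteness (edges go only between $U'$ and $V'$) gives
\[
|E'| \;=\; \sum_{u\in U'}\deg(u) \;=\; \sum_{v\in V'}\deg(v).
\]

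Next I would invoke the definition of a reduced graph to produce the decisive degree bound. By hypothesis $G'$ is not the distinguished component \begin{tikzpicture}[scale=0.6, transform shape, baseline={([yshift=-.5ex]current bounding box.center)}] \begin{scope}[every node/.style={circle,draw}] \node[Bullet] (V) at (0,0) {}; \node[label={[label distance=-0.15cm]above:${\scriptscriptstyle 1}$}] (U) at (1.5,0) {$0$}; \end{scope} \path[-](V) edge (U); \end{tikzpicture}, so the second reducedness condition forces every $v \in V'$ to have $\deg(v) \geq 3$. Therefore
\[
|E'| \;=\; \sum_{v\in V'}\deg(v) \;\geq\; 3|V'|.
\]
Combining this with the tree identity yields
\[
|U'| + |V'| - 1 \;\geq\; 3|V'|, \qquad\text{i.e.}\qquad |U'| \;\geq\; 2|V'| + 1,
\]
which is precisely the claim.

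Finally, I would handle the degenerate case $V' = \emptyset$ separately (since my chain of displayed inequalities is vacuous there): a non-empty connected component with no $V$-vertex has no edges either, hence consists of a single $u\in U'$, so $|U'| = 1 = 2\cdot 0 + 1$, and the inequality still holds. There is no serious obstacle here; the only thing to be careful about is applying the reducedness hypothesis correctly, namely that excluding the exceptional form really does force $\deg(v)\geq 3$ for every $v \in V'$, and not merely for $v$ outside some further special subcomponent.
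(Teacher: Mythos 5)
Your proof is correct and rests on the same combinatorial ingredients as the paper's: the tree identity for $G'$ and the reducedness requirement $\deg(v)\geq 3$ for every $v\in V'$. The paper phrases it by rooting the tree at a vertex of $V'$ and counting descendants (the root contributes $\geq 3$ children in $U'$, each other $v\in V'$ contributes $\geq 2$), while you instead double-count $|E'|$ via $|E'|=|U'|+|V'|-1$ and $|E'|=\sum_{v\in V'}\deg(v)\geq 3|V'|$; these are two presentations of the same argument, and your explicit handling of the $V'=\emptyset$ case matches the paper's remark that the inequality holds trivially there.
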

	\begin{proof}
		If $V^\prime$ is empty, then the inequality holds trivially. If $V^\prime$ is non-empty, then, since $G^\prime$ is simply-connected, we can choose a root $v_0\in V^\prime$ and consider it as a tree. Then, the inequality follows from the fact that, by definition, $v_0$ has at least $3$ descending vertices in $U^\prime$, while every other $v\in V^\prime$ has at least 2.
	\end{proof}

	It follows from Lemma \ref{L:RED_GRAPH_INEQ} and Corollary \ref{C:RED_GRAPHS} that for a connected component $G^\prime$ of a reduced graph, that is not of the form \begin{tikzpicture}[scale=0.6,, transform shape]
		\begin{scope}[every node/.style={circle,draw}]
			\node[Bullet] (V) at (0,0) {};
			\node[label={[label distance=-0.15cm]above:${\scriptscriptstyle 1}$}] (U)	at (1.5,0) {$0$};
		\end{scope}
		\path[-](V) edge (U);
	\end{tikzpicture}, we have
	\[\textrm{rank}(H_{G^\prime})=|U^\prime|-|V^\prime|+\sum\limits_{u\in U^\prime}k^+(u)+k^-(u)\geq 1+|V^\prime|+\sum\limits_{u\in U^\prime}k^+(u)+k^-(u). \]
	Thus, $\textrm{rank}(H_{G^\prime})=1$ implies that $V^\prime$ is empty and $k^+=k^-\equiv0$. In a similar way, by going through all possibilities, we obtain the reduced forms in Proposition \ref{P:RED_FORMS2}. It remains to prove the statements about $k$-equivalence in Propositions \ref{P:RED_FORMS1} and \ref{P:RED_FORMS2}.
		\begin{proof}[Proof of Proposition \ref{P:RED_FORMS1}]
		If $G$ is not of the form \begin{tikzpicture}[scale=0.6,, transform shape]
			\begin{scope}[every node/.style={circle,draw}]
				\node[Bullet] (V) at (0,0) {};
				\node[label={[label distance=-0.15cm]above:${\scriptscriptstyle 1}$}] (U)	at (1.5,0) {$0$};
			\end{scope}
			\path[-](V) edge (U);
		\end{tikzpicture}, then $G$ is given by \begin{tikzpicture}[scale=0.6,, transform shape]
			\begin{scope}[every node/.style={circle,draw}]
				\node (U) at (0,0) {$\alpha$};
			\end{scope}
		\end{tikzpicture}, which, by construction, yields the linear $S^{2k}$-bundle over $S^{4k}$ corresponding to $\alpha$.
		
		For such a graph $G$ let $u\in U$ be the unique element. Then $H_G=\Z u^0$ and
		\[\mu(u^0,u^0,u^0)=\frac{\lambda_k}{4}\alpha=-\mu(-u^0,-u^0,-u^0).\]
		Since $u^0$ and $-u^0$ are the only generators of $H_G$, this shows that for different absolute values of $\alpha$ we obtain non-equivalent trilinear forms.
		
		In the non-spin case the manifold $M_{\overline{G}}$ is diffeomorphic to the unique non-trivial $S^4$-bundle over $S^2$ by \cite[Lemma 4.2]{Re23}.
	\end{proof}
	
	\begin{proof}[Proof of Proposition \ref{P:RED_FORMS2}]
		In the non-spin case the linear form $p_G^k$ of the first graph is given by $p_G^k(u_1^1)=0$, $p_G^k(u_2^0)=\lambda_k\alpha$ (where $u_1$ denotes the upper vertex and $u_2$ the lower one), while the linear form on the second graph is given by $p_G^k(u^0)=\lambda_k\alpha+{2k+1\choose k}$, $p_G^k(u^1)=0$, in particular this is always non-zero. Since $\lambda_k>{2k+1\choose k}$, see Remark \ref{R:LAMBDA_K}, this shows that for different (absolute) values of $\alpha$ we obtain non-equivalent graphs.
		
		In the spin case we first consider the case $k=1$. The first graph is $k$-equivalent to a graph of the second form with $\alpha_3=0$, therefore it is enough to consider graphs of the second form, allowing $\alpha_i=0$. Let $G$ be such a graph. Then $e_1=u^0_1-u^0_3$ and $e_2=u^0_2-u^0_3$ form a basis of $H_G$ (where $u_i$ denotes the vertex labeled by $\alpha_i$) and we have
		\begin{align*}
			\mu_G(e_1,e_1,e_1)&=\alpha_1-\alpha_3,\\
			\mu_G(e_1,e_1,e_2)&=-\alpha_3,\\
			\mu_G(e_1,e_2,e_2)&=-\alpha_3,\\
			\mu_G(e_2,e_2,e_2)&=\alpha_2-\alpha_3,\\
			p_G(e_1)&=4(\alpha_1-\alpha_3),\\
			p_G(e_2)&=4(\alpha_2-\alpha_3).
		\end{align*}
		We define homogeneous polynomials $f$ and $p$ by setting $f(x_1,x_2)=\mu_G(x,x,x)$, and $p(x_1,x_2)=\frac{1}{4}p_G(x)$, $x=x_1 e_1+x_2 e_2$ and $x_1,x_2\in\C$. We obtain
		\begin{align*}
			f(x_1,x_2)&=(\alpha_1-\alpha_3)x_1^3-3\alpha_3 x_1^2 x_2-3\alpha_3 x_1 x_2^2+(\alpha_2-\alpha_3)x_2^3,\\
			p(x_1,x_2)&=(\alpha_1-\alpha_3)x_1+(\alpha_2-\alpha_3)x_2.
		\end{align*}
	
		Given two systems of invariants $(H_1,\mu_1,0,p_1)$ and $(H_2,\mu_2,0,p_2)$ with $\mathrm{rank}(H_1)=\mathrm{rank}(H_2)=n$ for which there is an isomorphism $\phi\colon H_1\to H_2$ with $\phi^*\mu_{2}=\mu_{1}$ and $\phi^* p_{2}=p_{1}$ the associated pairs of homogeneous polynomials $(f_1,p_1)$ and $(f_2,p_2)$ for fixed bases $E_1$ and $E_2$ of $H_{1}$ and $H_{2}$, respectively, satisfy
		\[f_1(v)=f_2(A^\top\cdot v)\quad\text{and}\quad p_1(v)=p_2(A^\top\cdot v) \]
		for all $v\in \C^n$, $n=\mathrm{rank}(H_{G_1})$. Here $A\in\C^{n\times n}$ is the matrix that transforms the basis $E_2$ into $\phi_*E_1$. In particular, $A\in \mathrm{GL}(n,\Z)\subseteq\widetilde{\mathrm{SL}}(n,\C)$, where
		\[\widetilde{\mathrm{SL}}(n,\C)=\{A\in\mathrm{GL}(n,\C)\mid \det(A)=\pm 1 \}. \]
		It follows that if two graphs are $1$-equivalent, then the associated pairs of homogeneous polynomials lie in the same orbit of the action of $\widetilde{\mathrm{SL}}(n,\C)$.
		
		For a given pair $(f,p)$ of binary homogeneous polynomials, where $f$ has degree 3 and $p$ has degree 1, by \cite[Section 3.1]{Sc97} there are invariants, that is, homogeneous polynomials in the coefficients of $f$ and $p$ that are invariant under the action of $\widetilde{\mathrm{SL}}(2,\C)$, given by $D(f)$, $R(f,p)^2$, $I(f\cdot p)$ and $J(f\cdot p)$, where
		\begin{itemize}
			\item $D(f)$ is the discriminant of $f$,
			\item $R(f,p)$ is the resultant of the pair $(f,p)$, and
			\item $I(f\cdot p)$ and $J(f\cdot p)$ are binary quartic polynomials.
		\end{itemize}
		For the precise definition of these invariants we refer to \cite[Section 3.1]{Sc97}. In our case these are given as follows:
		\begin{align*}
			D(f)=&\alpha_1^2\alpha_2^2+\alpha_1^2\alpha_3^2+\alpha_2^2\alpha_3^2-2\alpha_1^2\alpha_2\alpha_3-2\alpha_1\alpha_2^2\alpha_3-2\alpha_1\alpha_2\alpha_3^2,\\
			R(f,p)=&(\alpha_1-\alpha_3)(\alpha_2-\alpha_3)(\alpha_2-\alpha_1)(\alpha_1+\alpha_2+\alpha_3),\\
			I(f\cdot p)=&\alpha_1^2\alpha_2^2+\alpha_1^2\alpha_3^2+\alpha_2^2\alpha_3^2-\alpha_1^2\alpha_2\alpha_3-\alpha_1\alpha_2^2\alpha_3-\alpha_1\alpha_2\alpha_3^2,\\
			J(f\cdot p)=&-\alpha_1^4\alpha_2^2-\alpha_1^2\alpha_2^4-\alpha_1^4\alpha_3^2-\alpha_1^2\alpha_3^4-\alpha_2^4\alpha_3^2-\alpha_2^2\alpha_3^4+2\alpha_1^4\alpha_2\alpha_3+2\alpha_1\alpha_2^4\alpha_3+2\alpha_1\alpha_2\alpha_3^4\\
			&+\alpha_1^3\alpha_2^2\alpha_3+\alpha_1^3\alpha_2\alpha_3^2+\alpha_1^2\alpha_2^3\alpha_3+\alpha_1\alpha_2^3\alpha_3^2+\alpha_1^2\alpha_2\alpha_3^3+\alpha_1\alpha_2^2\alpha_3^3-6\alpha_1^2\alpha_2^2\alpha_3^2.
		\end{align*}
	
		Note that $D(f)$, $R(f,p)^2$, $I(f\cdot p)$ and $J(f\cdot p)$ are symmetric when viewed as polynomials in $(\alpha_1,\alpha_2,\alpha_3)$, hence we can express them in terms of the elementary symmetric polynomials
		\begin{align*}
			\sigma_1&=\alpha_1+\alpha_2+\alpha_3,\\
			\sigma_2&=\alpha_1\alpha_2+\alpha_1\alpha_3+\alpha_2\alpha_3,\\
			\sigma_3&=\alpha_1\alpha_2\alpha_3,
		\end{align*}
		and we obtain
		\begin{align*}
			D(f)&=\sigma_2^2-4\sigma_1\sigma_3,\\
			R(f,p)^2&=\sigma_1^2(\sigma_1^2\sigma_2^2-4\sigma_2^3-4\sigma_1^3\sigma_3+18\sigma_1\sigma_2\sigma_3-27\sigma_3^2),\\
			I(f\cdot p)&=\sigma_2^2-3\sigma_1\sigma_3,\\
			J(f\cdot p)&=-\sigma_1^2\sigma_2^2+2\sigma_2^3+4\sigma_1^3\sigma_3-9\sigma_1\sigma_2\sigma_3.
		\end{align*}
		Hence, the values of $\sigma_2^2$, in particular the value of $\sigma_2$ up to sign, and $\sigma_1\sigma_3$ are determined by $D(f)$ and $I(f\cdot p)$. We distinguish two cases.
		
		\begin{enumerate}
			\item[\textbf{Case 1.}] Assume that at least one of $D(f)$ and $I(f\cdot p)$ is non-zero. We now show that there are at most two triples $(\alpha_1,\alpha_2,\alpha_3)$ (up to permutation and simultaneous multiplication by $(-1)$) with invariants given by $(D(f),R(f,p)^2,I(f\cdot p),J(f\cdot p))$.
				
			Indeed, from the expression for $J(f\cdot p)$ we obtain
			\[(4\sigma_1\sigma_3-\sigma_2^2)\sigma_1^2+\sigma_2(2\sigma_2^2-9\sigma_1\sigma_3)-J(f\cdot p)=0. \]
			Since the values of $\sigma_2^2$ and $\sigma_1\sigma_3$ are determined by $D(f)$ and $I(f\cdot p)$, we consider these values as fixed and obtain a quadratic equation in $\sigma_1$ with no linear term. Hence, if $D(f)\neq0$, then for every choice of sign for $\sigma_2$, we obtain, up to sign, at most one solution for $\sigma_1$. If $D(f)=0$ and $I(f\cdot p)\neq 0$, and thus $\sigma_2^2,\sigma_1\sigma_3\neq 0$, then from the expression for $R(f,p)^2$ we obtain
			\[(2\sigma_1\sigma_2\sigma_3)\sigma_1^2-(27\sigma_1^2\sigma_3^2+R(f,p)^2)=0 \]
			and as before, every choice of sign for $\sigma_2$ uniquely determines $\sigma_1$ up to sign. The values for $\alpha_1,\alpha_2,\alpha_3$ are then obtained as the solutions of the equation
			\[y^3-\sigma_1 y^2+\sigma_2y-\sigma_3=0 \]
			and choosing a different sign for $\sigma_1$ (and thus also for $\sigma_3$) results in a simultaneous change of sign for the $\alpha_i$.
			
			\item[\textbf{Case 2.}] Assume that $D(f)$ and $I(f\cdot p)$ both vanish. We now show that in this case at least two of the $\alpha_i$ vanish and the third one can be determined up to sign from the $\widetilde{\mathrm{SL}}(2,\C)$-orbit of $(f,p)$.
				
			Indeed, we have $\sigma_2^2=\sigma_1\sigma_3=0$, which implies $\sigma_1=0$ or $\sigma_3=0$. If $\sigma_1=0$, then $\alpha_3=-\alpha_1-\alpha_2$, so
			\[0=\sigma_2=-\alpha_1^2-\alpha_1\alpha_2-\alpha_2^2=-\frac{1}{2}(\alpha_1^2+\alpha_2^2+(\alpha_1+\alpha_2)^2), \]
			which implies $\alpha_1=\alpha_2=0$ and hence also $\alpha_3=0$. If $\sigma_3=0$, then one $\alpha_i$, say $\alpha_3$, vanishes. Then $\sigma_2=\alpha_1\alpha_2$, hence also one of $\alpha_1,\alpha_2$, say $\alpha_2$, vanishes.
			
			To determine the value of $\alpha_1$ under the assumption $\alpha_2=\alpha_3=0$, first note that $\alpha_1$ vanishes if and only if $f$ is trivial. If $f$ is non-trivial, we have that $f$ divides $p^3$ with quotient $\alpha_1^2$. In particular, the value of $\alpha_1$ can be determined up to sign from the $\widetilde{\mathrm{SL}}(2,\C)$-orbit of $(f,p)$.
		\end{enumerate}
		Thus, we have shown that for any pair of binary homogeneous polynomials $(f,p)$, where $f$ has degree $3$ and $p$ has degree $1$, there are at most two triples $(\alpha_1,\alpha_2,\alpha_3)$ (up to permutation and simultaneous multiplication by $(-1)$) whose associated pairs of polynomials are in the $\widetilde{\mathrm{SL}}(2,\C)$-orbit of $(f,p)$.
		
		Since in the spin case $(\mu_G^k,p_G^k)=\frac{\lambda_k}{4}(\mu_G,p_G)$ holds, this result carries over to $k$-equivalence for all $k$.
	\end{proof}
	
	\begin{remark}
		\label{R:RED_FORMS2}
		We conjecture that Proposition \ref{P:RED_FORMS2} can be improved to give a positive answer to Question \ref{Q:RED_EQUIV=ISOM?} for reduced graphs of rank 2. In fact, there are many special cases where for given values $(D,R^2,I,J)\in\Z^4\setminus\{0\}$ in the proof of Proposition \ref{P:RED_FORMS2} there exists only one triple $(\alpha_1,\alpha_2,\alpha_3)\in\Z^3$ with $D(f)=D$, $R(f,p)^2=R^2$, $I(f\cdot p)=I$ and $J(f\cdot p)=J$, see \cite[Remark 6.4.5]{Re22}. As a result, we obtain an affirmative answer to Question \ref{Q:RED_EQUIV=ISOM?} for graphs defined by these values $(\alpha_1,\alpha_2,\alpha_3)$. These cases include the following:
		\begin{enumerate}
			\item $\alpha_1+\alpha_2+\alpha_3=0$,
			\item $\alpha_1\alpha_2+\alpha_1\alpha_3+\alpha_2\alpha_3=0$,
			\item One $\alpha_i$ vanishes,
			\item Two of the $\alpha_i$ equal $-1$.
			\item $\alpha_1,\alpha_2,\alpha_3\in[-1000,1000]$.
		\end{enumerate}
		
		On the other hand, the strategy of the proof of Proposition \ref{P:RED_FORMS2} cannot be extended to provide a proof of Question \ref{Q:RED_EQUIV=ISOM?} for reduced graphs of rank 2. In fact, there are triples $(\alpha_1,\alpha_2,\alpha_3)$ and $(\alpha_1^\prime,\alpha_2^\prime,\alpha_3^\prime)$, which are not related via a permutation or simultaneous multiplication by $(-1)$, but whose invariants $D$, $R$, $I$ and $J$ are all the same, see \cite[Remark 6.4.5]{Re22}. This occurs for example for the triples
		\[(4,15,30)\quad\text{ and }(-6,-5,60). \]
		By bringing the associated homogeneous polynomials in a canonical form as in \cite[Section 3.4]{Sc97} one sees that the corresponding systems of invariants are not equivalent, thus, they do not provide counterexamples to Question \ref{Q:RED_EQUIV=ISOM?}.
	\end{remark}
	\begin{remark}
		The proof of Proposition \ref{P:GRAPH_RED2} in fact provides an algorithm to decide whether a given closed, simply-connected spin 6-manifold $M$ with torsion-free homology and $b_2(M)=2$ can be constructed via an algebraic plumbing graph, see \cite[Algorithm E.2]{Re22}.
	\end{remark}

	\subsection{Manifolds Obtained from Algebraic Plumbing Graphs}
	\label{SS:ALG_PLUMBING_APPLICATIONS}
	
	In this section we give examples of manifolds that can or cannot be obtained from an algebraic plumbing graph.
	
	Given $p,q\in\N$ and a set $\mathcal{B}_q$ containing oriented diffeomorphism classes of manifolds of dimension $q$, we define $\mathcal{T}_{p+q-1}(\mathcal{B}_q)$ as the set containing all diffeomorphism classes of manifolds $M_G$, where $G=(U,V,E,\pi,\delta)$ is a geometric plumbing graph with simply-connected connected components, so that $B_v\cong S^p$ and $\pi_v$ is trivial for all $v\in V$, and $B_u\in \mathcal{B}_q$ for all $u\in U$.
	
	We will consider the case where $p=3$ and $q=4$, and where $\mathcal{B}_4$ consists of all known examples of 4-manifolds that admit a core metric, i.e.\
	\[\mathcal{B}_4=\left\{\#_{i=1}^k\varepsilon_i\C P^2 \;\middle|\; k\in\N_0, \varepsilon_i\in\{\pm 1\} \right\}. \]
	Then $\mathcal{T}_6(\mathcal{B}_4)$ consists of all manifolds on which we can construct a core metric using Theorem~\ref{T:PLUMBING} with all known examples of 4-manifolds that admit a core metric. By construction we have $M_{\overline{G}}\in\mathcal{T}_6(\mathcal{B}_4)$ for any algebraic plumbing graph $G$ with simply-connected connected components. We will now show that also the converse holds.
	\begin{proposition}
		\label{P:T6B4_RED}
		Let $M\in\mathcal{T}_6(\mathcal{B}_4)$. Then there is an algebraic plumbing graph $G$ so that $M\cong M_{\overline{G}}$.
	\end{proposition}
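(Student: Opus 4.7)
The plan is to start with any geometric plumbing graph $G$ realizing $M$ (with simply-connected components, $B_v=S^3$ carrying the trivial $D^4$-bundle, and $B_u\in\mathcal{B}_4$) and to modify it, without changing $M_G$ up to diffeomorphism, into the geometric graph $\overline{G'}$ associated to some algebraic plumbing graph $G'$. This places the proof in the same style as the rest of Section \ref{SS:GRAPH_MOD}, and lets us conclude via Jupp's classification.

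The first step is to apply Proposition \ref{P:GRAPH_RED2} iteratively in the reverse direction to every $u\in U$ with $B_u=\#_{i=1}^m \varepsilon_i \C P^2$ for some $m\geq 2$: split $u$ into $m$ new vertices $u_1,\dots,u_m$ with bases $\varepsilon_i\C P^2$, connected through $m-1$ new degree-2 vertices in $V$ carrying the trivial $D^4$-bundle over $S^3$. Doing this requires realizing $\pi_u$ as an iterated fiber connected sum of $S^2$-bundles over the individual $\varepsilon_i\C P^2$ summands, which is possible because every oriented linear $S^2$-bundle over $S^3$ is trivial (as $\pi_3(\mathrm{BSO}(3))\cong \pi_2(\mathrm{SO}(3))=0$): restrict $\pi_u$ to $\varepsilon_i\C P^2\setminus D^4$ and extend across the missing disc by the trivial disc bundle, so that regluing via the fiber connected sum recovers $\pi_u$. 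After this step every $B_u$ lies in $\{+\C P^2,-\C P^2,S^4\}$.

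Now partition the remaining vertices by $w_2(\pi_u)$. If $w_2(\pi_u)=0$, then the Wu relation (\ref{EQ:WU}) forces $p_1(\pi_u)\in 4\Z$, and $\pi_u$ coincides with the bundle produced by Lemma \ref{L:S2k_BUNDLES_S4k} for $\alpha=p_1(\pi_u)/4$; we encode $u$ as an algebraic vertex carrying this $\alpha$ and with $(k^+(u),k^-(u))=(1,0),(0,1)$ or $(0,0)$ according to $B_u$. The remaining vertices necessarily have $B_u=\pm\C P^2$ with $p_1(\pi_u)\equiv 1\pmod 4$; for each such vertex we replace $u$ and its incident edges by a small algebraic plumbing subgraph (consisting of several $u$-vertices clustered around a single $v$-vertex) whose contribution to $H^{2}(M_G)$, $\mu_{M_G}$, $w_2(M_G)$ and $p_1(M_G)$, as prescribed by Theorems \ref{T:MG_COHOM} and \ref{T:MG_CHAR_CLASSES} together with Corollary \ref{C:S2k_BDLS_S4k}, matches that of the original vertex. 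Assembling the output produces an algebraic plumbing graph $G'$ whose system of invariants agrees with that of $M$.

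At the end, both $M$ and $M_{\overline{G'}}$ are closed, simply-connected $6$-manifolds with torsion-free homology and $b_3=0$ (the latter by Corollary \ref{C:MG_COHOM_6KDIM}), and their systems of invariants agree by construction, so Jupp's classification (Theorem \ref{T:JUPP}) delivers the required diffeomorphism $M\cong M_{\overline{G'}}$. The main obstacle is the non-spin case $w_2(\pi_u)\neq 0$ over $\pm\C P^2$: since every single vertex in an algebraic plumbing graph carries a bundle with $w_2=0$, such bundles can never be matched by one algebraic vertex, and one genuinely has to trade each of them for a cluster of algebraic vertices whose combined invariants agree. Existence of the right cluster relies on the precise form of $\mu_E$ and $p_k(E)$ in Corollary \ref{C:S2k_BDLS_S4k} together with the divisibility of $p_1$ by $\lambda_1=4$ in Lemma \ref{L:S2k_BUNDLES_S4k}, which together ensure that the non-diagonal term $\mu_E(\pi^*x,a,a)=\langle x\smile W,[B]\rangle$ introduced by $w_2(\pi_u)\neq 0$ can be absorbed into the trilinear form of a graph of the type described in Proposition \ref{P:RED_FORMS2}.
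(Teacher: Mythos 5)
Your proposal follows essentially the same route as the paper's proof: split each $B_u$ into individual $\pm\C P^2$ summands by running Proposition \ref{P:GRAPH_RED2} in reverse, encode the resulting vertices with $w_2(\pi_u)=0$ as algebraic vertices with $\alpha=p_1(\pi_u)/4$, replace the remaining vertices (those with $w_2(\pi_u)\neq 0$, equivalently spin total space) by the rank-two cluster of Lemma \ref{L:S2_BUNDLES_CP2} with edges reattached to the $\alpha$-vertex, and conclude via Corollary \ref{C:MG_COHOM_6KDIM} and Jupp's Theorem \ref{T:JUPP}. Your observation that $\pi_3(\mathrm{BSO}(3))=0$ legitimizes the reverse application of Proposition \ref{P:GRAPH_RED2} is a useful expositional addition the paper leaves implicit; conversely, the paper is more explicit than you are in two small places, namely it normalizes all edge signs to $+1$ via Proposition \ref{P:GRAPH_RED_EDGE} and it pins down the replacement cluster concretely through Lemma \ref{L:S2_BUNDLES_CP2} rather than asserting its existence.
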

	For the proof we need the following lemma.
	\begin{lemma}
		\label{L:S2_BUNDLES_CP2}
		For $\alpha\in\Z$ let $G$ be the reduced graph below.
		\begin{center}
			\begin{tikzpicture}
				\begin{scope}[every node/.style={circle,draw,minimum height=2.35em}]
					\node(U11) at (4,1){$\alpha$};
					\node(U21) at (3.133975,-0.5){$-1$};
					\node(U31) at (4.866025,-0.5){$-1$};
					\node[Bullet](V) at (4,0){};
				\end{scope}
				\path[-](V) edge (U11);
				\path[-](V) edge (U21);
				\path[-](V) edge (U31);
			\end{tikzpicture}
		\end{center}
		Then $M_{\overline{G}}$ is diffeomorphic to the total space of the linear $S^2$-bundle over $\C P^2$ corresponding to $(\alpha,1)$ in Remark \ref{R:S2_B}.
	\end{lemma}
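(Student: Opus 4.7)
The plan is to apply Jupp's classification (Theorem \ref{T:JUPP}) by computing the systems of invariants of both sides and exhibiting an equivalence between them. Both $M_{\overline{G}}$ and the target bundle $E\to\C P^2$ are closed, simply-connected $6$-manifolds with torsion-free homology: for $M_{\overline{G}}$ this follows from Corollary \ref{C:MG_COHOM_6KDIM}(2), since no vertex of $V$ is a leaf. Moreover, $b_3(M_{\overline{G}})=0$ by the same corollary, and $b_3(E)=0$ by the Gysin sequence \eqref{EQ:GYSIN} applied to $E\to\C P^2$. Hence Jupp's theorem reduces the proof to an equivalence of systems of invariants.

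For $M_{\overline{G}}$ I would denote the vertex in $U$ with label $\alpha$ by $u_1$, the other two by $u_2,u_3$, apply Lemma \ref{L:ALG_PL_GR_INV}, and choose the basis $e_1=u_1^0-u_3^0$, $e_2=u_2^0-u_3^0$ of $H_G$ (of rank $2$ by Corollary \ref{C:RED_GRAPHS}). Using the trilinear form $\mu^1$ from Subsection \ref{SS:ALG_PLUMBING} and the fact that $\mu^1(u_i^0,u_j^0,u_\ell^0)$ vanishes unless $i=j=\ell$, a short expansion gives
\[
\mu_G(e_1^3)=\alpha+1,\ \mu_G(e_1^2 e_2)=1,\ \mu_G(e_1 e_2^2)=1,\ \mu_G(e_2^3)=0,
\]
together with $p_G(e_1)=4(\alpha+1)$, $p_G(e_2)=0$, and $w_G=0$ since $G$ is spin.

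For the bundle side I would unwind the identification in \cite[Corollary 5.2]{Re21}: the pair $(\alpha,1)$ corresponds to a linear $S^2$-bundle $\xi\to\C P^2$ with $w_2(\xi)=b_1\bmod 2$ and $p_1(\xi)=(4\alpha+1)b_1^2$ (the congruence mod $4$ being forced by the Wu formula \eqref{EQ:WU}). Taking $W=b_1\in H^2(\C P^2)$, Corollary \ref{C:S2k_BDLS_S4k} yields a basis $(b_1,a)$ of $H^2(E)$ with $\mu_E(b_1^3)=0$, $\mu_E(b_1^2 a)=1$, $\mu_E(b_1 a^2)=1$, $\mu_E(a^3)=\frac{1}{4}(3+4\alpha+1)=\alpha+1$, and $p_1(E)(b_1)=0$, $p_1(E)(a)=\langle p_1(T\C P^2\oplus\xi),[\C P^2]\rangle=3+4\alpha+1=4(\alpha+1)$. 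Proposition \ref{P:SPHERE_BDL_CHAR_CLASSES} together with $w_2(T\C P^2)=b_1\bmod 2$ gives $w_2(TE)=\pi^*(b_1+b_1)=0$, so $E$ is spin.

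The linear map $e_1\mapsto a$, $e_2\mapsto b_1$ is then an isomorphism $H_G\to H^2(E)$ that preserves all four invariants, completing the equivalence. Jupp's theorem then provides an orientation-preserving diffeomorphism $M_{\overline{G}}\cong E$. The only delicate point is to match the parametrisation $(\alpha,1)$ in \cite[Corollary 5.2]{Re21} with the bundle whose invariants I have written above; this amounts to a bookkeeping check of the normalisation of the first Pontryagin class in that corollary, which is the main obstacle to a fully routine verification.
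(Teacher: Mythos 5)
Your proof is correct and follows essentially the same strategy as the paper: compute the system of invariants $(H_G,\mu_G,w_G,p_G)$ from the algebraic plumbing graph, compute the system of invariants for the bundle total space, and conclude via Jupp's classification (Theorem \ref{T:JUPP}). Your choice of basis ($e_1=u_1^0-u_3^0$, $e_2=u_2^0-u_3^0$) differs from the paper's ($e_1=u_1^0-u_2^0$, $e_2=u_3^0-u_2^0$), but by the symmetry of the graph this is immaterial, and your resulting values for $\mu_G$ and $p_G$ agree with the paper's. The one genuine difference is on the bundle side: the paper simply cites \cite[Corollary 5.8]{Re21} for the invariants of the total space, whereas you rederive them inside this paper's framework from Corollary \ref{C:S2k_BDLS_S4k} and Proposition \ref{P:SPHERE_BDL_CHAR_CLASSES}, under the (correct) reading that the parameter $(\alpha,1)$ in \cite[Corollary 5.2]{Re21} encodes $w_2(\xi)=b_1$ and $p_1(\xi)=(4\alpha+1)b_1^2$. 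This makes your argument somewhat more self-contained at the cost of having to pin down the normalisation in the cited corollary, which you honestly flag as the only bookkeeping step you could not verify without the reference in hand.
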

	\begin{proof}
		The group $H_G$ is generated by the elements $e_1=u_1^0-u_2^0$ and $e_2=u_3^0-u_2^0$, where we denote by $u_1$ the vertex labeled by $\alpha$. Then
		\begin{align*}
			\mu_G(e_1,e_1,e_1)&=\alpha+1,\\
			\mu_G(e_1,e_1,e_2)&=1,\\
			\mu_G(e_1,e_2,e_2)&=1,\\
			\mu_G(e_2,e_2,e_2)&=0,\\
			p_G(e_1)&=4(\alpha+1),\\
			p_G(e_2)&=0.
		\end{align*}
		Now the claim follows from \cite[Corollary 5.8]{Re23} and Theorem \ref{T:JUPP}.
	\end{proof}
	\begin{proof}[Proof of Proposition \ref{P:T6B4_RED}.]
		Let $G^\prime$ be a geometric plumbing graph so that $M\cong M_{G^\prime}$. We have that any linear sphere bundle over a connected sum of $4$-manifolds is obtained by the fiber connected sum of bundles over each summand as the gluing area of the connected sum, which is diffeomorphic to $S^3$, does not admit non-trivial vector bundles. Hence, we can apply Proposition \ref{P:GRAPH_RED2} in reverse iteratively to split all base manifolds that are connected sums into their summands. In this way we modify $G^\prime$ so that all $B_u$ are given by $S^4$ or $\pm\C P^2$.
		
		Further, by applying Proposition \ref{P:GRAPH_RED_EDGE}, we can achieve that $\delta(e)=1$ for all $e\in E$. Now we turn $G^\prime$ into an algebraic plumbing graph $G$ by reversing the construction in Subsection \ref{SS:ALG_PLUMBING}, except for vertices $u$ for which $\pi_u$ is an $S^2$-bundle over $\pm\C P^2$ with spin total space, i.e.\ those corresponding to $(\alpha,1)\in\Z\times \{0,1\}$ in Remark \ref{R:S2_B}, as these are the only linear $S^2$-bundles over $S^4$ and $\pm\C P^2$ that do not arise in the construction of Subsection \ref{SS:ALG_PLUMBING}. Every such vertex $u$ gets replaced by a piece of the form as in Lemma \ref{L:S2_BUNDLES_CP2}, multiplied by $\pm1$, where all edges connected to $u$ get connected to the vertex labeled by $\alpha$. By using Corollary \ref{C:MG_COHOM_6KDIM} we now obtain that $M_{\overline{G}}$ has the same invariants as $M$, hence they are diffeomorphic.
	\end{proof}
	\begin{remark}
		\label{R:NEW_EX}
		The bundles in Lemma \ref{L:S2_BUNDLES_CP2}, together with connected sums of $S^2$-bundles over $S^4$, are the only known infinite families of closed, simply-connected spin 6-manifolds $M$ with $b_2(M)=2$ that admit a metric of positive Ricci curvature, see \cite[Theorem C and Subsection 5.1]{Re23}. Hence, Proposition \ref{P:RED_FORMS2} and items (3) and (4) of Remark \ref{R:RED_FORMS2} yield an infinite number of new examples of such manifolds and therefore an infinite number of new examples of 6-manifolds with a metric of positive Ricci curvature and $b_2=2$. To the best of our knowledge, the corresponding manifolds in dimension $6k$, which are $(2k-1)$-connected, are also new examples of manifolds with a metric of positive Ricci curvature.
	\end{remark}
	\begin{remark}
		It is worth noting that not all closed, simply-connected $6$-manifolds with torsion-free homology can be constructed via an algebraic plumbing graph. For graphs of rank 1 this directly follows from Proposition \ref{P:GRAPH_RED1} together with Theorem \ref{T:JUPP}. For graphs of rank 2 this can be seen by considering the normal forms of \cite[Section 3.4]{Sc97}. This has been analyzed in detail in \cite[Proposition 6.5.4]{Re22}.
				
		Further, one can show that any manifold that is diffeomorphic to the total space of a linear $S^{2k}$-bundle over $S^{2k}\times S^{2k}$ cannot be constructed via an algebraic plumbing graph, see \cite[Proposition 6.5.7]{Re22}. These manifolds are of special interest as it is known that they admit metrics of positive Ricci curvature for all $k$ and core metrics if $k\geq2$. This shows that not all manifolds that admit a metric of positive Ricci curvature can be constructed via algebraic plumbing graphs.
	\end{remark}
	
	Next we consider homotopy $\C P^3$'s. A closed manifold is a \emph{homotopy $\C P^3$}, if it is homotopy equivalent to $\C P^3$. In terms of invariants, a closed manifold $M$ is a homotopy $\C P^3$ if and only if
	\begin{itemize}
		\item $M$ is a simply-connected 6-manifold with torsion-free homology,
		\item $b_3(M)=0$ and $b_2(M)=1$,
		\item $\mu_M(x,x,x)=1$ for a generator $x$ of $H^2(M)$, and
		\item $w_2(M)=0$.
	\end{itemize}
	This follows for example from the homotopy classification for simply-connected 6-manifolds by Zhubr \cite{Zh00}. We note that the homotopy classification given by Wall \cite{Wa66} and Jupp \cite{Ju73} is erroneous, cf.\ \cite[5.14]{Zh00} or \cite[Remark 2]{OV95}. It can also be seen directly as follows: Let $M$ be a simply-connected manifold whose cohomology ring is isomorphic to that of $\C P^n$ and let $x\in H^2(M)$ be a generator. The element $x\in H^2(M)$ defines a map
	\[f_x\colon M\to K(\Z,2)=\C P^\infty \]
	with the property that the induced map on cohomology maps a generator of $H^*(\C P^\infty)$ to $x$. By the cellular approximation theorem we can assume that $f_x$ is cellular, so the image is contained in $\C P^n\subseteq \C P^\infty$. Then $f_x\colon M\to \C P^n$ induces an isomorphism on cohomology and, since $M$ is assumed to be simply-connected, it is therefore a homotopy equivalence by the Hurewicz theorem and Whitehead's theorem. Note that we did not use the assumption on $w_2$. In fact, since Stiefel-Whitney classes are preserved under homotopy equivalences, it follows that $w_2(M)$ vanishes if and only if $n$ is odd (alternatively, this can also be seen by using Wu classes).
	
	From Theorem \ref{T:JUPP} it follows that there is an infinite family of homotopy $\C P^3$'s whose diffeomorphism types are distinguished by $p_1$, which can take any value congruent to $4\mod 24$ on the generator $x$. By Proposition \ref{P:RED_FORMS1}, the only homotopy $\C P^3$ that can be constructed via an algebraic plumbing graph is the standard $\C P^3$ as only the graph \begin{tikzpicture}[scale=0.5,, transform shape]
			\begin{scope}[every node/.style={circle,draw}]
				\node (U) at (0,0) {$\pm 1$};
			\end{scope}
	\end{tikzpicture}, which defines the standard $\C P^3$, can produce the cohomology ring of $\C P^3$. However, we have the following result:
	
	\begin{proposition}
		\label{P:HOM_CP3}
		There is an infinite family $M_i$, $i\in\N_0$, of pairwise non-diffeomorphic homotopy $\C P^3$'s, such that for each $i\in\N_0$ there is a closed, simply-connected 6-manifold $N_i$ so that $M_i\#N_i\in\mathcal{T}_6(\mathcal{B}_4)$. In particular, $M_i\# N_i$ admits a core metric.
	\end{proposition}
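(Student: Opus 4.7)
The plan is to construct an explicit family of rank-$2$ reduced algebraic plumbing graphs $G_i$ whose corresponding $6$-manifolds split, via Jupp's classification, as the desired connected sums. For each $i \in \N_0$ let $G_i$ be the reduced graph of the second type appearing in Proposition~\ref{P:RED_FORMS2}: three U-vertices joined to a single V-vertex, with labels
\[
\alpha_1 = (2i+1)(i+1),\qquad \alpha_2 = (2i+1)\,i,\qquad \alpha_3 = \tfrac{i(i+1)}{2},
\]
and with $k^\pm = 0$. Since $G_i$ is a tree, Theorem~\ref{T:ALG_PL_GR_CORE1}(2) applied to $G_i$ immediately gives that $M_{\overline{G_i}}$ is a closed, simply-connected, spin $6$-manifold with torsion-free homology which admits a core metric; Corollary~\ref{C:MG_COHOM_6KDIM} furthermore yields $b_3(M_{\overline{G_i}}) = 0$, and by Proposition~\ref{P:T6B4_RED} it belongs to $\mathcal{T}_6(\mathcal{B}_4)$.

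By Corollary~\ref{C:MG_COHOM_6KDIM} the invariants of $M_{\overline{G_i}}$, in the natural basis $e_1 = u_1^0 - u_3^0$, $e_2 = u_2^0 - u_3^0$ of $H^2 \cong \Z^2$, are encoded by the cubic form $f(x,y) = \alpha_1 x^3 + \alpha_2 y^3 - \alpha_3(x+y)^3$, the linear form $p_1(xe_1+ye_2) = 4[(\alpha_1-\alpha_3)x + (\alpha_2-\alpha_3)y]$, and $w_2 = 0$. The heart of the argument is to set $v = e_1+e_2$ and $w = i\, e_1 + (i+1)\,e_2$ and to verify, using the identity $(2i+1)^2 - 4i(i+1) = 1$, that $f(1,1) = 1$, $\det(v,w) = 1$, and that the Hessian of $f$ at $v$ factors as $(i+1,-i)^{T}(i+1,-i)$, hence has rank one with kernel spanned by $w$. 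These identities imply that in the basis $(v,w)$ the trilinear form and the Pontryagin form of $M_{\overline{G_i}}$ become block-diagonal, splitting the system of invariants of $M_{\overline{G_i}}$ as the direct sum of a rank-one spin system with $\mu(v,v,v)=1$ and $p_1(v) = 4 + 12i(i+1)$, and a rank-one spin system with $\mu(w,w,w) = \tfrac{i(i+1)(2i+1)}{2}$ and $p_1(w) = 6\,i(i+1)(2i+1)$. Both summands are admissible by Theorem~\ref{T:JUPP}: the Jupp congruence reduces in each case to a divisibility statement that holds because $i(i+1)$ is always even and $i(i+1)(2i+1)$ is always divisible by $6$.

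Let $M_i$ be a closed, simply-connected spin $6$-manifold with $b_3 = 0$ realising the first rank-one system---it is then a homotopy $\C P^3$ with $p_1 = 4+12i(i+1)$---and let $N_i$ realise the second. Then $M_i \# N_i$ has $b_3 = 0$ and the same system of invariants as $M_{\overline{G_i}}$, so Theorem~\ref{T:JUPP} provides an orientation-preserving diffeomorphism $M_i \# N_i \cong M_{\overline{G_i}}$. In particular $M_i \# N_i \in \mathcal{T}_6(\mathcal{B}_4)$ and admits a core metric, and the $M_i$ are pairwise non-diffeomorphic because $p_1(M_i) = 4 + 12i(i+1)$ takes infinitely many distinct values on $i \in \N_0$. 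The main obstacle I expect is the simultaneous solution of $f(1,1) = 1$ and $\det H_v = 0$ while allowing $p_1(v)$ to vary with $i$: fixing $\alpha_3$ via the Pontryagin condition and $\alpha_1 + \alpha_2$ via the cube condition forces $\alpha_1, \alpha_2$ to be roots of a quadratic with discriminant $1 + 8\alpha_3$, and requiring this discriminant to be a perfect square $(2i+1)^2$ produces precisely the triangular-number parametrisation $\alpha_3 = i(i+1)/2$ displayed above. Once this algebraic identity is in place, all of the topology is supplied by Jupp's theorem and Theorem~\ref{T:ALG_PL_GR_CORE1}.
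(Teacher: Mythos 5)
Your proof is correct and takes essentially the same route as the paper: you use the identical graph $G_i$ with $\alpha_1=(2i+1)(i+1)$, $\alpha_2=(2i+1)i$, $\alpha_3=i(i+1)/2$, and your change of basis $(v,w)=(e_1+e_2,\, ie_1+(i+1)e_2)$ is precisely the paper's chosen basis $(u_1^0+u_2^0-2u_3^0,\, iu_1^0+(i+1)u_2^0-(2i+1)u_3^0)$, after which both arguments read off the same split system of invariants and conclude via Theorem \ref{T:JUPP}. The Hessian-rank computation and the explicit check of the Jupp congruence for each summand are useful extra motivation and verification, but they reproduce the same decomposition rather than a genuinely different argument.
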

	\begin{proof}
		Let $G$ be the graph
		\begin{center}
			\begin{tikzpicture}
				\begin{scope}[every node/.style={circle,draw,minimum height=2.35em}]
					\node(U11) at (4,1){$\alpha_1$};
					\node(U21) at (3.133975,-0.5){$\alpha_2$};
					\node(U31) at (4.866025,-0.5){$\alpha_3$};
					\node[Bullet](V) at (4,0){};
				\end{scope}
				\path[-](V) edge (U11);
				\path[-](V) edge (U21);
				\path[-](V) edge (U31);
			\end{tikzpicture}
		\end{center}
		with
		\[\alpha_1=(2i+1)(i+1),\quad \alpha_2=(2i+1)i,\quad \alpha_3=\frac{i(i+1)}{2}. \]
		Denote the vertices labeled by $\alpha_i$ by $u_i$. Then a basis for $H_G$ is given by $e_1=u_1^0+u_2^0-2u_3^0$, $e_2=i u_1^0+(i+1)u_2^0-(2i+1)u_3^0$, and we have
		\begin{align*}
			\mu_G(x_1,x_1,x_1)&=1,\\
			\mu_G(x_1,x_1,x_2)&=0,\\
			\mu_G(x_1,x_2,x_2)&=0,\\
			\mu_G(x_2,x_2,x_2)&=\frac{i(i+1)(2i+1)}{2},\\
			p_G(x_1)&=4+24\frac{i(i+1)}{2},\\
			p_G(x_2)&=4\frac{i(i+1)(2i+1)}{2}+24\frac{i(i+1)(2i+1)}{6}.
		\end{align*}
		Hence, $M_{\overline{G}}$ is diffeomorphic to $M_i\# N_i$ if we define $M_i$ and $N_i$ as the unique closed, oriented simply-connected spin 6-manifolds with
		\begin{itemize}
			\item $b_3(M_i)=b_3(N_i)=0$,
			\item $b_2(M_i)=b_2(N_i)=1$,
			\item $\mu_{M_i}(x,x,x)=1$ for a generator $x$ of $H^2(M_i)$,
			\item $p_1(M_i)(x)=4+24\frac{i(i+1)}{2}$,
			\item $\mu_{N_i}(y,y,y)=\frac{i(i+1)(2i+1)}{2}$ for a generator $y$ of $H^2(N_i)$, and
			\item $p_1(N_i)(y)=4\frac{i(i+1)(2i+1)}{2}+24\frac{i(i+1)(2i+1)}{6}$.
		\end{itemize}
		The manifold $M_i$ is a homotopy $\C P^3$, and for different values of $i$ we obtain different values for the divisibility of the first Pontryagin class, hence all $M_i$ are pairwise non-diffeomorphic.
	\end{proof}

	Finally, we consider the question, if a given manifold $M$ can be decomposed into a connected sum $M=M_1\# M_2$, where $M_1,M_2\not\cong \Sigma^{6k}$ for any homotopy sphere $\Sigma^{6k}$. To analyze this on the level of cohomology let $H$ be a finitely generated free abelian group with a symmetric trilinear form $\mu\colon H\times H\times H\to\Z$. Given a subspace $Y\subseteq H$, we say that $Y$ is a direct summand in $(H,\mu)$, if there is another subspace $Z\subseteq H$ such that $H=Y\oplus Z$ and
	\[\mu(y_1,z_1,z_2)=\mu(z_1,y_1,y_2)=0 \]
	for all $y_1,y_2\in Y$, $z_1,z_2\in Z$.
	\begin{lemma}
		\label{L:DIR_SUM_RANK}
		Let $m$ be the rank of $Y$ and let $(y_1,\dots, y_m)$ be a basis of $Y$. If $Y$ is a direct summand in $(H,\mu)$, then for any basis $(x_1,\dots,x_n)$ of $H$ the matrix
		\[
		\begin{pmatrix}
			\mu(x_1,x_1,y_1) &\cdots&\mu(x_1,x_n,y_1) &\cdots&\cdots& \mu(x_1,x_1,y_m) &\cdots&\mu(x_1,x_n,y_m)\\
			\vdots & \ddots &\vdots &\cdots&\cdots& \vdots & \ddots &\vdots\\
			\mu(x_n,x_1,y_1) & \cdots & \mu(x_n,x_n,y_1) & \cdots&\cdots&\mu(x_n,x_1,y_m) & \cdots & \mu(x_n,x_n,y_m) \\
		\end{pmatrix}
		\]
		has rank at most $m$.
	\end{lemma}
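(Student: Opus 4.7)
The plan is to recognize the displayed matrix as the matrix of a specific linear map on $H$ whose kernel visibly contains the complementary summand $Z$; a rank count then finishes the argument.

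Concretely, I would define $\phi\colon H\to \textup{Hom}(H\otimes Y,\Z)$ by $\phi(x)(w\otimes y)=\mu(x,w,y)$. With respect to the basis $(x_1,\dots,x_n)$ of $H$ and the basis $\{x_j\otimes y_k\}$ of $H\otimes Y$, the matrix of $\phi$ is precisely the one displayed, provided the $nm$ columns are ordered lexicographically first by $k$ and then by $j$, matching the lemma's layout. Hence the rank of the given matrix equals $\textup{rank}(\phi)$, and it suffices to show $\textup{rank}(\phi)\le m$.

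Next, I would verify that $\phi$ vanishes on $Z$. For $z\in Z$ and arbitrary $w\in H$, decompose $w=w_Y+w_Z$ along $H=Y\oplus Z$; then
\[\mu(z,w,y_k)=\mu(z,w_Y,y_k)+\mu(z,w_Z,y_k).\]
The first summand is a triple with one argument in $Z$ and two in $Y$, hence vanishes by the condition $\mu(z_1,y_1,y_2)=0$ in the definition of direct summand; the second summand is a triple with two arguments in $Z$ and one in $Y$, hence vanishes by the condition $\mu(y_1,z_1,z_2)=0$. Thus $\phi(z)=0$ for every $z\in Z$.

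Since $\textup{rank}(Z)=n-m$, this gives $\ker(\phi)\supseteq Z$ and therefore $\textup{rank}(\phi)\le n-(n-m)=m$, as required. The argument is essentially immediate once the matrix is recast as the map $\phi$; the only (cosmetic) care needed is to align the column ordering of the displayed matrix with a chosen basis of $H\otimes Y$, so there is no substantive obstacle to overcome.
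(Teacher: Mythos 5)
Your proof is correct and rests on the same idea as the paper's: the direct-sum decomposition $H=Y\oplus Z$ together with the vanishing conditions forces $\mu(z,\cdot,y)=0$ for $z\in Z$, $y\in Y$, so the complement $Z$ contributes nothing to the rank. The paper realizes this via an explicit change of basis exhibiting $n-m$ zero rows, while you package the same observation more cleanly as $Z\subseteq\ker\phi$ for the associated linear map $\phi\colon H\to\mathrm{Hom}(H\otimes Y,\Z)$.
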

	\begin{proof}
		For each $y_i$ we have a symmetric bilinear form $\mu(\cdot,\cdot,y_i)$. Let $A_i$ denote its matrix in the basis $(x_1,\dots,x_n)$, i.e.\ the matrix we consider is given by
		\[A=\begin{pmatrix}
			A_1|\dots|A_m
		\end{pmatrix}. \]
		Since $Y$ is a direct summand, there exists a subspace $Z\subseteq H$ so that $H= Y\oplus Z$ and products between elements of $Y$ and $Z$ vanish. Let $(z_1,\dots,z_{n-m})$ be a basis of $Z$. Then a basis for $H$ is given by $(y_1,\dots,y_m,z_1,\dots,z_{n-m})$. Let $S$ denote the matrix that maps this basis to $(x_1,\dots,x_n)$. Then define
		\[A^\prime=S^\top
		\begin{pmatrix}
			A_1|\dots|A_m
		\end{pmatrix}
		\begin{pmatrix}
			S && 0\\
			&\ddots&\\
			0&&S
		\end{pmatrix}
		=\begin{pmatrix}
			A_1^\prime|\dots| A_m^\prime
		\end{pmatrix},
		\]
		where $A_i^\prime$ is the matrix of $\mu(\cdot,\cdot,y_i)$ in the basis $(y_1,\dots,y_m,z_1,\dots,z_{n-m})$. The matrix $A^\prime$ has rank at most $m$, since it has $(n-m)$ lines that vanish. Since $A^\prime$ and $A$ are obtained from each other via invertible matrices, also $A$ has rank at most $m$.
	\end{proof}
	\begin{proposition}
		\label{P:MG_IRRED}
		Let $G$ be the graph
		\begin{center}
			\begin{tikzpicture}
				\begin{scope}[every node/.style={circle,draw,minimum height=2.35em}]
					\node[draw=none](U1) at (-1,1){$G_1$};
					\node[draw=none](U2) at (1,1){$G_l$};
					\node[draw=none](dots) at (0,0.5){$\dots$};
					\node[Bullet](V) at (0,0){};
				\end{scope}
				\path[-](V) edge (U1);
				\path[-](V) edge (U2);
			\end{tikzpicture}
		\end{center}
		where each $G_i$ is one of 
		\begin{center}
			\begin{tikzpicture}
				\begin{scope}[every node/.style={circle,draw,minimum height=2.35em}]
					\node[label={[label distance=-0.25cm]above:${\scriptscriptstyle 1}$}](U1) at (0,0){$\alpha_i$};
					\node[label={[label distance=-0.25cm]below:${\scriptscriptstyle 1}$}](U2) at (2,0){$\alpha_i$};
				\end{scope}
				\begin{scope}[every node/.style={circle,draw,minimum height=2.35em}]
					\node(U11) at (5,-1){$\alpha_i$};
					\node(U21) at (4.133975,0.5){$-1$};
					\node(U31) at (5.866025,0.5){$-1$};
					\node[Bullet](V1) at (5,0){};
				\end{scope}
				\path[-](V1) edge (U11);
				\path[-](V1) edge (U21);
				\path[-](V1) edge (U31);
				\begin{scope}[every node/.style={circle,draw,minimum height=2.35em}]
					\node(U12) at (9,-1){$\alpha_i$};
					\node(U22) at (8.133975,0.5){$1$};
					\node(U32) at (9.866025,0.5){$1$};
					\node[Bullet](V2) at (9,0){};
				\end{scope}
				\path[-](V2) edge (U12);
				\path[-](V2) edge (U22);
				\path[-](V2) edge (U32);
				
				\node(comma1) at (0.6,0){,};
				\node(comma2) at (2.6,0){,};
				\node(comma3) at (7,0){or};
			\end{tikzpicture}
		\end{center}
		and the edge is connected to the vertex labeled by $\alpha_i$. If $l\geq2$, then $M_{\overline{G}^k}\not\cong M_1\# M_2$ for any $6k$-dimensional manifolds $M_1,M_2$ that are not homotopy spheres.
	\end{proposition}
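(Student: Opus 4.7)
The strategy is to lift the problem to the level of invariants and derive a contradiction from Lemma \ref{L:DIR_SUM_RANK}. Set $M := M_{\overline{G}^k}$. By Lemma \ref{L:ALG_PL_GR_INV} and Corollary \ref{C:MG_COHOM_6KDIM}, $M$ is a closed, simply-connected $6k$-manifold with torsion-free homology, and since each $E_u$ has cohomology only in even degrees (Corollary \ref{C:2k_SPHERE_BDL_COHOM}) so does $M$. Suppose for contradiction that $M \cong M_1 \# M_2$ with neither summand a homotopy sphere. Then $H^{2k}(M) \cong H^{2k}(M_1) \oplus H^{2k}(M_2)$ is a direct sum of torsion-free groups and the trilinear form splits accordingly. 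Using Poincar\'e duality and the even-degree concentration of $H^*(M)$, one checks that both $H^{2k}(M_i)$ must be nonzero: if $H^{2k}(M_i) = 0$, then $H^{4k}(M_i) = 0$ too, and only $H^{3k}(M_i)$ could remain (which requires $k$ even), an edge case that can be excluded by a separate argument using the explicit description of $H^{3k}(M)$ from Theorem \ref{T:MG_COHOM}.

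Thus $Y := H^{2k}(M_1)$ is a direct summand of $(H_G, \mu_G^k)$ of some rank $m$ with $1 \le m \le \textrm{rank}(H_G) - 1$. By Lemma \ref{L:DIR_SUM_RANK}, the stacked matrix of partial products $[\mu_G^k(\cdot, \cdot, y_i)]_{i=1}^m$ then has rank at most $m$ in any basis of $H_G$. The plan is to show that this fails by exhibiting an explicit basis of $H_G$ adapted to the graph. For each subgraph $G_i$ one chooses an \emph{internal} element $\iota_i$ whose coefficient on the $\alpha_i$-labeled vertex vanishes---namely $u^{\pm 1}$ for types (1)/(2) and $u_1^0 - u_2^0$ for types (3)/(4)---and a \emph{bridging-type} element $\beta_i$ with nonzero coefficient on the $\alpha_i$-labeled vertex. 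The central-vertex constraint $\sum_i \lambda_{u^{(i)}_0} = 0$ reduces the bridging part of $H_G$ to the $(l-1)$-dimensional span of the differences $\beta_i - \beta_j$.

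A direct computation from the definitions in Subsection \ref{SS:ALG_PLUMBING} yields three key facts: (i) $\mu_G^k$ vanishes on any triple of internal elements $\iota_i, \iota_j, \iota_l$ (either because some indices differ and the corresponding vertex supports are disjoint, or because $\mu_G^k(\iota_i,\iota_i,\iota_i) = 0$ by the explicit form of the cubic); (ii) $\mu_G^k(\iota_i, \iota_i, \beta_i - \beta_j) \ne 0$ for $j \ne i$, while $\mu_G^k(\iota_i, \iota_i, \beta_j - \beta_l) = 0$ whenever $i \notin \{j, l\}$; and (iii) pairings among bridging elements involve only the scalars $\alpha_i$. From (i)--(iii) one derives a propagation property: a nonzero component of $Y$ along some $\iota_i$ forces, through the rank bound of Lemma \ref{L:DIR_SUM_RANK}, that $Y$ contain the bridging differences $\beta_i - \beta_j$ for all $j \ne i$, which in turn forces $Y$ to contain $\iota_j$ for all $j$, eventually giving $Y = H_G$; symmetrically, a summand with no internal component cannot sustain a nontrivial rank through the bridging products alone and must be trivial.

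The main obstacle is the uniform combinatorial verification: the four types of subgraphs produce slightly different internal and bridging elements, and the signs depend on the choice $(\alpha_i, -1, -1)$ versus $(\alpha_i, 1, 1)$ in types (3)/(4). What makes the argument tractable is the diagonality of $\mu^k$ across distinct vertices in the ambient group $A = \bigoplus_u \bigoplus_i \Z u^i$, which confines all cross-subgraph interactions to the single central-vertex constraint; this rigidity is ultimately responsible for the invariants of $G$ being indecomposable and hence for $M_{\overline{G}^k}$ not splitting nontrivially.
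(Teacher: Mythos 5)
Your proposal correctly identifies the strategy — work with the direct summand $Y \subseteq (H_G, \mu_G^k)$ and use Lemma \ref{L:DIR_SUM_RANK} — and your adapted basis of internal elements $\iota_i = x_i'$ and bridging differences $x_i - x_j$ matches the paper's setup. However, there are genuine gaps in the central argument. First, you take $Y := H^{2k}(M_1)$ with only the trivial bound $m \le \mathrm{rank}(H_G) - 1 = 2l-2$, whereas the paper's key move is to choose $Y$ to be the \emph{smaller} of $H^{2k}(M_1)$ and $H^{2k}(M_2)$, so that $m \le \lfloor(2l-1)/2\rfloor = l-1$. Without that reduction, the rank constraint $\mathrm{rank}(A) \le m$ from Lemma \ref{L:DIR_SUM_RANK} on a matrix with only $2l-1$ rows is far too weak to carry the rest of the argument. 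Second, the "propagation property" you invoke — that a nonzero internal component of $Y$ forces $Y$ to absorb bridging differences and ultimately all of $H_G$, while a summand with no internal component must be trivial — is exactly the technical heart of the proof, and you only assert it. The paper's proof proceeds by writing each $y_i$ in coordinates $\lambda_{i,j}, \mu_{i,j}$, exhibiting the block structure of the stacked matrix $A = (A_1 \mid \cdots \mid A_m)$ explicitly, first showing by a column argument that all $\lambda_{i,j}$ must vanish (reducing to the case where some column $A_i^{j_0}$ vanishes for all $i$ and then eliminating that case), and then deriving the final contradiction by exhibiting $2m$ linearly independent vectors $w_i, w_i'$ in the column span of $A$, which is impossible unless $m = 0$. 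Your observations (i)--(iii) are correct but do not by themselves yield this rank count; a closer look at the small case $l = 2$ shows the interplay between the two conditions $\mu(y,y,z)=0$ and $\mu(y,z,z')=0$ is delicate and cannot be dispatched by the propagation heuristic alone.

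Two smaller issues: the internal element for the three-vertex subgraphs should be $u_3^0 - u_2^0$ (not $u_1^0 - u_2^0$, which has a nonzero coefficient on the $\alpha_i$-vertex and does not even lie in $H_G$ by the central-vertex constraint). And in the preliminary reduction you claim that if $H^{2k}(M_i) = 0$ then "only $H^{3k}(M_i)$ could remain"; this skips the possibility of nonzero $H^j(M_i)$ in other even degrees $0 < j < 6k$ with $j \ne 2k, 4k$. The paper instead shows, after establishing $m=0$, that the ring of $H^*(M)$ is generated in degrees $2$ and $2k$ with $H^2 \to H^{2k}$, $b \mapsto b^k$ injective, which forces all intermediate cohomology of the rank-$0$ summand to vanish and hence that summand to be a homotopy sphere; your version would need a comparable ring-theoretic argument, not just Poincar\'e duality.
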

	\begin{proof}
		Set $M=M_{\overline{G}^k}$ and let $M_1$, $M_2$ be $6k$-dimensional  manifolds so that $M_1\# M_2\cong M$. Then both $H^{2k}(M_1)$ and $H^{2k}(M_2)$ are direct summands in $(H^{2k}(M),\mu_M)$. By choosing the subspace of smaller rank, we obtain a direct summand $Y$ in $(H^{2k}(M),\mu_M)\cong (H_G,\mu_G^k)$ of rank $m\leq\lfloor b_{2k}(M)/2\rfloor=\lfloor(2l-1)/2\rfloor=l-1$.
		
		Fix $i$ and let $u_1$ be the vertex labeled by $\alpha_i$. If $G_i$ consists of more than one vertex, denote the others by $u_2$ and $u_3$. Then either $x_i=u_1^0$, $x_i^\prime=u_1^{\pm1}$, or $x_i=u_1^0-u_2^0$, $x_i^\prime=u_3^0-u_2^0$ is a basis of $H_{G_i}$. We define $\gamma_i=\mu^k_{G_i}(x_i,x_i^\prime,x_i^\prime)$ and $\beta_i=\mu^k_{G_i}(x_i,x_i,x_i^\prime)/\gamma_i$, i.e.\ we have the following possibilities:
		\begin{center}
			\begin{tikzpicture}
				\begin{scope}[every node/.style={circle,draw,minimum height=2.35em}]
					\node[label={[label distance=-0.25cm]above:${\scriptscriptstyle 1}$}](U1) at (0,0){$\alpha_i$};
					\node[label={[label distance=-0.25cm]below:${\scriptscriptstyle 1}$}](U2) at (2,0){$\alpha_i$};
				\end{scope}
				\begin{scope}[every node/.style={circle,draw,minimum height=2.35em}]
					\node(U11) at (5,-1){$\alpha_i$};
					\node(U21) at (4.133975,0.5){$-1$};
					\node(U31) at (5.866025,0.5){$-1$};
					\node[Bullet](V1) at (5,0){};
				\end{scope}
				\path[-](V1) edge (U11);
				\path[-](V1) edge (U21);
				\path[-](V1) edge (U31);
				\begin{scope}[every node/.style={circle,draw,minimum height=2.35em}]
					\node(U12) at (9,-1){$\alpha_i$};
					\node(U22) at (8.133975,0.5){$1$};
					\node(U32) at (9.866025,0.5){$1$};
					\node[Bullet](V2) at (9,0){};
				\end{scope}
				\path[-](V2) edge (U12);
				\path[-](V2) edge (U22);
				\path[-](V2) edge (U32);
				
				\node (C11) at (0,-2) {$\beta_i=0$};
				\node (C12) at (0,-2.5) {$\gamma_i=1$};
				\node (C11) at (2,-2) {$\beta_i=0$};
				\node (C12) at (2,-2.5) {$\,\,\,\,\,\gamma_i=-1$};
				\node (C11) at (5,-2) {$\beta_i=1$};
				\node (C12) at (5,-2.5) {$\gamma_i=\frac{\lambda_k}{4}$};
				\node (C11) at (9,-2) {$\beta_i=1$};
				\node (C12) at (9,-2.5) {$\,\,\,\,\,\gamma_i=-\frac{\lambda_k}{4}$};
			\end{tikzpicture}
		\end{center}
		
		A basis for $H_G$ is now given by $(x_1-x_l,\dots,x_{l-1}-x_l,x_1^\prime,\dots,x_l^\prime)$. Let $(y_1,\dots,y_m)$ be a basis of $Y$. Then there exist $\lambda_{i,j}$ and $\mu_{i,j}$, so that
		\[y_i=\sum\limits_{j=1}^{l-1}\lambda_{i,j}(x_j-x_l)+\sum\limits_{j=1}^{l}\mu_{i,j}x_j^{\prime}. \]
		Let $\lambda_{i,l}=-\sum_{j=1}^{l-1}\lambda_{i,j}$. Then the matrix $A_i$ in Lemma \ref{L:DIR_SUM_RANK} is given as follows:
		\[
		\begin{tikzpicture}[baseline=(current bounding box.center)]
			\matrix (m) [matrix of math nodes,nodes in empty cells,right delimiter={)},left delimiter={(} ]{
				* & &  & * & \hspace{-8ex}{\scriptstyle\gamma_1(\mu_{i,1}+\lambda_{i,1}\beta_1)} & 0 & & 0 & {\scriptstyle -\gamma_l(\mu_{i,l}+\beta_i\lambda_{i,l})} \\
				& & & & 0 & & & & \\
				& & & & & & & 0 & \\
				* & & & * & 0 & & 0 & {\scriptstyle\gamma_{l-1}(\mu_{i,l-1}+\lambda_{i,l-1}\beta_{l-1} )} & {\scriptstyle -\gamma_l(\mu_{i,l}+\beta_i\lambda_{i,l})}\\
				{\scriptstyle\gamma_1(\mu_{i,1}+\lambda_{i,1}\beta_1)} & 0 & & 0 & \gamma_1 \lambda_{i,1} & 0 & & &0\\
				0 & & & &0 & & & &\\
				& & & 0 & & & & &\\
				0 & & 0 & {\scriptstyle\gamma_{l-1}(\mu_{i,l-1}+\lambda_{i,l-1}\beta_{l-1} )}& & & & & 0\\
				{\scriptstyle-\gamma_l(\mu_{i,l}+\beta_i\lambda_{i,l})} & & & {\scriptstyle-\gamma_l(\mu_{i,l}+\beta_i\lambda_{i,l})}& 0 & & & 0 & \gamma_l \lambda_{i,l} \\
			} ;
			\draw[loosely dotted] (m-1-1)-- (m-1-4);
			\draw[loosely dotted] (m-1-1)-- (m-4-4);
			\draw[loosely dotted] (m-1-1)-- (m-4-1);
			\draw[loosely dotted] (m-1-4)-- (m-4-4);
			\draw[loosely dotted] (m-4-1)-- (m-4-4);
			\draw[loosely dotted] (m-5-1)-- (m-8-4);
			\draw[loosely dotted] (m-5-2)-- (m-5-4);
			\draw[loosely dotted] (m-5-2)-- (m-7-4);
			\draw[loosely dotted] (m-5-4)-- (m-7-4);
			\draw[loosely dotted] (m-6-1)-- (m-8-1);
			\draw[loosely dotted] (m-6-1)-- (m-8-3);
			\draw[loosely dotted] (m-8-1)-- (m-8-3);
			\draw[loosely dotted] (m-9-1)-- (m-9-4);
			\draw[loosely dotted] (m-1-5)-- (m-4-8);
			\draw[loosely dotted] (m-2-5)-- (m-4-5);
			\draw[loosely dotted] (m-2-5)-- (m-4-7);
			\draw[loosely dotted] (m-4-5)-- (m-4-7);
			\draw[loosely dotted] (m-1-6)-- (m-1-8);
			\draw[loosely dotted] (m-1-6)-- (m-3-8);
			\draw[loosely dotted] (m-1-8)-- (m-3-8);
			\draw[loosely dotted] (m-1-9)-- (m-4-9);
			\draw[loosely dotted] (m-5-5)-- (m-9-9);
			\draw[loosely dotted] (m-6-5)-- (m-9-5);
			\draw[loosely dotted] (m-6-5)-- (m-9-8);
			\draw[loosely dotted] (m-9-5)-- (m-9-8);
			\draw[loosely dotted] (m-5-6)-- (m-5-9);
			\draw[loosely dotted] (m-5-6)-- (m-8-9);
			\draw[loosely dotted] (m-5-9)-- (m-8-9);
		\end{tikzpicture}
		\]
		By Lemma \ref{L:DIR_SUM_RANK}, the rank of $A$ is at most $m\leq l-1$. Let $A_i^j$ denote the $j$-th column of $A_i$. By considering $A_i^l,\dots,A_i^{2l-1}$, it follows that either all $\lambda_{i,j}$ vanish, or there is $j_0\in\{l,\dots,2l-1\}$, so that $A_i^{j_0}=0$ for all $i$. We first show that the second case implies the first one.
		
		By symmetry reasons we can assume that $j_0=2l-1$, i.e.\ $\lambda_{i,l}=\mu_{i,l}=0$ for all $i$. In particular, $\sum_{j=1}^{l-1}\lambda_{i,j}=0$. Now consider the matrix
		\[
		\begin{pmatrix}
			A_1^l|\dots|A_1^{2l-2}|\dots|A_m^l|\dots|A_m^{2l-2}
		\end{pmatrix}
		\]
		which then also has rank at most $m$. By elementary row and column operations we bring it into the following form:
		\[
		A^\prime=\begin{tikzpicture}[baseline=(current bounding box.center)]
			\matrix (m) [matrix of math nodes,nodes in empty cells,right delimiter={)},left delimiter={(} ]{
				\mu_{1,1} & 0 & & 0 &&\mu_{2,1} & 0 & & 0 &&\quad\quad&& \mu_{m,1} & 0 & & 0\\
				0 & & & && 0 & & & &&&& 0 & & &\\
				& & & 0 && & & & 0 &&&& & & & 0\\
				0 & & 0 & \mu_{1,l-1} && 0 & & 0 & \mu_{2,l-1} &&&& 0 & & 0 & \mu_{m,l-1}\\
				\lambda_{1,1} & 0 & & 0 &&  \lambda_{2,1} & 0 & & 0 &&&&  \lambda_{m,1} & 0 & & 0\\
				0 & & & && 0 & & & &&&& 0 & & &\\
				& & & 0 && & & & 0 &&&& & & & 0\\
				& & &\lambda_{1,l-1}&& & & &\lambda_{2,l-1} &&&& & & &\lambda_{m,l-1} \\
				0 & & & 0 && 0 & & & 0  &&&& 0 & & & 0\\
			} ;
			\draw[loosely dotted] (m-2-1)-- (m-4-1);
			\draw[loosely dotted] (m-6-1)-- (m-9-1);
			\draw[loosely dotted] (m-2-1)-- (m-4-3);
			\draw[loosely dotted] (m-4-1)-- (m-4-3);
			\draw[loosely dotted] (m-6-1)-- (m-9-4);
			\draw[loosely dotted] (m-9-1)-- (m-9-4);
			\draw[loosely dotted] (m-1-1)-- (m-4-4);
			\draw[loosely dotted] (m-5-1)-- (m-8-4);
			\draw[loosely dotted] (m-1-2)-- (m-3-4);
			\draw[loosely dotted] (m-1-2)-- (m-1-4);
			\draw[loosely dotted] (m-1-4)-- (m-3-4);
			\draw[loosely dotted] (m-5-2)-- (m-5-4);
			\draw[loosely dotted] (m-5-2)-- (m-7-4);
			\draw[loosely dotted] (m-5-4)-- (m-7-4);
			
			\draw[loosely dotted] (m-2-6)-- (m-4-6);
			\draw[loosely dotted] (m-6-6)-- (m-9-6);
			\draw[loosely dotted] (m-2-6)-- (m-4-8);
			\draw[loosely dotted] (m-4-6)-- (m-4-8);
			\draw[loosely dotted] (m-6-6)-- (m-9-9);
			\draw[loosely dotted] (m-9-6)-- (m-9-9);
			\draw[loosely dotted] (m-1-6)-- (m-4-9);
			\draw[loosely dotted] (m-5-6)-- (m-8-9);
			\draw[loosely dotted] (m-1-7)-- (m-3-9);
			\draw[loosely dotted] (m-1-7)-- (m-1-9);
			\draw[loosely dotted] (m-1-9)-- (m-3-9);
			\draw[loosely dotted] (m-5-7)-- (m-5-9);
			\draw[loosely dotted] (m-5-7)-- (m-7-9);
			\draw[loosely dotted] (m-5-9)-- (m-7-9);
			
			\draw[loosely dotted] (m-2-13)-- (m-4-13);
			\draw[loosely dotted] (m-6-13)-- (m-9-13);
			\draw[loosely dotted] (m-2-13)-- (m-4-15);
			\draw[loosely dotted] (m-4-13)-- (m-4-15);
			\draw[loosely dotted] (m-6-13)-- (m-9-16);
			\draw[loosely dotted] (m-9-13)-- (m-9-16);
			\draw[loosely dotted] (m-1-13)-- (m-4-16);
			\draw[loosely dotted] (m-5-13)-- (m-8-16);
			\draw[loosely dotted] (m-1-14)-- (m-3-16);
			\draw[loosely dotted] (m-1-14)-- (m-1-16);
			\draw[loosely dotted] (m-1-16)-- (m-3-16);
			\draw[loosely dotted] (m-5-14)-- (m-5-16);
			\draw[loosely dotted] (m-5-14)-- (m-7-16);
			\draw[loosely dotted] (m-5-16)-- (m-7-16);
			
			\draw (m-1-5)-- (m-9-5);
			\draw (m-1-10)-- (m-9-10);
			\draw (m-1-12)-- (m-9-12);
			\draw[loosely dotted] (m-5-10)-- (m-5-12);
		\end{tikzpicture}
		\]
		Then the vectors
		\[w_i=\begin{pmatrix}
			\mu_{i,1}\\\vdots\\\mu_{i,l-1}\\\lambda_{i,1}\\\vdots\\ \lambda_{i,l-1}\\0
		\end{pmatrix} \]
		lie in the space generated by the columns of $A^\prime$. Since $\mu_{i,l}=\lambda_{i,l}=0$, and since $(y_1,\dots,y_m)$ is a basis of $Y$, it follows that $(w_1,\dots,w_m)$ is linearly independent. Hence, since $A^\prime$ has rank at most $m$, $(w_1,\dots,w_m)$ is a basis (over $\Q$) of the space generated by the columns of $A^\prime$. Thus, since $\sum_{j=1}^{l-1}\lambda_{i,j}=0$, every $v=(v_1,\dots,v_{2l-1})$ in the space generated by the columns of $A^\prime$ satisfies
		\[\sum\limits_{i=l}^{2l-2}v_i=0. \]
		In particular, $\lambda_{i,j}=0$ for all $i,j$.
		
		Hence we can assume that all $\lambda_{i,j}$ vanish. Then define
		\[w_i^\prime=\sum\limits_{j=1}^{l-1}\frac{1}{\gamma_j } A_i^j=\begin{pmatrix}
			*\\\vdots\\ *\\\mu_{i,1}\\\vdots\\\mu_{i,l-1}\\-(l-1)\mu_{i,l}
		\end{pmatrix}. \]
		Since $(y_1,\dots,y_m)$ is a basis of $Y$, the elements $w^\prime_1,\dots,w^\prime_m$ are linearly independent and hence a basis over $\Q$ of the space generated by the columns of $A$. But they also form a linearly independent set together with the $w_i$ (which also lie in the space generated by the columns of $A$ since $\lambda_{i,j}=0$), which is a contradiction unless $m=0$.
		
		Thus, one of the $M_i$, say $M_1$, has vanishing cohomology in degree $2k$, so $H^{2k}(M)=H^{2k}(M_2)$. By Corollary \ref{C:2k_SPHERE_BDL_COHOM} and Theorem \ref{T:MG_COHOM}, any $x\in H^2(M)$ is non-trivial if and only if $x^k\in H^{2k}(M)$ is non-trivial and all other elements in $H^*(M)$ are obtained from powers of elements in $H^2(M)$ and multiplication by $a\in H^{2k}(M)$. Thus, $H^*(M_2)=H^*(M)$ and $M_1$ has non-trivial cohomology groups only in degrees $0$ and $6k$. Hence, $M_1$ is a homology sphere and since it is simply-connected it is therefore a homotopy sphere.
	\end{proof}
	\begin{remark}
		Since there do not exist exotic spheres in dimensions $6$ and $12$, we actually obtain for $k=1,2$ that $M_{\overline{G}^k}$ does not split as a connected sum $M_1\# M_2$ for any $M_1,M_2$ which are not standard spheres.
	\end{remark}

	For the proof of Theorem \ref{T:NEW_EX_B2_LARGE} we need the following lemma to conclude that we obtain new examples.
	\begin{lemma}
		\label{L:GROMOV_BETTI}
		For any $n\in\N$ there exists a constant $C(n)$ so that any closed $n$-dimensional manifold $M$ that has the structure of a homogeneous space, a biquotient, a cohomogeneity one manifold or a Fano variety satisfies Gromov's Betti number bound
		\[\sum\limits_{i=0}^{n}b_i(M)\leq C(n). \]
	\end{lemma}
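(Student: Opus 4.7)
The plan is to treat each of the four classes separately and take $C(n)$ to be the maximum of the resulting dimension-dependent constants. The common thread for the first three classes is Gromov's Betti number theorem bounding $\sum_i b_i(M)$ for a closed Riemannian $n$-manifold of nonnegative sectional curvature by a constant depending only on $n$.

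For homogeneous spaces and biquotients, I would exploit that both admit a metric of nonnegative sectional curvature: a compact homogeneous space $G/H$ inherits such a metric from a bi-invariant metric on $G$, and a biquotient arises as the base of a Riemannian submersion from $G$ equipped with a bi-invariant metric, so by O'Neill's formula nonnegative sectional curvature descends to the base. Gromov's theorem then immediately supplies the required bound. For cohomogeneity one manifolds, I would use the standard decomposition $M = D(G/K^-) \cup_{G/H} D(G/K^+)$ as a union of two disc bundles over the singular orbits glued along a tubular neighborhood of a principal orbit. A Mayer--Vietoris argument then reduces $\sum_i b_i(M)$ to the total Betti numbers of the closed homogeneous spaces $G/K^\pm$ and $G/H$, all of dimension strictly less than $n$, which have already been bounded in the previous case; the contribution of a disc bundle to total Betti numbers is controlled by that of its base via the Leray--Hirsch or Gysin sequence.

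For Fano varieties, the plan is to invoke the boundedness theorem of Koll\'ar--Miyaoka--Mori: smooth complex Fano manifolds of fixed complex dimension $m$ form a bounded family, so only finitely many deformation types, and in particular only finitely many homotopy types, occur. Their total Betti numbers are therefore bounded in terms of $n = 2m$. The hard part here is merely to locate and cite the correct boundedness statement; there is no analytic content beyond it, and Gromov's theorem does the real work in the other three cases. The mildly delicate step will be the Mayer--Vietoris bookkeeping in the cohomogeneity one case, where one must be careful that the homogeneous ingredients $G/K^\pm$ and $G/H$ are themselves closed of dimension at most $n-1$, so that the inductive/comparative bound against the homogeneous case is well-defined.
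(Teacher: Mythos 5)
Your proposal is correct and reaches the same conclusion, but takes a genuinely different route in the cohomogeneity one case. For homogeneous spaces and biquotients you argue exactly as the paper does: bi-invariant metric plus O'Neill, then Gromov's Betti number theorem. For Fano varieties you invoke the Koll\'ar--Miyaoka--Mori boundedness theorem; the paper cites the same circle of ideas (finitely many diffeomorphism types of smooth Fano $m$-folds for fixed $m$), so this is essentially identical in content.

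The interesting divergence is cohomogeneity one. The paper cites the theorem of Schwachh\"ofer--Tuschmann that every closed cohomogeneity one manifold admits a metric of \emph{almost} nonnegative sectional curvature, and then applies Gromov's theorem directly (which, in its full strength, covers almost nonnegative curvature). Your proposal instead uses the double disc bundle decomposition $M = D(G/K^-) \cup_{G/H} D(G/K^+)$ and a Mayer--Vietoris argument to bound $\sum_i b_i(M)$ by $\sum_i b_i(G/K^-) + \sum_i b_i(G/K^+) + \sum_i b_i(G/H)$, each a compact homogeneous space of dimension at most $n-1$, already controlled by the previous case. This is an elegant alternative: it avoids quoting the almost nonnegative curvature result and keeps the cohomogeneity one case elementary, at the cost of some bookkeeping. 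Two small points you should address to make it airtight. First, you must also handle the case where the orbit space is a circle rather than an interval: then there are no singular orbits and $M$ fibers over $S^1$ with fiber the principal orbit $G/H$, so the Wang exact sequence gives $\sum_i b_i(M) \leq 2\sum_i b_i(G/H)$, which is again controlled. Second, you should note that for a \emph{closed} $M$ one may assume $G$ compact (the closure of $G$ in the isometry group of some invariant metric is compact and acts with the same orbits), so that $G/K^\pm$ and $G/H$ are indeed closed homogeneous spaces to which the first case applies.
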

	\begin{proof}
		Every homogeneous space and, more general, every biquotient admits a metric of non-negative sectional curvature. This is well-known and follows from the fact that for a compact Lie group $G$ any biinvariant metric has non-negative sectional curvature and, by O'Neill's formulas for Riemannian submersions, this property descends to any biquotient $G\bq H$. Further, by \cite[Theorem A]{ST01}, any closed cohomogeneity one manifold admits a metric of almost non-negative sectional curvature. Thus, for these spaces the claim follows from Gromov's Betti number theorem \cite{Gr81}. For Fano varieties the claim follows from the fact that, by \cite[Theorem 2.1]{FH12}, there exist only finitely many diffeomorphism types in each dimension.
	\end{proof}
	
	\begin{proof}[Proof of Theorem \ref{T:NEW_EX_B2_LARGE}]
		Let $M=M_{\overline{G}^k}$ with $G$ as in Proposition \ref{P:MG_IRRED}, where we choose each subgraph $G_i$ to be spin, i.e.\ one of the third or fourth option. Then, by Proposition \ref{P:MG_IRRED}, $M$ does not split non-trivially as a connected sum and we have that $M$ is $(2k-1)$-connected with $b_{2k}(M)=2l-1$. Further, $M$ admits a core metric by Theorem \ref{T:ALG_PL_GR_CORE1} and for $l$ sufficiently large it is not diffeomorphic to a homogeneous space, a biquotient, a cohomogeneity one manifold or a Fano variety by Proposition \ref{L:GROMOV_BETTI}.
		
		To show that it is not diffeomorphic to the total space of a linear sphere bundle, note that, for every $\lambda_1,\lambda_2\in\Z$ we have $\lambda_1 x_1+\lambda_2 x_2-(\lambda_1+\lambda_2)x_3\in H_G$ (where we use the notation $x_i,x_i^\prime$ as in the proof of Proposition \ref{P:MG_IRRED}). Hence, there exist $\lambda_1,\lambda_2\in\Z$, with one of $\lambda_i\neq0$, so that
		\[p_G^k(\lambda_1 x_1+\lambda_2 x_2-(\lambda_1+\lambda_2)x_3)=0. \]
		Then, for $i$ with $\lambda_i\neq0$, we have
		\[\mu_G^k(\lambda_1 x_1+\lambda_2 x_2-(\lambda_1+\lambda_2)x_3,x_i^{\prime},x_i^{\prime})=\lambda_i\gamma_i\neq0. \]
		Hence, if $p_G^k\neq0$, then $M$ is not diffeomorphic to the total space of a linear sphere bundle by Lemma \ref{L:SPHERE_BDL_OBSTR}.
		
		To obtain a non-spin manifold, we can alternatively choose some of the $G_i$, say $m$, to be non-spin, i.e.\ one of the first and second options. Then $M$ is merely simply-connected, apart from that the same conclusions hold as in the spin case, except that the fact that $\mu_G^k$ is non-trivial on $\ker p_G^k$ does not necessarily imply, that $M$ is not diffeomorphic to the total space of a linear sphere bundles, as in this case this follows only for linear $S^{p-1}$-bundles with $p\geq 2k+1$. Hence, suppose that $M$ is diffeomorphic to the total space $E$ of a linear $S^{p-1}$-bundle with $1<p\leq 2k$. We consider the Euler characteristic, which, by Corollary \ref{C:S2k_BDLS_S4k} and Theorem \ref{T:MG_COHOM}, is given by
		\[\chi(M)=2+2m(k-1)+(l-1+m)(k+1). \]
		Since $\chi(M)>0$, we have that $p$ is odd by Lemma \ref{L:SPHERE_BDL_OBSTR}. If $k=1$, this is a contradiction. If $k$ is even, then $\chi(M)$ is odd for $m\equiv l\mod 2$, which is a contradiction by Lemma \ref{L:SPHERE_BDL_OBSTR}.
		
		Finally, we need to determine when $p_G^k$ vanishes. We have
		\begin{align*}
			p_G^k(x_i-x_l)&=\lambda_k(\alpha_i-\alpha_l)+{2k+1\choose k}((1-\beta_i)\gamma_i-(1-\beta_l)\gamma_l)+4(\beta_i\gamma_i-\beta_l\gamma_l)\\
			&\equiv{2k+1\choose k}(\gamma_i(1-\beta_i)-\gamma_l(1-\beta_l))\mod \lambda_k
		\end{align*}
		and $p^k_G(x_i^\prime)=0$. Hence, $p^k_G\equiv0\mod\lambda_k$ if and only if $\beta_i=0$ and $\gamma_i=\gamma_l$ for all $i$, or $\beta_i=1$ for all $i$ (note that $2{2k+1\choose k}<\lambda_k$ for $k\geq 3$ and $\lambda_1=4$ and $12\mid \lambda_2$ by Remark \ref{R:LAMBDA_K}). It follows that $p_G^k=0$ if and only if $\beta_i=0$, $\gamma_i=\gamma_l$ and $\alpha_i=\alpha_l$ for all $i$, or $\beta_i=1$ and $\alpha_i+\gamma_i=\alpha_l+\gamma_l$ for all $i$.
		
		Thus, we can, for example, set $\alpha_i=0$ for all $i>1$ and $\alpha_1=j$ to define the manifold $M_j$ for all $j>1$.
	\end{proof}

	\appendix

	\section{Adjacency and Incidence Matrix of a Directed Graph}
	\label{A:GRAPHS}
	
	In this appendix we recall some well-known facts about the adjacency and incidence matrices of a directed graph. For convenience we also include self-contained proofs.
	
	Let $R$ be a commutative ring and let $G=(V,E)$ be a directed graph, where $V=\{v_1,\dots,v_n\}$ is the set of vertices and $E=\{e_1,\dots,e_m\}\subseteq V\times V$ is the set of edges. The \emph{incidence matrix} of $G$, denoted by $Q(G)$, is the $n\times m$-matrix with entries in $R$ defined by
	\[
	Q(G)_{ij}=\begin{cases}
		1,\quad&\text{ if there is }k\text{ so that }e_j=(v_i,v_k),\\
		-1,\quad&\text{ if there is }k\text{ so that }e_j=(v_k,v_i),\\
		0,\quad&\text{else.}
	\end{cases}
	\]
	\begin{lemma}
		\label{L:GRAPH_RANK_N-1}
		Suppose $G$ is connected. Then the matrix $Q(G)^\top$ has kernel generated by $(1,\dots,1)^\top$. In particular, $Q(G)$ has rank $n-1$.
	\end{lemma}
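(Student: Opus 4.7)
The plan is to unwind the definition of the incidence matrix to turn the equation $Q(G)^\top x = 0$ into a system of equalities indexed by edges, then use path-connectivity of $G$ to propagate these equalities across all vertices.

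First I would fix $x = (x_1,\dots,x_n)^\top \in R^n$ with $Q(G)^\top x = 0$. The $j$-th entry of $Q(G)^\top x$ is the sum over $i$ of $Q(G)_{ij}x_i$. By definition of $Q(G)$, if $e_j = (v_a, v_b)$ (with $a\neq b$), then the $j$-th row of $Q(G)^\top$ has a $+1$ in position $a$, a $-1$ in position $b$, and zeros elsewhere. Hence the condition $Q(G)^\top x = 0$ is equivalent to requiring $x_a = x_b$ for every edge $(v_a, v_b) \in E$.

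Next I would use connectivity. Since $G$ is connected, for any two vertices $v_i, v_k \in V$ there is an undirected path $v_i = v_{i_0}, v_{i_1}, \dots, v_{i_r} = v_k$ such that for each $s$ either $(v_{i_s}, v_{i_{s+1}})$ or $(v_{i_{s+1}}, v_{i_s})$ is in $E$. Applying the previous step along this path gives $x_{i_s} = x_{i_{s+1}}$ for all $s$, and therefore $x_i = x_k$. Thus all coordinates of $x$ are equal, so $x$ is a scalar multiple of $(1,\dots,1)^\top$. Conversely, $(1,\dots,1)^\top$ clearly lies in $\ker Q(G)^\top$ since every column of $Q(G)$ has one $+1$ and one $-1$ and otherwise zeros. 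This establishes that $\ker Q(G)^\top = R\cdot(1,\dots,1)^\top$.

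For the rank statement, the kernel just described is a free summand of rank $1$, so the image of $Q(G)^\top$ has rank $n-1$, and therefore $Q(G)$ has rank $n-1$ as well. No serious obstacle is expected here; the only point to watch is that the argument is phrased so it works for a general commutative ring $R$ (rather than a field), which is automatic since the only operations used are additions and subtractions of the coordinates $x_i$.
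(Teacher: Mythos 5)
Your argument is correct and is essentially identical to the paper's proof: both translate the equation $Q(G)^\top x = 0$ into the edge-wise conditions $x_a = x_b$ and then propagate equality along paths using connectivity. You spell out a few steps that the paper leaves implicit (the explicit row structure of $Q(G)^\top$, the converse inclusion, and the passage from kernel rank to rank of $Q(G)$), but the underlying idea is the same.
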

	\begin{proof}
		Let $x\in R^n$ such that $x^\top Q(G)=0$. Then, for every $e=(v_i,v_j)\in E$, we have $x_i-x_j=0$, i.e.\ $x_i=x_j$. Since $G$ is connected, for any $v_i,v_j\in V$ there is a path between $v_i$ and $v_j$, so $x_i=x_j$ for all $1\leq i,j\leq n$.
	\end{proof}
	\begin{lemma}
		\label{L:GRAPH_SURJ}
		Suppose the underlying undirected graph of $G$ is simply-connected, i.e.\ a tree. Then the matrix $Q(G)^\top$, when considered as a linear map $R^n\to R^m$, is surjective.
	\end{lemma}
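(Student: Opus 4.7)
The plan is to prove this by induction on the number of vertices $n$, using the fact that every finite tree with more than one vertex has a leaf.

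The base case $n=1$ is trivial, since then $m=0$ (a tree on one vertex has no edges) and the target $R^m = 0$. For the inductive step, suppose the statement holds for all trees with fewer than $n$ vertices, and let $G$ be a directed graph whose underlying undirected graph is a tree with $n \geq 2$ vertices. I would pick a leaf $v_\ell$, which is incident to exactly one edge $e_j$. Removing $v_\ell$ and $e_j$ yields a directed graph $G'$ with $n-1$ vertices whose underlying graph is still a tree. The incidence matrix $Q(G')$ is obtained from $Q(G)$ by deleting the row corresponding to $v_\ell$ and the column corresponding to $e_j$.

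Given $y \in R^m$, by the inductive hypothesis applied to $G'$, I can find $x' \in R^{n-1}$, indexed by $V \setminus \{v_\ell\}$, so that $Q(G')^\top x' = y'$, where $y' \in R^{m-1}$ is obtained from $y$ by deleting the entry $y_j$. Now I extend $x'$ to $x \in R^n$ by setting $x_\ell$ to be determined by the single equation coming from edge $e_j$: if $e_j = (v_\ell, v_k)$ then $x_\ell := y_j + x_k'$, while if $e_j = (v_k, v_\ell)$ then $x_\ell := x_k' - y_j$. Either way, the $j$-th component of $Q(G)^\top x$ equals $y_j$. Since $v_\ell$ does not appear in any other edge, the remaining components of $Q(G)^\top x$ agree with those of $Q(G')^\top x'$, which equal the corresponding entries of $y$. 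Therefore $Q(G)^\top x = y$, completing the induction.

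There is no real obstacle here; the only subtle point is bookkeeping the index correspondence between $Q(G)$ and $Q(G')$ after removing the leaf and its edge. Alternatively, one could phrase the proof non-inductively by fixing a root $v_0$, setting $x_{v_0} = 0$, and propagating values outward along the unique path from the root to each vertex using the defining equations $\pm(x_i - x_k) = y_j$ for each edge $e_j$; the consistency of this procedure is guaranteed by the tree structure, since there is no cycle to create an obstruction. I would use the inductive version for brevity.
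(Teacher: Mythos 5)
Your proof is correct. It does, however, take a genuinely different route from the paper's. You argue by induction on the number of vertices, peeling off a leaf at each step and solving the single new equation for the leaf's coordinate, with the inductive hypothesis supplying a preimage of the rest of the target vector. The paper instead gives a direct, non-inductive argument: for each edge $e_i$, note that deleting it disconnects the tree into two subtrees, and define $x$ to be the $\{0,1\}$-valued indicator of the subtree from which $e_i$ originates; then $Q(G)^\top x$ is precisely the $i$-th standard basis vector, and surjectivity follows since all basis vectors are hit. Your inductive approach is slightly more procedural but avoids having to observe the two-subtree decomposition; the paper's approach is shorter and makes the explicit form of a preimage for each basis vector transparent. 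Both work over an arbitrary commutative ring $R$, since all coefficients involved are $0$, $\pm1$. The root-and-propagate variant you mention at the end is in fact closer in spirit to the paper's, since the value $x_v$ assigned by propagation from a root $v_0$ with $x_{v_0}=0$ ends up being a signed sum of the $y_j$ along the path from $v_0$ to $v$, and specializing $y$ to a basis vector recovers exactly the paper's indicator construction up to sign.
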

	\begin{proof}
		Let $e_i\in E$. Since $G$ is simply-connected, the edge $e_i$ divides $G$ into two subtrees. Let $G^\prime$ be the subtree from which $e_i$ originates with root the vertex connected to $e_i$. Then define $x\in R^n$ by $x_j=1$ whenever $v_j$ is contained in $G^\prime$ and $x_j=0$ otherwise. Then it follows that $Q(G)^\top x$ is the $i$-th standard basis vector of $R^m$ and hence $Q(G)^\top$ is surjective.
	\end{proof}
	
	Now let $G=(U,V,E)$, be a bipartite graph, where $U=\{u_1,\dots, u_r\}$ and $V=\{v_1,\dots,v_s\}$ are the sets of vertices and $E\subseteq U\times V$ is the set of edges. Let $\delta\colon E\to\{\pm1\}$ be a labeling of the edges. The \emph{biadjacency matrix} of $G$, denoted by $B(G)$, is the $r\times s$-matrix with entries in $R$ defined by
	\[B(G)_{ij}=\begin{cases}
		\delta(e),\quad & e=(u_i,v_j)\in E,\\
		0,\quad &\text{else.}
	\end{cases}  \]
	\begin{lemma}
		\label{L:BG_RANK}
		Suppose $G$ is a tree so that no $v\in V$ is a leaf. Then $B(G)$ has full rank.
	\end{lemma}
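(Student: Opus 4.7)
The plan is to exhibit an explicit $|V| \times |V|$ submatrix of $B(G)$ with determinant $\pm 1$, using a rooted-tree construction. After dispatching the trivial case $|V| = 0$, I would first observe that the hypotheses force $|U| \geq |V| + 1$: every $v \in V$ has degree at least $2$, so $|E| \geq 2|V|$, while $G$ being a tree gives $|E| = |U| + |V| - 1$. In particular full rank of $B(G)$ means rank equal to $|V|$, so it suffices to find a size-$|V|$ square submatrix that is invertible over $\Z$ (hence over $R$).

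To build such a submatrix, I would fix any vertex $v_0 \in V$ and regard $G$ as a rooted tree with root $v_0$. For each $v \in V$, choose a child $c(v) \in U$ of $v$; this is possible since $v$ has degree at least $2$ and at most one of its neighbors (its parent, when $v \neq v_0$) lies strictly above it. The assignment $v \mapsto c(v)$ is injective, since in a rooted tree each element of $U$ has at most one parent in $V$. I then plan to consider the square submatrix $B'$ of $B(G)$ whose rows are indexed by $\{c(v) : v \in V\}$ and whose columns are indexed by $V$.

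The entry $B'_{v,v'}$ equals $1$ exactly when $c(v)$ is adjacent to $v'$ in $G$. In the bipartite tree this happens only in the two cases $v' = v$ (the parent of $c(v)$) or $v'$ is a child of $c(v)$, and the latter forces $\mathrm{depth}(v') = \mathrm{depth}(v) + 2$. Ordering rows and columns of $B'$ by non-decreasing depth, the diagonal entries are all $1$ while every nonzero off-diagonal entry lies strictly above the diagonal, so $\det B' = 1$ and $B(G)$ has rank $|V|$. There is no substantive obstacle here: the proof is a direct combinatorial construction, and the main content is identifying the right child-assignment between $V$ and $U$, after which the triangular structure of $B'$ is immediate.
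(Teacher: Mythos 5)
Your proof is correct and takes a genuinely different route from the paper. The paper establishes full rank by showing injectivity via an induction that repeatedly prunes leaves: given $x$ with $B(G)x = 0$, each leaf $u_i \in U$ is adjacent to a unique $v_j$, forcing $x_j = 0$; one then deletes those $u$-leaves and the $v$'s that become leaves and repeats on the smaller tree. Your argument instead roots the tree at some $v_0 \in V$, picks for each $v$ a child $c(v) \in U$ (injectively, since each $U$-vertex has at most one parent), and exhibits the $|V| \times |V|$ submatrix on rows $\{c(v)\}$ as unitriangular after ordering by depth: the diagonal entries are $1$ because $c(v)$ is adjacent to its parent $v$, and any other nonzero entry in row $c(v)$ sits in a column $v'$ that is a grandchild of $v$, hence strictly deeper. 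This is a nice constructive alternative; it also yields a slightly stronger conclusion, namely that $B(G)$ admits a $\Z$-linear left inverse (since the exhibited minor is $\pm 1$), which gives injectivity over every commutative ring in one stroke rather than by re-running an argument for each $R$, and it avoids the minor bookkeeping in the paper's induction about what graph remains after pruning.
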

	\begin{proof}
		We show that $B(G)$, when considered as a linear map $R^s\to R^r$, is injective. Let $x\in R^s$ with $B(G)x=0$ and let $u_i\in U$ be a leaf. Then there is a unique $v_j\in V$ so that $(u_i,v_j)\in E$, hence $x_j=0$.
		
		Now we remove all the leaves, which are vertices in $U$, and all $v\in V$ that turn into leaves by this procedure and get a subgraph $G^\prime$ that is again a tree and has no vertex in $V$ as a leaf. For all those $v_j\in V$ that get removed we already have $x_j=0$. Hence, by removing these entries from $x$, we obtain a vector $x^\prime$ with $B(G^\prime)x^\prime=0$. Hence, by induction, it follows that $x=0$.
	\end{proof}
	
	\begin{lemma}
		\label{L:BG_BASIS}
		Suppose that $G$ is a tree so that no $v\in V$ is a leaf. Then the image of $B(G)$, when considered as a map $R^s\to R^r$, is a direct summand in $R^r$.
	\end{lemma}
	\begin{proof}
		We define a vertex in $V$, say $v_1$, to be the root of $G$. For every vertex $v_j\in V$ we then pick a vertex $u_i\in U$ connected to $v_j$ that has greater distance to $v_1$ than $v_j$. We set $f(j)=i$ and obtain a function $f\colon\{1,\dots,s \}\to\{1,\dots,r\}$. Since no vertex in $V$ is a leaf, the function $f$ is well-defined and since $G$ contains no cycle, the function $f$ is injective. Denote by $e_i$ the $i$-th standard basis vector of $R^r$ and the union of the columns of $B(G)$ and the elements $e_i$ with $i\not\in\im(f)$ by $C$. We will show that $C$ is a basis of $R^r$.
		
		Let $u_\ell\in U$. If $\ell\not\in\im(f)$, then $e_\ell$ is already contained in $C$. Otherwise let $j\in\{1,\dots,s\}$ such that $f(j)=\ell$. If $j=1$, then the first column of $B(G)$ is a sum of elements in $(e_{i})_{i\not\in\im(f)}$ and $e_\ell$ (after multiplying each element by $\pm1$), in particular, $e_\ell$ is a linear combination of elements in $C$. If $j>1$, then the $j$-th column of $B(G)$ is a sum of elements in $(e_{i})_{i\not\in\im(f)}$, $e_\ell$ and an element $e_{k}$ where $u_k$ is the preceding vertex to $v_j$ (and again each element is multiplied by $\pm1$). In particular, $u_\ell$ is a linear combination of elements in $C$ and $e_k$. By induction, this shows that any $e_\ell$ is a linear combination of elements in $C$, so $C$ is a generating set for $R^r$.
		
		Since $C$ has $r=\mathrm{rank}(R^r)$ elements, it follows that $C$ is a basis of $R^r$ and $R^r$ is the direct sum of the subspace generated by the columns of $B(G)$ and the subspaces generated by the elements $e_i$ with $i\not\in\im(f)$.
	\end{proof}

	\bibliographystyle{abbrvurl}
	\bibliography{References}

\end{document}